\theoremstyle{plain}
\newtheorem{theorem}{Theorem}[section]
\newtheorem{lemma}[theorem]{Lemma}
\newtheorem{thm}[theorem]{Theorem}
\newtheorem{cor}[theorem]{Corollary}
\newtheorem{prop}[theorem]{Proposition}
\newtheorem{lem}[theorem]{Lemma}
\newtheorem{conj}[theorem]{Conjecture}
\newtheorem{defi}[theorem]{Definition}
\newtheorem{example}[theorem]{Example}
\newtheorem{rem}[theorem]{Remark}
\newcommand{\Glie}{\mathfrak{g}}
\newcommand{\Yim}{\mathcal{Y}}
\newcommand{\ZZ}{\mathbb{Z}}
\newcommand{\CC}{\mathbb{C}}
\newcommand{\C}{\mathbb{C}}
\newcommand{\Q}{\mathbb{Q}}
\newcommand{\Z}{\mathbb{Z}}
\newcommand{\g}{\mathfrak{g}}
\newcommand{\tb}{\mathbf{\mathfrak{t}}}
\newcommand{\ga}{\overline{\alpha}}
\newcommand{\Psib}{\mbox{\boldmath$\Psi$}}
\newcommand{\Psibs}{\scalebox{.7}{\boldmath$\Psi$}}
\newcommand{\qbin}[2]{{\left[
\begin{matrix}{\,\displaystyle #1\,}\\
{\,\displaystyle #2\,}\end{matrix}
\right]
}}
\newcommand{\nc}{\newcommand}
\nc{\on}{\operatorname}
\nc{\la}{\lambda}
\nc{\wh}{\widehat}
\nc{\wt}{\widetilde}
\nc{\sw}{{\mathfrak s}{\mathfrak l}}
\nc{\ghat}{\wh{\g}}
\nc{\hhat}{\wh{\h}}
\nc{\mc}{\mathcal}
\nc{\bi}{\bibitem}
\nc{\pa}{\partial}
\nc{\ppart}{(\!(t)\!)}
\nc{\pparl}{(\!(\la)\!)}
\nc{\zpart}{(\!(z^{-1})\!)}
\nc{\n}{{\mathfrak n}}
\nc{\ol}{\overline}
\nc{\mb}{\mathbf}
\nc{\bb}{{\mathfrak b}}
\nc{\su}{\wh\sw_2}
\nc{\h}{{\mathfrak h}}
\nc{\can}{\on{can}}
\nc{\ntil}{\wt{\n}}
\nc{\pone}{{\mathbb P}^1}
\nc{\bs}{\backslash}
\nc{\al}{\alpha}
\nc{\gt}{{\mathfrak g}'}
\nc{\ds}{\displaystyle}
\nc{\OO}{{\mc O}}
\newcommand{\mfr}{\mathfrak{r}}
\nc{\bchi}{{\boldsymbol \chi}}
\theoremstyle{definition}
\numberwithin{equation}{section}
\begin{document}

\begin{title}
{Extended Baxter relations and $QQ$-systems for quantum affine algebras}
\end{title}

\author{Edward Frenkel}

\address{Department of Mathematics, University of California,
  Berkeley, CA 94720, USA}

\author[David Hernandez]{David Hernandez}

\address{Universit\'e Paris Cit\'e and Sorbonne Universit\'e, CNRS,
  IMJ-PRG, F-75006, Paris, France}

\begin{abstract}
 Generalized Baxter's $TQ$-relations and the $QQ$-system are systems
 of algebraic relations in the category ${\mc O}$ of representations
 of the Borel subalgebra of the quantum affine algebra $U_q(\ghat)$,
 which we established in our earlier works \cite{FH,FH2}. In the
 present paper, we conjecture a family of analogous relations labeled
 by elements of the Weyl group $W$ of $\g$, so that the original
 relations correspond to the identity element. These relations are closely
 connected to the $W$-symmetry of $q$-characters established in
 \cite{FH3}. We prove these relations for all $w \in W$ if $\g$ has
 rank two, and we prove the extended $TQ$-relations if $w$ is a simple
 reflection. We also generalize our results and conjectures to the
 shifted quantum affine algebras.
\end{abstract}

\maketitle

\setcounter{tocdepth}{2}
\tableofcontents

\section{Introduction}

Let $\mathfrak{g}$ be a finite-dimensional simple Lie algebra over
$\C$ and $I$ the set of vertices of its Dynkin diagram.  Let
$U_q(\ghat)$ be the corresponding quantum affine algebra of level 0
and $U_q(\wh{\mathfrak{b}})$ its Borel subalgebra, defined in terms of
the Drinfeld-Jimbo generators of $U_q(\ghat)$. In \cite{HJ}, M. Jimbo
and one of the authors introduced the category $\OO$
  whose objects are representations of $U_q(\wh{\mathfrak{b}})$
(possibly infinite-dimensional) that have a weight space decomposition
with finite-dimensional weight components satisfying a certain
condition (see Definition \ref{defo}). It contains the category of
finite-dimensional representations of $U_q(\wh{\mathfrak{g}})$ (of
type 1) as a subcategory.

In this paper we study various algebraic relations in the (completed)
Grothendieck ring $K_0(\OO)$ of the category $\OO$, and the
corresponding relations on $q$-characters. They can be viewed as
extensions of the {\em generalized Baxter $TQ$-relations} and the {\em
  $QQ$-system}. Here's a brief summary of these relations and their
connection to the spectra of commuting quantum Hamiltonians of the
XXZ-type model associated to $U_q(\ghat)$.

  \medskip
  
(1) The $TQ$-relations are generalizations of the celebrated
Baxter $TQ$-relation of the quantum XXZ
model corresponding to the quantum affine algebra $U_q(\wh{\sw}_2)$:
\begin{equation}    \label{baxter}
T_a = u \frac{Q_{aq^2}}{Q_a} + u^{-1} \frac{Q_{aq^{-2}}}{Q_a}.
\end{equation}
It can be viewed (see e.g. \cite{FH}) as a relation in $K_0(\OO)$,
with $T_a = [L(Y_{aq^{-1}})]$, the class of the two-dimensional
representation of the Borel subalgebra of $U_q(\wh{\sw}_2)$ 
(the restriction of the fundamental representation of
  $U_q(\wh\sw_2)$); $Q_a = [L(\Psib_a)]$, the class of its {\em
  prefundamental representation}; and $u$ is the class of a
one-dimensional representation (these notions will be introduced
below).

There are analogues of this relation in the ring $K_0(\OO)$ for an
arbitrary Lie algebra $\g$ and any finite-dimensional irreducible
representation $V$ of $U_q(\wh\g)$, which were proved in
\cite{FH}. The starting point is the observation that the RHS of
formula \eqref{baxter} can be obtained by replacing the variable $Y_a$
appearing in the {\em $q$-character} of the two-dimensional
representation $L(Y_{aq^{-1}})$,
$$
\chi_q(L(Y_{aq^{-1}})) = Y_{aq^{-1}} + Y_{aq}^{-1},
$$
with the ratio $u^{-1} \dfrac{Q_{aq^{-1}}}{Q_{aq}}$.

Likewise, in general one should replace each variable $Y_{i,a}, i \in
I$, appearing in the $q$-character $\chi_q(V)$ of a finite-dimensional
representation $V$ of $U_q(\ghat)$ with the ratio $u_i^{-1}
\dfrac{Q_{i,aq_i^{-1}}}{Q_{i,aq_i}}$, where $Q_{i,a} =
      [L(\Psib_{i,a})]$, the class of the $i$th prefundamental
      representation in $K_0(\OO)$, $q_i = q^{d_i}$ with $d_i$ being
      the length of the $i$th simple root, and $u_i$ is the class of a
      one-dimensional representation. By equating the resulting
      expression with $[V]$, we obtain a relation in the field of
      fractions of $K_0(\OO)$ (and clearing the denominators, we
      obtain a polynomial relation in $K_0(\OO)$). This is the
      {\em generalized Baxter $TQ$-relation} associated to the
      representation $V$ of $U_q(\wh\g)$ proved in \cite{FH}
      (following a conjecture of N. Reshetikhin and one
        of the authors in \cite{Fre}).

\medskip

(2) Using the universal $R$-matrix of $U_q(\wh\g)$, for any
finite-dimensional representation $U$ of $U_q(\wh\g)$ we obtain a
homomorphism from $K_0(\OO)$ to $\on{End}_{\C}(U)[[z]]$ sending a
representation $W$ to the corresponding transfer-matrix $t_W(z)|_U$
acting on $U$. Then the above $TQ$-relations become the relations on
the joint eigenvalues of these transfer-matrices on $U$, which are the
commuting quantum Hamiltonians of the XXZ-type integrable model
associated to $U_q(\wh\g)$.

In \cite{FH}, we proved that if $U$ is a tensor product of
finite-dimensional irreducible representations of $U_q(\ghat)$, then
all eigenvalues of the transfer-matrix corresponding to the $i$th
prefundamental representation $L_{i,a}$ are polynomials in $z$ (the
degrees of these polynomials can be found from the weight of the
corresponding eigenvector) up to a universal function that is
determined by $U$. This generalizes Baxter's description of the
spectrum of the commuting quantum Hamiltonians of the XXZ model (which
corresponds to $\g=\sw_2$). For other representations, the eigenvalues
will have different analytic properties (see
e.g. \cite{BLZ1,BLZ2,BLZ3}).

\medskip

(3) Next, we discuss the {\em $QQ$-system}. To motivate it, let us
rewrite the relation \eqref{baxter} for $\g=\sw_2$ as a second-order
$q^2$-difference equation satisfied by $Q_a$ (we set
$u=1$ to simplify the formula)
\begin{equation}    \label{second}
({\mb D}^2 - T_{aq^2}{\mb D} + 1) Q_a = 0,
\end{equation}
where ${\mb D} \cdot f_a = f_{aq^2}$. There is a unique second
solution $\wt{Q}_a$ of equation \eqref{second} such that the
$q^2$-difference Wronskian of $Q_a$ and $\wt{Q}_a$ is equal to 1:
\begin{equation}    \label{wronskian}
  \wt{Q}_{aq}Q_{aq^{-1}} - \wt{Q}_{aq^{-1}}Q_{aq} = 1.
\end{equation}

It is a remarkable fact that equation \eqref{wronskian} can also be
interpreted as a relation in $K_0(\OO)$, with $\wt{Q} =
[L(\Psib_a^{-1})]$, the class of another representation of the Borel
subalgebra of $U_q(\wh{\sw}_2)$ which we denote by $L(\Psib_a^{-1})$,
up to a normalization factor. This is the $QQ$-system for $\g=\sw_2$ (it
consists of a single equation in this case).

For a general simple Lie algebra $\g$, an analogous system was
discovered by D. Masoero, A. Raimondo, and D. Valeri in
\cite{MRV1,MRV2} in the context of affine opers, which were introduced
in \cite{FF:sol}. In \cite{FH2}, motivated by \cite{MRV1,MRV2} and the
Langlands duality conjecture of \cite{FF:sol} expressing the spectra
of the quantum KdV Hamiltonians in terms of affine opers, we
interpreted the $QQ$-system of \cite{MRV1,MRV2} as a relation in
$K_0(\OO)$. It has the form
\begin{equation}    \label{QQsystem}
\wt{{\mb Q}}_{i,aq_i} {\mb Q}_{i,aq_i^{-1}} - 
a_i^{-1}  \wt{{\mb Q}}_{i,aq_i^{-1}} {\mb Q}_{i,aq_i}
\end{equation}
$$=  \prod_{j,C_{i,j} = -1} {\mb Q}_{j,a}\prod_{j,C_{i,j} = -2} {\mb Q}_{j,aq} {\mb Q}_{j,aq^{-1}}
\prod_{j,C_{i,j} = -3} {\mb Q}_{j,aq^2} {\mb Q}_{j,a}
{\mb Q}_{j,aq^{-2}},
$$
where ${\mb Q}_{i,a}$ is the class of $L(\Psib_{i,a})$ divided by its ordinary character and 
$\wt{\mb Q}_{i,a}$ is the class of another simple representation divided
by its ordinary character and by the factor $(1 - a_i^{-1})$ (where
$a_i^{-1}$ is the class of a one-dimensional
representation corresponding to the negative $i$th simple root).
(Note that in \cite{MRV1,MRV2} and
\cite{FH2}, this system was referred to as the $Q\wt{Q}$-system.)
Hence we can again
interpret it as a universal system of relations in $K_0(\OO)$, from
which the $QQ$-systems satisfied by the joint eigenvalues of the
corresponding transfer-matrices follow automatically.

In \cite{FKSZ} (see also \cite{KZ}), the $QQ$-system
  for simply-laced $\g$ was interpreted in terms of the Miura
  $(G,q)$-opers (for non-simply laced $\g$, the $QQ$-system arising
  from $(G,q)$-opers corresponds to the folded quantum integrable
  system of XXZ type introduced in \cite{FHR}).

  \medskip
  
(4) There is also a similar system (with additive shifts instead of
multiplication by $q^n$) for the XXX-type model associated to the
Yangian $Y(\g)$, which similarly
implies the $QQ$-systems on the joint eigenvalues of the corresponding
transfer-matrices. It was first constructed in \cite{MV}  in the
framework of discrete Miura opers. In the framework of representation
theory of Yangians, the $QQ$-system for $\g={\mathfrak g}{\mathfrak
  l}_n$ was proved in \cite{Bazhanov} following \cite{Tsuboi}. In the
case of $\g={\mathfrak s}{\mathfrak o}_{2n}$, analogous results were
obtained recently in \cite{ffk,EV}. These results have been
generalized (conjecturally) to all simply-laced Lie algebras in
\cite{esv}.

\begin{rem}
  In both cases (quantum affine algebras and Yangians), the
  $QQ$-system implies (under a genericity
  assumption) that the roots of the polynomial parts of the
  $Q_{i,a}$'s satisfy the so-called Bethe Ansatz equations. These
  equations were previously used to describe the eigenvalues of the
  transfer-matrices. However, we consider the $QQ$-system as more
  fundamental than Bethe Ansatz equations because the eigenvalues of
  the transfer-matrices automatically satisfy the $QQ$-system, as
  explained above, whereas Bethe Ansatz equations (for a general Lie
  algebra $\g$) originally appeared as an educated guess from
  analyzing explicit formulas for eigenvectors and eigenvalues.
  It was indeed a good guess since we can now link solutions of
  these Bethe Ansatz equations to the eigenvalues of the
  transfer-matrices via the $QQ$-system (under a genericity
  assumption).
\end{rem}

(5) In \cite{MRV1,MRV2}, the $QQ$-system \eqref{QQsystem} was obtained
on the ``dual side'' of the Langlands correspondence proposed in
\cite{FF:sol}; namely, from solutions of differential equations
defined by affine opers. In the same paper, it was shown that in fact,
there is a whole family of coupled $QQ$-systems labeled by elements of
the Weyl group $W$ of $\g$. Namely, there are natural variables
${\mb Q}_{w(\omega_i),a}$, where $w \in W, i \in I, a \in
\C^\times$, such that the old variables ${\mb Q}_{i,a}$ and $\wt{{\mb
    Q}}_{i,a}$ are identified with ${\mb Q}_{\omega_i,a}$ (i.e. $w =
e$) and ${\mb Q}_{s_i(\omega_i),a}$ (i.e. $w=s_i$, the $i$th
simple reflection from $W$), respectively. These variables satisfy the
equations
$${\mb Q}_{ws_i(\omega_i),aq_i} {\mb Q}_{w(\omega_i),aq_i^{-1}} - 
 {\mb Q}_{ws_i(\omega_i),aq_i^{-1}}{\mb Q}_{w(\omega_i),aq_i}$$
$$= 
   \prod_{j,C_{i,j} = -1}{\mb Q}_{w(\omega_j),a}\prod_{j,C_{i,j} = -2}{\mb Q}_{w(\omega_j),aq}{\mb Q}_{w(\omega_j),aq^{-1}}
\prod_{j,C_{i,j} = -3}{\mb Q}_{w(\omega_j),aq^2}{\mb Q}_{w(\omega_j),a}{\mb Q}_{w(\omega_j),aq^{-2}},$$
(here we have suppressed some coefficients analogous to
  the $a_i^{-1}$ appearing in relations \eqref{QQsystem};
  the precise form of these relations can be found in Theorem \ref{QQrel} 
  below).

These equations also appear naturally from the Miura $(G,q)$-opers
\cite{FKSZ}, and their additive analogues appear from the discrete
Miura opers \cite{MV}. The latter have been interpreted as relations
on representations of Yangians in
\cite{Tsuboi,Bazhanov,ffk,EV,esv}.

Thus, one obtains a family of $QQ$-systems labeled by $w \in W$,
with the original equations corresponding to $w=e$ (note that these
equations are coupled to each other in a non-trivial way). It is
called the {\em extended $QQ$-system}.

It is natural to ask whether the equations of the extended
$QQ$-system can be interpreted as relations in $K_0(\OO)$. In
particular, what representations from the category $\OO$ correspond to
the variables ${\mb Q}_{w(\omega_i),a}$ with $w \neq 1$? This was
our first motivating question for the present paper.

Our second motivating question was: are there $w$-analogues of
the $TQ$-systems with ${\mb Q}_{i,a}$ replaced by ${\mb
  Q}_{w(\omega_i),a}, w \in W$?

In this paper we tackle these, and some other related, questions.  We conjecture
that the answer to each of these two questions is affirmative. In
particular, we conjecture (and prove for $\g$ of rank up to 2) that if
we set ${\mb Q}_{w(\omega_i),a} = [L(\Psib_{w(\omega_i),a})]$, the class of a
particular representation from the category $\OO$ defined below, times a
constant normalization factor, then
these ${\mb Q}_{w(\omega_i),a}$'s will indeed satisfy the extended
$QQ$-system (see Conjecture \ref{extqq}).

\medskip

(6) While investigating these questions, we asked whether there is a
natural {\em action of the Weyl group} on the ring where the
$q$-characters of representations from $\OO$ take values, such that
when we apply $w \in W$ to the (renormalized) $q$-character of
$L(\Psib_{i,a}))$, we obtain the $q$-character of
$L(\Psib_{w(\omega_i),a})$. In our previous paper \cite{FH3} we showed that
one can indeed define such an action.

 More precisely, we defined an action of the Weyl group
  $W$ of $\g$ on a direct sum of completions of the ring $\mathcal{Y}
  = \mathbb{Z}[Y_{i,a}^{\pm 1}]_{i\in I, a\in\mathbb{C}^\times}$ (this
  is the ring where the $q$-characters of finite-dimensional
  representations of $U_q(\wh{\mathfrak{g}})$ take values), such that
  the subring of invariants of the action of $W$ in $\mathcal{Y}$ is
  spanned by the $q$-characters of finite-dimensional representations
  of the quantum affine algebra $U_q(\wh{\mathfrak{g}})$ (see Theorem
  \ref{mfh}\ below). Thus, the
  Grothendieck ring of the category of finite-dimensional
  representations of $U_q(\wh{\mathfrak{g}})$ is realized as the ring
  of invariants of an action of the Weyl group, much like the
  Grothendieck ring of the category of finite-dimensional
  representations of the Lie algebra $\g$.

\medskip
  
 (7) The Weyl group action that we constructed in \cite{FH3} has led
us to conjecture a {\em $q$-character formula} for the representations
$L(\Psib_{w(\omega_i),a})$ for all $w \in W$ (Conjecture
\ref{bass2}). Namely, $\chi_q(L(\Psib_{w(\omega_i),a}))$
  satisfies the $q_i^2$-difference equation
\begin{equation}    \label{TWY}
  [w(\omega_i)]\frac{\chi_q(L(\Psib_{w(\omega_i),aq_i^{-1}}))}{\chi_q(L(\Psib_{w(\omega_i),aq_i}))} = \Theta_w(Y_{i,a}),
\end{equation}
where $\Theta_w$ is the action of the element $w \in W$ that we
have constructed in \cite{FH3} and $[w(\omega_i)]$ corresponds to an
invertible one-dimensional representation defined below. 
  This fixes $\chi_q(L(\Psib_{w(\omega_i),a}))$ up to a constant
  (i.e. spectral parameter-independent) factor.

We have previously proved formula \eqref{TWY} for $w=e$ in \cite{FH}
(see formula \eqref{pref} below) and $w=s_i$ in \cite{FH2,FHR} (see
Theorem \ref{fex} below). In the present paper we prove it for all $w
\in W$ for Lie algebras $\g$ of rank 2.

In Theorem \ref{exwo} we compute $\Theta_w(Y_{i,a})$, which enables us
to pin down the $q$-character of $L(\Psib_{w(\omega_i),a})$ up to a
constant factor. The $W$-invariance of the $q$-characters of
finite-dimensional representations of $U_q(\ghat)$ proved in
\cite{FH3} then gives us (modulo Conjecture \ref{bass2}) the
sought-after extended Baxter $TQ$-relations in $K_0(\OO)$ (see
Conjecture \ref{exttq} and Corollary \ref{twoconj}).
Namely, let $V$ be an irreducible finite-dimensional representation of
$U_q(\ghat)$. Equating $[V]$ with the expression obtained by
replacing in the $q$-character $\chi_q(V)$ every $Y_{i,a}, i \in I$,
with
$$Y_{i,a} \mapsto [w(\omega_i)]
  \dfrac{[L(\Psib_{w(\omega_i),aq_i^{-1}})]}{[L(\Psib_{w(\omega_i),aq_i})]}
  $$
(and clearing the denominators), we obtain the extended Baxter
$TQ$-relation corresponding to $V$ and $w \in W$.

\medskip

(8) The third motivating question for this paper was whether the
eigenvalues of the transfer-matrices corresponding to the
representations $L(\Psib_{w(\omega_i),a}), w \in W$, acting on a tensor
product $U$ of finite-dimensional irreducible representations of
$U_q(\ghat)$ are {\em polynomials} up to an overall factor determined
by $U$. We will address this question in a follow-up paper
\cite{next}.

\medskip

(9) Our results have also led us (in Section \ref{r2s}) to formulate
conjectural $q$-character formulas for certain simple representations
of {\em shifted quantum affine algebras} introduced in \cite{FT,
  Hsh}. We prove that they are satisfied for $w=e$ and $w=s_i$ and
arbitrary $w \in W$ for Lie algebras of rank 2.  We expect analogous
formulas to hold for representations of shifted Yangians which have
recently been studied in \cite{brkl, kwwy1, nw}; in particular, in the
context of the quantization of transverse slices to Schubert varieties
in the affine Grassmannian and Coulomb branches of quiver gauge
theories.

\medskip

The paper is organized as follows. In Section \ref{back} we recall
some basic facts about quantum affine algebras $U_q(\ghat)$ and their
representations.  We also introduce the category $\OO$ of
representations of the Borel subalgebra $U_q(\wh{\mathfrak{b}})$ of
$U_q(\ghat)$ defined in \cite{HJ}.  In Section \ref{TQ}, we conjecture
the extended $TQ$-relations (Conjecture \ref{exttq}) which involve
classes of simple representations from the category $\OO$ whose
highest $\ell$-weights are obtained using a generalization of Chari's
braid group action \cite{C}. We then show how the Weyl group action
introduced in \cite{FH3} was motivated by this picture.  In Section
\ref{wgao}, we use the Weyl group action to prove Conjectures
\ref{exttq} and \ref{exttq1} when $w \in W$ is a simple
reflection. We give a conjectural $q$-character formula
  for $L(\Psib_{w(\omega_i),a})$ in Conjecture \ref{bass2} and show
  that it implies the extended Baxter $TQ$-relations (see Corollary
  \ref{twoconj}). In Section \ref{QQ}, we formulate the extended
$QQ$-system in $K_0(\OO)$ (Conjecture \ref{extqq}). In Section
\ref{shifted} we introduce the shifted quantum affine algebras and
formulate analogues of our conjectures for representations of these
algebras. In Section \ref{r2s}, we prove our conjectures for Lie
algebras of rank 2.

\medskip

\noindent {\bf Acknowledgments.} The authors were partially supported
by a grant from the France-Berkeley Fund of UC Berkeley. The second
author would like to thank the Department of Mathematics at UC
Berkeley for hospitality during his visit in the Spring of 2022 and
Bernard Leclerc for useful discussions. We also thank
  Dmytro Volin for useful correspondence.

\section{Background on quantum affine algebras}    \label{back}

In this section we collect definitions and results on quantum affine
algebras and their representations that we will need in the main body
of the paper. We refer the reader for more details to the textbook
\cite{CP} and the more recent surveys on this topic \cite{CH,L}. We
also discuss representations of the Borel subalgebra of a quantum
affine algebra (see \cite{HJ, FH} for more details). 

\subsection{Lie algebra and Weyl group}\label{Liealg}

Let $\Glie$ be a finite-dimensional simple Lie algebra of rank $n$ and
${\mathfrak{h}}$ its Cartan subalgebra. We will use the notation and
conventions of \cite{ka}.  We denote by $C = (C_{i,j})_{i,j\in I}$ its
Cartan matrix, where $I=\{1,\ldots, n\}$. We denote by $h^\vee$
(resp. $d$) the dual Coxeter number (resp. the lacing number) of
$\Glie$.

Let $\{\alpha_i\}_{i\in I}$,
$\{\alpha_i^\vee\}_{i\in I}$, $\{\omega_i\}_{i\in I}$,
$\{\omega_i^\vee\}_{i\in I}$ be the sets of simple roots, simple
coroots, fundamental weights, and fundamental coweights, respectively.
We set
$$
Q=\bigoplus_{i\in I}\Z\alpha_i, \quad Q^+=\bigoplus_{i\in
  I}\Z_{\ge0}\alpha_i, \quad P=\bigoplus_{i\in I}\Z\omega_i, \quad P^+=\bigoplus_{i\in I}\Z_{\ge0}\omega_i, \quad
P^\vee=\bigoplus_{i\in I}\Z\omega_i^\vee,
$$
and let $P_\Q := P\otimes \Q$, $P^\vee_\mathbb{Q} := P^\vee\otimes
\mathbb{Q}$.

We have the set of roots
$\Delta\subset Q$, which is decomposed as $\Delta = \Delta_+ \sqcup
\Delta_-$ where $\Delta_\pm = \pm (\Delta\cap Q^+)$.  Let
$D=\mathrm{diag}(d_1,\ldots,d_n)$ be the unique diagonal matrix such
that $B=DC$ is symmetric and $d_i$'s are relatively prime positive
integers.  

We will use the partial ordering on $P_\Q$ defined by the rule
$\omega\leq \omega'$ if and only if $\omega'-\omega\in Q^+$. The
vector space of invariant symmetric bilinear forms on $\g$ is known to
be one-dimensional. There is a unique such form $\kappa_0$
with the property that the bilinear form $(\cdot,\cdot) :=
(\kappa_0|_{\h})^{-1}(\cdot,\cdot)$ on $\h^*$ dual to the restriction
$\kappa_0|_{\h}$ of $\kappa_0$ to $\h \subset \g$
satisfies $(\alpha_i,\alpha_i)=2d_i$ ($\kappa_0$ is the form
introduced in \cite{ka} divided by $d$). We have for $i,j\in I$,
$$(\alpha_i,\alpha_j) = d_i C_{i,j} = d_j C_{j,i} \quad \text{ and } \quad
(\alpha_i,\omega_j) = d_j \delta_{i,j}.$$ Hence the bases $(\alpha_i)_{i\in I}$,
$(\omega_i/d_i)_{i\in I}$ are dual to each other with respect to this
form and $\alpha_i = \sum_{j\in I} C_{j,i}\omega_j$, 

Let $W$ be the Weyl group of $\mathfrak{g}$, generated by the simple
reflections $s_i, i\in I$. For $w\in W$, we denote by $l(w)$ its
length. The group $W$ acts on $P$, with the action determined by the
formula
$$
s_k(\omega_j) := \omega_j - \delta_{k,j}\alpha_j, \qquad j,k \in I.
$$
The form $(\cdot,\cdot)$ is invariant under this action, i.e. for any
$\lambda, \mu\in P$, we have
\begin{equation}\label{winv}(w \lambda,w \mu) = (\lambda,\mu)\text{
    for any $w\in W$.}\end{equation}
The Weyl group invariance implies that for $i,k \in I$ and $w\in W$ we have 
$$(w\omega_i,\alpha_k) = (\omega_i,w^{-1}\alpha_k)$$
which is equal to $d_i$ multiplied by the multiplicity of $\alpha_i$ in the decomposition of the root $w^{-1}\alpha_k$. This root is positive or negative, hence the sign of $(w\omega_i,\alpha_k)$ does not depend on $i\in I$. 

Now consider a reduced decomposition $w = s_{i_1}\cdots s_{i_N}$ of a
Weyl group element $w$. Then $w^{-1} = s_{i_N}\cdots s_{i_1}$ is a
reduced decomposition of $w^{-1}$ and $s_{i_N}\cdots s_{i_L}$ is also
a reduced decomposition for all $1\leq L\leq N$.

\begin{lem}\label{sign} The multiplicity of $\omega_{i_M}$ in 
$$(s_{i_{M+1}}\cdots s_{i_N})(\omega_i)$$
is non-negative for any $i\in I$.
\end{lem}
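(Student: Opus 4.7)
The plan is to reduce the claim to a well-known positivity statement about reduced expressions in the Weyl group.

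I would set $u := s_{i_{M+1}}\cdots s_{i_N}$, so that $u^{-1} = s_{i_N}\cdots s_{i_{M+1}}$, and write $u(\omega_i) = \sum_{k\in I} c_k \omega_k$. Using the invariant form from Section \ref{Liealg} together with the duality $(\omega_k,\alpha_j) = d_k\delta_{k,j}$ and its Weyl group invariance \eqref{winv}, I get
$$c_{i_M} \;=\; \frac{(u(\omega_i),\alpha_{i_M})}{d_{i_M}} \;=\; \frac{(\omega_i,\, u^{-1}(\alpha_{i_M}))}{d_{i_M}},$$
so $c_{i_M}$ equals $\frac{d_i}{d_{i_M}}$ times the coefficient of $\alpha_i$ in the simple root expansion of $u^{-1}(\alpha_{i_M}) = s_{i_N}\cdots s_{i_{M+1}}(\alpha_{i_M})$. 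It therefore suffices to show that this root is positive: then all its coefficients in the simple root basis are non-negative, and $c_{i_M}\geq 0$ follows.

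For the positivity I would invoke the standard length criterion. As already observed just above the statement of the lemma, $w^{-1} = s_{i_N}\cdots s_{i_1}$ is a reduced expression, hence so is any prefix; in particular $s_{i_N}\cdots s_{i_{M+1}} s_{i_M}$ is reduced of length $N-M+1$. The classical equivalence, that for any $v\in W$ and $j\in I$ one has $l(v s_j) = l(v)+1$ if and only if $v(\alpha_j)\in\Delta_+$, applied with $v = s_{i_N}\cdots s_{i_{M+1}}$ and $j = i_M$, then gives
$$u^{-1}(\alpha_{i_M}) \;=\; (s_{i_N}\cdots s_{i_{M+1}})(\alpha_{i_M}) \;\in\; \Delta_+,$$
completing the argument.

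I do not anticipate a genuine obstacle: the two ingredients (extraction of the $\omega_{i_M}$-coefficient via pairing with a simple root through the invariant form, and the positivity criterion for reduced expressions) are both entirely standard. The only delicate point is bookkeeping, namely keeping straight the order of multiplication when moving between $u$, $u^{-1}$, and the relevant prefix of the reduced word $w^{-1}$; once that is organized, the proof is essentially a one-line consequence of the two facts above.
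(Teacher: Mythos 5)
Your argument is correct and follows essentially the same route as the paper: the paper's proof consists of citing the fact (Kac, Lemma 3.11) that $(s_{i_N}\cdots s_{i_M})(\alpha_{i_M})\in\Delta_-$, which, since $s_{i_M}(\alpha_{i_M})=-\alpha_{i_M}$, is exactly your statement that $(s_{i_N}\cdots s_{i_{M+1}})(\alpha_{i_M})\in\Delta_+$, and the extraction of the $\omega_{i_M}$-coefficient via the invariant pairing is precisely the computation the paper carries out in the paragraph preceding the lemma. You have simply made explicit the steps the paper leaves implicit, and your use of the length criterion in place of the citation to Kac is an equivalent standard fact.
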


\begin{proof}
This follows from the formula (see \cite[Lemma 3.11]{ka})
$$(s_{i_N}\cdots s_{i_M})(\alpha_{i_M})\in \Delta_-.$$
\end{proof}


The following lemma is also well-known. Let $W_i$ be the
  subgroup of $W$ generated by $s_j$, $j\neq i$.

\begin{lem}\label{wn} Let $w\in W$ and $i\in I$. Then

(1) $w(\alpha_i)\in \Delta_+$ if and only if $l(ws_i) = l(w) + 1$.

(2) $w(\omega_i) = \omega_i$ if and only if  $w \in W_i$,
    so the $W$-orbit of $\omega_i$ is in bijection with $W/W_i$.

(3) The $W$-orbits $\{ w(\omega_i) \}_{w \in W}$ of the fundamental
    weights are disjoint.
\end{lem}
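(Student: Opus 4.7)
All three statements are standard facts about finite Coxeter groups; my plan is to derive each directly from the formula $s_k(\omega_j) = \omega_j - \delta_{k,j}\alpha_j$, the $W$-invariance of the form $(\cdot,\cdot)$, and the classical length characterization $l(w) = |N(w)|$, where $N(w) := \{\alpha \in \Delta_+ \mid w(\alpha) \in \Delta_-\}$. For part (1), since $s_i$ permutes $\Delta_+ \setminus \{\alpha_i\}$ and sends $\alpha_i$ to $-\alpha_i$, comparing $N(ws_i)$ with $N(w)$ shows that $\alpha_i$ toggles between the two sets while the remaining elements are in bijection via $s_i$; hence $l(ws_i) - l(w) = \pm 1$, with sign $+1$ precisely when $\alpha_i \in N(ws_i)$, i.e.\ when $w(\alpha_i) \in \Delta_+$.

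For (2), the ``if'' direction is immediate from $s_j(\omega_i) = \omega_i$ for $j \neq i$. For the converse I would induct on $l(w)$. Suppose $w \neq 1$ fixes $\omega_i$. I first rule out the possibility that $s_i$ is a right descent of $w$: if $l(ws_i) < l(w)$, then by (1), $w(\alpha_i) \in \Delta_-$, so I may write $-w(\alpha_i) = \sum_k n_k \alpha_k$ with all $n_k \geq 0$. By $W$-invariance of $(\cdot,\cdot)$ and the hypothesis $w(\omega_i) = \omega_i$,
\begin{equation*}
d_i \;=\; (\omega_i,\alpha_i) \;=\; (w(\omega_i), w(\alpha_i)) \;=\; (\omega_i, w(\alpha_i)) \;=\; -n_i d_i,
\end{equation*}
forcing $n_i = -1$, a contradiction. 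Hence any right descent $s_j$ of $w$ must satisfy $j \neq i$; for such $s_j$, the element $ws_j$ still fixes $\omega_i$ (because $s_j$ does) and has strictly smaller length, so by induction $ws_j \in W_i$, and therefore $w = (ws_j)s_j \in W_i$.

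For (3), I would use the classical fact that the closure of the fundamental Weyl chamber is a fundamental domain for the $W$-action on $P_\Q$, so each $W$-orbit contains a unique dominant weight. If $w(\omega_i) = w'(\omega_j)$, then $\omega_i$ and $\omega_j$ lie in the same $W$-orbit; being both dominant they must coincide, and linear independence of the fundamental weights then forces $i = j$. The only real obstacle is the descent step in (2); the short $W$-invariance computation above handles it cleanly by forbidding $s_i$ itself from being a right descent of any element stabilizing $\omega_i$.
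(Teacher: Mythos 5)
Your proof is correct, but it is organized differently from the paper's, which for parts (1) and (2) simply cites Humphreys (\cite[Lemma 1.6, Corollary 1.7]{hu} for (1), and \cite[Proposition 1.15]{hu} --- the statement that the stabilizer of a point in the closed fundamental chamber is generated by the simple reflections fixing it --- for (2), applied to $\omega_i$ using $(\omega_i,\alpha_j)=d_i\delta_{i,j}$). What you have written is essentially a self-contained proof of those cited facts: the inversion-set count $l(w)=|N(w)|$ for (1), and for (2) an induction on length whose key step --- excluding $s_i$ as a right descent of a stabilizer of $\omega_i$ via the $W$-invariance computation $d_i=(\omega_i,w(\alpha_i))=-n_i d_i$ --- is a clean, direct substitute for the general stabilizer theorem. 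All the computations check out against the paper's normalization $(\alpha_i,\omega_j)=d_j\delta_{i,j}$. The one place where your route genuinely diverges is part (3): you invoke the fact that the closed dominant chamber is a fundamental domain, so each orbit contains a unique dominant weight, whereas the paper argues representation-theoretically that $w(\omega_i)=\omega_j$ forces $\omega_j\leq\omega_i$ because $\omega_i$ is the highest weight of the fundamental representation ${\mc V}_{\omega_i}$ (and then symmetry gives $\omega_i=\omega_j$). Both are one-line arguments; yours stays entirely within Coxeter-group combinatorics, the paper's leans on the weight theory it has already set up. The only mild housekeeping your write-up leaves implicit is the base case of the induction in (2) and the standard fact that every $w\neq e$ has a right descent; neither is a gap.
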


\begin{proof}
  Part (1) follows, for example, from \cite[Lemma 1.6, Corollary
    1.7]{hu}.

  Part (2) is a particular case of \cite[Proposition 1.15]{hu}. Indeed
  $(\omega_i,\alpha_i) > 0$ and 
	$$\omega_i \in C = \{\lambda | (\lambda
  , \alpha_j) = 0\}\text{ for any $j\neq i$.}$$ 
	This proposition in the case
  of the subset $\{1, 2, ..., n\} \setminus \{i\} \subset I$ implies
  the result.

To prove part (3), note that $w(\omega_i) = \omega_j$
implies that $\omega_j\preceq \omega_i$ as $\omega_i$ is the highest
weight of a fundamental representation of $\mathfrak{g}$.
\end{proof}

Denote the $W$-orbit of $\omega_i$ in $P$ by ${\mc
    P}_{\omega_i}$. This is the set of extremal weights of the $i$th
  fundamental representation ${\mc V}_{\omega_i}$ of $\g$. Each of
  them has multiplicity one. If these are all the weights of ${\mc
    V}_{\omega_i}$, then ${\mc V}_{\omega_i}$ is called minuscule
  (for example, this is so for all $i$ if $\g$ is of type $A$).
  
Denote by $w_0$ the longest element of the Weyl group. We have the bar
involution of the set $I$ defined by $w_0(\alpha_i) =
-\alpha_{\overline{i}}, i \in I$. Note that this implies $w_0(\omega_i) =
  -\omega_{\overline{i}}, i\in I$. This is the lowest weight of the
  fundamental representation ${\mc V}_{\omega_i}$.
  
\subsection{Quantum affine algebras and their Borel subalgebras}

Let $\wh{\Glie}$ be the untwisted affine Kac-Moody Lie algebra
associated to $\Glie$ and $(C_{i,j})_{0\leq i,j\leq n}$ the
corresponding indecomposable Cartan matrix. Let $a_0,\cdots,a_n$ be
the Kac labels defined in \cite{ka}, pp. 55-56. We have $a_0 = 1$ and
we set $\alpha_0 = -(a_1\alpha_1 + a_2\alpha_2 + \cdots +
a_n\alpha_n)$.

Throughout this paper, we fix a non-zero complex number $q$ which is
not a root of unity. {\em We also choose $h\in\CC$
such that $q = e^h$ and define $q^r \in \C^\times$ for any
$r\in\Q$ as $e^{hr}$}. Since $q$ is not a root of unity, the map $\Q \to
  \C^\times$ sending $r \mapsto q^r$ is injective.

We will use the standard symbols for $q$-integers
\begin{align*}
[m]_z=\frac{z^m-z^{-m}}{z-z^{-1}}, \quad
[m]_z!=\prod_{j=1}^m[j]_z,
 \quad 
\qbin{s}{r}_z
=\frac{[s]_z!}{[r]_z![s-r]_z!}. 
\end{align*}
We set $q_i=q^{d_i}$.

The quantum affine algebra of level 0 (also known as the quantum loop
algebra) $U_q(\wh{\Glie})$ is the $\C$-algebra
with the generators $e_i,\ f_i,\ k_i^{\pm1}$ ($0\le i\le n$) and the
following relations for $0\le i,j\le n$:
\begin{align*}
&k_ik_j=k_jk_i,\quad k_0^{a_0}k_1^{a_1}\cdots k_n^{a_n}=1,\quad
&k_ie_jk_i^{-1}=q_i^{C_{i,j}}e_j,\quad k_if_jk_i^{-1}=q_i^{-C_{i,j}}f_j,
\\
&[e_i,f_j]
=\delta_{i,j}\frac{k_i-k_i^{-1}}{q_i-q_i^{-1}},
\\
&\sum_{r=0}^{1-C_{i.j}}(-1)^re_i^{(1-C_{i,j}-r)}e_j e_i^{(r)}=0\quad (i\neq j),
&\sum_{r=0}^{1-C_{i.j}}(-1)^rf_i^{(1-C_{i,j}-r)}f_j f_i^{(r)}=0\quad (i\neq j)\,.
\end{align*}
Here $x_i^{(r)}=x_i^r/[r]_{q_i}!$ ($x_i=e_i,f_i$). 
The algebra $U_q(\wh{\Glie})$ has a Hopf algebra structure such that
\begin{align*}
&\Delta(e_i)=e_i\otimes 1+k_i\otimes e_i,\quad
\Delta(f_i)=f_i\otimes k_i^{-1}+1\otimes f_i,
\quad
\Delta(k_i)=k_i\otimes k_i\,\text{ for $0\leq i\leq n$.}
\end{align*}
The algebra $U_q(\wh{\Glie})$ can also be presented in terms of the Drinfeld
generators (see \cite{Dri2,bec} for details)
\begin{align}\label{dgen}
  x_{i,r}^{\pm}\ (i\in I, r\in\Z), \quad \phi_{i,\pm m}^\pm\ (i\in I,
  m\in\mathbb{Z}), \quad k_i^{\pm 1}\ (i\in I).
\end{align}
We will use the generating series $(i\in I)$
\begin{equation}\label{phrel}
  \phi_i^\pm(z) = \sum_{m\in \mathbb{Z}}\phi_{i, m}^\pm z^{ m} =
k_i^{\pm 1}\on{exp}\left(\pm (q_i - q_i^{-1})\sum_{r > 0} h_{i,\pm
  r} z^{\pm r} \right).\end{equation} In particular, $\phi_{i,\pm
  m}^\pm = 0$ for $m < 0$, $i\in I$, and $\phi_{i,0}^\pm
  = k_i^{\pm 1}$, so $\phi_{i,0}^+\phi_{i,0}^- = 1$.

The algebra $U_q(\wh{\Glie})$ has a $\ZZ$-grading defined by
$\on{deg}(e_i) = \on{deg}(f_i) = \on{deg}(k_i^{\pm 1}) = 0$ for $i\in
I$ and $\on{deg}(e_0) = - \on{deg}(f_0) = 1$.  It satisfies
$\on{deg}(x_{i,m}^\pm) = \on{deg}(\phi_{i,m}^\pm) = m$ for $i\in I$,
$m\in\ZZ$. For $a\in\CC^\times$, there is an automorphism
$$\tau_a : U_q(\wh{\Glie})\rightarrow U_q(\wh{\Glie})$$ 
such that $\tau_a(g) = a^m g$ for any element $g$ of degree
$m\in\ZZ$.

\begin{defi} The {\em Borel subalgebra} $U_q(\wh\bb)$ is 
the subalgebra of $U_q(\wh{\Glie})$ generated by $e_i$ 
and $k_i^{\pm1}$ with $0\le i\le n$. 
\end{defi}

This is a Hopf subalgebra of $U_q(\wh{\Glie})$. 
The algebra $U_q(\wh\bb)$ contains the Drinfeld generators
$x_{i,m}^+$, $x_{i,r}^-$, $k_i^{\pm 1}$, $\phi_{i,r}^+$ 
with $i\in I$, $m \geq 0$ and $r > 0$. 

\begin{rem}
For $\g=\sw_2$, the algebra $U_q(\wh\bb)$ is generated by the Drinfeld
generators, but for other Lie algebras $\g$ this is not the case (see
\cite[Section 2.3]{HJ} and \cite{bec}).
\end{rem}
  

\subsection{Category $\mathcal{O}$ for representations of Borel
  subalgebras}    \label{catO}

%
%
%

\begin{defi} We define $\tb^\times := \on{Maps}(I,q^\Q) =
      \bigl(q^\Q\bigr)^I$, where $q^\Q := \{ e^{rh}, r \in \Q \}$, as a
      group with pointwise multiplication.

We define
a group homomorphism
$\overline{\phantom{u}}:P_\Q \longrightarrow
\tb^\times$ by setting
$$\overline{r \omega_i}(j):=q_i^{r \delta_{i,j}} = e^{r d_i h
  \delta_{i,j}}, \qquad i,j \in I, r \in \Q.
$$
\end{defi}

Since we have assumed that $q$ is not a root of unity, this is
actually an isomorphism, with the inverse map $\ol\varpi: \tb^\times
\to P_\Q$ defined by the formula
$$
\psi_i = e^{d_ih(\ol\varpi({\mbox{\boldmath$\psi$}}))(\alpha_i^\vee)}, \qquad
    \forall {\mbox{\boldmath$\psi$}} = (\psi_i)_{i \in I} \in
    \tb^\times.
$$

The partial ordering on $P$ induces a partial ordering on
$\tb^\times$.

\medskip

For a $U_q(\wh\bb)$-module $V$ and $\omega\in \tb^\times$, we set
\begin{align}
V_{\omega}:=\{v\in V \mid  k_i\, v = \omega(i) v\ (\forall i\in I)\}\,,
\label{wtsp}
\end{align}
and call it the weight subspace of weight $\omega$. 
For any $i\in I$, $r\in\ZZ$ we have $\phi_{i,r}^\pm (V_\omega)\subset V_\omega$
and $x_{i,r}^\pm (V_{\omega}) \subset V_{\omega \ga_i^{\pm 1}}$.
We say that $V$ is Cartan-diagonalizable 
if $V=\underset{\omega\in \tb^\times}{\bigoplus}V_{\omega}$.

\begin{defi} A series $\Psib=(\Psi_{i, m})_{i\in I, m\geq 0}$ 
of complex numbers such that $\Psi_{i,0}
  \in q^\Q$ for all $i\in I$ is called an $\ell$-weight.

We denote by $\tb^\times_\ell$ the set of $\ell$-weights. We have a
natural projection $\jmath: \tb^\times_\ell \to \tb^\times$ sending
$(\Psi_{i, m})_{i\in I, m\geq 0}$ to $(\Psi_{i, 0})_{i\in I}$ and a
natural inclusion $\imath: \tb^\times \to \tb^\times_\ell$.
\end{defi}

We will identify the collection $(\Psi_{i, m})_{m\geq 0}$ with its
generating series and use this to represent every $\ell$-weight as an
$I$-tuple of formal power series in $z$:
\begin{align*}
\Psib = (\Psi_i(z))_{i\in I},
\qquad
\Psi_i(z) := \underset{m\geq 0}{\sum} \Psi_{i,m} z^m.
\end{align*}
Since each $\Psi_i(z)$ is an invertible formal power series,
$\tb^\times_\ell$ has a natural group structure.  We also have a
surjective homomorphism of groups $\varpi :
  \tb^\times_\ell\rightarrow P_\Q$ defined as the composition of the
  projection $\jmath: \tb^\times_\ell \to \tb^\times$ and the map
  $\ol\varpi: \tb^\times \to P_\Q$ defined above.
  
\begin{defi} A $U_q(\wh\bb)$-module $V$ is said to be 
of highest $\ell$-weight 
$\Psib\in \tb^\times_\ell$ if there is $v\in V$ such that 
$V =U_q(\wh\bb)v$ 
and the following equations hold:
\begin{align*}
e_i\, v=0\quad (i\in I)\,,
\qquad 
\phi_{i,m}^+v=\Psi_{i, m}v\quad (i\in I,\ m\ge 0)\,.
\end{align*}
\end{defi}

The $\ell$-weight $\Psib\in \tb^\times_\ell$ is uniquely determined by $V$. 
It is called the highest $\ell$-weight of $V$. 
The vector $v$ is said to be a highest $\ell$-weight vector of $V$.


\begin{lem}\cite[Section 3.2]{HJ}\label{simple} 
For any $\Psib\in \tb^\times_\ell$, there exists a simple highest
$\ell$-weight module $L(\Psib)$ over $U_q(\wh\bb)$ of highest
$\ell$-weight $\Psib$. This module is unique up to isomorphism.
\end{lem}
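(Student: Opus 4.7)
The plan is to imitate the standard construction of simple highest-weight modules for Kac--Moody-type algebras, working within $U_q(\wh\bb)$.

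First, I would build a Verma-type universal highest $\ell$-weight module. Let $J(\Psib) \subset U_q(\wh\bb)$ be the left ideal generated by the elements
$$
e_i \quad (i \in I), \qquad k_i - \Psi_{i,0}\cdot 1 \quad (i \in I), \qquad \phi^+_{i,m} - \Psi_{i,m}\cdot 1 \quad (i \in I,\ m \geq 1),
$$
and set $M(\Psib) := U_q(\wh\bb)/J(\Psib)$, with $\overline{1}$ the image of $1$. Using the relation $\phi^+_{i,0} = k_i$ from \eqref{phrel} together with the assumption $\Psi_{i,0} \in q^\Q$, the vector $\overline{1}$ satisfies exactly the defining relations of a highest $\ell$-weight vector of $\ell$-weight $\Psib$, and by construction $M(\Psib) = U_q(\wh\bb)\cdot\overline{1}$.

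Second, I would verify that $M(\Psib)$ is nonzero and that its $\jmath(\Psib)$-weight space is one-dimensional, spanned by $\overline{1}$. For this I would appeal to a triangular decomposition of $U_q(\wh\bb)$ in terms of the Drinfeld generators contained in $U_q(\wh\bb)$: after reduction modulo the relations defining $J(\Psib)$, every element of $U_q(\wh\bb)$ can be written (via a PBW-type basis obtained from Beck's work \cite{bec}) as a linear combination of ordered monomials in the negative Drinfeld generators $x^-_{i,r}$ ($r > 0$) acting on $\overline{1}$. Since these negative generators strictly lower the weight by $\overline{\alpha_i}$, no nontrivial linear combination of them can act as the identity, so $M(\Psib)_{\jmath(\Psib)} = \C\cdot\overline{1}$ and in particular $M(\Psib) \neq 0$.

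Third, I would produce the simple quotient. Any proper $U_q(\wh\bb)$-submodule $N \subset M(\Psib)$ must satisfy $N \cap M(\Psib)_{\jmath(\Psib)} = 0$, for otherwise $N$ would contain $\overline{1}$ and hence equal $M(\Psib)$. Therefore the sum $N(\Psib)$ of all proper submodules still has trivial intersection with the highest $\ell$-weight space, so it is itself proper, and it is manifestly the unique maximal proper submodule. The quotient $L(\Psib) := M(\Psib)/N(\Psib)$ is then a simple highest $\ell$-weight $U_q(\wh\bb)$-module of highest $\ell$-weight $\Psib$. For uniqueness, if $V$ is any such module with highest $\ell$-weight vector $v$, the universal property of $M(\Psib)$ gives a surjection $M(\Psib)\twoheadrightarrow V$, $\overline{1}\mapsto v$; if $V$ is simple the kernel is maximal, hence equals $N(\Psib)$, and $V \cong L(\Psib)$.

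The main obstacle is the second step: establishing that $M(\Psib)$ has a one-dimensional highest $\ell$-weight space. Unlike the case of $\sw_2$ mentioned in the remark before the lemma, for general $\g$ the Borel subalgebra $U_q(\wh\bb)$ is not generated by the Drinfeld generators alone, so the required PBW/triangular decomposition is nontrivial; this is precisely where one must invoke the structural results of Beck \cite{bec} and the analysis carried out in \cite[Section 3]{HJ}.
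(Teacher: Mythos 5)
Your proposal is correct and is essentially the argument of the source: the paper gives no proof of this lemma but cites \cite[Section 3.2]{HJ}, where exactly this Verma-type construction is carried out --- the universal module $M(\Psib)$, its nonvanishing and the one-dimensionality of the top $\ell$-weight space via the triangular decomposition of $U_q(\wh\bb)$ coming from Beck's PBW theory, and passage to the unique simple quotient. You also correctly flag the one genuinely nontrivial input (the triangular decomposition of $U_q(\wh\bb)$, which is not generated by Drinfeld generators alone outside type $A_1$), so nothing essential is missing.
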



\begin{defi}\cite{HJ}
For $i\in I$ and $a\in\CC^\times$, let 
\begin{align}
L_{i,a}^\pm = L(\Psib_{i,a})
\quad \text{where}\quad 
(\Psib_{i,a})_j(z) = \begin{cases}
(1 - za)^{\pm 1} & (j=i)\,,\\
1 & (j\neq i)\,.\\
\end{cases} 
\label{fund-rep}
\end{align}
\end{defi}
We call $L_{i,a}^+$ (resp. $L_{i,a}^-$) a {\em positive (resp. negative)
prefundamental representation}.

 For $\lambda \in P_\Q$, we define the $1$-dimensional
  representation
$$[\lambda] = L(\Psib_\lambda)
\quad \text{with}\quad 
\Psib_\lambda := \imath(\ol\lambda),$$
where $\imath: \tb^\times \to \tb^\times_\ell$ is the inclusion.

For $a\in \C^\times$, the subalgebra $U_q(\wh\bb)$ is stable under
$\tau_a$. Denote its restriction to $U_q(\wh\bb)$ by the same letter.
Then the pullbacks of the $U_q(\wh\bb)$-modules $L_{i,b}^\pm$ by
$\tau_a$ is $L_{i,ab}^\pm$.

For $\lambda\in \tb^\times$, let $D(\lambda )=
\{\omega\in \tb^\times \mid \omega\leq\lambda\}$. Consider the category $\mathcal{O}$ introduced in \cite{HJ}.

\begin{defi}\label{defo} A $U_q(\wh\bb)$-module $V$ 
is said to be in category $\mathcal{O}$ if

i) $V$ is Cartan-diagonalizable;

ii) for all $\omega\in \tb^\times$ we have 
$\dim (V_{\omega})<\infty$;

iii) there exist a finite number of elements 
$\lambda_1,\cdots,\lambda_s\in \tb^\times$ 
such that the weights of $V$ are in 
$\underset{j=1,\cdots, s}{\bigcup}D(\lambda_j)$.
\end{defi}

The next result follows directly from the definition.

\begin{lem}
  The category $\mathcal{O}$ is a monoidal category.
\end{lem}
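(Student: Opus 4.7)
The plan is to verify that the three conditions in Definition \ref{defo} are preserved under the tensor product (and that the trivial module $[0]$ satisfies them), since the monoidal structure itself comes for free from the Hopf algebra structure on $U_q(\wh\bb)$. Let $V, W$ be objects of $\OO$.

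First I would handle condition (i). Because $\Delta(k_i) = k_i \otimes k_i$ for $i \in I$ (and this extends via the relation $k_0^{a_0} \cdots k_n^{a_n} = 1$), tensor products of weight vectors are weight vectors, and we obtain the decomposition
$$
(V \otimes W)_\omega \;=\; \bigoplus_{\omega_1 \omega_2 = \omega} V_{\omega_1} \otimes W_{\omega_2},
$$
so $V \otimes W$ is Cartan-diagonalizable.

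Next I would handle condition (iii), which is the easiest. If the weights of $V$ are contained in $\bigcup_i D(\lambda_i)$ and those of $W$ in $\bigcup_j D(\mu_j)$, then by the decomposition above every weight of $V \otimes W$ is of the form $\omega_1 \omega_2$ with $\omega_1 \leq \lambda_i$ and $\omega_2 \leq \mu_j$ for some $i,j$, hence lies in $D(\lambda_i \mu_j)$. This is a finite union of $D$-sets.

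The main obstacle is condition (ii): we must show $\dim (V \otimes W)_\omega < \infty$ for each $\omega \in \tb^\times$. A priori the direct sum above could have infinitely many non-zero summands. The idea is that condition (iii) on both factors forces a compactness: if $V_{\omega_1} \otimes W_{\omega_2} \neq 0$ contributes to weight $\omega$, then $\omega_1 \leq \lambda_i$ for some $i$ and $\omega_2 = \omega \omega_1^{-1} \leq \mu_j$ for some $j$, i.e.\ $\omega \mu_j^{-1} \leq \omega_1 \leq \lambda_i$. Via the isomorphism $\ol\varpi: \tb^\times \to P_\Q$, this is an interval in $P_\Q$ of the form $\{\nu \in P_\Q : \nu_j' \leq \nu \leq \nu_i\}$ with $\nu_i - \nu_j' \in P_\Q$ fixed. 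Since the roots are a $\Q$-basis of $P_\Q$, for weights $\omega_1$ that are actually weights of $V$ the difference $\nu_i - \ol\varpi(\omega_1) \in Q^+$ lies below a fixed element of $P_\Q$, and the set of such $\beta \in Q^+$ with $\beta \leq \gamma$ (for fixed $\gamma$) is finite by a standard argument. Therefore only finitely many pairs $(\omega_1, \omega_2)$ contribute, and since each $\dim V_{\omega_1}$ and $\dim W_{\omega_2}$ is finite by hypothesis, $\dim (V \otimes W)_\omega < \infty$.

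Finally the trivial module $[0]$ is clearly in $\OO$, so $\OO$ is closed under the monoidal structure of the ambient category of $U_q(\wh\bb)$-modules, completing the proof.
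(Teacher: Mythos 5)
Your proof is correct; the paper gives no argument at all (it states only that the lemma ``follows directly from the definition''), and your verification of conditions (i)--(iii) for $V \otimes W$ — in particular the finiteness of weight multiplicities via the standard bound on $\{\beta \in Q^+ : \beta \leq \gamma\}$ — is exactly the routine check the authors leave implicit. Nothing to correct.
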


\begin{defi}
We define $\mfr$ as the subgroup of $\tb^\times_\ell$ consisting of
$\Psib$ such that $\Psi_i(z)$ is an expansion in positive powers of
$z$ of a rational function in $z$ for any $i\in I$.
\end{defi}

\begin{thm}\label{class}\cite{HJ} Let $\Psib\in\tb^\times_\ell$. 
The simple module $L(\Psib)$ is in $\mathcal{O}$ if and only if
$\Psib\in \mfr$.
\end{thm}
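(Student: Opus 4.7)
The plan is to prove the two implications separately: the forward direction by an explicit construction that reduces to prefundamental representations, and the converse by exploiting the finite-dimensionality of weight spaces to force a linear recursion on the coefficients of $\Psi_i(z)$.

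For the direction $\Psib \in \mfr \Rightarrow L(\Psib) \in \OO$, I would first observe that $\mfr$ is a subgroup of $\tb^\times_\ell$ and that every $\Psib \in \mfr$ admits a factorization
$$\Psib = \Psib_\lambda \cdot \prod_{k} \Psib_{i_k,a_k} \cdot \prod_{l} \Psib_{j_l,b_l}^{-1}$$
for some $\lambda \in P_\Q$ and finite collections of pairs, by factoring each rational function $\Psi_i(z)$ into its zeros and poles and reading off the weight from the constant terms $\Psi_{i,0}$. The key input (taken from \cite{HJ}) is that each prefundamental representation $L_{i,a}^\pm$ lies in $\OO$; this is proved there by realizing $L_{i,a}^-$ as a suitably renormalized asymptotic limit of Kirillov-Reshetikhin modules, and obtaining $L_{i,a}^+$ by a dual construction. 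Granting this, the tensor product
$$M := [\lambda] \otimes \bigotimes_k L_{i_k,a_k}^+ \otimes \bigotimes_l L_{j_l,b_l}^-$$
lies in $\OO$ by the preceding monoidality lemma, and the tensor of its highest $\ell$-weight vectors is a highest $\ell$-weight vector of $M$ with $\ell$-weight $\Psib$ (one uses that the coproduct in Drinfeld form satisfies $\Delta(\phi_i^+(z)) \equiv \phi_i^+(z) \otimes \phi_i^+(z)$ modulo terms strictly lowering weight on the right tensorand). Hence $L(\Psib)$ is the simple quotient of the $U_q(\wh\bb)$-submodule of $M$ generated by this vector; since subquotients of objects in $\OO$ remain in $\OO$, we conclude $L(\Psib) \in \OO$.

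For the direction $L(\Psib) \in \OO \Rightarrow \Psib \in \mfr$, I would fix a highest $\ell$-weight vector $v \in L(\Psib)$ of weight $\omega := \jmath(\Psib)$ and an index $i \in I$, and examine the vectors $x_{i,r}^- v$ for $r \geq 1$. All of these lie in the weight space $L(\Psib)_{\omega\,\ga_i^{-1}}$, which is finite-dimensional by hypothesis. Hence there exist $N \geq 1$ and constants $c_1,\dots,c_N \in \C$, not all zero, with $\sum_{r=1}^{N} c_r\, x_{i,r}^- v = 0$. Applying $x_{i,m}^+$ for each $m \geq 0$ and using the Drinfeld commutation relation
$$[x_{i,m}^+, x_{i,r}^-] = \frac{\phi_{i,m+r}^+ - \phi_{i,m+r}^-}{q_i - q_i^{-1}},$$
together with $x_{i,m}^+ v = 0$ and $\phi_{i,m+r}^- v = 0$ (valid since $m + r \geq 1$), I obtain the scalar linear recursion
$$\sum_{r=1}^{N} c_r\, \Psi_{i,m+r} = 0, \qquad m \geq 0.$$
A routine generating-series manipulation then yields $Q(z)\,\Psi_i(z) = P(z)$ with $Q(z) := \sum_{r=1}^{N} c_r z^{N-r}$ and $P(z)$ a polynomial depending on the initial values $\Psi_{i,0},\dots,\Psi_{i,N-1}$. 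Thus $\Psi_i(z)$ is the expansion of a rational function for each $i$, i.e. $\Psib \in \mfr$.

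The main obstacle is the first direction, specifically the construction of the prefundamental representations $L_{i,a}^\pm$ and the verification that they belong to $\OO$; everything else reduces to combinatorics of rational factorization and a one-line application of the Drinfeld relation. The existence of the asymptotic limit defining $L_{i,a}^-$ requires careful renormalization of the Cartan generators on the Kirillov-Reshetikhin modules so that the limiting module is well-defined on a fixed graded space with finite-dimensional weight components, and this is the part where the hard work of \cite{HJ} is concentrated.
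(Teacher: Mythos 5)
The paper does not prove this statement: it is quoted from \cite{HJ}, so there is no internal argument to compare against. Your proof is, in substance, the one given in \cite{HJ} itself, and it is correct. The ``only if'' direction is exactly the argument there: finite-dimensionality of $L(\Psib)_{\omega\ga_i^{-1}}$ forces a nontrivial dependence $\sum_r c_r x_{i,r}^- v=0$, and hitting it with $x_{i,m}^+$ and the relation $[x_{i,m}^+,x_{i,r}^-]=(\phi_{i,m+r}^+-\phi_{i,m+r}^-)/(q_i-q_i^{-1})$ yields a constant-coefficient recursion on the $\Psi_{i,m}$, hence rationality; the ``if'' direction factors $\Psib$ as $\Psib_\lambda$ times prefundamental $\ell$-weights and realizes $L(\Psib)$ as a subquotient of the corresponding tensor product. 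Two facts you use implicitly deserve a citation: that $e_iv=0$ $(i\in I)$ for the generating vector already implies $x_{i,m}^+v=0$ for all $m\ge 0$ (this rests on the triangular decomposition of $U_q(\wh\bb)$ and on all weights of a highest $\ell$-weight module lying in $D(\omega)$), and that subquotients of objects of $\OO$ remain in $\OO$; both are in \cite{HJ}. You correctly identify that the genuine difficulty is the construction of $L_{i,a}^\pm$ as asymptotic limits of Kirillov--Reshetikhin modules and the verification that they lie in $\OO$; granting that input, your argument is complete.
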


\begin{defi}    \label{defE}
We define $\mathcal{E}$ 
as the additive group of maps $c : P_\Q \rightarrow \ZZ$ 
whose support 
$$\on{supp}(c) = \{\omega\in P_\Q,c(\omega) \neq 0\}$$ 
is contained in 
a finite union of sets of the form $D(\mu)$. 
For $\omega\in P_\Q$, we define $[\omega] = \delta_{\omega,.}\in\mathcal{E}$.

For $V$ in the category $\mathcal{O}$ we define the character of
$V$ to be the following element of $\mathcal{E}$
\begin{align}
\chi(V) = \sum_{\omega\in\tb^\times} 
\on{dim}(V_\omega) [\omega]\,.
\label{ch}
\end{align}
\end{defi}

In the same way as in the case of the ordinary category $\mathcal{O}$
of $\g$-modules (see \cite[Section 9.6]{ka}), one shows that the
multiplicity of any simple module in a given module from the category
$\mathcal{O}$ is well-defined. One uses this fact in the following
definition \cite[Section 3.2]{HL}.

\begin{defi}    \label{K0}
We define $K_0(\mathcal{O})$ as the completion of the Grothendieck
ring of the category $\OO$ whose elements are sums (possibly infinite)
$$\sum_{\Psibs \in \mfr} \lambda_{\Psibs} [L(\Psib)],$$ 
such that the $\lambda_{\Psibs}\in\ZZ$ satisfy
$$
\sum_{\Psibs \in
  \mfr, \omega\in P_\Q} |\lambda_{\Psibs}|
\on{dim}((L(\Psib))_\omega) [\omega] \in \mathcal{E}.
$$
\end{defi}

We naturally identify $\mathcal{E}$ with 
the Grothendieck ring of the subcategory of the category
$\mathcal{O}$ whose obtains are representations with constant
$\ell$-weights. Its
simple objects are the $[\omega]$, $\omega\in P_\Q$. Thus
as in \cite[Section 9.7]{ka} we will regard elements of $\mathcal{E}$
as formal sums
\[
 c = \sum_{\omega\in \text{Supp}(c)} c(\omega)[\omega].
\]
The multiplication is given by $[\omega][\omega'] = [\omega+\omega']$ and $\mathcal{E}$
is regarded as a subring of $K_0(\mathcal{O})$. 

The character defines a ring homomorphism $\chi:
K_0(\mathcal{O})\rightarrow \mathcal{E}$ which is not injective.

\subsection{Monomials and finite-dimensional representations}\label{fdrep}

Following \cite{Fre}, we give the following definition.

\begin{defi}    \label{defY}
Define the ring of Laurent polynomials $\Yim =
\ZZ[Y_{i,a}^{\pm 1}]_{i\in I,a\in\CC^\times}$ in the variables
$\{Y_{i,a}\}_{i\in I, a\in \C^\times}$. Define $\mathcal{M}$ as the
multiplicative group of monomials in $\Yim$.

For $i\in I, a\in\CC^\times$, define
$A_{i,a}\in\mathcal{M}$ by the formula
\begin{multline}    \label{Ai}
A_{i,a} := \\ Y_{i,aq_i^{-1}}Y_{i,aq_i}
\Bigl(\prod_{\{j\in I|C_{j,i} = -1\}}Y_{j,a}
\prod_{\{j\in I|C_{j,i} = -2\}}Y_{j,aq^{-1}}Y_{j,aq}
\prod_{\{j\in I|C_{j,i} =
-3\}}Y_{j,aq^{-2}}Y_{j,a}Y_{j,aq^2}\Bigr)^{-1}\,.
\end{multline}


Define an injective group homomorphism 
$\mathcal{M}\rightarrow \mfr$ by 
$$Y_{i,a}\mapsto [\omega_i]\Psib_{i,aq_i^{-1}}\Psib_{i,aq_i}^{-1}.$$
\end{defi}

Note that $\varpi(Y_{i,a}) = \omega_i$ and $\varpi(A_{i,a}) =
\alpha_i$.

We will identify a monomial $m\in\mathcal{M}$ with its image in
$\mfr$. In particular, for any $m\in\mathcal{M}$, we will
  denote by $L(m)$ the corresponding simple $U_q(\wh\bb)$-module.


Let $\mathcal{C}$ be the category of type $1$ finite-dimen\-sional
representations of $U_q(\wh{\Glie})$. For brevity, we will omit ``type
1'' and refer to objects of the category ${\mc C}$ simply as
finite-dimensional representations of $U_q(\wh{\Glie})$. For $j\in I$,
a monomial in $m\in\mathcal{M}$ is said to be $j$-{\em dominant}
(resp. $j$-antidominant) if the powers of the variables $Y_{j,b}$,
$b\in\mathbb{C}^\times$, occurring in this monomial are all positive
(resp. negative). A monomial $m\in\mathcal{M}$ is said to be {\em
  dominant} if it is $j$-dominant for any $j\in I$. We
  will denote by ${\mc M}^+$ the set of dominant monomials. This is a
  subsemi-group of ${\mc M}$.

Part (1) of the following theorem was established in \cite{CP}
following \cite{Dri2}. Parts (2) and (3) were proved in \cite{HJ}
using the result of part (1).
  
\begin{thm}
(1) Every simple module in the category $\mathcal{C}$ is of the form
  $L(m), m \in {\mc M}^+$.

(2) For $m \in \mathcal{M}$, the simple $U_q(\wh\bb)$-module $L(m)$ is
  finite-dimensional if and only if $m \in {\mc M}^+$. Moreover, the
  action of $U_q(\wh\bb)$ on $L(m)$ can be uniquely extended to an
  action of $U_q(\wh{\Glie})$ in this case.

(3) Every finite-dimensional simple module in the category $\OO$ is of
  the form $L(m) \otimes [\omega], m \in {\mc M}^+, \omega \in P_\Q$.
\end{thm}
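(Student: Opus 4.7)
The plan is to proceed through the three parts in order, since (2) will use (1) and (3) will use (2). The central reference point is the parametrization of simple finite-dimensional representations by Drinfeld polynomials and its translation into the monomial language of Definition \ref{defY}. Under the map $Y_{i,a} \mapsto [\omega_i]\Psib_{i,aq_i^{-1}}\Psib_{i,aq_i^{-1}}^{-1}$ (reading $\Psib$-series as expansions of rational functions), a dominant monomial $m = \prod_{i,r} Y_{i,a_{i,r}}$ corresponds exactly to the highest $\ell$-weight whose $i$-th component is $q_i^{k_i}P_i(zq_i^{-1})/P_i(zq_i)$ with $P_i(z) = \prod_r (1 - a_{i,r}z)$, $k_i = \deg P_i$. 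This dictionary is the key to translating between the Chari--Pressley formulation and the form given in the theorem.

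For part (1), I would work in the Drinfeld realization \eqref{dgen}. Given a simple finite-dimensional $U_q(\wh\g)$-module $V$, a standard maximal-weight-vector argument produces $v \in V$ annihilated by all $x_{i,r}^+$ ($i \in I$, $r \in \Z$); the Drinfeld commutation relations then force $v$ to be a simultaneous eigenvector of the $\phi_{i,m}^\pm$. For each $i$, restricting to the ``vertical'' subalgebra generated by $x_{i,r}^\pm, \phi_{i,m}^\pm$ reduces the analysis to a quantum $\wh{\sw}_2$-calculation: the eigenvalues of $\phi_i^\pm(z)$ on $v$ must be expansions at $0$ and $\infty$ of one rational function of the form $q_i^{k_i} P_i(zq_i^{-1})/P_i(zq_i)$ with $P_i(0)=1$. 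The resulting $\ell$-weight is precisely $\imath$-image of a dominant monomial. Conversely, given $m \in \mathcal{M}^+$, one constructs $L(m)$ as the simple head of a Weyl-type module (or inside a tensor product of fundamental representations), whose finite-dimensionality is again checked by a rank-one argument. The hardest step here is the rationality of $\phi_i^\pm(z)$-eigenvalues and their specific form; this is the substantive content of the Chari--Pressley theorem.

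For part (2), the forward direction uses part (1): given $m \in \mathcal{M}^+$, restrict $L(m)$ from $U_q(\wh\g)$ to $U_q(\wh\bb)$. The highest $\ell$-weight vector remains cyclic under $U_q(\wh\bb)$, since the Drinfeld generators $x_{i,r}^-$ for $r>0$ contained in $U_q(\wh\bb)$ suffice to generate the lower part (this uses the explicit description in the remark following Definition of $U_q(\wh\bb)$). Any nonzero $U_q(\wh\bb)$-submodule must contain a highest $\ell$-weight vector (by weight-space finiteness and the $e_i$-action), hence equals $L(m)$; so $L(m)$ is already simple as a $U_q(\wh\bb)$-module, giving both finite-dimensionality and the extension. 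Uniqueness of the extension follows from the fact that the Drinfeld relations determine the action of the missing generators (such as $f_0$, $x_{i,-r}^\pm$, $x_{i,0}^-$) from their Borel counterparts. The reverse direction is the other main obstacle: assume $L(m)$ is finite-dimensional over $U_q(\wh\bb)$ and show $m \in \mathcal{M}^+$. For each $i$, restrict to a rank-one subalgebra of quantum $\wh{\sw}_2$-type; finite-dimensionality there forces $\Psi_i(z)$ to be of the dominant form $q_i^{k}P_i(zq_i^{-1})/P_i(zq_i)$ with $P_i(0)=1$, which by the dictionary above means $m \in \mathcal{M}^+$.

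For part (3), let $V$ be a finite-dimensional simple object of $\OO$ with highest $\ell$-weight $\Psib \in \mfr$. The constant term $\jmath(\Psib) \in \tb^\times$ corresponds via $\ol\varpi$ to some $\lambda \in P_\Q$. Choose $\omega \in P_\Q$ so that $\lambda - \omega \in \sum_i \Z \omega_i$ with each coefficient in $\Z$ rather than $\Q$, and such that tensoring by $[\omega]^{-1}$ normalizes $V$ to a module $V' := V \otimes [\omega]^{-1}$ whose highest $\ell$-weight $\Psib' = \Psib \cdot \imath(\ol\omega)^{-1}$ satisfies $\Psi'_i(0) = q_i^{k_i}$ for some $k_i \in \Z_{\geq 0}$. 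Finite-dimensionality is preserved under tensoring by the one-dimensional $[\omega]^{-1}$, so part (2) applies to $V'$, forcing $V' \cong L(m)$ with $m \in \mathcal{M}^+$; hence $V \cong L(m) \otimes [\omega]$. Conversely, any such $L(m) \otimes [\omega]$ is finite-dimensional simple and lies in $\OO$, completing the classification.
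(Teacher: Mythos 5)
First, a point of comparison: the paper does not prove this theorem at all. It is stated as a citation --- part (1) to Chari--Pressley following Drinfeld, parts (2) and (3) to Hernandez--Jimbo --- so there is no in-paper proof to match your argument against. Your sketch does follow the standard route of those references (highest $\ell$-weight vector, rank-one reduction to $\wh{\sw}_2$, rationality of the eigenvalues of $\phi_i^\pm(z)$, the Drinfeld-polynomial dictionary with dominant monomials), and the overall architecture (1) $\Rightarrow$ (2) $\Rightarrow$ (3) is the right one.

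That said, two steps as written would not go through. In part (2), the forward direction argues that a nonzero $U_q(\wh\bb)$-submodule of $L(m)$ ``must contain a highest $\ell$-weight vector \ldots hence equals $L(m)$.'' A highest $\ell$-weight vector for $U_q(\wh\bb)$ is only required to be killed by $e_i$, $i\in I$ (not $e_0$), and to be a $\phi^+$-eigenvector; a finite-dimensional simple $U_q(\wh\g)$-module generally contains many such vectors (one for each $U_q(\g)$-constituent), and a vector of this kind generates a highest $\ell$-weight submodule, not automatically all of $L(m)$. The actual argument in the literature is different: one shows that on a finite-dimensional module the series $x_i^-(z)$ acts by a rational function of $z$, so the operators $x_{i,r}^-$ with $r>0$ (which lie in $U_q(\wh\bb)$) span the same space of endomorphisms as all $x_{i,r}^-$, $r\in\Z$; hence the images of $U_q(\wh\bb)$ and $U_q(\wh\g)$ in $\on{End}(V)$ coincide, which gives simplicity of the restriction and the unique extension in one stroke. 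You gesture at this (``the $x_{i,r}^-$ for $r>0$ suffice to generate the lower part'') but then rest the simplicity claim on the flawed highest-vector step. In part (3), after normalizing by $[\omega]^{-1}$ you invoke part (2) to conclude $V'\cong L(m)$; but part (2) as stated only concerns modules $L(m)$ whose highest $\ell$-weight is \emph{already} in the image of $\mc M$, whereas at that point you only know $\Psib'\in\mfr$ with constant term in $q^{\Z}$. You need the stronger rank-one statement (which your part (2) reverse direction essentially contains, applied to arbitrary rational $\Psi_i(z)$): finite-dimensionality forces each $\Psi_i(z)$, up to constant, to equal $q_i^{\deg P_i}P_i(zq_i^{-1})/P_i(zq_i)$, i.e.\ forces $\Psib'$ into the image of ${\mc M}^+$. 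Reordering so that this is proved for general $\Psib\in\mfr$ before part (3) would close the loop. (Also a typo: the substitution should read $Y_{i,a}\mapsto [\omega_i]\Psib_{i,aq_i^{-1}}\Psib_{i,aq_i}^{-1}$.)
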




In particular, the finite-dimensional fundamental representation
$L(Y_{i,a}), i\in I$, $a\in\CC^\times$, is called the $i$th
fundamental representation. The classes of these representations
freely generate the Grothendieck ring $K_0({\mc C})$.


\subsection{$q$-characters}    \label{qchar}

\begin{defi}
For a $U_q(\wh\bb)$-module $V$ and
$\Psib\in\tb_\ell^\times$, the subspace of $V$,
\begin{align}
V_{\Psibs} :=
\{v\in V\mid
\exists p\geq 0, \forall i\in I, 
\forall m\geq 0,  
(\phi_{i,m}^+ - \Psi_{i,m})^pv = 0\}
\label{l-wtsp} 
\end{align}
is called the $\ell$-weight subspace of $V$ of $\ell$-weight $\Psib$.
\end{defi}

\begin{thm}\cite{HJ} For $V$ in category $\mathcal{O}$, $V_{\Psibs}\neq
  0$ implies $\Psib\in\mfr$.
\end{thm}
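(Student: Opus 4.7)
The plan is first to reduce to a finite-dimensional setting. Since $V$ is in category $\mathcal{O}$, the weight space $V_\omega$ with $\omega=\jmath(\Psib)$ is finite-dimensional, and $V_{\Psibs}\subset V_\omega$. The Drinfeld generators $\phi_{i,m}^+$ for $i\in I$, $m\geq 0$ preserve $V_\omega$ and mutually commute, so we may simultaneously triangularize them on $V_\omega$; the $\ell$-weights $\Psib'$ of $V$ with $\jmath(\Psib')=\omega$ are precisely the generalized simultaneous eigenvalue tuples of this action. It therefore suffices to prove, for each $i\in I$, that the $\mathrm{End}(V_\omega)$-valued power series $\phi_i^+(z)=\sum_{m\geq 0}\phi_{i,m}^+z^m$ is the expansion at $z=0$ of a matrix-valued rational function, since the eigenvalues of such a rational matrix are themselves rational functions of $z$.

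I would then reduce to simple modules. Because $V_\omega$ is finite-dimensional, only finitely many simple subquotients of $V$ meet $V_\omega$ nontrivially, so every generalized eigenvalue of $\phi_i^+(z)$ on $V_\omega$ must already occur on some simple subquotient $L(\Psib')$ of $V$. By Theorem \ref{class}, the highest $\ell$-weight of such a simple module in $\mathcal{O}$ lies in $\mfr$, and so the task reduces to proving that for every $\Psib_0\in\mfr$, every $\ell$-weight appearing in $L(\Psib_0)$ again lies in $\mfr$.

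For this last step, let $v_0$ be a highest $\ell$-weight vector of $L(\Psib_0)$. Then $L(\Psib_0)$ is spanned by vectors of the form $x_{j_1,r_1}^-\cdots x_{j_k,r_k}^-v_0$, and the Drinfeld commutation relations give, schematically,
\[
\phi_i^+(z)\,x_j^-(w)=f_{i,j}(z,w)\,x_j^-(w)\,\phi_i^+(z),
\]
with $f_{i,j}(z,w)$ a rational function of $z/w$. Iterating and using $\phi_i^+(z)v_0=\Psi_{0,i}(z)v_0$ with $\Psi_{0,i}(z)$ rational, the action of $\phi_i^+(z)$ on $x_{j_1}^-(w_1)\cdots x_{j_k}^-(w_k)v_0$ becomes $\Psi_{0,i}(z)$ times a product of such rational functions. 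Extracting coefficients in the $w_l$'s, together with the observation that only finitely many of the vectors $x_{j_1,r_1}^-\cdots x_{j_k,r_k}^-v_0$ of fixed weight are linearly independent, yields a finite linear recurrence on $\phi_{i,m}^+|_{V_\omega}$, forcing its generating series to be a rational function. I expect the main obstacle to be converting the formal-series identities coming from the iterated Drinfeld relations into a genuine rationality statement on the finite-dimensional space $V_\omega$: the argument must exploit the finite-dimensionality to obtain a terminating linear recurrence rather than merely a formal manipulation of power series.
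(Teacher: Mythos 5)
Your reduction steps are reasonable: restricting to the finite-dimensional weight space $V_\omega$, simultaneously triangularizing the commuting operators $\phi^+_{i,m}$ over $\C$, and passing to simple subquotients via a local composition series (the machinery the paper invokes from \cite[Section 9.6]{ka}) correctly reduce the theorem to showing that every $\ell$-weight of a simple module $L(\Psib_0)$ with $\Psib_0\in\mfr$ is again in $\mfr$. The gap is in that last step. The claim that $L(\Psib_0)$ is spanned by vectors $x^-_{j_1,r_1}\cdots x^-_{j_k,r_k}v_0$ is false for general $\g$: the module is $U_q(\wh\bb)v_0$, and as the paper itself remarks, $U_q(\wh\bb)$ is \emph{not} generated by the Drinfeld generators unless $\g=\sw_2$; moreover only the modes $x^-_{j,r}$ with $r>0$ lie in $U_q(\wh\bb)$, so no PBW-type spanning set of this form is available for the Borel subalgebra. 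Independently of that, the identity $\phi^+_i(z)x^-_j(w)=f_{i,j}(z,w)x^-_j(w)\phi^+_i(z)$ is an identity of formal distributions in which $f_{i,j}$ must be expanded in a prescribed direction; applied to a single mode $x^-_{j,r}$ it produces an infinite sum over modes (including nonpositive ones, which leave $U_q(\wh\bb)$), and the individual vectors $x^-_{j_1,r_1}\cdots v_0$ are not $\phi^+$-eigenvectors, so the ``product of rational functions'' does not directly compute any eigenvalue. The step you yourself flag as the main obstacle is therefore exactly where the proof is missing, and it cannot be completed along these lines as stated.

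The mechanism that actually works (and is the one behind the ``only if'' direction of Theorem \ref{class} in \cite{HJ}) extracts the linear recurrence without any spanning set: if $v$ is a joint $\phi^+_{i,m}$-eigenvector of weight $\omega$ annihilated by the raising modes $x^+_{i,m}$, then a linear dependence $\sum_{r=1}^{N}c_r\,x^-_{i,r}v=0$, which exists because $\dim V_{\omega\ga_i^{-1}}<\infty$, combined with $[x^+_{i,m},x^-_{i,r}]=(\phi^+_{i,m+r}-\phi^-_{i,m+r})/(q_i-q_i^{-1})$ and $x^+_{i,m}v=0$, gives $\sum_{r}c_r\Psi_{i,m+r}=0$ for all $m\geq 0$, i.e.\ rationality of $\Psi_i(z)$. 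Passing from such ``$i$-highest'' vectors to an arbitrary $\ell$-weight vector uses that the weights of $V$ are bounded above (so one can climb to a maximal weight of the relevant rank-one submodule) together with the rank-one structure of $\ell$-weights for the Borel subalgebra of $U_{q_i}(\wh\sw_2)$ — structural input that your proposal does not supply. If you want to repair your argument, this is the ingredient to add in place of the PBW spanning claim.
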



We define $\mathcal{E}_\ell$ as the additive group of maps $c : \mathfrak{r} \rightarrow \ZZ$ 
so that the image of its support 
$$\on{supp}(c) = \{\Psib\in \mathfrak{r},c(\Psib) \neq 0\}$$ 
by $\varpi$ is contained in 
a finite union of sets of the form $D(\mu)$. The group
$\mathcal{E}_\ell$ is a completion of the group ring of $\mfr$ and the
  ring structure on the latter extends to ${\mc E}_\ell$ by
  continuity. Moreover, the homomorphism
$\varpi$ naturally extends to a surjective homomorphism 
\begin{equation}    \label{varpi}
  \varpi : \mathcal{E}_\ell \twoheadrightarrow \mathcal{E},
\end{equation}
where $\mathcal{E}$ is defined in Definition \ref{defE}.
We also have a natural inclusion $\mathcal{E} \hookrightarrow
\mathcal{E}_\ell$.

Since $\Yim$ is the group ring of ${\mc M}$ (see Definition
\ref{defY}), the injective group homomorphism ${\mc M} \to \mfr$
extends to an injective ring homomorphism $\Yim \to {\mc E}_\ell$.


For $\Psib\in\mfr$, we define $[\Psib] =
\delta_{\Psibs,.}\in\mathcal{E}_\ell$.

Let $V$ be a $U_q(\wh\bb)$-module in category $\mathcal{O}$. 
Following \cite{Fre, HJ}, we define the {\em $q$-character} of $V$ as
\begin{align}
\chi_q(V) = 
\sum_{\Psibs\in\mfr}  
\mathrm{dim}(V_{\Psibs}) [\Psib]\in \mathcal{E}_\ell\,.
\label{qch}
\end{align}
		
The following is proved in \cite{Fre} and \cite[Section 3.4]{HJ}.

\begin{prop}    \label{inject}
Formula \ref{qch} defines an injective ring homomorphism
$$\chi_q : K_0(\mathcal{O})\rightarrow \mathcal{E}_\ell.$$
\end{prop}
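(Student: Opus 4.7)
The plan is to establish the proposition in three steps: well-definedness, multiplicativity, and injectivity. For well-definedness, formula \eqref{qch} lands in $\mathcal{E}_\ell$ because Theorem \ref{class} guarantees that every $\ell$-weight of $V \in \mathcal{O}$ lies in $\mfr$, and because each $\phi_{i,m}^+$ preserves the weight decomposition. The latter fact means $\varpi$ sends the support of $\chi_q(V)$ onto the support of $\chi(V)$, so the support condition of Definition \ref{defE} is inherited from $\chi(V) \in \mathcal{E}$. This extends continuously to the completed Grothendieck ring of Definition \ref{K0}.

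For multiplicativity, the key input is the triangular shape of the coproduct of the Drinfeld series $\phi_i^+(z)$: modulo terms whose left tensor factor has strictly lower weight,
$$\Delta(\phi_i^+(z)) \equiv \phi_i^+(z) \otimes \phi_i^+(z).$$
Given $V, W \in \mathcal{O}$ and $\ell$-weight vectors $v \in V_{\Psibs_1}$, $w \in W_{\Psibs_2}$, this implies that $\phi_{i,m}^+$ acts on $v \otimes w$ with generalized eigenvalue given by the $z^m$-coefficient of $\Psi_{1,i}(z)\Psi_{2,i}(z)$, modulo vectors of strictly lower weight in the first tensor factor. A downward induction on the weight ordering, applied within each generalized $\ell$-weight space, then identifies $\dim(V \otimes W)_{\Psibs} = \sum_{\Psibs_1 \Psibs_2 = \Psibs} \dim V_{\Psibs_1} \cdot \dim W_{\Psibs_2}$, which is exactly $\chi_q(V \otimes W) = \chi_q(V)\chi_q(W)$.

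For injectivity, I would exploit the fact that every simple module $L(\Psib)$ has a one-dimensional highest $\ell$-weight space spanned by some $v$ annihilated by all $e_i$ with $L(\Psib) = U_q(\wh\bb)v$. Since each $x_{i,r}^-$ strictly lowers the weight by $\alpha_i$, every other $\ell$-weight $\Psib'$ of $L(\Psib)$ satisfies $\varpi(\Psib') < \varpi(\Psib)$ in the partial order on $P_\Q$. Hence
$$\chi_q(L(\Psib)) = [\Psib] + \sum_{\varpi(\Psibs') < \varpi(\Psibs)} n_{\Psibs'}[\Psib'], \qquad n_{\Psibs'} \in \mathbb{Z}_{\geq 0}.$$
This unitriangular form with respect to the partial order on $\mfr$ pulled back via $\varpi$ shows that the $\chi_q(L(\Psib))$ are linearly independent in $\mathcal{E}_\ell$, even after the infinite combinations permitted by Definition \ref{K0}; injectivity of $\chi_q$ follows.

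The hard step is multiplicativity: one needs the triangular form of $\Delta(\phi_i^+(z))$, which is not apparent from the Drinfeld--Jimbo coproduct in which $U_q(\wh\g)$ was originally presented, but requires passing through the identification of Drinfeld's new realization with the Drinfeld--Jimbo one (via Beck or Damiani). One must also verify that the ``lower weight'' correction terms cannot contribute to the generalized eigenvalue being computed. Once that identity is in hand, the rest is bookkeeping using the two partial orders.
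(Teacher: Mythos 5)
Your proposal is correct and follows essentially the same route as the paper, which does not prove Proposition \ref{inject} itself but defers to \cite{Fre} and \cite[Section 3.4]{HJ}: multiplicativity there rests on exactly the quasi-grouplike form of $\Delta(\phi_i^+(z))$ modulo lower-weight terms (established via the identification of the Drinfeld and Drinfeld--Jimbo realizations), and injectivity on the unitriangularity of $\chi_q(L(\Psib))$ with respect to the ordering induced by $\varpi$, combined with the support condition of Definition \ref{K0} which guarantees maximal elements exist. The only nitpick is that the statement ``every $\ell$-weight of $V\in\mathcal{O}$ lies in $\mfr$'' is the theorem following Definition \eqref{l-wtsp}, not Theorem \ref{class}.
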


Note that we have $\chi(V) = \varpi(\chi_q(V))$ for any representation
$V$ from the category $\mathcal{O}$.

Slightly abusing notation, we will write $[\omega]$ for
  both the one-dimensional representation in the category ${\mc O}$
  corresponding to $\omega \in P_{\Q}$ and its $q$-character.


 Recall that $\mathcal{M}$ is the multiplicative group of
  monomials of $\Yim$ and the injective group homomorphism ${\mc M}
  \to \mfr$ from Definition \ref{defY}, as well as the injective ring
  homomorphism $\Yim \to {\mc E}_\ell$. We will identify ${\mc M}$ and
  $\Yim$ with their images in $\mfr$ and ${\mc E}_\ell$, respectively,
  under these homomorphisms.

\begin{thm}\cite{Fre, Fre2}

(1) For any finite-dimensional representation $V$ of
$U_q(\wh{\Glie})$, all $\ell$-weights appearing in $V$
are in the image of ${\mc M}$ in $\mfr$.

(2) The $q$-character $\chi_q(V)$ of $V$ is an element of $\Yim =
\mathbb{Z}[Y_{i,a}^{\pm 1}]_{i\in I, a\in\mathbb{C}^\times} \subset
       {\mathcal E}_\ell$.

(3) For any simple finite-dimensional representation $L(m)$ of
       $U_q(\wh{\Glie})$, we have
$$\chi_q(L(m))\in m (1 + \ZZ[A_{i,a}^{-1}]_{i\in I, a\in\CC^\times}).$$
\end{thm}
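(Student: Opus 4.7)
The plan is to reduce everything to analyzing the joint action of the commuting generating series $\phi_i^\pm(z)$, $i\in I$, on a finite-dimensional $U_q(\wh\g)$-module $V$. Since each $\phi_i^\pm(z)$ preserves the finite-dimensional weight subspaces $V_\omega$ and the operators $\phi_i^\pm(z)$ pairwise commute, $V$ decomposes as a direct sum of generalized $\ell$-weight subspaces. Thus, for (1) it suffices to show that every joint generalized eigenvalue $(\Psi_i^+(z), \Psi_i^-(z))_{i\in I}$ occurring on $V$ lies in the image of ${\mc M} \hookrightarrow \mfr$ under the map $Y_{i,a} \mapsto [\omega_i]\Psib_{i,aq_i^{-1}}\Psib_{i,aq_i}^{-1}$.

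The key technical step is the classical observation from \cite{Fre} that on a finite-dimensional module each eigenvalue $\Psi_i^+(z)$ is the expansion at $z=0$ of a rational function of $z$, while the corresponding $\Psi_i^-(z)$ is the expansion of the same rational function at $z=\infty$. Using $\phi_{i,0}^+\phi_{i,0}^- = 1$ together with the Drinfeld presentation, one may write this rational function in the form
$$\Psi_i(z) = q_i^{\deg Q_i - \deg R_i}\,\frac{Q_i(zq_i^{-1})R_i(zq_i)}{Q_i(zq_i)R_i(zq_i^{-1})},$$
where $Q_i(z), R_i(z) \in \CC[z]$ have constant term $1$ and are coprime. Factoring $Q_i(z) = \prod_k(1 - a_{i,k}z)$ and $R_i(z) = \prod_k(1 - b_{i,k}z)$, the pair $(\Psi_i^+(z), \Psi_i^-(z))_{i\in I}$ is precisely the image of the monomial $m = \prod_{i,k} Y_{i,a_{i,k}}\prod_{i,k}Y_{i,b_{i,k}}^{-1} \in {\mc M}$. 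This proves (1), and (2) follows immediately since the injective group homomorphism ${\mc M} \to \mfr$ extends to an injective ring homomorphism $\Yim \to {\mc E}_\ell$, so $\chi_q(V)$ is a $\ZZ$-linear combination of images of monomials.

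For (3), every simple finite-dimensional module has the form $L(m)$ for a unique dominant $m \in {\mc M}^+$, with $m$ appearing as the highest $\ell$-weight of $L(m)$ with multiplicity one. The goal is to show every other $\ell$-weight $m'$ satisfies $m' = m\cdot\prod_k A_{i_k,a_k}^{-1}$ for a finite collection of pairs $(i_k, a_k)$. Since $\varpi(A_{i,a}) = \alpha_i$, this is compatible with, and sharpens, the weight inequality $\varpi(m') \leq \varpi(m)$. The strategy is to use that $L(m)$ is generated from its highest $\ell$-weight vector by successive applications of the Drinfeld generators $x_{j,r}^-$, and to analyze via the Drinfeld relations between $x_{j,r}^-$ and $\phi_i^+(z)$ how each such application shifts the $\ell$-weight: one checks that the shift is always by a factor of the form $A_{j,a}^{-1}$.

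The main obstacle is the $\ell$-weight shift calculation underlying (3). Precisely controlling the factor $A_{j,a}^{-1}$ (rather than a more general ratio of $\Psib_{j,b}$'s) requires a careful use of the commutation relations in the Drinfeld presentation and, for general non-simply-laced $\g$, is most cleanly handled through the Frenkel-Mukhin algorithm of \cite{Fre2}. This algorithm computes $\chi_q(L(m))$ recursively starting from $m$, producing at each step only monomials differing from a dominant one by a product of $A_{i,a}^{-1}$'s, which makes the structure in (3) manifest.
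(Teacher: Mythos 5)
The paper you are working from does not actually prove this theorem: it is quoted from \cite{Fre} and \cite{Fre2}, so your proposal has to be measured against the arguments in those references. For parts (1) and (2) your outline is the standard one from \cite{Fre}: the commuting series $\phi_i^{\pm}(z)$ preserve the finite-dimensional weight spaces, so $V$ splits into generalized $\ell$-weight subspaces, and the entire content is the assertion that each generalized eigenvalue is the expansion at $0$ (resp.\ $\infty$) of a rational function of the form $q_i^{\deg Q_i-\deg R_i}\,Q_i(zq_i^{-1})R_i(zq_i)/\bigl(Q_i(zq_i)R_i(zq_i^{-1})\bigr)$. You state this as a ``classical observation'' rather than proving it; in \cite{Fre,Fre2} it is established by restricting to the subalgebras $U_{q_i}(\wh{\sw}_2)$ attached to each node and using the classification of their simple finite-dimensional modules via evaluation modules. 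Given that the theorem itself is a citation, deferring this step is defensible, and your translation of the resulting rational functions into the monomial $m=\prod Y_{i,a_{i,k}}\prod Y_{i,b_{i,l}}^{-1}$ and the deduction of (2) are correct.

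Part (3) is where there is a genuine gap. Your primary strategy --- tracking how each application of $x_{j,r}^-$ shifts the $\ell$-weight via the Drinfeld relations --- is the right heuristic but is precisely the step you do not carry out: $x_{j,r}^-$ does not send $\ell$-weight vectors to $\ell$-weight vectors, and extracting from the relation between $\phi_i^+(z)$ and $x_j^-(w)$ that the new $\ell$-weights differ from the old one exactly by factors $A_{j,a}^{-1}$ is essentially the whole theorem. More seriously, the fallback on the Frenkel--Mukhin algorithm has the logic backwards. The algorithm produces, by construction, only monomials in $m\,\ZZ[A_{i,a}^{-1}]$, but it is only known to compute $\chi_q(L(m))$ for restricted classes of modules (those whose $q$-character has a unique dominant monomial, e.g.\ fundamental modules); for an arbitrary simple module its correctness is not established and it is known to fail in general, so it cannot be the basis for a proof of (3). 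In \cite{Fre2} the containment (3) is proved before and independently of the algorithm, by combining: (a) the explicit $\wh{\sw}_2$ computation; (b) the fact that for each $i$ the restriction of $\chi_q(V)$ to the $i$th $\wh{\sw}_2$ decomposes into $\sw_2$-strings, so that every non-$i$-dominant monomial is obtained from an $i$-dominant monomial of strictly higher weight by multiplying by $A_{i,c}^{-1}$'s; and (c) the fact that $L(m)$ is generated by its highest $\ell$-weight vector, which guarantees every lower weight space is reached from higher ones and lets the induction on weight close. Steps (b) and (c) are absent from your argument, and without them the claim in (3) does not follow.
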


Let $\varphi:
  \mathcal{M}\rightarrow P$ be the restriction of the homomorphism
  $\varpi$ to ${\mc M}$. Explicitly, $\varphi$ sends a monomial $m =
  \prod_{i\in I,
    a\in\CC^\times}Y_{i,a}^{u_{i,a}(m)}$ in $\mathcal{M}$ to the
  corresponding $\g$-weight
$$\varphi(m) := \sum_{i\in I, a\in\CC^\times} u_{i,a}(m) \omega_i\in
  P.$$
  For example, for $i\in I, a\in\CC^\times$, we have
  $\varphi(Y_{i,a}) = \omega_i$ and $\varphi(A_{i,a}) = \alpha_i$, where
  $A_{i,a}\in\mathcal{M}$ is defined by formula \eqref{Ai}. In
  particular, if $m \in {\mc M}^+$, then $\varphi(m) \in P^+$.

The next result follows directly from the definitions.

\begin{lem}    \label{Winv}
(1) For $m \in {\mc M}^+$, let $L(m)$ be the corresponding simple
$U_q(\wh{\mathfrak{g}})$-module, and set $\lambda := \varphi(m) \in P^+$.
The restriction of $L(m)$ to $U_q(\mathfrak{g})$ is isomorphic to a
direct sum of simple $U_q(\mathfrak{g})$-modules of the form
\begin{equation}
{\mc V}^q_\la \bigoplus \left( \bigoplus_{\mu < \la} ({\mc V}^q_\mu)^{\oplus
  c_{\la\mu}} \right),
\end{equation}
where ${\mc V}^q_\mu, \mu \in P^+$, denotes the simple
$U_q(\wh{\mathfrak{g}})$-module with highest weight $\mu$, and we use
the standard partial ordering $<$ on $P^+$ (see Section \ref{Liealg}).

(2) For any
finite-dimensional $U_q(\wh{\mathfrak{g}})$-module $V$, the ordinary
character of its restriction to $U_q(\mathfrak{g})$ is equal to
$\varphi(\chi_q(V))$.
\end{lem}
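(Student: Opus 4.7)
The plan is to derive both statements directly from Theorem~2.5.2 together with the standard fact that finite-dimensional representations of $U_q(\mathfrak{g})$ at generic $q$ are completely reducible. Since $U_q(\mathfrak{g})$ and $U_q(\wh{\mathfrak{b}})$ share the Cartan generators $k_i$, the $U_q(\wh{\mathfrak{b}})$-weight decomposition of any $V$ in ${\mc O}$ coincides with its classical $U_q(\mathfrak{g})$-weight decomposition under the isomorphism $\ol\varpi:\tb^\times\xrightarrow{\sim} P_\Q$; both parts will reduce to bookkeeping of this identification together with the compatibility $\varpi|_{{\mc M}}=\varphi$.

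For part (1), I would apply Theorem~2.5.2(3) to conclude that every monomial $m'$ occurring in $\chi_q(L(m))$ has the form $m\prod_{i,a}A_{i,a}^{-n_{i,a}}$ with $n_{i,a}\in\mathbb{Z}_{\geq 0}$. Using $\varphi(m)=\lambda$ and $\varphi(A_{i,a})=\alpha_i$, this yields $\varphi(m')=\lambda-\sum n_{i,a}\alpha_i\leq \lambda$, so every weight $\mu$ of $L(m)|_{U_q(\mathfrak{g})}$ satisfies $\mu\leq \lambda$. Complete reducibility then decomposes $L(m)|_{U_q(\mathfrak{g})}$ into simples ${\mc V}^q_\mu$ with $\mu\leq \lambda$. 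Finally, the weight $\lambda$ can only come from the highest $\ell$-weight monomial $m$ itself (since any non-trivial product of $A_{i,a}^{-1}$'s contributes a strictly negative element of $Q^+$), and $m$ has multiplicity one in $\chi_q(L(m))$ as the highest $\ell$-weight; hence ${\mc V}^q_\lambda$ appears exactly once, and all other summands satisfy $\mu<\lambda$, as required.

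For part (2), I would observe that the ordinary $U_q(\mathfrak{g})$-character of $V|_{U_q(\mathfrak{g})}$ equals $\sum_\omega \dim(V_\omega)[\ol\varpi(\omega)]$, which is precisely $\chi(V)$ under the implicit identification of $\tb^\times$ with $P_\Q$ in Definition~2.3.16, and the latter equals $\varpi(\chi_q(V))$ by the remark following Proposition~2.4.5. When $V$ is finite-dimensional, Theorem~2.5.2(1) ensures that every $\ell$-weight of $V$ lies in the image of ${\mc M}\hookrightarrow\mfr$, and on this image $\varpi$ restricts to $\varphi$ by construction. Hence $\varpi(\chi_q(V))=\varphi(\chi_q(V))$, completing the argument. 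No real obstacle arises, which is consistent with the assertion that the lemma follows directly from the definitions; the only subtlety lies in distinguishing the three related maps $\ol\varpi$, $\varpi$, and $\varphi$ and verifying their compatibility.
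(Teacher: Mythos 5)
Your proof is correct; the paper offers no argument at all (it states only that ``the next result follows directly from the definitions''), and your elaboration — bounding all weights of $L(m)$ by $\lambda$ via the fact that $\chi_q(L(m))\in m(1+\Z[A_{i,a}^{-1}])$ together with $\varphi(A_{i,a})=\alpha_i$, isolating the multiplicity-one occurrence of the weight $\lambda$, invoking complete reducibility of type $1$ finite-dimensional $U_q(\g)$-modules for $q$ not a root of unity, and checking the compatibility of $\ol\varpi$, $\varpi$ and $\varphi$ for part (2) — is precisely the routine verification intended. The only remark worth making is that for part (1) you do not really need the $q$-character theorem: the weights of any highest $\ell$-weight module are already $\le\lambda$ with $\dim L(m)_\lambda=1$ by the triangular structure, so your appeal to that theorem is slightly heavier machinery than strictly necessary, but entirely valid.
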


We will also need the following result.

\begin{thm}\cite{FH}\label{formuachar} For any $a\in\CC^\times$, $i\in I$ we have
$$\chi_q(L_{i,a}^+) = \left[\Psib_{i,a}\right] \chi(L_{i,a}^+)$$
       and
         $$
         \chi(L_{i,a}^+) = \chi(L_{i,a}^-).
         $$
\end{thm}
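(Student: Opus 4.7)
The proof realizes the prefundamental representations as asymptotic limits of Kirillov-Reshetikhin (KR) modules, following \cite{HJ}, and exploits the explicit structure of their $q$-characters.

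For the first identity, for each $k \geq 1$ consider the KR module $W_k = L(m_k)$ with highest monomial
$$m_k = \prod_{r=0}^{k-1} Y_{i, a q_i^{2r+1}} \in \mathcal{M}^+.$$
Under the embedding $\mathcal{M} \hookrightarrow \mfr$ of Definition \ref{defY}, the highest $\ell$-weight of $W_k \otimes [-k\omega_i]$ is $\Psib_{i,a} \cdot \Psib_{i, a q_i^{2k}}^{-1}$, which tends coefficient-wise (as a formal series in $z$ in each coordinate) to $\Psib_{i,a}$ as $k \to \infty$. By the Hernandez-Jimbo construction \cite{HJ}, the renormalized family $\{W_k \otimes [-k\omega_i]\}$ admits a well-defined asymptotic limit in category $\OO$ which is isomorphic to $L^+_{i,a}$.

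To extract the $q$-character, I invoke the Frenkel-Mukhin theorem, which gives $\chi_q(W_k) = m_k \bigl(1 + \sum_\nu c^{(k)}_\nu M_\nu\bigr)$, where each $M_\nu$ is a product of $A_{j,b}^{-1}$'s of some strictly negative $\g$-weight $\varphi(M_\nu) \in -Q^+$. The key stability property from \cite{HJ} is that for each fixed $\nu$, the multiplicity $c^{(k)}_\nu$ stabilizes to some $c_\nu \in \ZZ_{\geq 0}$ for $k$ sufficiently large, while the $\ell$-weight associated to $[-k\omega_i] \cdot m_k \cdot M_\nu$ converges in $\mathcal{E}_\ell$ to $[\Psib_{i,a}] \cdot [\varphi(M_\nu)]$. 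Summing over $\nu$ gives
$$\chi_q(L^+_{i,a}) = [\Psib_{i,a}] \sum_\nu c_\nu [\varphi(M_\nu)] = [\Psib_{i,a}] \, \chi(L^+_{i,a}),$$
which is the first identity.

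For the second identity, I carry out the parallel construction for $L^-_{i,a}$, either as an asymptotic limit using KR modules with opposite spectral shift and renormalization by $[+k\omega_i]$, or via a Chevalley-type involution that exchanges the $L^\pm$ prefundamentals. In either case, the weight-by-weight stabilization theorem of \cite{HJ} applies with the same combinatorics (depending only on $k$ and $i$, not on the spectral direction or sign convention), so the resulting ordinary characters supported on $-Q^+$ coincide, giving $\chi(L^+_{i,a}) = \chi(L^-_{i,a})$.

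The main obstacle is the precise control of the asymptotic limit: one must verify that for each $\lambda \in -Q^+$, the weight-$\lambda$ subspaces of $W_k \otimes [-k\omega_i]$ stabilize in dimension for $k$ large, that the $\Psib$-content of each $\ell$-weight occurring therein is forced into the form $[\Psib_{i,a}] \cdot [\lambda]$ (with no residual $k$-dependence surviving), and that no extra $\ell$-weight subspaces are created in the limit. This is exactly the content of the Hernandez-Jimbo construction in \cite{HJ}, on which the argument relies.
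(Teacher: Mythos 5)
First, a point of comparison: the paper does not prove this statement at all — it is imported from \cite{FH}, where the proof indeed runs through the realization of the prefundamental representations as asymptotic limits of Kirillov--Reshetikhin modules constructed in \cite{HJ}. So your overall strategy is the same as that of the original proof, and your identification of the normalized highest $\ell$-weight of $W_k\otimes[-k\omega_i]$ as $\Psib_{i,a}\Psib_{i,aq_i^{2k}}^{-1}$ is correct.

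However, the two steps by which you actually extract the conclusion do not hold as written. You assert that $\Psib_{i,aq_i^{2k}}^{-1}=(1-zaq_i^{2k})^{-1}$ ``tends coefficient-wise'' to $1$, and that the $\ell$-weight of $[-k\omega_i]\,m_k M_\nu$ ``converges in $\mathcal{E}_\ell$'' to $[\Psib_{i,a}]\cdot[\varphi(M_\nu)]$. The coefficient of $z^m$ in $(1-zaq_i^{2k})^{-1}$ is $(aq_i^{2k})^m$, which converges to $\delta_{m,0}$ only when $|q_i|<1$; the paper assumes only that $q$ is not a root of unity, so for generic $q$ these sequences of $\ell$-weights do not converge, and $\mathcal{E}_\ell$ (a group of $\Z$-valued maps on $\mfr$) carries no topology in which a sequence of distinct delta functions $[\Psib_k]$ tends to $[\Psib]$. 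The mechanism in \cite{HJ} is algebraic rather than analytic: the matrix coefficients of the Borel generators on the renormalized KR modules are polynomial in an auxiliary parameter of the form $q_i^{\pm 2k}$, the limit module is defined by specializing that parameter to $0$, and the $\ell$-weights of the resulting module are then computed exactly — they \emph{equal} $\Psib_{i,a}$ times constants, they are not limits of anything. Relatedly, your stabilization claim for $c^{(k)}_\nu$ is misindexed: in $\chi_q(W_k)$ the monomials $M_\nu$ are products of $A_{j,b}^{-1}$ whose spectral parameters are anchored at the moving top $aq_i^{2k-1}$ of the string, so the multiplicity of any \emph{fixed} monomial of $\mathcal{M}$ is eventually $0$; what stabilizes is the multiplicity in a fixed position relative to the top. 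These are precisely the points that the construction of \cite{HJ} is designed to handle, so the argument can be repaired by invoking the exact statements there, but the convergence argument as you have written it does not go through. Your derivation of $\chi(L_{i,a}^+)=\chi(L_{i,a}^-)$ — both being the common limit of $[-k\omega_i]\chi(W_k)$, which is independent of the spectral parameter — is sound once the limits are set up correctly.
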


\subsection{Chari's braid group action and extremal
    monomials}\label{secextm}

 Consider the simple finite-dimensional
 $U_q(\wh{\mathfrak{g}})$-module $L(m), m \in {\mc M}^+$, and set
 $\la=\varphi(m) \in P^+$. Lemma \ref{Winv} implies that the ordinary
 character of the restriction of $L(m)$ to $U_q(\g)$ is invariant
 under the action of the Weyl group $W$. Furthermore, Lemma \ref{Winv}
 implies that for any $w\in W$, the dimension of the weight subspace
 of $L(m)$ of weight $w(\omega)$ is equal to $1$, and it is spanned by an
 extremal weight vector (in the sense of \cite{kas}) which belongs to
 the $U_q(\mathfrak{g})$-submodule ${\mc V}^q_\la$ generated by the
 highest weight vector of $L(m)$. We will denote this vector by
 $v_w$. (In particular, if $w=e$, the identity element of the Weyl group
 $W$, the vector $v_e$ is the highest weight vector of $L(m)$.) Hence
 this weight subspace is also an $\ell$-weight subspace, and $v_w$ is
 an $\ell$-weight vector.

 V. Chari introduced in \cite{C} a braid group action on
  $\Yim = \mathbb{Z}[Y_{i,a}^{\pm 1}]_{i\in I, a\in\mathbb{C}^\times}$
  and used it to compute the monomial in $\Yim$ expressing the
  $\ell$-weight of $v_w$.

Namely, Chari defined the operators $T_i, i \in I$, on $\Yim$
acting by the formula
\begin{equation}    \label{TiYj}
T_i(Y_{i,a}) = Y_{i,a}A_{i,aq_i}^{-1}\text{ and }T_i(Y_{j,a}) =
Y_{j,a} \text{ if $j\neq i$}
\end{equation}
for all $i,j\in I $ and
$a\in\mathbb{C}^\times$. The operator $T_i$ is not an involution, but
it was established in \cite{C} (a closely related result
  was proved earlier in \cite{BP}) that the $T_i$'s satisfy the
relations of the braid group corresponding to $\g$. 
  Hence, for any $w\in W$, there is a well-defined operator $T_w$
  acting on $\Yim$ by the formula
  $$
  T_w = T_{i_1} T_{i_2} \ldots T_{i_k}
  $$
  for any reduced decomposition $w = s_{i_1} \ldots s_{i_k}$.

\begin{thm}\cite{C, CM}\label{occurlm} The $\ell$-weight of $v_w \in
  L(m)$ is $T_w(m)$. In particular, $T_w(m)$ occurs in $\chi_q(L(m))$
  with multiplicity $1$.
\end{thm}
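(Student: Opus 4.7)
My plan is to prove both claims by induction on the length $\ell(w)$. The base case $w = e$ is immediate: $v_e$ is by construction the highest weight vector of the summand ${\mc V}^q_\la \subset L(m)$ with $\la := \varphi(m)$, and since Lemma \ref{Winv} forces the $\la$-weight subspace of $L(m)$ to be one-dimensional (no other summand has a weight as large as $\la$), $v_e$ is the highest $\ell$-weight vector of $L(m)$ and therefore has $\ell$-weight $m = T_e(m)$ with multiplicity one.

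For the inductive step, fix a reduced decomposition $w = s_{i_1}\cdots s_{i_k}$, set $i := i_1$ and $w' := s_{i_2}\cdots s_{i_k}$, so that $w = s_iw'$ with $\ell(w) = \ell(w')+1$ and $T_w(m) = T_i(T_{w'}(m))$. By the induction hypothesis, $v_{w'}$ has $\ell$-weight $m' := T_{w'}(m)$. Since $\ell(s_iw') > \ell(w')$, Lemma \ref{wn}(1) gives $(w')^{-1}(\alpha_i)\in\Delta_+$, so $k := \langle w'(\la),\alpha_i^\vee\rangle \geq 0$; then inside ${\mc V}^q_\la \subset L(m)$ the extremal vector $v_w$, of weight $s_iw'(\la) = w'(\la) - k\alpha_i$, is obtained up to a nonzero scalar as $f_i^{(k)} v_{w'}$.

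The heart of the proof is then to show that the $\ell$-weight of $f_i^{(k)} v_{w'}$ equals $T_i(m')$. I would separate the computation into two parts. (a) The $i$th Drinfeld polynomial component: pass to the quantum affine $\sw_2$-subalgebra $U_{q_i}(\wh{\sw}_{2,i})$ attached to $\alpha_i$, and apply the known $\sw_2$ case (due to Chari-Pressley), which determines how the $Y_{i,\bullet}$ part of $m'$ transforms under the $\sw_2$ rule $Y_{i,a}\mapsto Y_{i,aq_i^2}^{-1}$, matching the $Y_i$-component of $T_i$. (b) The off-diagonal components for $j\neq i$: use the Drinfeld commutation relation between $\phi_j^+(z)$ and $x_i^-(z)$ (which expresses the conjugation of $x_i^-(z)$ by $\phi_j^+(z)$ as multiplication by a rational function of $z$) to show that the $\phi_j^+$-eigenvalue on $f_i^{(k)} v_{w'}$ is obtained from that on $v_{w'}$ by multiplication by a rational factor whose zeros and poles reproduce exactly the $Y_j$-contributions to $A_{i,aq_i}^{-1}$ appearing in formula \eqref{Ai}. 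Combining (a) and (b) then gives $T_i(m')$.

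I expect part (b) to be the main obstacle: the rational factor from the Drinfeld relation must be summed over the spectral parameters labelling the different $\wh{\sw}_{2,i}$-constituents of the submodule $U_{q_i}(\wh{\sw}_{2,i}) v_{w'}$, and its zeros and poles must be matched, with correct multiplicities, against the shifts in $a$ and the powers prescribed by the Cartan entries $C_{j,i}$ in \eqref{Ai}. Once this is done, the multiplicity-one assertion follows: any other vector in $L(m)$ with $\ell$-weight $T_w(m)$ would have $\g$-weight $w(\la)$, which has multiplicity one in ${\mc V}^q_\la$; and the Frenkel-Mukhin containment $\chi_q(L(m))\subset m(1+\ZZ[A_{i,a}^{-1}])$, together with the extremal shape of $T_w(m)/m$ (no further $A^{-1}$ factor can be absorbed while preserving the total weight $w(\la)-\la$), rules out contributions from the smaller summands ${\mc V}^q_\mu$, $\mu<\la$, of Lemma \ref{Winv}.
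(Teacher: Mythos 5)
The paper does not actually prove this statement: Theorem \ref{occurlm} is quoted from Chari \cite{C} and Chari--Moura \cite{CM}, so there is no internal proof to compare with. Your outline does follow the strategy of Chari's original argument: induction on $l(w)$, the extremal-vector recursion $v_{s_iw'}=f_i^{(\langle w'(\la),\alpha_i^\vee\rangle)}v_{w'}$ (legitimate because $l(s_iw')=l(w')+1$ forces $(w')^{-1}(\alpha_i)\in\Delta_+$, hence $\langle w'(\la),\alpha_i^\vee\rangle\geq 0$ by Lemma \ref{wn}), reduction to $U_{q_i}(\wh\sw_2)$ for the $i$th Drinfeld component, and the commutation relations between $\phi_j^+(z)$ and the $x_{i,r}^-$ for $j\neq i$.

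The gap is that your step (b), which you yourself single out as ``the main obstacle,'' is precisely the content of the theorem and is not carried out. Two things must be supplied there. First, the eigenvalue of $\phi_j^+(z)$ on $f_i^{(k)}v_{w'}$ cannot be computed by conjugating past the single mode $x_{i,0}^-$ raised to the $k$th power, because the Drinfeld relation mixes modes; one must first identify the $U_{q_i}(\wh\sw_2)$-cyclic module generated by $v_{w'}$ as a quotient of a Weyl module whose Drinfeld polynomial is read off from the $Y_{i,\bullet}$-part of $T_{w'}(m)$ (which is $i$-dominant since $x_{i,r}^+v_{w'}=0$ by extremality), locate $v_w$ in its one-dimensional bottom weight space, and only then determine the spectral parameters at which the rational factor from the relation is evaluated. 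Second, the resulting product of rational functions must be matched, zero by zero and pole by pole, against the $Y_{j,\bullet}$-factors of $A_{i,aq_i}^{-1}$ in \eqref{Ai}, including the shifts $aq^{\pm1}$, $aq^{\pm2}$ dictated by $C_{j,i}$; this bookkeeping is exactly Chari's computation and is the part that cannot be waved through. By contrast, your multiplicity-one argument is more complicated than necessary: since $w(\la)$ is an extremal weight of ${\mc V}^q_\la$ and is not a weight of any ${\mc V}^q_\mu$ with $\mu<\la$, the $\g$-weight space $L(m)_{w(\la)}$ is already one-dimensional (as noted in Section \ref{secextm}), and the $\ell$-weight space of $\ell$-weight $T_w(m)$ sits inside it; no appeal to the Frenkel--Mukhin containment is needed.
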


Recall the set ${\mc P}_{\omega_i} = W \cdot \omega_i \subset P$. By
Lemma \ref{wn},(ii), ${\mc P}_{\omega_i} \simeq W/W_i$, where $W_i$ is
the subgroup of $W$ generated by $s_j, j \neq i$.

  \begin{lem}    \label{welldef}
For fixed $i \in I$ and $a \in \C^\times$, the monomial $T_w(Y_{i,a}),
w \in W$, only depends on the class of $w$
in $W/W_i$. Hence there is a one-to-one correspondence
$$
\{ T_w(Y_{i,a}) \}_{w\in W} \simeq {\mc P}_{\omega_i} \qquad
T_w(Y_{i,a}) \leftrightarrow w(\omega_i).
$$
  \end{lem}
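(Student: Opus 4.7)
The plan is to read off both claims from Chari's Theorem \ref{occurlm} applied to the fundamental representation $V=L(Y_{i,a})$, whose $\g$-weight is $\varphi(Y_{i,a})=\omega_i$. By that theorem, $T_w(Y_{i,a})$ is the $\ell$-weight of the extremal vector $v_w \in V$, which, as recalled in the paragraph preceding Theorem \ref{occurlm}, lies in the irreducible $U_q(\g)$-submodule ${\mc V}^q_{\omega_i} \subset V$ generated by the highest weight vector and has weight $w(\omega_i)$.

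For the well-definedness half, the key input is that the extremal weight $w(\omega_i)$ has multiplicity one in ${\mc V}^q_{\omega_i}$; this is classical, and it is in fact the one-dimensionality already recorded in the paragraph preceding Theorem \ref{occurlm}. Hence $v_w$ is pinned down up to a scalar by its weight $w(\omega_i)$ alone. By Lemma \ref{wn}(2), $w(\omega_i)=w'(\omega_i)$ whenever $wW_i=w'W_i$, so $v_w$ and $v_{w'}$ are proportional and carry the same $\ell$-weight. Since $\ell$-weights are invariants of the line through an $\ell$-weight vector, this forces $T_w(Y_{i,a})=T_{w'}(Y_{i,a})$. We therefore obtain a well-defined surjection $W/W_i \twoheadrightarrow \{T_w(Y_{i,a})\}_{w\in W}$ sending $wW_i \mapsto T_w(Y_{i,a})$.

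For the injectivity that upgrades this to the asserted one-to-one correspondence with $\mathcal{P}_{\omega_i}$, I would verify directly that $\varphi(T_w(Y_{i,a}))=w(\omega_i)$. Indeed $\varphi$ satisfies $\varphi(Y_{j,b})=\omega_j$ and $\varphi(A_{j,b})=\alpha_j$, so formula \eqref{TiYj} gives $\varphi(T_j(Y_{k,b}))=\omega_k=s_j(\omega_k)$ for $k\neq j$ and $\varphi(T_j(Y_{j,b}))=\varphi(Y_{j,b} A_{j,bq_j}^{-1})=\omega_j-\alpha_j=s_j(\omega_j)$. Iterating along any reduced decomposition of $w$ then yields $\varphi(T_w(Y_{i,a}))=w(\omega_i)$. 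Thus $T_w(Y_{i,a})=T_{w'}(Y_{i,a})$ implies $w(\omega_i)=w'(\omega_i)$, which by Lemma \ref{wn}(2) gives $wW_i=w'W_i$. Combined with the previous surjectivity, this delivers the bijection $\mathcal{P}_{\omega_i}\simeq \{T_w(Y_{i,a})\}_{w\in W}$, $w(\omega_i)\leftrightarrow T_w(Y_{i,a})$. No serious obstacle is expected here: the argument is a packaging of Theorem \ref{occurlm} with the multiplicity-one property of extremal weights in an irreducible $U_q(\g)$-module, plus the elementary identification $\varphi\circ T_j=s_j\circ \varphi$ on monomials.
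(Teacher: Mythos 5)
Your proof is correct, but it takes a genuinely different route from the paper. The paper's own proof is a one-line citation of Proposition 3.1 of \cite{C}, which directly asserts that $T_w(Y_{i,a})$ depends only on $w(\omega_i)$. You instead re-derive the well-definedness from the representation-theoretic interpretation: Theorem \ref{occurlm} identifies $T_w(Y_{i,a})$ as the $\ell$-weight of the extremal vector $v_w$ in the fundamental representation $L(Y_{i,a})$, and since the weight space of weight $w(\omega_i)$ is one-dimensional (as recorded in the paragraph preceding Theorem \ref{occurlm}), the line through $v_w$ --- hence its $\ell$-weight --- depends only on $w(\omega_i)$, i.e.\ only on $wW_i$ by Lemma \ref{wn}(2). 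This is a valid and non-circular packaging of facts the paper has already established. You also make explicit something the paper leaves implicit, namely the injectivity needed for the stated one-to-one correspondence: your verification that $\varphi\circ T_j = s_j\circ\varphi$ on the generators of $\Yim$ (hence $\varphi(T_w(Y_{i,a}))=w(\omega_i)$ by multiplicativity and iteration) shows that distinct points of ${\mc P}_{\omega_i}$ yield distinct monomials. The trade-off is that the paper's argument is shorter and works purely at the level of the braid group action on $\Yim$, while yours is self-contained relative to the material recalled in Section \ref{secextm} and is more transparent about why the correspondence is a bijection rather than merely a well-defined surjection.
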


  \begin{proof} It follows from Proposition 3.1 in \cite{C} 
		that $T_w(Y_{i,a})$ depends only on $w(\omega_i) \in
                P$.
  \end{proof}
    
Lemmas \ref{welldef} and \ref{wn},(3) imply that the following
notation is well-defined:
\begin{equation}    \label{Ywia}
  Y_{w(\omega_i),a} := T_w(Y_{i,a}).
\end{equation}

\begin{example} (i) For $w = e$, $Y_{\omega_i,a} = Y_{i,a}$.

(ii) Let $w = s_i$ for some $i\in I$. For $k\in I$ and
  $a\in\mathbb{C}^\times$, we have
$$T_i(Y_k) = Y_{s_i(\omega_k),a} = Y_{k,a} A_{i,aq_i}^{-\delta_{i,k}}.$$

(iii)  By \cite{Fre2},
we have $Y_{w_0(\omega_i),a} = Y_{\overline{i},aq^{d h^\vee}}^{-1}$.
\end{example}

It is well-known that the subgroup $\bigoplus_{i\in I} \mathbb{Z} (d +
1 - d_i) \omega_i \subset P$ is stable under the action of the
Weyl group $W$. Similarly, we obtain the following result. For $i\in I$ and
$a\in\mathbb{C}^\times$  let us set
\begin{align}\label{wia} W_{i,a} = \begin{cases} Y_{i,a}&\text{ if $d_i = d$,}
\\ Y_{i,aq^{-1}}Y_{i,aq}&\text{ if $ d_i = d - 1$,} \\Y_{i,aq^{-2}}Y_{i,a}Y_{i,aq^2}&\text{ if $d_i = d - 2$.} \end{cases}\end{align}

\begin{lem} The subring
$$\mathbb{Z}[W_{i,a}^{\pm 1}]_{i\in I, a\in\mathbb{C}^\times} \subset
  \mathbb{Z}[Y_{i,a}^{\pm 1}]_{i\in I, a\in\mathbb{C}^\times} = \Yim$$
is stable under the action of the operators $T_w$, $w\in W$.
\end{lem}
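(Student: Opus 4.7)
The plan is to verify stability under each braid group generator $T_i$, $i\in I$; stability under every $T_w = T_{i_1}\cdots T_{i_k}$ then follows, since $T_w$ is a product of generators.

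If $j\neq i$, the relation $T_i(Y_{j,b}) = Y_{j,b}$ gives $T_i(W_{j,a}) = W_{j,a}$, because $W_{j,a}$ is a monomial involving only the variables $Y_{j,?}$. The content of the lemma is therefore the computation of $T_i(W_{i,a})$.

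I would write $W_{i,a} = \prod_{s\in S_i}Y_{i,aq^s}$, where $S_i = \{-(d-d_i),\,-(d-d_i)+2,\dots,\,d-d_i\}$, and apply $T_i(Y_{i,b}) = Y_{i,b}\,A_{i,bq_i}^{-1}$ factor-wise, expanding $A_{i,bq_i}$ using \eqref{Ai}. The $Y_{i,?}$-contributions telescope to $W_{i,aq^{2d_i}}^{-1}$. For each neighbor $j$ of $i$ in the Dynkin diagram, the $Y_{j,?}$-factors that appear have spectral parameters of the form $aq^{s+d_i+\delta}$, with $s\in S_i$ and $\delta$ ranging over a subset $\Delta_{i,j}\subset\Z$ of size $|C_{j,i}|$ (explicitly, $\{0\}$, $\{-1,1\}$, or $\{-2,0,2\}$). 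The remaining task is to check that the multiset of shifts $\{s+d_i+\delta:s\in S_i,\,\delta\in\Delta_{i,j}\}$ partitions into arithmetic progressions of length $d-d_j+1$ and step $2$, each of which matches exactly the set of shifts appearing in some $W_{j,c}$.

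The main obstacle is the bookkeeping of these $q$-shifts, which is handled by a short case analysis on $d_i\in\{d,d-1,d-2\}$ and on the type of each neighbor. The underlying arithmetic input is the identity $(d-d_i+1)|C_{j,i}| = (d-d_j+1)\,n_{i,j}$, $n_{i,j}\in\Z_{\geq 0}$, which is exactly the condition that $\bigoplus_k\Z(d+1-d_k)\omega_k$ be $W$-stable (recalled just before the lemma, in the form $s_i((d-d_i+1)\omega_i)\in\bigoplus_k\Z(d-d_k+1)\omega_k$). A direct computation then yields, for example, when $d_i = d$, the clean formula
\[ T_i(W_{i,a}) = W_{i,aq^{2d}}^{-1}\prod_{\{j\,:\,C_{j,i}\neq 0\}} W_{j,aq^d}, \]
with analogous closed-form expressions in the cases $d_i = d-1$ and $d_i = d-2$, each exhibiting $T_i(W_{i,a})$ as a Laurent monomial in the $W_{k,c}$'s.
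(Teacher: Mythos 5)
Your proposal is correct and takes essentially the same route as the paper: reduce to the generators $T_i$, observe that $T_i$ fixes $W_{j,a}$ for $j\neq i$, and verify by an explicit case analysis on $d_i$ and the neighbor types that $T_i(W_{i,a})$ is a Laurent monomial in the $W_{k,c}$'s (the product of $A_{i,?}$'s you expand factor-wise is exactly the monomial $U_{i,aq_i}$ the paper introduces and checks "by inspection"). Your closed formula $T_i(W_{i,a}) = W_{i,aq^{2d}}^{-1}\prod_{j:C_{j,i}\neq 0}W_{j,aq^d}$ in the case $d_i=d$ is consistent with the paper's expression $W_{i,a}U_{i,aq_i}^{-1}$.
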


\begin{proof} It suffices to check this for the operators $T_i, i \in
  I$. We have
$$T_i(W_{j,a}) = W_{j,a} U_{i,aq_i}^{-\delta_{i,j}}$$ where
\begin{align} U_{i,a} = \begin{cases} A_{i,a}&\text{ if $d_i = d$,}
\\ A_{i,aq^{-1}}A_{i,aq}&\text{ if $d_i = d - 1$,} 
\\ A_{i,aq^{-2}}A_{i,a}A_{i,aq^2}&\text{ if $d_i = d - 2$.} \end{cases}\end{align}
By inspection of these formulas and the formulas for the monomials
$A_{i,a}$, we check that the $U_{i,a}$'s belong to
$\mathbb{Z}[W_{i,a}^{\pm 1}]_{i\in I, a\in\mathbb{C}^\times}$. This
completes the proof.
\end{proof}

In particular, we obtain the following result.

\begin{prop}\label{wextr} For any dominant monomial $m\in
  \mathbb{Z}[W_{i,a}]_{i\in I, a\in\mathbb{C}^\times}$,
the extremal $\ell$-weights of $L(m)$ belong to $\mathbb{Z}[W_{i,a}^{\pm 1}]_{i\in I, a\in\mathbb{C}^\times}$.
\end{prop}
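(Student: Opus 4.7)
The plan is to combine the previous lemma (stability of $\mathbb{Z}[W_{i,a}^{\pm 1}]_{i\in I, a\in\mathbb{C}^\times}$ under the operators $T_w$, $w \in W$) with Theorem \ref{occurlm}, which identifies the extremal $\ell$-weights of $L(m)$ with the monomials $T_w(m)$. Since the argument is essentially a one-line deduction, the main task is simply to set it up carefully.

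First I would invoke Theorem \ref{occurlm}: for the simple finite-dimensional $U_q(\wh\g)$-module $L(m)$ with $m \in {\mc M}^+$, the extremal $\ell$-weight attached to the extremal weight vector $v_w$ equals $T_w(m)$, and this monomial occurs with multiplicity one in $\chi_q(L(m))$. Thus the set of extremal $\ell$-weights of $L(m)$ is precisely $\{T_w(m) \mid w \in W\}$.

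Next I would use the hypothesis $m \in \mathbb{Z}[W_{i,a}]_{i\in I, a\in\mathbb{C}^\times}$, which is contained in the larger ring $\mathbb{Z}[W_{i,a}^{\pm 1}]_{i\in I, a\in\mathbb{C}^\times}$. By the lemma immediately preceding the proposition, this larger ring is stable under every $T_w$, $w \in W$. Therefore $T_w(m) \in \mathbb{Z}[W_{i,a}^{\pm 1}]_{i\in I, a\in\mathbb{C}^\times}$ for all $w \in W$, which is exactly the desired conclusion.

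There is no real obstacle here; the substance of the proposition is carried entirely by the preceding stability lemma, where the nontrivial check is that each $U_{i,a}$ (obtained by applying $T_i$ to $W_{j,a}$) lies in $\mathbb{Z}[W_{i,a}^{\pm 1}]$. The present proposition is then a formal corollary: the extremal $\ell$-weight parametrization via Chari's braid action, together with $T_w$-stability, immediately gives the result.
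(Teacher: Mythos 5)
Your proof is correct and is exactly the argument the paper intends: the proposition is stated as an immediate consequence of the preceding stability lemma, combined with Theorem \ref{occurlm} identifying the extremal $\ell$-weights of $L(m)$ with the monomials $T_w(m)$. You have simply made explicit the one-line deduction the paper leaves implicit.
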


It implies the following result which will also be useful for our purposes.

\begin{cor}\label{wextr2} Let $i,k\in I$ so that $C_{k,i} < -1$. For any $w\in W$, 
$Y_{w(\omega_i),1}$ is a Laurent monomial in the $W_{j,b}$, $j\in I$, $b\in\mathbb{C}^\times$.
\end{cor}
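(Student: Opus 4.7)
The plan is to combine Proposition \ref{wextr} with an elementary Cartan-matrix observation. The key point is that the hypothesis $C_{k,i} < -1$ forces $d_i = d$: indeed, the symmetrization identity $d_i C_{i,k} = d_k C_{k,i}$ combined with the fact that in a finite-type Cartan matrix at most one of $C_{i,k}, C_{k,i}$ can be strictly less than $-1$ forces $C_{i,k} = -1$, whence $d_i = |C_{k,i}| \, d_k \geq 2 d_k$. Case-checking the non-simply-laced types ($B_n, C_n, F_4, G_2$), this inequality is only consistent with $d_i = d$.

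Once we know $d_i = d$, the definition \eqref{wia} gives $W_{i,1} = Y_{i,1}$. In particular, $Y_{i,1}$ lies in the subring $\mathbb{Z}[W_{j,b}]_{j\in I, b\in\mathbb{C}^\times}$ and is a dominant monomial. By Theorem \ref{occurlm}, applied to $m = Y_{i,1}$, the monomial $T_w(Y_{i,1}) = Y_{w(\omega_i),1}$ occurs as the $\ell$-weight of the extremal weight vector $v_w$ in $L(Y_{i,1})$. Hence $Y_{w(\omega_i),1}$ is one of the extremal $\ell$-weights of $L(Y_{i,1})$.

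Now Proposition \ref{wextr}, applied to the dominant monomial $m = Y_{i,1} \in \mathbb{Z}[W_{j,b}]_{j\in I, b\in\mathbb{C}^\times}$, states that every extremal $\ell$-weight of $L(m)$ lies in $\mathbb{Z}[W_{j,b}^{\pm 1}]_{j\in I, b\in\mathbb{C}^\times}$. Combining these two points, $Y_{w(\omega_i),1}$ is a Laurent monomial in the $W_{j,b}$'s, which is exactly the claim.

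There is no serious obstacle here: the entire argument is a direct reduction to Proposition \ref{wextr}, with the only genuine input being the observation $d_i = d$. The main thing to verify carefully is that step, but it is routine and follows from the classification of rank-two subdiagrams of the Dynkin diagram.
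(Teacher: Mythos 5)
Your proof is correct and follows essentially the same route as the paper: the hypothesis $C_{k,i}<-1$ forces $\alpha_i$ to be a long root, so $d_i=d$ and $Y_{i,1}=W_{i,1}$, after which Proposition \ref{wextr} applied to $L(Y_{i,1})$ gives the claim. You merely spell out the details (the symmetrization argument for $d_i=d$ and the appeal to Theorem \ref{occurlm} identifying $Y_{w(\omega_i),1}$ as an extremal $\ell$-weight) that the paper's one-line proof leaves implicit.
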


\begin{proof} This follows from Proposition \ref{wextr} as $d_i$ is
  equal to the lacing number $d$ in this case, and so $Y_{i,1} = W_{i,1}$. 
\end{proof}

\section{Extended $TQ$-relations}    \label{TQ}

The plan of this section is as follows. In Section \ref{rec} we will
recall the generalized Baxter $TQ$-relations in the Grothendieck ring
of the category $\mathcal{O}$ that we established in \cite{FH}
(Theorem \ref{classtq}) and their $q$-character versions. Next, in
Section \ref{genChari} we will extend the Chari braid group action
\cite{C} from the ring $\Yim$ (see Section \ref{secextm} above) to a
larger ring $\Yim' \supset \Yim$. Then in Section \ref{esys} we will
state the Extended $TQ$-relations Conjecture \ref{exttq}. 
  Extended $TQ$-relations are labeled by elements of the Weyl group
  $W$ of $\g$. The relation corresponding to $w \in W$ involves
  classes of simple representations from the category $\OO$ whose
  highest $\ell$-weights are labeled by the weights $w(\omega_i), i \in
  I$, and are obtained using the generalized Chari's action which we
construct in Section \ref{genChari}. In Section
  \ref{simplerefl} we will consider in more detail the case $w=s_i$. In
  Section \ref{wga} we will motivate the Weyl group action introduced
  in \cite{FH3} by the results of Section
  \ref{simplerefl}.  Finally, in Section \ref{add} we will introduce
some homomorphisms that we will use in the next section.

\subsection{Recollections}    \label{rec}

We recall the $TQ$-relations which were conjectured in
\cite{Fre} and proved in \cite{FH}. Recall the ring $\Yim =
\mathbb{Z}[Y_{i,a}^{\pm 1}]_{i\in I, a\in\mathbb{C}^\times}$.

Denote by $\wt{K}_0(\OO)$ the localization of the ring
  $K_0(\OO)$ with respect to the elements $[L(\Psib_{i,a})]$, $i \in I,
  a \in \C^\times$. Define a homomorphism
  \begin{equation}    \label{loc1}
  {\mc B}: \Yim \to \wt{K}_0(\OO)
  \end{equation}
  by the formula
  \begin{equation}    \label{loc11}
  Y_{i,a} \mapsto [\omega_i]
  \dfrac{[L(\Psib_{i,aq_i^{-1}})]}{[L(\Psib_{i,aq_i})]}.
  \end{equation}

\begin{thm}\label{classtq}\cite{FH}    \label{thmFH}
Let $V$ be a finite-dimensional simple representation of
$U_q(\wh\g)$ and $\chi_q(V) \in \Yim$ its
$q$-character. Then ${\mc B}(\chi_q(V)) = [V]$ in
$\wt{K}_0(\OO)$.
\end{thm}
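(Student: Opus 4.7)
The identity holds in the localized ring $\wt{K}_0(\OO)$, so my first move is to clear denominators and reduce to an equality in $K_0(\OO)$. Writing $V=L(m)$ with $m\in\mc M^+$ and $\chi_q(V)=\sum_{m'}c_{m'}m'$, each monomial $m'=\prod_{j,b}Y_{j,b}^{u_{j,b}(m')}$ has image $[\varphi(m')]\prod_{j,b}\bigl(\Psib_{j,bq_j^{-1}}\Psib_{j,bq_j}^{-1}\bigr)^{u_{j,b}(m')}$ in $\mfr$ under ${\mc M}\hookrightarrow\mfr$. Choose a product $\Psib=\prod_{j,b}\Psib_{j,b}^{N_{j,b}}$ of positive prefundamental highest $\ell$-weights, with $N_{j,b}$ large enough that $\iota(m')\Psib\in\mfr$ for every $m'$ appearing in $\chi_q(V)$. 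Multiplying the conjectured identity through by $[L(\Psib)]$, the claim becomes
\[
[V]\cdot[L(\Psib)]\;=\;\sum_{m'\in\chi_q(V)}c_{m'}\,[\varphi(m')]\cdot[L(\iota(m')\Psib)]
\]
in $K_0(\OO)$, together with a factorization $[L(\Psib)]=\prod[L(\Psib_{j,b})]^{N_{j,b}}$.

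This reduced identity rests on two ingredients. First, \emph{fusion of positive prefundamental representations}: the tensor product $L(\Psib_{j_1,b_1})\otimes\cdots\otimes L(\Psib_{j_k,b_k})$ is simple and isomorphic to $L(\prod_s\Psib_{j_s,b_s})$. This follows from a highest $\ell$-weight argument: each $L(\Psib_{j,b})$ lies in $\OO$ with all weights $\leq\varphi(\Psib_{j,b})$, so the tensor product is generated by the (unique, up to scalar) highest $\ell$-weight vector. Applied to $\Psib$ and to the "positive part" of each $\iota(m')\Psib$, this yields the required factorizations of $[L(\Psib)]$ and $[L(\iota(m')\Psib)]$ as monomials in the $[L(\Psib_{k,c})]$'s. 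Second, a \emph{tensor product decomposition} in $K_0(\OO)$:
\[
[V\otimes L(\Psib)]\;=\;\sum_{m'\in\chi_q(V)}c_{m'}\cdot[L(\iota(m')\Psib)].
\]
The natural approach is to realize each $L(\Psib_{j,b})$ as an asymptotic limit, up to a normalization by one-dimensional modules, of Kirillov--Reshetikhin modules $W^{(j)}_{k,a}$ as $k\to\infty$, as in \cite{HJ}. For the finite-dimensional factor $W^{(j)}_{k,a}$ one has the classical identity $\chi_q(V\otimes W^{(j)}_{k,a})=\chi_q(V)\,\chi_q(W^{(j)}_{k,a})$, and taking the limit in a suitably renormalized form produces the decomposition above in $K_0(\OO)$. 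Dividing through by $[L(\Psib)]=\prod[L(\Psib_{j,b})]^{N_{j,b}}$ in $\wt{K}_0(\OO)$ and regrouping factors then gives $[V]=\sum_{m'}c_{m'}{\mc B}(m')={\mc B}(\chi_q(V))$.

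\textbf{Main obstacle.} The principal difficulty is the tensor product decomposition: controlling the Jordan--H\"older multiplicities of $V\otimes L(\Psib)$ and ruling out any simple factor whose highest $\ell$-weight is not of the form $\iota(m')\Psib$ for some $m'$ appearing in $\chi_q(V)$. A priori, the weight-space filtration of $V\otimes L(\Psib)$ could produce spurious highest $\ell$-weight vectors or wrong multiplicities; matching them precisely to $c_{m'}$ requires the asymptotic KR construction of \cite{HJ}, together with a careful induction on the partial weight order to identify the simple subquotients at each stage. Once this is in place, the rest is bookkeeping with the homomorphism ${\mc B}$ and the fusion identity for positive prefundamental representations.
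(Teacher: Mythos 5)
Your overall strategy (clear denominators, reduce to a Jordan--H\"older decomposition of $V\otimes L(\Psib)$ in $K_0(\OO)$, and invoke simplicity of tensor products of positive prefundamental representations) could be made to work, but as written both of its pillars are gaps. The justification you give for the fusion statement is invalid: knowing that all weights of $L(\Psib_{j_1,b_1})\otimes\cdots\otimes L(\Psib_{j_k,b_k})$ are bounded above by the top weight, and that the top weight space is one-dimensional, does not imply that the module is generated by the top vector (compare tensor products of Verma modules), and even being of highest $\ell$-weight would not imply simplicity. Simplicity of such tensor products is a genuine theorem of \cite{FH}, proved by quite different means. More importantly, the ``tensor product decomposition'' you correctly single out as the main obstacle carries the entire content of the theorem, and your plan to obtain it through renormalized limits of Kirillov--Reshetikhin modules amounts to reproving from scratch the one hard input that is already available in the paper, namely Theorem \ref{formuachar}.

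Indeed, the paper's proof is a two-line formal argument from Theorem \ref{formuachar}: since $\chi_q(L(\Psib_{i,a})) = [\Psib_{i,a}]\,\chi(L_{i,a}^+)$ and the ordinary character $\chi(L_{i,a}^+)$ is independent of $a$ (the modules $L_{i,a}^+$ are related by the weight-preserving twists $\tau_a$), the substitution $Y_{i,a}\mapsto[\omega_i]\,\chi_q(L(\Psib_{i,aq_i^{-1}}))/\chi_q(L(\Psib_{i,aq_i}))$ coincides with $Y_{i,a}\mapsto[\omega_i]\Psib_{i,aq_i^{-1}}\Psib_{i,aq_i}^{-1}$, which is the identity substitution $Y_{i,a}\mapsto Y_{i,a}$ by Definition \ref{defY}; then injectivity of $\chi_q$ (Proposition \ref{inject}) transports the resulting tautology ${\mc B}'(\chi_q(V))=\chi_q(V)$ to the relation ${\mc B}(\chi_q(V))=[V]$ in $\wt{K}_0(\OO)$. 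The same two ingredients (Theorem \ref{formuachar}, multiplicativity and injectivity of $\chi_q$) also yield your decomposition of $[V\otimes L(\Psib)]$ immediately, with no induction on the weight order and no analysis of spurious highest $\ell$-weight vectors. So your route is salvageable only by importing the very result that makes the detour unnecessary; as it stands, the central step is missing.
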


By clearing the denominators in the relation ${\mc
  B}(\chi_q(V)) = [V]$ we obtain an algebraic relation in
$K_0(\OO)$. This is the $TQ$-relation in $K_0(\OO)$.

The following is an equivalent statement. Let $\wt{\mc E}_\ell$ be the
localization of the ring ${\mc E}_\ell$ with respect to the elements
$\chi_q(L(\Psib_{i,a}))$, $i \in I, a \in \C^\times$. Define a
homomorphism
  \begin{equation}    \label{loc1c}
  {\mc B}': \Yim \to \wt{\mc E}_\ell
  \end{equation}
  by the formula
  \begin{equation}    \label{loc1c1}
  Y_{i,a} \mapsto [\omega_i]
    \dfrac{\chi_q(L(\Psib_{i,aq_i^{-1}}))}{\chi_q(L(\Psib_{i,aq_i}))}.
  \end{equation}
  Recall that $\Yim$ is a subring of ${\mc E}_\ell$. Hence it is a
  subring of $\wt{\mc E}_\ell$.
  
\begin{thm}\cite{FH}    \label{corTQ}
Let $V$ be a finite-dimensional simple representation of
$U_q(\wh\g)$ and $\chi_q(V) \in \Yim$ its
$q$-character. Then we have ${\mc B}'(\chi_q(V)) = \chi_q(V)$ in
$\wt{\mc E}_\ell$.
\end{thm}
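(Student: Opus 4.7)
The plan is to deduce Theorem \ref{corTQ} as a direct consequence of Theorem \ref{classtq} by applying the $q$-character homomorphism to the relation $\mathcal{B}(\chi_q(V)) = [V]$ in $\wt{K}_0(\OO)$. The key observation is that the two theorems are related by the functor $\chi_q$, suitably extended to localizations.

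First, I would extend the ring homomorphism $\chi_q : K_0(\OO) \to \mathcal{E}_\ell$ of Proposition \ref{inject} to a homomorphism
\[
\wt{\chi_q} : \wt{K}_0(\OO) \longrightarrow \wt{\mc E}_\ell.
\]
By the universal property of localization, this extension exists uniquely provided the elements $[L(\Psib_{i,a})]$, $i \in I$, $a \in \C^\times$, which we have inverted on the left, are sent by $\chi_q$ to invertible elements on the right. But this holds by construction: $\chi_q([L(\Psib_{i,a})]) = \chi_q(L(\Psib_{i,a}))$, and these are precisely the elements we have inverted in forming $\wt{\mc E}_\ell$. Since $\chi_q$ is injective by Proposition \ref{inject}, the extension $\wt{\chi_q}$ is also injective.

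Next, I would verify on the generators $Y_{i,a}$ of $\Yim$ that the diagram
\[
\wt{\chi_q} \circ \mathcal{B} = \mathcal{B}'
\]
commutes as homomorphisms $\Yim \to \wt{\mc E}_\ell$. Indeed, applying $\wt{\chi_q}$ to \eqref{loc11} gives
\[
\wt{\chi_q}(\mathcal{B}(Y_{i,a})) = \wt{\chi_q}\!\left([\omega_i] \dfrac{[L(\Psib_{i,aq_i^{-1}})]}{[L(\Psib_{i,aq_i})]}\right) = [\omega_i] \dfrac{\chi_q(L(\Psib_{i,aq_i^{-1}}))}{\chi_q(L(\Psib_{i,aq_i}))} = \mathcal{B}'(Y_{i,a}),
\]
where we used that $[\omega_i]$, viewed as the class of a one-dimensional representation, has $q$-character equal to $[\omega_i] \in \mathcal{E}_\ell$ under our identification. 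Since both $\wt{\chi_q} \circ \mathcal{B}$ and $\mathcal{B}'$ are ring homomorphisms from $\Yim$ to $\wt{\mc E}_\ell$ agreeing on the multiplicative generators $Y_{i,a}^{\pm 1}$, they coincide.

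Finally, I would apply $\wt{\chi_q}$ to both sides of the identity $\mathcal{B}(\chi_q(V)) = [V]$ from Theorem \ref{classtq}. The left-hand side becomes $\wt{\chi_q}(\mathcal{B}(\chi_q(V))) = \mathcal{B}'(\chi_q(V))$ by the commutativity just established, while the right-hand side becomes $\chi_q(V)$. This yields $\mathcal{B}'(\chi_q(V)) = \chi_q(V)$ in $\wt{\mc E}_\ell$, as required. There is no real obstacle here; the only point requiring care is checking that localization is well-defined, i.e.\ that $\chi_q(L(\Psib_{i,a}))$ is not a zero-divisor in $\mathcal{E}_\ell$, which follows because by Theorem \ref{formuachar} each such $q$-character is a nonzero formal sum with leading term $[\Psib_{i,a}]$ and hence is invertible in the completed ring $\mathcal{E}_\ell$.
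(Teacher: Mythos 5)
Your deduction is formally valid, but it runs in exactly the opposite direction from the paper, and relative to the paper's own logic it is circular. The paper proves Theorem \ref{corTQ} \emph{first} and directly: by Theorem \ref{formuachar} one has $\chi_q(L(\Psib_{i,a})) = [\Psib_{i,a}]\,\chi(L_{i,a}^+)$, and since the ordinary character $\chi(L_{i,a}^+)$ does not depend on the spectral parameter $a$, these factors cancel in the ratio defining ${\mc B}'(Y_{i,a})$, leaving $[\omega_i]\,\Psib_{i,aq_i^{-1}}\Psib_{i,aq_i}^{-1}$, which is $Y_{i,a}$ by Definition \ref{defY}. Thus ${\mc B}'$ is the identity on $\Yim \subset \wt{\mc E}_\ell$ and the statement is a tautology; Theorem \ref{classtq} is then \emph{deduced} from Theorem \ref{corTQ} via the injectivity of $\chi_q$ (Proposition \ref{inject}). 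You instead take Theorem \ref{classtq} as given and push it through the localized $q$-character map. That transfer argument is correct as far as it goes --- the extension $\wt{\chi_q}$ exists by the universal property of localization, and $\wt{\chi_q}\circ{\mc B} = {\mc B}'$ on the generators $Y_{i,a}^{\pm 1}$ --- so if one is allowed to quote Theorem \ref{classtq} as a black box from \cite{FH}, your argument succeeds. But it contains no independent mathematical content: the actual input (the prefundamental $q$-character formula of Theorem \ref{formuachar} and the $a$-independence of $\chi(L_{i,a}^+)$) never appears, and since the paper establishes \ref{classtq} only \emph{via} \ref{corTQ}, your proof cannot serve as the paper's proof of either statement.

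Two minor points. Your claim that $\wt{\chi_q}$ is injective does not follow automatically from injectivity of $\chi_q$ (localization need not preserve injectivity without further argument), but you never use it. Also, your closing remark is essentially right for a better reason than you give: $\chi_q(L(\Psib_{i,a})) = [\Psib_{i,a}]\,\chi(L_{i,a}^+)$ is already invertible in ${\mc E}_\ell$, since $\chi(L_{i,a}^+)$ has leading coefficient $1$ and its inverse converges in the completed ring ${\mc E}$; in particular it is not a zero divisor.
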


By clearing the denominators in the relation ${\mc B}'(\chi_q(V)) =
\chi_q(V)$ we obtain an algebraic relation in ${\mc E}_\ell$. This is
the $TQ$-relation expressed in terms of $q$-characters.

For readers convenience, we recall the proof of these two theorems
given in \cite{FH}.

\medskip

\noindent{{\em Proof of Theorems \ref{classtq} and \ref{corTQ}.}}
Recall from Theorem \ref{formuachar} that 
\begin{equation}    \label{pref}
    \chi(L(\Psib_{i,a})) = \chi_q(L_{i,a}^+) =
    \Psib_{i,a} \chi(L_{i,a}^+).
\end{equation}
    Therefore the relation of Theorem \ref{corTQ} can be described as
    follows: we replace each $Y_{i,a}, i \in I$,
      appearing in $\chi_q(V)$ with
$[\omega_i] \chi_q(L(\Psib_{i,aq_i^{-1}}))(\chi_q(L(\Psib_{i,aq_i})))^{-1}$
    and equate the result with $\chi_q(V)$. But according to formula
    \eqref{pref}, this replacement is equivalent to replacing each
    $Y_{i,a}, i \in I$, with $[\omega_i]
    \Psib_{i,aq_i^{-1}}\Psib_{i,aq_i}^{-1}$. According to
    Definition \ref{defY}, the latter ratio is equal to $Y_{i,a}$, so
    this replacement tautologically gives $\chi_q(V)$. Thus, we
    obtain the statement
    of Theorem \ref{corTQ}. Recalling the injectivity of the
    $q$-character homomorphism (Proposition \ref{inject}), we obtain
    the $TQ$-relations of Theorem \ref{classtq}.\qed

\begin{rem}
We had noted in \cite[Remark 4.10]{FH} that there is an analogous
$TQ$-relation in terms of the simple representations
$L(\Psib_{i,a}^{-1})$. Both relations appear as particular cases
of our extended $TQ$-relations below, corresponding to
  the identity element and the longest element of the Weyl group,
  respectively.
\end{rem}

\subsection{Generalization of Chari's braid group
  action}    \label{genChari}

Recall the Chari braid group action on the ring $\Yim$, which we
described in Section \ref{secextm}. Introduce the following extension
$\mathcal{Y}'$ of the ring $\Yim$:
\begin{equation}    \label{Yprime}
\mathcal{Y}' :=
\mathbb{Z}[\Psib_{i,a}^{\pm 1}]_{i\in I,a\in\mathbb{C}^\times} \otimes_{\Z} \Z(P)
= \mathbb{Z}[\Psib_{i,a}^{\pm 1}, y_j^{\pm 1}]_{i\in I,a\in\mathbb{C}^\times;j \in I}
\supset \mathcal{Y},
\end{equation}
where we have used formula \eqref{ZP}.

We are going to define ring automorphisms $T'_i: \Yim' \to \Yim', i
\in I$, satisfying the braid group relations, whose restrictions to
$\Yim \subset \Yim'$ are the Chari braid group operators $T_i, i \in I$,
on $\Yim$ given by formula \eqref{TiYj}. Thus, we will obtain an
extension of the Chari braid group action from $\Yim$ to $\Yim'$.

Recall the elements $\wt{\Psib}_{i,a} \in \Yim', i \in I$,
which we introduced in \cite{FH2} :
  \begin{multline}\label{psfh2}\wt{\Psib}_{i,a} :=
    \\ \Psib_{i,a}^{-1} \left(\prod_{j\in I,C_{i,j} =
      -1}\Psib_{j,aq_i}\right) \left(\prod_{j\in I, C_{i,j} =
  -2}\Psib_{j,a}\Psib_{j,aq^2}\right)\left(\prod_{j\in I, C_{i,j} =
      -3}\Psib_{j,aq^{-1}}\Psib_{j,aq}\Psib_{j,aq^3}\right).
  \end{multline}

Define a ring automorphism $\sigma$ of $\Yim'$ by the formula
\begin{equation}    \label{sigm}
\sigma(\Psib_{i,a}) = \Psib_{i,a^{-1}}^{-1}, \qquad i\in I, \quad
a\in\mathbb{C}^\times; \qquad \sigma([\omega]) = [\omega], \quad
\omega \in P.
\end{equation}

Define a ring automorphism $T'_i: \Yim' \to \Yim', i \in I$, by the
formulas
\begin{multline}    \label{TiPj}
T'_i(\Psib_{i,a}) = \sigma(\wt{\Psib}_{i,a^{-1}q_i^{-2}}^{-1}) =
\\ \Psib_{i,aq_i^2}^{-1} \left(\prod_{j\in I,C_{i,j} =
      -1}\Psib_{j,aq_i}\right) \left(\prod_{j\in I, C_{i,j} =
  -2}\Psib_{j,a}\Psib_{j,aq^2}\right)\left(\prod_{j\in I, C_{i,j} =
  -3}\Psib_{j,aq^{-1}}\Psib_{j,aq}\Psib_{j,aq^3}\right).
 \end{multline}
\begin{equation}
T'_i(\Psib_{j,a}) = \Psib_{j,a}, \quad j \neq i; \qquad
T'_i[\omega] = [s_i(\omega)] , \quad \omega \in P.
\end{equation}

This is indeed an automorphism, with the inverse given by 
$$(T'_i)^{-1}(\Psib_{i,a}) = \wt{\Psib}_{i,aq_i^{-2}}, \qquad
(T'_i)^{-1}(\Psib_{j,a}) = \Psib_{j,a}, \quad j \neq i; \qquad
(T'_i)^{-1}[\omega] = [s_i(\omega)] , \quad \omega \in P.$$

\begin{thm}
(1)  The operators $T'_i, i \in i$, generate an action of the braid group
  associated to $\g$ on $\Yim'$.

(2) The restriction of $T'_i$ to $\Yim \subset
\Yim'$ is equal to the Chari operator $T_i$ (see formula \eqref{TiYj}).
\end{thm}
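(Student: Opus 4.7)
Part (2) is a direct substitution. For $j\neq i$, the operator $T'_i$ fixes $\Psib_{j,\cdot}$ and $s_i(\omega_j)=\omega_j$, so $T'_i(Y_{j,a}) = Y_{j,a} = T_i(Y_{j,a})$. For $j=i$, I would use the embedding $Y_{i,a} = [\omega_i]\Psib_{i,aq_i^{-1}}\Psib_{i,aq_i}^{-1}$ to expand $T'_i(Y_{i,a}) = [s_i(\omega_i)]\, T'_i(\Psib_{i,aq_i^{-1}})\, T'_i(\Psib_{i,aq_i})^{-1}$ via formula \eqref{TiPj}; separately I expand $T_i(Y_{i,a}) = Y_{i,a}A_{i,aq_i}^{-1}$ by substituting the definition of $A_{i,b}$ and re-applying the same embedding to each $Y_{k,c}$. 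Straightforward cancellation of the "middle" $\Psib_{i,\cdot}$ factors shows the two expressions coincide.

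For part (1), one first verifies that each $T'_i$ is an automorphism using the explicit candidate inverse $(T'_i)^{-1}(\Psib_{i,a}) = \wt{\Psib}_{i,aq_i^{-2}}$ given in the excerpt. A direct calculation using \eqref{psfh2} confirms that $T'_i$ and $(T'_i)^{-1}$ are two-sided inverses: the factors $\Psib_{j,\cdot}$ with $C_{i,j}<0$ produced by $T'_i(\Psib_{i,b}^{-1})$ cancel precisely against those coming from the remaining factors of $\wt{\Psib}_{i,b}$, leaving $\Psib_{i,a}$.

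To establish the braid relations $T'_iT'_jT'_i\cdots = T'_jT'_iT'_j\cdots$ with $m_{ij}$ factors on each side, it suffices to check them on the generators of $\Yim'$. On $[\omega]\in\Z[P]$, both sides act via the standard braid relation $s_is_j\cdots = s_js_is_j\cdots$ in $W$, so the relation holds. On $\Psib_{k,a}$ with $k\notin\{i,j\}$, both $T'_i$ and $T'_j$ act as the identity, so the relation is trivial. The non-trivial cases are $k=i$ and $k=j$. Here the crucial observation is that any factor $\Psib_{l,\cdot}$ with $l\notin\{i,j\}$ produced by $T'_i$ acting on a $\Psib_{i,\cdot}$ (or by $T'_j$ acting on a $\Psib_{j,\cdot}$) is fixed by every subsequent application of $T'_i$ or $T'_j$, since $l\neq i$ and $l\neq j$. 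Thus the computation separates: the "inert" factors with indices outside $\{i,j\}$ are produced at identifiable steps and pass through later operators unchanged, while the interactions among $\Psib_{i,\cdot}$ and $\Psib_{j,\cdot}$ are governed by the rank-two subsystem attached to $\{i,j\}$.

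The main obstacle is the case-by-case verification across the four rank-two Cartan types $C_{i,j}C_{j,i}\in\{0,1,2,3\}$, corresponding to $A_1\times A_1$, $A_2$, $B_2$, $G_2$. The first three are short; the $G_2$ case, with $m_{ij}=6$ and three $\Psib$ factors appearing in each of $T'_i(\Psib_{i,a})$ and $T'_j(\Psib_{j,a})$, requires careful bookkeeping of the spectral parameters in the six-fold composition on each side. One concrete way to organize it is to compute the intermediate expressions $(T'_iT'_j)^k(\Psib_{i,a})$ and $(T'_jT'_i)^k(\Psib_{i,a})$ for $k=1,\ldots,m_{ij}$ and verify that they agree at the final step. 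Note that Chari's braid relations on $\Yim$ \cite{C}, combined with part (2), only give agreement after restriction to $\Yim\subset\Yim'$; this is not enough to conclude the identity on $\Yim'$, because the embedding $\Yim\hookrightarrow\Yim'$ lands in ratios $\Psib_{i,aq_i^{-1}}\Psib_{i,aq_i}^{-1}$ and misses the individual $\Psib_{i,a}$'s, so a direct verification on each $\Psib_{k,a}$ is unavoidable.
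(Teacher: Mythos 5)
Your part (2) is fine and matches what the paper leaves as a direct substitution. For part (1) you take a genuinely different, and much heavier, route than the paper, and your closing assertion that ``a direct verification on each $\Psib_{k,a}$ is unavoidable'' is in fact false --- this is precisely the step where the paper has a trick you are missing. The paper observes that the braid word $B = (T_k')^{-1}(T_i')^{-1}(T_k')^{-1}T_i' T_k' T_i'$ (in the $C_{i,k}=C_{k,i}=-1$ case, and its analogues otherwise) fixes the ratio $\Psib_{j,aq_j^{-1}}\Psib_{j,aq_j}^{-1}$ for \emph{every} $a$, by Chari's theorem on $\Yim$ together with the action on $\Z(P)$; this gives $\Psib_{j,aq_j^{-1}}^{-1}B(\Psib_{j,aq_j^{-1}}) = \Psib_{j,aq_j}^{-1}B(\Psib_{j,aq_j})$, hence $\Psib_{j,aq_j^{2m}}^{-1}B(\Psib_{j,aq_j^{2m}})$ is independent of $m\in\Z$. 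Since this quantity is a Laurent monomial in the $\Psib_{l,aq^r}^{\pm 1}$ whose spectral parameters track $m$, independence of $m$ forces it to equal $1$, and the braid relation on each individual $\Psib_{j,a}$ follows with no rank-two case analysis at all. Your brute-force alternative --- explicit computation of the alternating words in types $A_2$, $B_2$, $G_2$, keeping track of the inert factors with indices outside $\{i,j\}$ --- would in principle also work (your observation that such factors are fixed by all subsequent applications of $T_i'$ and $T_j'$ is correct and is what localizes the problem to rank two), but the $G_2$ verification is substantial and you have not actually carried it out, so as written the hardest part of your argument is deferred rather than done. Either execute that bookkeeping in full or, better, adopt the telescoping argument above, which is what the paper does.
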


\begin{proof} It suffices to prove that the braid relations are
satisfied on the generators of $\Yim'$. Since the $T_i, i \in I$,
acting on $\Yim$, satisfy the braid relations by Chari's theorem
\cite{C}, we know that the $T_i', i \in I$, satisfy the braid
relations on the generators $Y_{j,a}, j \in I$, of $\Yim \subset
\Yim'$. This is also clear for the generators $y_j$, $j\in I$, of
$\Yim'$ because $T_i'$ acts on $\Z[y_j^{\pm 1}]_{j \in I} = \Z(P)$ in
the same way as $s_i \in W$ acts on $\Z(P)$.

Hence the braid relations $\mathcal{B}_{i,k}, i,k \in I$, are
satisfied on the monomials $\Psib_{j,aq_j^{-1}}\Psib_{j,aq_j}^{-1} =
Y_{j,a}[-\omega_i], j \in I$. It remains to show that they are also
satisfied on $\Psib_{j,a}^{\pm 1}, j \in I$.

Consider first the simplest non-trivial case: $C_{i,k} = C_{k,i} =
-1$. In this case $\mathcal{B}_{i,k}$ is the relation
$(T_k')^{-1}(T_i')^{-1}(T_k')^{-1}T_i' T_k' T_i' = \on{Id}$.
Since it is satisfied on $\Psib_{j,aq_j^{-1}}\Psib_{j,aq_j}^{-1}$, we have
$$(T_k')^{-1}(T_i')^{-1}(T_k')^{-1}T_i' T_k'
T_i'(\Psib_{j,aq_j^{-1}}\Psib_{j,aq_j}^{-1}) =
\Psib_{j,aq_j^{-1}}\Psib_{j,aq_j}^{-1}.$$
Therefore,
$$\Psib_{j,aq_j^{-1}}^{-1}(T_k')^{-1}(T_i')^{-1}(T_k')^{-1}T_i' T_k'
T_i'(\Psib_{j,aq_j^{-1}}) =
\Psib_{j,aq_j}^{-1}(T_k')^{-1}(T_i')^{-1}(T_k')^{-1}T_i' T_k' T_i'(\Psib_{j,aq_j^{-1}}).$$ 
This implies that
$\Psib_{j,aq_j^{2m}}^{-1}(T_k')^{-1}(T_i')^{-1}(T_k')^{-1}T_i' T_k'
T_i'(\Psib_{j,aq_j^{2m}})$ does not depend on $m \in \Z$.
But this expression is a Laurent monomial in
  $\Psib_{l,aq^r}^{\pm 1}, l \in
I, r \in \Z$. Therefore, it must be equal to $1$. Hence we obtain
that the braid relation
$\mathcal{B}_{i,k}$ holds on
$\Psib_{j,a}$ in this case. The braid relations for other values of $C_{i,k}$ and
$C_{k,i}$ are proved in a similar way.
\end{proof}

Recall the monomials $Y_{w(\omega_i),a} \in \Yim, w \in W,
  i \in I, a \in \C^\times$ introduced in Section \ref{secextm} (see
  equation \eqref{Ywia}):
$$
Y_{w(\omega_i),a} := T_w(Y_{i,a}).
$$
Here $T_w$ is the Chari operator on $\Yim$ corresponding to
$w$, viewed as an element of the braid group via its reduced
decomposition. According to Corollary \ref{wextr2}, if
  $C_{k,i} < - 1$, then $Y_{w(\omega_i),a}$ is a Laurent polynomial in
$W_{k,b}$.

Now we compute the $\ell$-weights $\sigma \circ T'_w \circ
\sigma(\Psib_{i,a}), w \in W$, where $T'_w$ is obtained from a reduced
decomposition of $w$ viewed as an element of the braid group and
$\sigma$ is the ring automorphism of $\Yim'$ introduced in (\ref{sigm}).

\begin{defi}\label{subs}
For $i\in I$, $a\in\mathbb{C}^\times$, and $w\in W$, define an
$\ell$-weight $\Psib_{w(\omega_i),a}$ by the following formulas.

The $\ell$-weight $\Psib_{w(\omega_i),1}$ is defined from the
factorization of $Y_{w(\omega_i),1}$ as a product of the variables 
$Y_{k,b}^{\pm 1}$ by replacing

$Y_{k,b}$ by $\Psib_{k,b^{-1}}$ if $d_i = d_k$,

$Y_{k,b}$ by $\Psib_{k,b^{-1}q}\Psib_{k,b^{-1}q^{-1}}$ if $C_{i,k} = -2$,

$Y_{k,b}$ by
  $\Psib_{k,b^{-1}q^2}\Psib_{k,b^{-1}}\Psib_{k,b^{-1}q^{-2}}$ if
$C_{i,k} = -3$,

$W_{k,b}$ by $\Psib_{k,b^{-1}}$ if $C_{k,i} < - 1$.

\noindent Finally, we set 
$$\Psib_{w(\omega_i),a} := \tau_a(\Psib_{w(\omega_i),1}).$$
\end{defi}

\begin{prop}\label{proptprime}
We have
$$\Psib_{w(\omega_i),a} = \sigma \circ T'_w \circ
\sigma(\Psib_{i,a}), \qquad w \in W,
$$
where $T'_w = T'_{i_1} T'_{i_2} \ldots T'_{i_k}$
for any reduced decomposition $w = s_{i_1} \ldots s_{i_k}$ and
$\sigma$ is given by formula \eqref{sigm}.
\end{prop}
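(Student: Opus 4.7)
The plan is to first reduce to the case $a=1$, then induct on the length $l(w)$, and finally verify the inductive step by comparing how $T_j$ acts on monomials in $\Yim$ with how $\sigma\circ T'_j\circ\sigma$ acts on their $\Psib$-substitutes produced by Definition \ref{subs}.

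Since the formulas defining each $T'_j$ are homogeneous in the spectral parameter, $\tau_a$ commutes with $T'_j$, and hence with $T'_w$. Combined with the identity $\tau_a\circ\sigma = \sigma\circ\tau_{a^{-1}}$, this yields
$$\sigma\circ T'_w\circ\sigma(\Psib_{i,a}) \;=\; \sigma\circ T'_w\circ\sigma\circ\tau_a(\Psib_{i,1}) \;=\; \tau_a\bigl(\sigma\circ T'_w\circ\sigma(\Psib_{i,1})\bigr),$$
which matches $\Psib_{w(\omega_i),a} := \tau_a(\Psib_{w(\omega_i),1})$ as soon as the $a=1$ case is settled. The base case $w=e$ is immediate: $\sigma T'_e\sigma = \sigma^2 = \on{id}$, while the $d_i=d_k$ rule of Definition \ref{subs} (applied with $k=i$) sends $Y_{i,1}$ to $\Psib_{i,1}$.

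For the inductive step, choose $j$ with $l(s_jw)=l(w)+1$. By the braid relations for the $T'_*$ just proved, $T'_{s_jw}=T'_jT'_w$, so the inductive hypothesis reduces the claim to
$$\Psib_{s_jw(\omega_i),1} \;=\; \sigma\circ T'_j\circ\sigma\bigl(\Psib_{w(\omega_i),1}\bigr).$$
By Proposition \ref{wextr} and Corollary \ref{wextr2}, the monomial $Y_{w(\omega_i),1}$ factors as a Laurent monomial in $Y_{k,b}$ (for $k$ with $d_k\le d_i$) together with $W_{k,b}$ (for $k$ with $C_{k,i}<-1$); Definition \ref{subs} replaces each such factor by a prescribed $\Psib$-expression to yield $\Psib_{w(\omega_i),1}$. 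Since $Y_{s_jw(\omega_i),1}=T_j(Y_{w(\omega_i),1})$ and both $\sigma T'_j\sigma$ and the substitution are multiplicative, the inductive step reduces to checking, factor by factor, that the substitution of $T_j(Y_{k,b})$ (resp.\ $T_j(W_{k,b})$) coincides with $\sigma T'_j\sigma$ applied to the substitution of $Y_{k,b}$ (resp.\ $W_{k,b}$). For $k\neq j$ this is automatic: $T_j$ fixes $Y_{k,b}$ and $W_{k,b}$, while $T'_j$ fixes every $\Psib_{k,c}$.

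The main obstacle is the case $k=j$. Here one expands $T_j(Y_{j,b})=Y_{j,b}A_{j,bq_j}^{-1}$ (or $T_j(W_{j,b})$), pushes the resulting $\Yim$-monomial through the appropriate clause of Definition \ref{subs}, and compares it with the direct expansion of $\sigma T'_j\sigma$ applied to the substitution of $Y_{j,b}$ (or $W_{j,b}$), computed from formula \eqref{TiPj}. This splits into four subcases according to whether $d_i=d_j$, $C_{i,j}=-2$, $C_{i,j}=-3$, or $C_{j,i}<-1$. The key subtlety is that $A_{j,a}$ is written in terms of the column entries $C_{l,j}$, whereas $T'_j$ is written in terms of the row entries $C_{j,l}$; these coincide precisely when $d_l=d_j$, and the case-by-case asymmetry of the substitution rules in Definition \ref{subs} is arranged exactly to absorb this mismatch. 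A direct but somewhat tedious verification in each subcase, carefully tracking the spectral-parameter shifts that accompany each substitution, completes the inductive step and hence the proof of Proposition \ref{proptprime}.
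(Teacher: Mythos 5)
Your proof takes a genuinely different route from the paper's. The paper does not attack Definition \ref{subs} head-on with the operators $T'_j$; instead it first establishes the identity $\Lambda\circ\Theta_w=\sigma\circ T_w\circ\sigma$ (Proposition \ref{tctw}), deduces from it that $[w(\omega_i)]\,\Psib_{w(\omega_i),aq_i^{-1}}\Psib_{w(\omega_i),aq_i}^{-1}=\sigma(Y_{w(\omega_i),a^{-1}})$, and then pins down $\Psib_{w(\omega_i),a}$ itself by a rigidity argument: the ratio $\Psib_{w(\omega_i),aq_i^{2m}}\bigl(\sigma\circ T'_w\circ\sigma(\Psib_{i,aq_i^{2m}})\bigr)^{-1}$ is independent of $m$ and is a Laurent monomial in finitely many $\Psib_{j,aq^r}^{\pm1}$, hence equals $1$. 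The only case analysis the paper needs is the short list of identities $[\omega_k]\Psib_{k,\cdot}/\Psib_{k,\cdot}=Y_{k,\cdot}$ (or $W_{k,\cdot}$) in the proof of Theorem \ref{exwo}(2). Your argument is more elementary and self-contained --- it avoids $\Theta_w$, $\Lambda$, and the $W$-invariance of $q$-characters entirely --- at the price of an induction on $l(w)$ whose inductive step is a direct intertwining check between $T_j$ on $\Yim$ and $\sigma T'_j\sigma$ on the $\Psib$-side. The formal scaffolding (reduction to $a=1$ via $\tau_a\circ\sigma=\sigma\circ\tau_{a^{-1}}$, the base case, the use of $T'_{s_jw}=T'_jT'_w$ for $l(s_jw)=l(w)+1$, multiplicativity, and the triviality of the factors with $k\neq j$) is all correct.

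The concern is that the decisive step --- the factor-by-factor verification for $k=j$ --- is asserted rather than performed, and it is precisely where all the mathematical content of the proposition lives. I checked that it does go through in representative subcases (e.g.\ $i=j$ with arbitrary neighbours, and $C_{i,j}=-2$), and the row-versus-column asymmetry you point to is indeed what makes the books balance, so I do not believe the approach fails; but as written the proof defers its own substance to an unexecuted computation. Two further points need care if you complete it: (i) your parenthetical ``$Y_{k,b}$ for $k$ with $d_k\le d_i$'' should read $d_k\ge d_i$ --- the atomic symbols are $Y_{k,b}$ when $d_k\ge d_i$ and $W_{k,b}$ when $d_k<d_i$ --- and for non-adjacent $k$ with $d_k\neq d_i$ the clauses of Definition \ref{subs} must be read via the ratio $d_k/d_i$ (as in Remark \ref{orbit}) rather than literally via $C_{i,k}$; (ii) when the atomic symbol is $W_{j,b}$ you must also justify that $T_j(W_{j,b})=W_{j,b}U_{j,bq_j}^{-1}$ re-expands uniquely in the atomic symbols, which follows from the lemma on $U_{i,a}$ and the unitriangularity of the passage from the $W_{k,\cdot}$'s to the $Y_{k,\cdot}$'s.
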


This statement is a consequence of Proposition \ref{tctw} below and
its proof is given at the end of Section \ref{comptheta}.

\begin{rem}    \label{orbit}
    If $\g$ is simply-laced (i.e. $d_i=1$ for all $i \in I$), then we
    can obtain $\Psib_{w(\omega_i),1}$ for all $i \in I$ by
    replacing $Y_{k,b}$ by $\Psib_{k,b^{-1}}$ in $Y_{w(\omega_i),1}$ for
    all $k$ and $b$. However, if $\g$ is not simply-laced, then this
    is only true for
    those $k$ for which the lengths of $\al_i$ and $\al_k$ are the
    same. If the length of $\al_i$ is shorter than the length of
    $\al_k$, we replace $Y_{k,b}$ by a product of two or three
    $\Psi$'s (depending on the lacing number of $\g$). And if the
    length of $\al_i$ is longer than the length of $\al_k$, then we
    have to
    replace $W_{k,b}$, which is the product of two or three $Y$'s
    (depending on the lacing number of $\g$), by $\Psib_{k,b^{-1}}$
    (this procedure is well-defined by Corollary \ref{wextr2}). Under
    this construction, the Cartan matrix gets replaced by its
    transpose, which we can see as a non-trivial manifestation of
    Langlands duality.
  \end{rem}

\begin{example}    \label{exPsi}
  (i) For $\g=\sw_2$ we have
$$Y_{\omega_1,1} = Y_{1,1}\text{ and }  Y_{w_0(\omega_1),1} = Y_{1,q^2}^{-1}$$
and so 
$$\Psib_{\omega_1,a} = \Psib_{1,a}\text{ and } \Psib_{w_0(\omega_1),a}
= \Psib_{1,aq^{-2}}^{-1}.$$

(ii) It is known from \cite{Fre2} that $Y_{w_0(\omega_i),1} = Y_{\overline{i},q^{ d h^\vee}}^{-1}$. Hence
$$\Psib_{w_0(\omega_i),a} = \Psib_{\overline{i},aq^{- d h^\vee}}^{-1}.$$
\end{example}

\subsection{Extended $TQ$-relations Conjecture}\label{esys}

We now propose the conjectural extended
$TQ$-relations labeled by elements $w$ of the Weyl group $W$, so
that the original $TQ$-relations of Theorem \ref{classtq} correspond
to the identity element of $W$.

Recall that for any monomial $\Psib$ in
  $\Psib_{i,b}^{\pm 1}, i \in I, b \in \C^\times$ (such as
  $\Psib_{w(\omega_i),a}$), there is a unique simple highest
  $\ell$-weight module $L(\Psib)$ in the category $\OO$ (see Lemma
  \ref{simple}).

Fix $w \in W$, and denote by $\wt{K}^w_0(\OO)$ the
  localization of the ring
  $K_0(\OO)$ with respect to the elements $[L(\Psib_{w(\omega_i),a})]$, $i \in I,
  a \in \C^\times$. Define a homomorphism
  \begin{equation}    \label{locw}
  {\mc B}_w: \Yim \to \wt{K}^w_0(\OO)
  \end{equation}
  by the formula
  \begin{equation}    \label{locw1}
  Y_{i,a} \mapsto [w(\omega_i)]
  \dfrac{[L(\Psib_{w(\omega_i),aq_i^{-1}})]}{[L(\Psib_{w(\omega_i),aq_i})]}.
  \end{equation}
Note that $\wt{K}^e_0(\OO) = \wt{K}_0(\OO)$ introduced in Section
\ref{rec} and ${\mc B}_e$ is ${\mc B}$ given by formulas \eqref{loc1}
and \eqref{loc11}.

\begin{conj}\label{exttq}
  Let $V$ be a finite-dimensional simple representation of
$U_q(\wh\g)$ and $\chi_q(V) \in \Yim$ its
$q$-character. Then ${\mc B}_w(\chi_q(V)) = [V]$ in
$\wt{K}^w_0(\OO)$.
\end{conj}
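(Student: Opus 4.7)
The plan is to reduce Conjecture \ref{exttq} to two ingredients: the conjectural $q$-character formula for $L(\Psib_{w(\omega_i),a})$ (Conjecture \ref{bass2}, namely equation \eqref{TWY}) and the $W$-invariance of the $q$-characters of finite-dimensional $U_q(\wh\g)$-modules proved in \cite{FH3} (Theorem \ref{mfh}). The argument closely parallels the proof of Theorem \ref{classtq} recalled at the end of Section \ref{rec}.

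First I would transport the statement to the $q$-character side. Let $\wt{\mc E}^w_\ell$ denote the localization of ${\mc E}_\ell$ with respect to the elements $\chi_q(L(\Psib_{w(\omega_i),a}))$ for $i\in I$, $a\in\C^\times$, and define
$${\mc B}'_w:\Yim\to \wt{\mc E}^w_\ell,\qquad Y_{i,a}\ \mapsto\ [w(\omega_i)]\,\frac{\chi_q(L(\Psib_{w(\omega_i),aq_i^{-1}}))}{\chi_q(L(\Psib_{w(\omega_i),aq_i}))},$$
in parallel with \eqref{loc1c}--\eqref{loc1c1}. Since the $q$-character homomorphism is injective by Proposition \ref{inject} and extends to an injection $\wt K^w_0(\OO)\hookrightarrow \wt{\mc E}^w_\ell$, it is enough to prove the identity ${\mc B}'_w(\chi_q(V)) = \chi_q(V)$ in $\wt{\mc E}^w_\ell$.

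Next, invoking \eqref{TWY} of Conjecture \ref{bass2}, the substitution defining ${\mc B}'_w$ becomes exactly $Y_{i,a}\mapsto \Theta_w(Y_{i,a})$. Since $\chi_q(V)\in\Yim$ is a Laurent polynomial in the $Y_{i,a}$'s and $\Theta_w$ is a ring automorphism on (the appropriate completion of) the ambient $q$-character ring, we obtain
$${\mc B}'_w(\chi_q(V)) \ =\ \Theta_w(\chi_q(V)).$$
Theorem \ref{mfh} then gives $\Theta_w(\chi_q(V)) = \chi_q(V)$, and pulling back along the injective $q$-character map yields ${\mc B}_w(\chi_q(V)) = [V]$ in $\wt K^w_0(\OO)$, as desired. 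For $w=e$ this recovers Theorem \ref{classtq}.

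The main obstacle is Conjecture \ref{bass2} itself, which is known only for $w=e$ (via Theorem \ref{formuachar}), for $w = s_i$ (via \cite{FH2, FHR}), and, in the body of the present paper, for all $w\in W$ when $\g$ has rank at most $2$. A natural strategy in general would be to build $L(\Psib_{w(\omega_i),a})$ inductively along a reduced decomposition $w = s_{i_1}\cdots s_{i_k}$, using the generalized Chari operators $T'_{i_j}$ of Section \ref{genChari} (together with Proposition \ref{proptprime}) to propagate the $q_i^2$-difference equation \eqref{TWY} one simple reflection at a time. Pinning down the full $q$-character of these typically infinite-dimensional simple modules in $\OO$ beyond the minuscule and rank-$2$ situations appears to require genuinely new input, and this is where the bulk of the difficulty will lie.
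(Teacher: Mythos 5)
Your proposal is correct and follows essentially the same route as the paper: the statement is a conjecture there, and the paper's own reduction (Theorem \ref{wbax} together with Corollary \ref{twoconj}) likewise passes to the $q$-character side via Proposition \ref{inject}, identifies the substitution with $Y_{i,a}\mapsto E_e(\Theta_w(Y_{i,a}))$ using the conjectural $q$-character formula of Conjecture \ref{bass2}, and concludes by the $W$-invariance of Theorem \ref{mfh}. You also correctly locate the genuine gap (Conjecture \ref{bass2} beyond $w=e$, $w=s_i$, and rank $2$), which is exactly where the paper stops as well.
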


By clearing the denominators in the relation ${\mc B}_w(\chi_q(V)) =
[V]$ we obtain an algebraic relation in $K_0(\OO)$. This is the
conjectural extended $TQ$-relation in $K_0(\OO)$ corresponding to $w
\in W$.

Informally, Conjecture \ref{exttq} can be interpreted as
  follows: let us make the following substitution for every variable
  $Y_{i,a}, i \in I$, appearing in the $q$-character $\chi_q(V)$:
$$Y_{i,a} \mapsto [w(\omega_i)]
  \dfrac{[L(\Psib_{w(\omega_i),aq_i^{-1}})]}{[L(\Psib_{w(\omega_i),aq_i})]}.
  $$
Equating the resulting expression with $[V]$, we obtain the extended
$TQ$-relation in $\wt{K}^w_0(\OO)$ corresponding to $w \in W$ (and by
clearing the denominators, in $K_0(\OO)$).



Since the $q$-character homomorphism is injective, Conjecture
\ref{exttq} is equivalent to the following statement. Let
  $\wt{\mc E}^w_\ell$ be the localization of the ring ${\mc E}_\ell$
  with respect to the elements $\chi_q(L(\Psib_{w(\omega_i),a}))$, $i \in I, a
  \in \C^\times$. Define a homomorphism
  \begin{equation}    \label{locwc}
  {\mc B}'_w: \Yim \to \wt{\mc E}^w_\ell
  \end{equation}
  by the formula
  \begin{equation}    \label{locwc1}
  Y_{i,a} \mapsto [w(\omega_i)]
    \dfrac{\chi_q(L(\Psib_{w(\omega_i),aq_i^{-1}}))}{\chi_q(L(\Psib_{w(\omega_i),aq_i}))}.
  \end{equation}
Note that $\wt{\mc E}^e_\ell = \wt{\mc E}_\ell$ introduced in Section
\ref{rec} and ${\mc B}'_e$ is ${\mc B}'$ given by formulas \eqref{loc1c}
and \eqref{loc1c1}.
  
  Recall that $\Yim$ is a subring of ${\mc E}_\ell$ and hence of
  $\wt{\mc E}^w_\ell$.

\begin{conj}    \label{exttq1}
Let $V$ be a finite-dimensional simple representation of
$U_q(\wh\g)$ and $\chi_q(V) \in \Yim$ its
$q$-character. Then we have ${\mc B}'_w(\chi_q(V)) = \chi_q(V)$ in
$\wt{\mc E}^w_\ell$.
\end{conj}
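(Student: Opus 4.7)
The plan is to deduce Conjecture \ref{exttq1} from two ingredients: the conjectural $q$-character formula for $L(\Psib_{w(\omega_i),a})$ asserted in Conjecture \ref{bass2}, and the $W$-symmetry of $q$-characters of finite-dimensional $U_q(\wh\g)$-modules established in \cite{FH3} and recorded in Theorem \ref{mfh}. First, since the $q$-character homomorphism $\chi_q : K_0(\OO) \to {\mc E}_\ell$ is injective (Proposition \ref{inject}), Conjectures \ref{exttq} and \ref{exttq1} are equivalent, so I would focus on the latter, $q$-character version.

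The heart of the argument is the following observation. Conjecture \ref{bass2} gives, for every $i \in I$, $a \in \C^\times$, and $w \in W$,
\[
[w(\omega_i)]\,\frac{\chi_q(L(\Psib_{w(\omega_i),aq_i^{-1}}))}{\chi_q(L(\Psib_{w(\omega_i),aq_i}))} \;=\; \Theta_w(Y_{i,a}).
\]
Hence the substitution defining ${\mc B}'_w$ in \eqref{locwc1} coincides on generators with the map $Y_{i,a} \mapsto \Theta_w(Y_{i,a})$. Since $\chi_q(V) \in \Yim$ for a finite-dimensional $V$ by \cite{Fre, Fre2}, applying ${\mc B}'_w$ to $\chi_q(V)$ simply produces $\Theta_w(\chi_q(V))$. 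By Theorem \ref{mfh}, $q$-characters of finite-dimensional representations lie in the subring of $W$-invariants, so $\Theta_w(\chi_q(V)) = \chi_q(V)$. This yields ${\mc B}'_w(\chi_q(V)) = \chi_q(V)$ in $\wt{\mc E}^w_\ell$, completing the proof.

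The main obstacle is Conjecture \ref{bass2} itself: it pins down $\chi_q(L(\Psib_{w(\omega_i),a}))$ via a $q_i^2$-difference equation up to a spectral-parameter-independent constant. It is essentially tautological for $w = e$ by Theorem \ref{formuachar}, known for $w = s_i$ from \cite{FH2, FHR}, and proved in rank 2 in the present paper, but otherwise open. A natural attack would be induction on $l(w)$, using the generalized Chari braid group action on $\Yim'$ from Section \ref{genChari} together with the modules $L(\Psib_{w(\omega_i),a})$ already constructed for shorter elements, or alternatively exploiting the extended $QQ$-system of Conjecture \ref{extqq}, which, once established, would propagate the formula across each $W$-orbit. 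In either route, the delicate point is ruling out extra $\ell$-weights beyond those predicted by $\Theta_w(Y_{i,a})$ and matching the correct normalization factor.
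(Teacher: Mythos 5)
Your proposal follows essentially the same route as the paper: it reduces Conjecture \ref{exttq1} to the conjectural $q$-character formula of Conjecture \ref{bass2} (via the identity of Conjecture \ref{wconj}, which the paper derives from Theorem \ref{exwo}) and then invokes the $W$-invariance of $\chi_q(V)$ from Theorem \ref{mfh} — this is exactly the content of Theorem \ref{wbax} and Corollary \ref{twoconj}. You also correctly identify that the genuinely open ingredient is Conjecture \ref{bass2} itself, which the paper establishes only for $w=e$, $w=s_i$, and in rank $2$; the only minor imprecision is that the identity should involve $E_e(\Theta_w(Y_{i,a}))$ rather than $\Theta_w(Y_{i,a})$, since the latter lives in the direct sum $\Pi$ over all completions.
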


By clearing the denominators in the relation ${\mc B}'_w(\chi_q(V)) =
\chi_q(V)$ we obtain an algebraic relation in ${\mc E}_\ell$. This is
the extended $TQ$-relation corresponding to $w \in W$ expressed in
terms of $q$-characters.

Informally, this statement can be interpreted as
  follows: in $\wt{\mc E}^w_\ell$, the $q$-character $\chi_q(V)$
    is equal to the expression obtained by replacing every
    $Y_{i,a}, i \in I$, appearing in $\chi_q(V)$ with
$$[w(\omega_i)]
\dfrac{\chi_q(L(\Psib_{w(\omega_i),aq_i^{-1}}))}{\chi_q(L(\Psib_{w(\omega_i),aq_i}))}$$
(by clearing the denominators, we obtain a relation in
${\mc E}_\ell$).

\subsection{The case of simple reflections}    \label{simplerefl}

For $w=e$, Conjecture \ref{exttq1} is proved in Theorem
\ref{corTQ}. In this case, we have the $q$-character formula which
follows from Theorem \ref{formuachar}:
    $$
    \chi_q(L(\Psib_{i,a})) =
    \left[\Psib_{i,a}\right] \chi(L_{i,a}^+).$$

    Next, consider the case when $w$ is a simple reflection, $w=s_i, i
    \in I$. In this case, we have $\Psib_{s_i(\omega_i),a} =
    \wt{\Psib}_{i,aq_i^{-2}}$ (see Example \ref{exPsi},(ii)), and the
    $q$-character formula for $L(\Psib_{s_i(\omega_i),a})$ is given in
    the following theorem.

  \begin{thm}\label{fex}\cite{FH2, FHR}
    The $q$-character of
  $L(\Psib_{s_i(\omega_i),a}) = L(\wt{\Psib}_{i,aq_i^{-2}}))$ is equal to
  $$\chi_q(L(\Psib_{s_i(\omega_i),a})) =
    [\wt{\Psib}_{i,aq_i^{-2}}] \; \chi(L(\wt{\Psib}_{i,aq_i^{-2}})))
    \; (1 - [-\alpha_i]) \;  \Sigma^e_{i,aq_i^{-2}}
  $$
 where
\begin{equation}    \label{Sigmai}
 \Sigma^e_{i,a}:= \sum_{k\geq 0} \; \prod_{0 < j \leq k}
  A_{i,aq_i^{-2j+2}}^{-1} = 1 + A_{i,a}^{-1} (1 + A_{i,aq_i^{-2}}^{-1}(1 +
  \ldots )).
\end{equation}
\end{thm}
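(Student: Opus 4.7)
The plan is to prove Theorem \ref{fex} by combining an asymptotic-limit construction of $L(\wt{\Psib}_{i,aq_i^{-2}})$ with a reduction to an $\sw_2$-type computation along the $i$th node. The overarching idea is that the non-$i$ components of $\wt{\Psib}_{i,aq_i^{-2}}$ are arranged to be ``maximal'' in the weight-lowering directions $\alpha_j$, $j \ne i$, so all new $\ell$-weights in $L(\wt{\Psib}_{i,aq_i^{-2}})$ must differ from the highest by a monomial in the $A_{i,\cdot}^{-1}$ only. Once this is established, the infinite string of $\ell$-weights is controlled by an $\sw_2$-subproblem, which accounts for the factor $\Sigma^e_{i,aq_i^{-2}}$.

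First, I would construct $L(\wt{\Psib}_{i,aq_i^{-2}})$ as an asymptotic limit, in the spirit of \cite{HJ} and \cite{FH}, of a sequence of finite-dimensional simple modules $L(m_r)$ whose highest monomials $m_r$ are suitably renormalized so that $m_r \to \wt{\Psib}_{i,aq_i^{-2}}$ after dividing by a one-dimensional character. Concretely, one builds $m_r$ so that the $Y_{j,b}$-factors for $j \ne i$ (with $j$ adjacent to $i$) are already present in the correct pattern from \eqref{psfh2}, while the $i$th factor is a long Kirillov--Reshetikhin-type string $Y_{i,aq_i^{-2}}Y_{i,aq_i^{-4}}\cdots Y_{i,aq_i^{-2r}}$, so that, after multiplying by an appropriate one-dimensional character, the limit $r \to \infty$ produces $\wt{\Psib}_{i,aq_i^{-2}}$. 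One then shows that the $q$-characters $\chi_q(L(m_r))$ converge in ${\mc E}_\ell$ to a well-defined element, which equals $\chi_q(L(\wt{\Psib}_{i,aq_i^{-2}}))$ by the usual continuity argument (the limit is a simple module because its highest $\ell$-weight has the required structure and the multiplicities are eventually constant on each weight space).

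Second, I would identify the limit using the known $q$-character formulas for KR modules and a cancellation argument. Each $\chi_q(L(m_r))$ contains $m_r$ and monomials $m_r \prod A_{j,b}^{-1}$; by induction on $r$ and the Frenkel--Mukhin algorithm, the contributions in the directions $j \ne i$ telescope because the $\Psib_{j,b}$-pattern in $\wt{\Psib}_{i,aq_i^{-2}}$ is already maximal (that is, $T_i$ shifts $Y_{i,a}$ by $A_{i,aq_i}^{-1}$, and the non-$i$ factors that appear in $A_{i,\cdot}^{-1}$ precisely match the shifts already accounted for in $\wt{\Psib}_{i,aq_i^{-2}}$). This explains both the appearance of $\chi(L(\wt{\Psib}_{i,aq_i^{-2}}))$ as an overall $\g$-character factor and the restriction of the remaining sum to monomials involving only $A_{i,\cdot}^{-1}$. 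The infinite geometric-type sum $\Sigma^e_{i,aq_i^{-2}}$ then emerges from the $\sw_2$-string structure in the $i$th direction, and the factor $(1 - [-\alpha_i])$ records the overlap between neighboring KR strings that gets ``cut'' in the limit (equivalently, the character $\chi(L(\wt{\Psib}_{i,aq_i^{-2}}))$ arises as the generating function of a quotient of a Verma-like module by the $[-\alpha_i]$-shift relation).

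The main obstacle is controlling multiplicities: one must rule out $\ell$-weights of $L(\wt{\Psib}_{i,aq_i^{-2}})$ that are not of the form $\wt{\Psib}_{i,aq_i^{-2}} \cdot (\text{product of } A_{i,\cdot}^{-1}) \cdot (\text{ordinary weight shift})$, and then show that within this class the multiplicity is exactly prescribed by $\chi(L(\wt{\Psib}_{i,aq_i^{-2}})) (1 - [-\alpha_i]) \Sigma^e_{i,aq_i^{-2}}$. For the first point, I would argue using the action of the generators $\phi_{j,m}^+$, $j \ne i$, together with the observation that the leading exponents of $\Psib_{j,\cdot}$ in $\wt{\Psib}_{i,aq_i^{-2}}$ are extremal, so lowering operators cannot create further $\Psib_{j,\cdot}$-factors without forcing extra $\Psib_{i,\cdot}$-factors that would destroy the $\ell$-dominance at the $i$th node. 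For the second point, one either matches the resulting $\g$-character against the explicit description of $L(\wt{\Psib}_{i,aq_i^{-2}})$ as a Borel-module quotient (which, as for $L_{i,a}^+$ in Theorem \ref{formuachar}, factors as an $\ell$-weight times a genuinely $\g$-module character), or, alternatively, uses the fusion identity $[L(\Psib_{i,aq_i})] \cdot [L(\wt{\Psib}_{i,aq_i^{-1}})]$ and the analogue of the $QQ$-relation \eqref{QQsystem} to recover the claimed multiplicities from the already-proven formula for the prefundamental $L(\Psib_{i,b})$.
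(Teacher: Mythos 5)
First, a point of reference: the paper does not prove Theorem \ref{fex} at all --- it is quoted from \cite{FH2} (and \cite{FHR} for the non-simply-laced normalization) --- so there is no in-paper argument to compare with, and I assess your proposal on its own terms and against the cited proof. Your structural insight (the spectral-parameter-dependent part of every $\ell$-weight of $L(\wt{\Psib}_{i,aq_i^{-2}})$ is a product of $A_{i,\cdot}^{-1}$'s, the $j\neq i$ directions entering only through constant $\ell$-weights) is exactly the content of the theorem, but your route to it breaks down at the very first step. No sequence of dominant monomials $m_r$ of the kind you describe can converge to $\wt{\Psib}_{i,aq_i^{-2}}$ after renormalization by one-dimensional representations: for $j$ adjacent to $i$ the $j$-component of $\wt{\Psib}_{i,a}$ in \eqref{psfh2} is a polynomial of degree one (e.g.\ $(1-zaq_i)$ when $C_{i,j}=-1$), whereas the $j$-component of any dominant monomial is a ratio of polynomials of \emph{equal} degrees, and multiplying by $[\lambda]$ changes only the constant term. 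Keeping the $Y_{j,b}$-factors ``already present in the correct pattern'' while only the $i$-string grows therefore gives the wrong limit of highest $\ell$-weights; to reach the single $\Psib_{j,\cdot}$ factors the $j$-strings would also have to grow without bound, which destroys the claimed telescoping and changes the limit module entirely. There is a second obstruction independent of the choice of $m_r$: by the theorem of \cite{Fre,Fre2}, every $\ell$-weight of a finite-dimensional module is the highest one times a product of $A_{k,c}^{-1}$'s, and a nontrivial such product is never a constant $\ell$-weight; hence any limit of normalized $q$-characters of finite-dimensional modules lies in $\Z[[A_{k,c}^{-1}]]$ and cannot contain $\ell$-weights of the form $\wt{\Psib}_{i,aq_i^{-2}}\cdot[-\mu]$ with $[-\mu]$ a nontrivial constant. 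The right-hand side of the theorem \emph{does} contain such terms, because $\chi(L(\wt{\Psib}_{i,aq_i^{-2}}))$ is an infinite series in all the $[-\alpha_k]$'s and is not equal to $(1-[-\alpha_i])^{-1}$ (as Section \ref{shifted} emphasizes, this character is not explicitly known even for $w=s_j$). So $L(\wt{\Psib}_{i,aq_i^{-2}})$ is simply not an asymptotic limit of finite-dimensional modules; your construction would be closer to viable for the shifted module $L^{\on{sh}}(\wt{\Psib}_{i,a})$, whose normalized $q$-character is just $E_e(\Sigma_{i,a})$.

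Two further remarks. Your fallback of recovering the multiplicities from the $QQ$-relation \eqref{QQsystem} is circular: in \cite{FH2} that relation is \emph{derived from} the $q$-character formula you are trying to prove. And the cited proof proceeds differently from your sketch: it works directly with the category $\OO$ module rather than with limits of finite-dimensional ones, controlling the $i$-th direction by restriction to the Borel subalgebra of $U_{q_i}(\wh{\sw}_2)$ (where the $i$-component $\Psi_{i,a}^{-1}$ of the highest $\ell$-weight makes the module behave like a negative prefundamental --- this is the source of $\Sigma^e_{i,aq_i^{-2}}$ and of the factor $(1-[-\alpha_i])$), and controlling the $j\neq i$ directions via the polynomiality of the $j$-components of the $\ell$-weights, the same mechanism that yields Theorem \ref{formuachar}; it is this last step, which you only gesture at, that forces the $j\neq i$ directions to contribute solely through the ordinary character factor.
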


\subsection{Weyl group action}    \label{wga}

 Conjecture \ref{exttq} and Theorem \ref{fex} suggest
  defining an operator $\Theta_i$ sending $Y_{i,a}$ to
\begin{equation}    \label{sioi}
\Theta_i(Y_{i,a}) :=
[s_i(\omega_i)]
\frac{\chi_q(L(\wt{\Psib}_{i,aq_i^{-3}}))}{\chi_q(L(\wt{\Psib}_{i,aq_i^{-1}}))}
= Y_{i,a}A_{i,aq_i^{-1}}\frac{\Sigma_{i,aq_i^{-3}}^e}{\Sigma_{i,aq_i^{-1}}^e}
\end{equation}
and posing the question whether these operators satisfy the relations
of the Weyl group $W$.

  To make sense of this statement, we need to deal with the fact that
  $\Sigma^e_{i,a}$ lies in a completion of
\begin{equation}    \label{Yim}
  \Yim = \ZZ[Y_{i,a}^{\pm 1}]_{i\in I,a\in\CC^\times}
\end{equation}
  (and so in particular $\Theta_i$
  does not preserve $\Yim$). In \cite{FH3} we resolved this problem by
  extending the operators $\Theta_i$ to a direct sum of completions of
  $\Yim$ labeled by $w \in W$.

 The key point here is that $\Sigma_{i,a}^e$ is a solution of the
 $q^{2d_i}$-difference equation
\begin{equation}\label{siae}\Sigma_{i,a} = 1 +
  A_{i,a}^{-1}\Sigma_{i,aq^{-2d_i}}.
\end{equation}
In order to define the operators $\Theta_i$ we also need another
solution of this equation; namely,
\begin{equation}\label{Sigmasi}
  \Sigma_{i,a}^{s_i} := - A_{i,aq_i^2}(1 + A_{i,aq_i^4}(1 +
  \cdots)\cdots).
\end{equation}

Using these two solutions, we defined in \cite{FH3} operators
$\Theta_i, i \in I$, acting on the direct sum
$$\Pi := \bigoplus_{w\in W} \wt{\mathcal{Y}}^w$$ of topological
completions $\wt{\mathcal{Y}}^w$ of $\mathcal{Y}$ with respect to a
partial ordering $\prec_w$ on the multiplicative
group ${\mc M}$ of monomials in $\Yim$. It is defined as follows.

First, define the partial ordering $<_w$ on $P$ by the rule $\omega
<_w \omega'$ is and only $w(\omega) < w(\omega')$, where $<$ denotes
the standard partial ordering (see Section \ref{Liealg}). Next, recall the
multiplicative group ${\mc M}$ of monomials in the variables
$Y_{i,a}^{\pm 1}$ and the homomorphism $\varphi: {\mc M} \to P$
introduced in Section \ref{qchar}. Taking the pull-back of the partial
ordering $<_w$ under $\varphi$, we obtain a partial ordering $\prec_w$
on ${\mc M}$. Finally, define a topology on the ring $\mathcal{Y}$
invariant under addition, in which the base of open neighborhoods of the
zero element of $\mathcal{Y}$ consists of the sets $U_M, M \in {\mc
  M}$, of all finite linear combinations of monomials $m_i \in {\mc
  M}$ satisfying $m_i \prec_w M$. By definition, $\wt{\mathcal{Y}}^w$ is
the completion of $\mathcal{Y}$ with respect to this topology. Since $m_1
\prec_w M_1$ and $m_2 \prec_w M_2$ implies $m_1 m_2 \prec_w M_1 M_2$,
the ring structure on $\mathcal{Y}$ naturally extends to
$\wt{\mathcal{Y}}^w$. Thus, $\wt{\mathcal{Y}}^w$ is a complete
topological ring.

The algebra $\mathcal{Y}$ embeds diagonally in 
$\Pi$ and we have canonical projections $E_w : \Pi\rightarrow
\wt{\mathcal{Y}}^w$, $i\in I$.

\begin{defi}\cite{FH3}
For each $i \in I$, introduce the operator $\Theta_i: \Pi \to \Pi$,
$$\Theta_i = (\Theta_i^{v} : \wt{\mathcal{Y}}^{v}\rightarrow
\wt{\mathcal{Y}}^{v s_i})_{v\in W}$$
with $\Theta_i^v, v \in W$, given by the formulas
$\Theta^v_i(Y_{j,a}) = Y_{j,a}$ if $j\neq i$ and
\begin{equation}\label{defti}
  \Theta^v_i(Y_{i,a}) =
  Y_{i,a}A_{i,aq^{-d_i}}^{-1}\frac{\Sigma^{vs_i}_{i,aq^{-3d_i}}}{\Sigma^{vs_i}_{i,aq^{-d_i}}}
\end{equation}
where $\Sigma^{vs_i}_{i,a}\in \wt{\mathcal{Y}}^{vs_i}$ is one of the
two solutions of the $q^{2d_i}$-difference equation (\ref{siae}):
\begin{equation}    \label{twosol}
  \Sigma^{vs_i}_{i,a} := \begin{cases} \Sigma^e_{i,a} & \; \text{if}
    \quad l(vs_i)<l(v), \\ \Sigma^{s_i}_{i,a} & \; \text{if}
    \quad l(vs_i)>l(v). \end{cases}
\end{equation}
\end{defi}

Recall that $\Sigma^e_{i,a}$ is given by formula \eqref{Sigmai} and
$\Sigma^{s_i}_{i,a}$ is given by formula \eqref{Sigmasi}.

\begin{thm}\label{mfh}\cite{FH3}
  (1)There is an action of the Weyl group $W$ of $\g$ on $\Pi$, with
    the simple reflection $s_i \in W (i \in I)$ acting as $\Theta_i$.

  (2) The subring $\mathcal{Y}^W$ of
  $W$-invariants in $\Yim$ is equal to the image of the $q$-character
  homomorphism $\chi_q: K_0({\mc C}) \to \Yim$, and hence
  $\mathcal{Y}^W$ is isomorphic to $K_0({\mc
    C})$ (here ${\mc C}$ is the category of type $1$ finite-dimensional
representations of $U_q(\wh{\Glie})$).
\end{thm}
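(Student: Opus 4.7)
The statement breaks into constructing the Weyl group action (part (1)) and identifying the invariant subring (part (2)), which can be addressed independently. For part (1), since $\Theta_i$ fixes $Y_{j,a}$ when $j \neq i$, the defining relations $\Theta_i^2 = \on{Id}$ and the braid relations reduce to checks on $Y_{i,a}$ (and on $Y_{j,a}$ for the braid case). For the involution, I would start with $Y_{i,a} \in \wt{\mathcal{Y}}^v$: applying $\Theta^v_i$ produces an element of $\wt{\mathcal{Y}}^{vs_i}$ built from $\Sigma^{vs_i}_{i,\bullet}$, and the prescription \eqref{twosol} forces the second application of $\Theta_i$ to invoke the other solution of the $q^{2d_i}$-difference equation \eqref{siae}. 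The key identity to establish is that the two $\Sigma$-ratios, combined with the $A_{i,\bullet}^{\pm 1}$ factors, telescope to the identity, which I expect to follow from a Wronskian-type relation between $\Sigma^e_{i,a}$ and $\Sigma^{s_i}_{i,a}$ analogous to \eqref{wronskian}.

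For the braid relations, the reduction is standard: when $C_{i,j} = 0$ the operators $\Theta_i$ and $\Theta_j$ modify disjoint subsets of generators and so commute trivially; otherwise, the relation to verify involves only $Y_{i,a}, Y_{j,a}$ and the associated $A$-monomials, so the computation takes place within the rank-$2$ subsystem of type $A_2$, $B_2/C_2$, or $G_2$ determined by $i$ and $j$. One then checks the braid relation of order three, four, or six by explicit calculation in $\wt{\mathcal{Y}}^{v}$, tracking at each step of the braid word which of the two $\Sigma$-solutions is invoked by \eqref{twosol}. I expect this to be the main obstacle: the $\Sigma$'s are genuinely infinite series, so one must simultaneously justify convergence of the intermediate products in each completion and carry out increasingly delicate bookkeeping; the order-six relation in type $G_2$ should be the most involved.

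For part (2), one inclusion is direct. Given $V \in \mathcal{C}$, the combination of Theorems \ref{classtq} and \ref{fex} with the definition \eqref{defti} implies $\Theta_i(\chi_q(V)) = \chi_q(V)$ already inside $\Yim \subset \wt{\mathcal{Y}}^e$ for every $i \in I$, so $\chi_q(V) \in \mathcal{Y}^W$. For the converse, take $f \in \mathcal{Y}^W$ and let $m$ be a maximal dominant monomial in its support. Because $\chi_q(L(m)) \in m \cdot (1 + \Z[A_{j,a}^{-1}]_{j \in I, a \in \C^\times})$, subtracting a suitable multiple of $\chi_q(L(m))$ from $f$ kills $m$ and yields another element of $\mathcal{Y}^W$ (by the first inclusion) whose set of maximal dominant monomials is strictly smaller. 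Iteration terminates by finiteness of the support of $f$, exhibiting $f$ as a $\Z$-linear combination of $q$-characters of simple objects in $\mathcal{C}$ and hence in the image of $\chi_q$.
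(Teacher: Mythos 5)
First, a point of logic: this paper does not prove Theorem \ref{mfh} at all --- it is imported verbatim from \cite{FH3}, so the only ``proof'' in the present text is a citation. Your outline does match the strategy of \cite{FH3} in broad shape (rank-two reduction for part (1), two inclusions for part (2)), but two of your steps have genuine gaps that would need to be filled before this counts as a proof.

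The most serious problem is the forward inclusion in part (2). You claim that Theorems \ref{classtq} and \ref{fex} together with \eqref{defti} give $\Theta_i(\chi_q(V)) = \chi_q(V)$. But as the paper itself explains in the proof of Theorems \ref{classtq} and \ref{corTQ}, the $w=e$ relation is a tautology: by \eqref{pref} the substitution \eqref{loc1c1} is literally the identity substitution $Y_{i,a}\mapsto Y_{i,a}$, so Theorem \ref{classtq} carries no information about $\Theta_i$. What you actually need is the $w=s_i$ extended $TQ$-relation, i.e.\ invariance of $\chi_q(V)$ under $Y_{i,a}\mapsto Y_{i,a}A_{i,aq_i^{-1}}^{-1}\Sigma^e_{i,aq_i^{-3}}/\Sigma^e_{i,aq_i^{-1}}$ --- and in this paper that relation is \emph{deduced from} Theorem \ref{mfh} (see Corollary \ref{exttq2} and the theorem following it), so your argument runs in a circle. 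The proof in \cite{FH3} instead goes through the characterization of $\on{Im}\,\chi_q$ as the intersection of the kernels of the screening operators (from \cite{Fre2}) and an $\sw_2$-reduction showing that each $i$-string decomposition of an element of that kernel is $\Theta_i$-invariant. Your reverse inclusion is also missing its key lemma: the subtraction procedure terminates with a $W$-invariant element having no dominant monomial in its support, and one must prove that such an element is zero (equivalently, that every maximal monomial of a nonzero element of $\mathcal{Y}^W$ is dominant); this is precisely where the structure of the operators $\Theta_i$ enters, and it is not addressed (nor is the termination itself, since each subtraction of $\chi_q(L(m))$ enlarges the support). Finally, for part (1) your reduction to rank two is correct in shape, but the hoped-for ``Wronskian-type'' identity is not by itself enough: to even begin computing $\Theta_i^2(Y_{i,a})$ one must evaluate $\Theta_i$ on the infinite series $\Sigma^{vs_i}_{i,b}$ term by term via $\Theta_i(A_{i,c}^{-1})$, check convergence in the correct completion $\wt{\mathcal{Y}}^v$, and then carry out the rank-two braid verifications; this is the bulk of \cite{FH3} and is named rather than performed in your proposal.
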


Theorem \ref{mfh},(1) allows us to define an operator $\Theta_w : \Pi
\rightarrow \Pi$ for an arbitrary element $w\in W$ using a reduced
decomposition of $w$ in terms of the simple reflections $s_i$. This
operator acts on $\Pi$ as follows: $\Theta_w = (\Theta_w^{v}:
\wt{\mathcal{Y}}^{v}\rightarrow \wt{\mathcal{Y}}^{vw^{-1}})_{v\in W}$.


\subsection{Some useful homomorphisms}   \label{add}

Let us recall the homomorphisms $\varpi$ from \cite[Section 2.5]{FH3}
and $\Lambda$ from \cite[Section 6.4]{FH3} which we will use below.

Recall also the partial ordering $<_w$ on $P$ introduced
  above in Section \ref{wga}. Since
\begin{equation}    \label{ZP}
  \Z(P) = \mathbb{Z}[y_i^{\pm 1}]_{i\in I}
\end{equation}
where the variable $y_i$ correspond to the
fundamental weight $\omega_i \in P$, we obtain a partial ordering on
the set of monomials in $\mathbb{Z}[y_i^{\pm 1}]_{i\in I}$ that we
will also denote by $<_w$. We define the topological completion
  $(\mathbb{Z}[y_i^{\pm 1}]_{i\in I})^w$ of $\mathbb{Z}[y_i^{\pm
    1}]_{i\in I}$ with respect to this partial ordering in
the same way in which we defined the completion $\wt{\mathcal{Y}}^w$
of $\mathcal{Y}$ in Section \ref{wga} (with respect to the partial
  ordering $\prec_w$).

Consider the ring
$$\pi := \bigoplus_{w\in
    W}(\mathbb{Z}[y_i^{\pm 1}]_{i\in I})^w.$$
with the diagonal embedding $\mathbb{Z}[y_i^{\pm 1}] \hookrightarrow \pi$.
The assignment $\varpi_{w}(Y_{i,a}) = y_i$ extends to a ring homomorphism
$$\varpi_{w} : \wt{\mathcal{Y}}^w \rightarrow (\mathbb{Z}[y_i^{\pm
    1}]_{i\in I})^w.$$
Thus, we obtain a map
$$\varpi = (\varpi_w)_{w\in W} : \Pi \to \pi.$$
The standard action of the simple reflection $s_i, i \in I$, on ${\mathbb
  Z}[y_j^{\pm 1}]_{j \in I}$, naturally
extends to a collection of $s_i^w : (\mathbb{Z}[y_j^{\pm 1}]_{j\in I})^w\rightarrow
(\mathbb{Z}[y_j^{\pm 1}]_{j\in I})^{ws_i}$ and we obtain a
well-defined action of the Weyl group $W$ on $\pi$.

\begin{lem}\cite{FH3}
The projection $\varpi: \Pi \to \pi$ intertwines the actions of $W$ on the two rings.
\end{lem}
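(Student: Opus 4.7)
The plan is to verify the intertwining relation
$$\varpi_{vs_i} \circ \Theta_i^v = s_i^v \circ \varpi_v$$
as maps $\wt{\mathcal{Y}}^v \to (\mathbb{Z}[y_j^{\pm 1}])^{vs_i}$ for every $i\in I$ and $v\in W$. Since the $W$-action on $\Pi$ (resp.\ on $\pi$) is generated by the $\Theta_i$ (resp.\ by the $s_i^v$), this component-wise compatibility with simple reflections is enough. All the maps involved are continuous ring homomorphisms, so equality may be checked on the generators $Y_{j,a}$ of $\mathcal{Y}\subset \wt{\mathcal{Y}}^v$.

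For $j\neq i$ the statement is immediate: $\Theta_i^v(Y_{j,a})=Y_{j,a}$ and $s_i^v(y_j)=y_j$ since $s_i(\omega_j)=\omega_j$. The essential case is $j=i$. Using formula \eqref{defti}, we need to compute the image under $\varpi_{vs_i}$ of the factor $Y_{i,a}A_{i,aq^{-d_i}}^{-1}$ (which gives $y_i \cdot y^{-\alpha_i}$, independently of $a$, since $\varpi_{vs_i}(A_{i,b})=y^{\alpha_i}$) and of the ratio $\Sigma^{vs_i}_{i,aq^{-3d_i}}/\Sigma^{vs_i}_{i,aq^{-d_i}}$. The key point is that the image of $\Sigma^{vs_i}_{i,a}$ under $\varpi_{vs_i}$ does not depend on $a$, so the ratio equals $1$.

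To compute this image, we split into the two cases that define $\Sigma^{vs_i}_{i,a}$. If $l(vs_i)<l(v)$, then by Lemma~\ref{wn}(1) we have $v(\alpha_i)\in\Delta_-$, hence $vs_i(-\alpha_i)=v(\alpha_i)\in\Delta_-$, which means $y^{-\alpha_i}$ lies in the topological completion $(\mathbb{Z}[y_j^{\pm 1}])^{vs_i}$; therefore
$$\varpi_{vs_i}(\Sigma^e_{i,a}) = \sum_{k\geq 0} y^{-k\alpha_i} = \frac{1}{1-y^{-\alpha_i}}.$$
If $l(vs_i)>l(v)$, then $v(\alpha_i)\in\Delta_+$, so $vs_i(\alpha_i)\in\Delta_-$ and $y^{\alpha_i}$ lies in $(\mathbb{Z}[y_j^{\pm 1}])^{vs_i}$; hence
$$\varpi_{vs_i}(\Sigma^{s_i}_{i,a}) = -\sum_{k\geq 1} y^{k\alpha_i} = \frac{-y^{\alpha_i}}{1-y^{\alpha_i}} = \frac{1}{1-y^{-\alpha_i}}.$$
In both cases the result is independent of $a$, so the ratio of shifted $\Sigma$'s maps to $1$. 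Combining, we obtain
$$\varpi_{vs_i}(\Theta_i^v(Y_{i,a})) = y_i \cdot y^{-\alpha_i} = y^{s_i(\omega_i)} = s_i^v(\varpi_v(Y_{i,a})),$$
which is the required identity.

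The main subtle point is the division into cases depending on $l(vs_i)$ vs.\ $l(v)$: it is precisely this dichotomy, matched via Lemma~\ref{wn}(1) to the sign of $v(\alpha_i)$, that guarantees both the topological convergence of $\varpi_{vs_i}(\Sigma^{vs_i}_{i,a})$ in $(\mathbb{Z}[y_j^{\pm 1}])^{vs_i}$ and the fact that the two possible expansions give the same rational function $1/(1-y^{-\alpha_i})$. Once this is in hand, the remainder of the verification is essentially bookkeeping of weights, and the $W$-equivariance for general $w\in W$ follows by composing the simple-reflection compatibilities along any reduced decomposition of $w$.
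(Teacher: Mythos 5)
Your proof is correct and is essentially the intended argument (the paper itself only cites \cite{FH3} for this lemma without reproducing a proof): reduce to simple reflections, check the identity $\varpi_{vs_i}\circ\Theta_i^v=s_i^v\circ\varpi_v$ on the generators $Y_{j,a}$, and observe that $\varpi_{vs_i}$ sends both branches of $\Sigma^{vs_i}_{i,a}$ to an $a$-independent expansion of $(1-y^{-\alpha_i})^{-1}$ in $(\mathbb{Z}[y_j^{\pm 1}]_{j\in I})^{vs_i}$, so the ratio of shifted $\Sigma$'s collapses to $1$ and $Y_{i,a}A_{i,aq^{-d_i}}^{-1}\mapsto y^{s_i(\omega_i)}$ as required. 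The matching of the length dichotomy in \eqref{twosol} with the sign of $v(\alpha_i)$ via Lemma \ref{wn},(1) — which you correctly identify as the crux — is exactly what guarantees convergence of the relevant geometric series in the right completion.
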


Now recall the multiplicative group ${\mc M}$ of monomials in the
variables $Y_{i,a}^{\pm 1}$ and the homomorphism $\varphi: {\mc M} \to
P$ introduced in Section \ref{qchar}. Taking the pull-back of the
partial ordering $<_w$ under $\varphi$, we obtain a partial ordering
$\prec_w$ on ${\mc M}$ which we used in Section \ref{wga} to define
the completion $\wt{\mathcal{Y}}^w$ of ${\mc Y}$.

A subgroup $\overline{\mathcal{M}}$ of the group of invertible
elements in $\Pi$ is introduced \cite{FH3}. For our purposes, it is
enough to recall that $\overline{\mathcal{M}}$ contains $\mathcal{M}$
and that it is stable by the Weyl group action. We have the truncation
homomorphism
$$\Lambda :
\overline{\mathcal{M}}\rightarrow \mathcal{M}$$ 
which assigns to $P \in\overline{\mathcal{M}}$ the leading monomial of
$E_e(P)$.

 Recall the ring automorphism $\sigma$ of $\Yim'$ given by
  formula \eqref{sigm}. We will denote by the same symbol its
  restrictions to $\Yim \subset \Yim'$ and to $\mathcal{M} \subset
  \Yim$ (the latter is a group automorphism). Both are defined by the
  formula $\sigma(Y_{i,a}) = Y_{i,a^{-1}}$ for $i\in I$ and
  $a\in\mathbb{C}^\times$.

We proved in \cite[Section 6.4]{FH3} that for any $i\in I$, the Chari
operator $T_i$ of section \ref{secextm} is also equal to
\begin{equation}\label{til}T_i = \sigma \circ \Lambda \circ \Theta_i \circ \sigma.\end{equation} 

This will be generalized below for $w\in W$ by using homomorphism  
$$\Lambda_w : \overline{\mathcal{M}}\rightarrow \mathcal{M}.$$ 
It is the leading term, relatively to the partial ordering $\prec_w$, 
of the expansion in $\wt{\mathcal{Y}}^w$ of an element of
$\overline{\mathcal{M}}$.
For $w = e$, we have $\Lambda_e = \Lambda$.

\begin{example} For $i\in I$ and $a\in\mathbb{C}^\times$, we have
$$\Lambda_e(\Sigma_{i,a}) = 1\text{ and }\Lambda_{s_i}(\Sigma_{i,a}) = -A_{i,aq_i^2}.$$
\end{example}

For $w\in W$, we will also consider the group homomorphism: 
$$T_w = \Lambda_w \circ \Theta_w  : \mathcal{M}\rightarrow \mathcal{M}.$$

\subsection{Extension of $\Pi$}    \label{extact}
We define the following extension of $\Pi$: 
$$\Pi' := \mathcal{Y}'\otimes_\mathcal{Y} \Pi
$$
where ${\mc Y}'$ is defined by formula \eqref{Yprime}.
Thus,
$$
\Pi' = \bigoplus_{w\in W} \wt{\mathcal{Y}}'{}^w
$$
where
\begin{equation}    \label{Yprimew}
\wt{\mathcal{Y}}'{}^w := \mathcal{Y}'\otimes_\mathcal{Y} \wt{\mathcal{Y}}^w
\end{equation}
is the completion of $\mathcal{Y}'$ consisting of finite sums of
elements of the form $P(\Psib_{i,a}^{\pm 1},y_j^{\pm 1}) \cdot
R(Y_{i,a}^{\pm 1})$, where $P$ is a Laurent polynomial and $R$ is an element
of $\wt{\mathcal{Y}}'{}^w$ which, as we explained in Section \ref{wga},
is the completion of ${\mc Y}$ with respect to the partial ordering
$\prec_w$ defined in Section \ref{add}.

Now we define another completion $\ol{\mathcal{Y}}'{}^w$ as follows:
it consists of finite sums of elements of the form $P(\Psib_{i,a}^{\pm
  1}) \cdot R(Y_{i,a}^{\pm 1},y_j^{\pm 1})$, where $P$ is a Laurent polynomial
and $R$ is an element of the completion of ${\mc Y}$ with respect to
the partial ordering obtained by combining the orderings $\prec_w$ and
$<_w$ defined in Section \ref{wga}. We then set
$$
\ol\Pi' := \bigoplus_{w\in W} \ol{\mathcal{Y}}'{}^w.
$$

Observe that $\ol{\mathcal{Y}}'{}^e$ is naturally realized as a
subring of ${\mc E}_\ell$. The following result follows from \cite{HJ}
and \cite[Theorem 4.19]{W} (see Definition \ref{K0} for the definition
of $K_0(\OO)$).

\begin{prop}
The $q$-character homomorphism $\chi_q: K_0({\mc O}) \to {\mc E}_\ell$
takes values in $\ol{\mathcal{Y}}'{}^e$ and the resulting homomorphism
$K_0({\mc O}) \to \ol{\mathcal{Y}}'{}^e$ is an isomorphism.
\end{prop}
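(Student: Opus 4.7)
The plan is to establish three claims in sequence: (i) for each simple module $L(\Psib)$ in category $\OO$, the $q$-character $\chi_q(L(\Psib))$ lies in $\ol{\mathcal{Y}}'{}^e$; (ii) the map $\chi_q$ extends to the completion $K_0(\OO)$ with values in $\ol{\mathcal{Y}}'{}^e$; and (iii) the resulting homomorphism $K_0(\OO) \to \ol{\mathcal{Y}}'{}^e$ is bijective. Injectivity on the Grothendieck group level is already granted by Proposition \ref{inject}, so the real content is (i) and the surjectivity part of (iii).

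For (i), I would invoke \cite[Theorem 4.19]{W}, which extends the finite-dimensional structure theorem from Section \ref{qchar} to all of $\OO$: for $\Psib \in \mfr$,
$$
\chi_q(L(\Psib)) = \Psib \Bigl( 1 + \sum_{m' \neq 1} c_{m'}\, m' \Bigr),
$$
where each $m' \neq 1$ is a monomial in the variables $A_{i,a}^{-1}$, $i \in I$, $a \in \C^\times$, and $c_{m'} \in \Z$. Any $\Psib \in \mfr$ factors as a finite product of the generators $\Psib_{i,a}^{\pm 1}$ times an element $\imath(\ol\omega)$ with $\omega \in P_\Q$, so after identifying $y_j$ with $[\omega_j]$ the prefactor $\Psib$ lies in $\mathcal{Y}'$. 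Since $\varpi(A_{i,a}) = \alpha_i \in Q^+$ and $A_{i,a} \in \mathcal{Y}$, each nontrivial $m'$ simultaneously satisfies $m' \prec_e 1$ in $\mathcal{M}$ and $\varphi(m') <_e 0$ in $P$. This is precisely the convergence condition defining the completion $\ol{\mathcal{Y}}'{}^e$.

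For (ii), the finiteness condition in Definition \ref{K0} on sums $\sum_\Psibs \lambda_\Psibs [L(\Psib)]$ in $K_0(\OO)$ requires that for each $\omega \in P_\Q$ only finitely many terms contribute to the $[\omega]$-coefficient of the character, with the total weight support in a finite union of sets $D(\mu)$. Combined with (i), which shows that the monomials of $\chi_q(L(\Psib))$ are bounded above by $\Psib$ in both orderings, this ensures that applying $\chi_q$ termwise yields a well-defined element of $\ol{\mathcal{Y}}'{}^e$. For (iii), the nontrivial step is surjectivity. Given $X \in \ol{\mathcal{Y}}'{}^e$, I would iteratively peel off leading terms: choose a monomial $\Psib$ maximal in the $\Psib$-support of $X$ with respect to the combined $\prec_e$-and-$<_e$ ordering, with coefficient $\lambda \neq 0$; by (i), the element $X - \lambda \chi_q(L(\Psib))$ has strictly smaller support in this leading direction. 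Iterating produces a formal sum $\sum \lambda_\Psibs [L(\Psib)]$, and the matching of the topology in (ii), run in reverse, shows this sum satisfies the finiteness condition of Definition \ref{K0} and defines an element of $K_0(\OO)$ mapping to $X$.

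The chief obstacle is the structure theorem (i): it is the natural generalization of the finite-dimensional fact that $\chi_q(L(m)) \in m(1+\Z[A_{i,a}^{-1}])$, but in category $\OO$ it is genuinely an infinite statement that rests on the detailed analysis in \cite{W} of the composition factors of $L(\Psib)$ under the action of the Drinfeld generators. Given (i), the rest is formal, modulo checking that the combined $\prec_e$-and-$<_e$ topology on $\ol{\mathcal{Y}}'{}^e$ is compatible with the support condition in Definition \ref{K0} in sufficient strength for the leading-term iteration in the surjectivity argument to converge.
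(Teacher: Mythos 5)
Your proposal is correct and follows essentially the route the paper intends: the paper gives no written proof of this proposition, simply asserting that it "follows from \cite{HJ} and \cite[Theorem 4.19]{W}", and your three steps --- the structure theorem $\chi_q(L(\Psib))\in \Psib\,(1+\sum c_{m'}m')$ with $m'$ monomials in the $A_{i,a}^{-1}$ from \cite{W}, the compatibility of the completion in Definition \ref{K0} with the combined ordering defining $\ol{\mathcal{Y}}'{}^e$, and the triangular leading-term iteration for surjectivity --- are exactly the standard unpacking of those citations. The only point worth tightening is that convergence in $\ol{\mathcal{Y}}'{}^e$ requires not just $m'\prec_e 1$ for each term but finiteness of the set of terms above any given level, which for a single $L(\Psib)$ is the finite-dimensionality of weight spaces guaranteed by \cite{HJ}.
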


Next, we extend the Weyl group action on $\Pi$ to a braid group action
on $\Pi'$ and $\ol\Pi'$. Note that this action is also discussed in
\cite{GHL}.

\begin{prop}    \label{extact1}
The operators $\Theta_i'$, $i\in I$ given by the formulas
$$\Theta_i'([\omega]) = [s_i(\omega)],
\qquad
\Theta_i'(\Psib_{j,a}) = 
\left\{ 
\begin{array}{lc}
\Psib_{j,a} & \text{ if $j \not = i$}, \\[2mm]
\widetilde{\Psib}_{i,aq_i^{-2}}\Sigma_{i,aq_i^{-2}} & \mbox{ if }j = i.
\end{array}
\right.
$$
generate an action of the braid group associated to $\g$ on $\Pi'$ and
$\ol\Pi'$.
\end{prop}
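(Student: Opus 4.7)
My plan is to mirror the strategy used earlier in the paper to establish the braid relations for the operators $T_i'$ on $\Yim'$, combined with the Weyl group relations for $\Theta_i$ on $\Pi$. The three key inputs are: (i) the operators $T_i'$ on $\Yim'$ satisfy the braid relations (Theorem in Section \ref{genChari}); (ii) the operators $\Theta_i$ on $\Pi$ satisfy the full Weyl group relations (Theorem \ref{mfh}); and (iii) the series $\Sigma_{i,a}^e$ and $\Sigma_{i,a}^{s_i}$ satisfy the recursion \eqref{siae}, $\Sigma_{i,a}=1+A_{i,a}^{-1}\Sigma_{i,aq^{-2d_i}}$.

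First, I would verify that the prescribed formulas define a ring homomorphism $\Theta_i': \wt{\mathcal{Y}}'{}^v\to\wt{\mathcal{Y}}'{}^{vs_i}$ (and likewise on $\ol{\mathcal{Y}}'{}^v$) for each $v\in W$, where $\Sigma_{i,aq_i^{-2}}$ is interpreted according to the two-case prescription \eqref{twosol}. Convergence on products of $\Psib$-generators is immediate since the action is by a Laurent monomial times a controlled $\Sigma$-series, and the compatibility with the partial orderings $\prec_v$ and $<_v$ follows exactly as for $\Theta_i$ in \cite{FH3}. Next, I would check that the restriction of $\Theta_i'$ to $\Pi\subset\Pi'$ via the inclusion $Y_{j,a}\mapsto y_j\Psib_{j,aq_j^{-1}}\Psib_{j,aq_j}^{-1}$ coincides with $\Theta_i$. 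For $j\neq i$ this is immediate since $s_i(\omega_j)=\omega_j$ and the $\Psib_{j,\cdot}$ are fixed; for $j=i$ it is a short computation in $\Yim'$ using the explicit formula \eqref{psfh2} for $\wt\Psib_{i,b}$ and formula \eqref{Ai} for $A_{i,b}$, which recovers $\Theta_i(Y_{i,a})=Y_{i,a}A_{i,aq_i^{-1}}^{-1}\Sigma_{i,aq_i^{-3}}/\Sigma_{i,aq_i^{-1}}$ as in \eqref{defti}.

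It is then sufficient to verify the braid relation $\mathcal{B}_{i,k}$ on a set of topological generators of $\Pi'$, namely $y_j^{\pm1}$ and $\Psib_{j,a}^{\pm1}$. On $y_j$, $\Theta_i'$ acts as $s_i\in W$, so $\mathcal{B}_{i,k}$ follows from the braid relations in the Weyl group. On $\Psib_{j,a}$ with $j\notin\{i,k\}$, both $\Theta_i'$ and $\Theta_k'$ fix $\Psib_{j,a}$ and $\mathcal{B}_{i,k}$ is trivial. This reduces the problem to checking $\mathcal{B}_{i,k}$ on $\Psib_{j,a}$ for $j\in\{i,k\}$.

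For this remaining case I would run the same argument as in the proof of the braid relations for $T_i'$ in Section \ref{genChari}. By the consistency step and Theorem \ref{mfh}, $\mathcal{B}_{i,k}$ already holds on $Y_{j,a}$, and combined with the $y$-case this gives $\mathcal{B}_{i,k}(\Psib_{j,aq_j^{-1}}\Psib_{j,aq_j}^{-1})=\Psib_{j,aq_j^{-1}}\Psib_{j,aq_j}^{-1}$. Setting $\mathcal{D}(a):=\mathcal{B}_{i,k}^{-1}(\Psib_{j,a})\Psib_{j,a}^{-1}$, this implies $\mathcal{D}(aq_j^{-1})=\mathcal{D}(aq_j)$, hence $\mathcal{D}(aq_j^{2m})$ is independent of $m\in\Z$. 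In the earlier setting for $T_i'$, $\mathcal{D}$ was a Laurent monomial in finitely many $\Psib_{l,aq^r}^{\pm1}$, which forced $\mathcal{D}=1$. The main obstacle here is to upgrade this argument to the current setting, where $\mathcal{D}(a)$ lies in the completion $\wt{\mathcal{Y}}'{}^w$ (resp.\ $\ol{\mathcal{Y}}'{}^w$) and contains $\Sigma$-series. My plan is to first apply the truncation homomorphisms $\Lambda_w$ to reduce to the purely monomial case, where the previous argument gives $\Lambda_w(\mathcal{D})=1$; then to extract the next-to-leading term using the recursion \eqref{siae}, whose coefficient is a Laurent monomial in $Y$'s controlled by the $Y$-level braid relations already established; and to iterate this bootstrap order by order in the filtration defined by $\prec_w$, concluding $\mathcal{D}(a)=1$. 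The analogous verification for $C_{i,k}C_{k,i}=2$ and $3$ (types $B,C,G$) proceeds identically, with longer braid words.
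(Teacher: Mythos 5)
Your route is genuinely different from the paper's. The paper's proof is a direct verification: it reduces to the rank-two subdiagram on $\{i,k\}$ and then checks the braid relation on $\Psib_{i,a}$, $\Psib_{k,a}$ by explicit computation, using the closed-form expressions for the composite series $\Theta_v(\Sigma_{l,b})$ (the $E_e(Q_{w(\omega_l),b})$'s) worked out in \cite[Sections 4.6--4.8]{FH3}; e.g.\ in type $A_2$ one computes $\Theta_j'\Theta_i'(\Psib_{i,a})=\Psib_{j,aq^{-3}}^{-1}\Sigma_{ji,aq^{-2}}$ and checks directly that it is $\Theta_i'$-invariant. You instead propose a soft argument: verify the relation on the generators $y_j$ and on $\Psib_{j,a}$ for $j\notin\{i,k\}$ (both fine), then for $j\in\{i,k\}$ use that the braid relation already holds on $Y_{j,a}$ (via the restriction to $\Pi$ and Theorem \ref{mfh}) to deduce that the discrepancy $\mathcal{D}(a)=\mathcal{B}_{i,k}^{-1}(\Psib_{j,a})\Psib_{j,a}^{-1}$ satisfies $\mathcal{D}(aq_j^{-1})=\mathcal{D}(aq_j)$. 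Up to this point your argument is sound, and the periodicity step does buy you something real: since each $\varphi$-graded piece of $\wt{\mathcal{Y}}'{}^w$ is spanned by finitely many monomials and a monomial with nontrivial $\Psib$- or $Y$-content has an infinite $\tau_{q_j^2}$-orbit, invariance under $\tau_{q_j^2}$ forces $\mathcal{D}$ to lie in the completion of $\Z[y_l^{\pm1}]_{l\in I}$, i.e.\ to be spectral-parameter independent.

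The gap is in the last step. In the $T_i'$ argument of Section \ref{genChari} the discrepancy was a single Laurent monomial in the $\Psib_{l,aq^r}^{\pm1}$, so periodicity alone forced it to equal $1$. Here $\mathcal{D}$ is a ratio of products of series $\Theta_v(\Sigma_{l,b})^{\pm1}$, and after the periodicity argument you only know that it is a \emph{constant} element of the completion of $\Z(P)$ whose leading term ($\Lambda_w$-image) is $1$; that does not make it $1$ (e.g.\ $1+[-\alpha_i]$ has leading term $1$). Your proposed fix --- ``extract the next-to-leading term using the recursion \eqref{siae} \ldots and iterate order by order'' --- does not work as stated: the subleading terms of $\Theta_v(\Sigma_{l,b})$ for composite $v$ are not controlled by the $Y$-level braid relations; they are precisely the nontrivial data of the series $Q_{w(\omega_l),b}$ that the paper's explicit rank-two computation supplies. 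To close the argument along your lines you would need an additional input identifying the constant part of both sides, for instance showing that $\varpi(\mathcal{B}(\Psib_{j,a}))$ satisfies, for every reduced word representing the same Weyl group element, the recursion of Lemma \ref{family} (obtained by applying $\varpi$ composed with the word of $\Theta'$'s to \eqref{siae}) and invoking the uniqueness of its solution --- taking care that this does not circularly presuppose the well-definedness of $\Theta_w'$. As written, the proof is incomplete at exactly the point you flag as the main obstacle.
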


\begin{proof}
The braid relations are proved from the rank $2$ case in the same way as for the operators $\Theta_i$ in \cite{FH3}.
For example, in type $A_2$, we have for $i\neq j$
$$\Theta_j' \Theta_i'(\Psib_{i,a}) =  \Psib_{j,aq^{-3}}^{-1}\Sigma_{ji,aq^{-2}}$$
which is invariant under $\Theta_i'$. The proof for types $B_2$ and $G_2$ is similar, using \cite[Sections 4.7, 4.8]{FH3}.
\end{proof}

It is clear that the operators $\Theta_i'$ preserve the groups of
invertible elements $(\Pi')^\times$, $(\Pi)^\times$, and
$\tb^\times$. The following result is obtained by a direct computation.

\begin{prop}    \label{actG}
The operators $\Theta_i', i\in I$, preserve the
subgroup $G$ of $(\Pi')^\times$ generated by $(\Pi)^\times$ and the
$\Psib_{j,a}$'s. On this subgroup, we have $(\Theta_i')^2 = \text{Id}$ on
$(\Pi)^\times$ and $(\Theta_i')^2(\Psib_{j,a})$ is equal to
$\Psib_{j,a}$ up to a factor in $\pm \tb^\times$.  Hence we have a
well-defined action of the Weyl group $W$ on $G/(\pm \tb^\times)$.
\end{prop}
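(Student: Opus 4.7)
The plan is to establish four things in order: (i) that $\Theta_i'$ preserves $G$; (ii) that $(\Theta_i')^2 = \mathrm{Id}$ on $(\Pi)^\times$; (iii) that $(\Theta_i')^2(\Psib_{j,a})/\Psib_{j,a} \in \pm\tb^\times$; and (iv) to combine (ii)--(iii) with the braid relations from Proposition~\ref{extact1} to obtain the $W$-action on $G/(\pm\tb^\times)$. For (i) and (ii), the key observation is that $\Theta_i'$ restricted to $\Pi$ coincides with the operator $\Theta_i$ of \cite{FH3}. On $Y_{j,a}$ with $j\ne i$ this is clear since $\Theta_i'$ fixes $\Psib_{j,\cdot}$ and $[\omega_j]$; for $Y_{i,a}$ one writes $Y_{i,a} = [\omega_i]\Psib_{i,aq_i^{-1}}\Psib_{i,aq_i}^{-1}$ and expands $\Theta_i'(Y_{i,a})$ using Proposition~\ref{extact1} and the explicit form \eqref{psfh2} of $\wt{\Psib}_{i,\cdot}$: the $\wt{\Psib}$-ratio $\wt{\Psib}_{i,aq_i^{-3}}\wt{\Psib}_{i,aq_i^{-1}}^{-1}$ telescopes to $Y_{i,a}A_{i,aq_i^{-1}}^{-1}$, while the $\Sigma$-ratio matches \eqref{defti}. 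Hence $\Theta_i'((\Pi)^\times)\subseteq(\Pi)^\times$ and $(\Theta_i')^2 = \Theta_i^2 = \mathrm{Id}$ there by Theorem~\ref{mfh}(1). For the remaining generators, $\Theta_i'(\Psib_{j,a}) = \Psib_{j,a}$ when $j\ne i$, while $\Theta_i'(\Psib_{i,a}) = \wt{\Psib}_{i,aq_i^{-2}}\Sigma_{i,aq_i^{-2}}$ is a product of a Laurent monomial in the $\Psib_{k,\cdot}^{\pm 1}$ and an invertible element of $\Pi$ (since $\Sigma^v_{i,b} = 1 + A_{i,b}^{-1} + \cdots$ is invertible in the completion), so $G$ is stable.

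For (iii), the case $j\ne i$ is trivial. For $j=i$, the central tool is the \emph{swap identity}
$$\Theta_i^{v}\bigl(\Sigma^{v}_{i,b}\bigr) \;=\; 1 - \Sigma^{vs_i}_{i,b}, \qquad v\in W,$$
which I verify by observing that both sides lie in $\wt{\mathcal{Y}}^{vs_i}$, satisfy the same $q_i^2$-difference equation \eqref{siae}, and have matching leading term; the check uses the auxiliary formula $\Theta_i^v(A_{i,b}) = A_{i,bq_i^{-2}}^{-1}\Sigma^{vs_i}_{i,bq_i^{-4}}/\Sigma^{vs_i}_{i,b}$, which follows directly from \eqref{defti}. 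Now decompose $\wt{\Psib}_{i,aq_i^{-2}} = \Psib_{i,aq_i^{-2}}^{-1}R_a$, where $R_a$ collects all the $\Psib_{k,\cdot}^{\pm 1}$ with $k\ne i$ appearing in \eqref{psfh2}; since $R_a$ is fixed by $\Theta_i'$, a single application of $\Theta_i'$ gives
$$\Theta_i'\bigl(\wt{\Psib}_{i,aq_i^{-2}}\Sigma^{vs_i}_{i,aq_i^{-2}}\bigr) \;=\; \Psib_{i,aq_i^{-4}}R_aR_{aq_i^{-2}}^{-1}\bigl(\Sigma^v_{i,aq_i^{-4}}\bigr)^{-1}\bigl(1 - \Sigma^v_{i,aq_i^{-2}}\bigr).$$
By $1-\Sigma^v_{i,b} = -A_{i,b}^{-1}\Sigma^v_{i,bq_i^{-2}}$ this collapses to $-\Psib_{i,aq_i^{-4}}R_aR_{aq_i^{-2}}^{-1}A_{i,aq_i^{-2}}^{-1}$. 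Finally, expanding $A_{i,aq_i^{-2}}^{-1}$ via $Y_{k,c} = [\omega_k]\Psib_{k,cq_k^{-1}}\Psib_{k,cq_k}^{-1}$ produces exactly $[-\alpha_i]\Psib_{i,aq_i^{-4}}^{-1}\Psib_{i,a}R_a^{-1}R_{aq_i^{-2}}$ (matching \eqref{Ai} against \eqref{psfh2} term by term), so the $R$-factors and the $\Psib_{i,aq_i^{-4}}$'s cancel and
$$(\Theta_i')^2(\Psib_{i,a}) \;=\; -[-\alpha_i]\Psib_{i,a} \;\in\; \pm\tb^\times\cdot\Psib_{i,a}.$$

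Since $\Theta_i'$ preserves $\pm\tb^\times$ (as $\Theta_i'[\omega] = [s_i(\omega)]$), it descends to an operator on $G/(\pm\tb^\times)$, on which $(\Theta_i')^2 = \mathrm{Id}$ by (ii)--(iii). Combined with the braid relations from Proposition~\ref{extact1}, this yields the claimed $W$-action on $G/(\pm\tb^\times)$. The main technical obstacle is step (iii), specifically the precise matching between the $R$-factors coming from $\wt{\Psib}$ and the $\Psib_{k,\cdot}^{\pm 1}$-factors produced by expanding $A_{i,aq_i^{-2}}^{-1}$; for non-simply-laced $\g$ the three cases $C_{i,j}\in\{-1,-2,-3\}$ in \eqref{psfh2} must be compared separately against the corresponding terms of $A_{i,b}^{-1}$, which, as in the proof of Proposition~\ref{extact1}, reduces to the rank-two computations already handled in \cite{FH3}.
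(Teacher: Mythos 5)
The paper offers no proof of this proposition beyond the remark that it ``is obtained by a direct computation,'' and your proposal correctly supplies exactly that computation. The three key ingredients all check out: the swap identity $\Theta_i^v(\Sigma^v_{i,b})=1-\Sigma^{vs_i}_{i,b}$ (both sides satisfy the inhomogeneous difference equation \eqref{siae} after conjugating by $\Theta_i$, using your auxiliary formula for $\Theta_i(A_{i,b})$, and the leading terms agree), the collapse $\bigl(\Sigma^v_{i,aq_i^{-4}}\bigr)^{-1}\bigl(1-\Sigma^v_{i,aq_i^{-2}}\bigr)=-A_{i,aq_i^{-2}}^{-1}$, and the term-by-term match of $A_{i,aq_i^{-2}}^{-1}$ against $[-\alpha_i]\Psib_{i,aq_i^{-4}}^{-1}\Psib_{i,a}R_a^{-1}R_{aq_i^{-2}}$, yielding $(\Theta_i')^2(\Psib_{i,a})=-[-\alpha_i]\Psib_{i,a}\in\pm\tb^\times\cdot\Psib_{i,a}$ as required.
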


\section{Weyl group action and solutions of the extended $TQ$-relations}\label{wgao}

In this section we use the Weyl group action from Section
  \ref{wga} to prove Conjectures \ref{exttq} and \ref{exttq1} when $w
  \in W$ is a simple reflection. We then relate these conjectures for
  arbitrary elements $w \in W$ to a conjectural formula for the
  $q$-character of $L(\Psib_{w(\omega_i),aq_i^{-1}})$, up to
  non-zero factor from ${\mc E} \subset {\mc E}_\ell$ (Conjecture
  \ref{upto}). This formula generalizes the formulas in Theorem
  \ref{fex} in the case when $w$ is a simple reflection. In the
  process, we define generating functions $Q_{w(\omega_i),a}\in \Pi$
  (Theorem \ref{exwo}) and show that they satisfy the extended
  $TQ$-relations (Theorem \ref{wbax}). In the next section, we will
  show that they also satisfy the extended $QQ$-system (Theorem
  \ref{QQrel}).

\subsection{Relations obtained from the Weyl group action}

For each element $w$ of the Weyl group $W$, we have the
  extended $TQ$-relations from Conjecture \ref{exttq} and their
  equivalent $q$-character versions from Conjecture \ref{exttq1}. In
  the special case $w=s_j, j \in I$, Theorem \ref{fex} gives us an
  explicit $q$-character formula for $L(\Psib_{s_j(\omega_j),a})$, and
  therefore we obtain the following equivalent reformulation of
  Conjecture \ref{exttq1} in the case $w=s_j, j \in I$.

\begin{cor}    \label{exttq2}
Let $V$ be a finite-dimensional simple representation of
$U_q(\wh\g)$. Its $q$-character $\chi_q(V)$, viewed as an
element of the completion $\wt\Yim^e$ of $\Yim$, is invariant under
the substitution
$$
Y_{i,a} \mapsto E_e(\Theta_j(Y_{i,a})), \qquad \forall i \in I, a \in
\C^\times.
$$
\end{cor}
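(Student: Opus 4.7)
The plan is to deduce this corollary almost immediately from Theorem \ref{mfh}(2), which states that $\chi_q(V) \in \mathcal{Y}^W$, i.e.\ the $q$-character of any finite-dimensional representation $V$ of $U_q(\wh\g)$ is $W$-invariant when viewed inside $\Pi$ via the diagonal embedding $\mathcal{Y} \hookrightarrow \Pi$. In particular, applying the simple reflection $s_j$, we have $\Theta_j(\chi_q(V)) = \chi_q(V)$ as elements of $\Pi$. What the corollary asks for is exactly the $e$-component of this equality, but reinterpreted as a substitution statement; the key observation is that passing from the equality in $\Pi$ to the substitution in $\wt{\Yim}^e$ is purely formal, using that both $\Theta_j$ and $E_e$ are ring homomorphisms.

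Concretely, I would proceed as follows. Write $\chi_q(V) = P(\{Y_{i,a}\})$ as a Laurent polynomial in finitely many variables $Y_{i,a}$. Since $\Theta_j = (\Theta_j^v : \wt{\Yim}^v \to \wt{\Yim}^{vs_j})_{v \in W}$ is a ring homomorphism on $\Pi$, substitution commutes with its action, giving
\[
\Theta_j(\chi_q(V)) \;=\; P\bigl(\{\Theta_j(Y_{i,a})\}\bigr) \quad \text{in } \Pi.
\]
Applying the ring homomorphism $E_e : \Pi \to \wt{\Yim}^e$, we obtain
\[
E_e\bigl(\Theta_j(\chi_q(V))\bigr) \;=\; P\bigl(\{E_e(\Theta_j(Y_{i,a}))\}\bigr) \quad \text{in } \wt{\Yim}^e.
\]
On the other hand, the $W$-invariance from Theorem \ref{mfh}(2) yields $\Theta_j(\chi_q(V)) = \chi_q(V)$ in $\Pi$, whose $e$-component is simply $\chi_q(V) \in \wt{\Yim}^e$. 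Combining these two identities gives $P(\{E_e(\Theta_j(Y_{i,a}))\}) = \chi_q(V)$, which is precisely the asserted invariance under the substitution $Y_{i,a} \mapsto E_e(\Theta_j(Y_{i,a}))$.

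The proof is thus entirely formal once Theorem \ref{mfh} is in hand, so there is no genuine obstacle beyond that input (which is already established in \cite{FH3}). The only detail worth noting is well-definedness of the substitution: by formula \eqref{defti} together with the rule \eqref{twosol}, each $E_e(\Theta_j(Y_{i,a}))$ corresponds to $\Theta_j^{s_j}(Y_{i,a})$ and involves the solution $\Sigma^e_{j,\bullet}$ (since $l(s_j \cdot s_j) = 0 < 1 = l(s_j)$), which is a well-defined element of the completion $\wt{\Yim}^e$. Since $P$ has only finitely many monomials, the evaluation $P(\{E_e(\Theta_j(Y_{i,a}))\})$ sits in $\wt{\Yim}^e$ without any convergence issue, and the comparison above is a literal equality in this completion.
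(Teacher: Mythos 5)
Your proposal is correct and follows essentially the same route as the paper: the paper derives this corollary as the special case $w=s_j$ of Theorem \ref{wbax1}, whose proof is exactly your argument --- Theorem \ref{mfh} gives $\Theta_j(\chi_q(V))=\chi_q(V)$ in $\Pi$, and applying the ring homomorphism $E_e$ converts this into the substitution statement in $\wt{\Yim}^e$. Your added remark identifying $E_e(\Theta_j(Y_{i,a}))$ with $\Theta_j^{s_j}(Y_{i,a})$ and the choice of solution $\Sigma^e_{j,\bullet}$ via \eqref{twosol} is a correct (and welcome) well-definedness check that the paper leaves implicit.
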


We will now state and prove a generalization of Conjecture
\ref{exttq2} for an arbitrary element $w \in W$ (instead of
$w=s_i$). In fact, we will formulate two versions: for $\wt\Yim^e$
and for $\Pi = \oplus_{w \in W} \wt\Yim^w$.

\begin{thm}\label{wbax1}
Let $V$ be a finite-dimensional representation
of $U_q(\wh{\Glie})$ and $w\in W$.

(1) The $q$-character $\chi_q(V)$, viewed as an
element of the completion $\wt\Yim^e$ of $\Yim$, is invariant under
the substitution
$$
Y_{i,a} \mapsto E_e(\Theta_w(Y_{i,a})), \qquad \forall i \in I, a \in
\C^\times.
$$

(2) The $q$-character $\chi_q(V)$, viewed as an element of $\Pi$ under
the diagonal embedding $\Yim \hookrightarrow \Pi$, is invariant under
the substitution
$$
Y_{i,a} \mapsto \Theta_w(Y_{i,a}), \qquad \forall i \in I, a \in
\C^\times.
$$
\end{thm}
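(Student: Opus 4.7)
My plan is to derive both parts as essentially formal consequences of the Weyl group invariance of $q$-characters established in Theorem \ref{mfh}, together with the ring-automorphism property of the operators $\Theta_w$.

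First I would invoke Theorem \ref{mfh},(2), which asserts that for every finite-dimensional $V$ the $q$-character $\chi_q(V)$ lies in $\mathcal{Y}^W$. Since the Weyl group action defined in Theorem \ref{mfh},(1) lives on $\Pi = \bigoplus_{v\in W}\wt{\mathcal{Y}}^v$ rather than on $\mathcal{Y}$ itself, this $W$-invariance must be read via the diagonal embedding $\mathcal{Y} \hookrightarrow \Pi$: the diagonal image of $\chi_q(V)$ is fixed by every $\Theta_w$, $w \in W$. With the convention that $\Theta_w$ has components $\Theta_w^v : \wt{\mathcal{Y}}^v \to \wt{\mathcal{Y}}^{vw^{-1}}$, this says
\[
\Theta_w^v(\chi_q(V)) = \chi_q(V) \qquad \text{in } \wt{\mathcal{Y}}^{vw^{-1}},
\]
for every $v, w \in W$, where on each side $\chi_q(V)$ is viewed in the appropriate completion through the diagonal embedding.

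For part (2), I would observe that $\Theta_w$ is a ring automorphism of $\Pi$, so its evaluation on the image of the Laurent polynomial $\chi_q(V) \in \mathcal{Y}$ coincides with the substitution $Y_{i,a} \mapsto \Theta_w(Y_{i,a})$ applied to $\chi_q(V)$; the invariance $\Theta_w(\chi_q(V)) = \chi_q(V)$ in $\Pi$ is then exactly the claim. For part (1), I would project to the $e$-component via the ring homomorphism $E_e : \Pi \to \wt{\mathcal{Y}}^e$. Because each component $\Theta_w^w : \wt{\mathcal{Y}}^w \to \wt{\mathcal{Y}}^e$ is itself a ring homomorphism (as a direct summand of the ring automorphism $\Theta_w$), substituting $Y_{i,a} \mapsto E_e(\Theta_w(Y_{i,a})) = \Theta_w^w(Y_{i,a})$ into the Laurent polynomial $\chi_q(V)$ produces $\Theta_w^w(\chi_q(V)) \in \wt{\mathcal{Y}}^e$. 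Specializing the invariance identity displayed above to $v=w$ gives $\Theta_w^w(\chi_q(V)) = \chi_q(V)$ in $\wt{\mathcal{Y}}^e$, which is precisely statement (1).

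There is really no substantive obstacle here: all the difficulty was already absorbed into Theorem \ref{mfh}. The only care required is bookkeeping, namely checking that (i) the components $\Theta_w^v$ are ring homomorphisms so that evaluation on a polynomial can be performed term-by-term, (ii) the resulting substitution of elements of $\wt{\mathcal{Y}}^w$ into a Laurent polynomial is well-defined in the complete topological ring, and (iii) the projection $E_e$ correctly picks out $\Theta_w^w$ from the direct sum. Once these compatibilities are noted, both statements fall out immediately from the $W$-invariance of $\chi_q(V)$.
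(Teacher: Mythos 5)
Your proposal is correct and follows essentially the same route as the paper: the paper's proof likewise deduces part (2) directly from the $W$-invariance of $\chi_q(V)$ in $\Pi$ guaranteed by Theorem \ref{mfh}, and obtains part (1) by applying the projection $E_e$. The extra bookkeeping you spell out (ring-homomorphism property of the components $\Theta_w^v$, well-definedness of the substitution in the completion) is left implicit in the paper but is accurate.
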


\begin{proof} Theorem \ref{mfh} implies that the $q$-character
  $\chi_q(V)$, viewed as an element of $\Pi$ under the diagonal
  embedding $\Yim \hookrightarrow \Pi$, is invariant under all
  operators $\Theta_w, w \in W$. This proves part (2). Applying the
  projection $E_e: \Pi \to \wt\Yim^e$, we obtain part (1).
\end{proof}

In the case $w=e$, the statement of this theorem is trivial. In the
case $w=s_j$, because we know the $q$-character of
$L(\Psib_{s_i(\omega_i),a})$ from Theorem \ref{fex}, we obtain the
statement of Corollary \ref{exttq2} which in turn implies Conjectures
\ref{exttq} and \ref{exttq1} when $w$ is a simple reflection (note
that for $w=e$ we have proved these conjectures in Theorems
\ref{thmFH} and \ref{corTQ}). Thus, we obtain the following.

\begin{thm}
   Conjectures \ref{exttq} and \ref{exttq1} hold if $w=s_j, j \in
   I$.
\end{thm}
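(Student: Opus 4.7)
The plan is to combine Theorem \ref{wbax1}(1) in the case $w=s_j$ (which is Corollary \ref{exttq2}) with the explicit $q$-character formula of Theorem \ref{fex} and match the two substitution rules. The case $w=e$ is already handled by Theorems \ref{thmFH} and \ref{corTQ}, so I focus on $w=s_j$.

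First, Corollary \ref{exttq2} asserts that $\chi_q(V)\in\wt{\mathcal{Y}}^e$ is invariant under the substitution $Y_{i,a}\mapsto E_e(\Theta_{s_j}(Y_{i,a}))$. Using the defining formulas \eqref{defti}--\eqref{twosol}, I would make this explicit: $\Theta_{s_j}$ fixes $Y_{i,a}$ for $i\neq j$, and sends $Y_{j,a}$ to $Y_{j,a}A_{j,aq_j^{-1}}^{-1}\Sigma^e_{j,aq_j^{-3}}/\Sigma^e_{j,aq_j^{-1}}$ (the branch $\Sigma^e$ is selected because $l(s_j\cdot s_j)=0<1=l(s_j)$).

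The key step is to identify this with the substitution $Y_{i,a}\mapsto [s_j(\omega_i)]\chi_q(L(\Psib_{s_j(\omega_i),aq_i^{-1}}))/\chi_q(L(\Psib_{s_j(\omega_i),aq_i}))$ appearing in Conjecture \ref{exttq1}. For $i\neq j$, Lemma \ref{wn} gives $s_j(\omega_i)=\omega_i$, hence $\Psib_{s_j(\omega_i),a}=\Psib_{i,a}$; combining Theorem \ref{formuachar} with Definition \ref{defY} collapses the candidate substitution to $Y_{i,a}$, matching $\Theta_{s_j}$. For $i=j$, since $\Psib_{s_j(\omega_j),a}=\wt{\Psib}_{j,aq_j^{-2}}$, I would substitute Theorem \ref{fex}'s explicit formula for $\chi_q(L(\wt{\Psib}_{j,\cdot}))$ into the ratio: the ordinary character factors $\chi(L(\wt{\Psib}_{j,\cdot}))$ and the scalar factors $(1-[-\alpha_j])$ cancel between numerator and denominator, the monomial factors $[\wt{\Psib}_{j,aq_j^{-3}}]/[\wt{\Psib}_{j,aq_j^{-1}}]$ combine with $[s_j(\omega_j)]$ to yield $Y_{j,a}A_{j,aq_j^{-1}}^{-1}$, and the surviving $\Sigma^e$-factors give the correct quotient. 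This identification is precisely the content of formula \eqref{sioi}.

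Once the substitution rules are matched, Conjecture \ref{exttq1} for $w=s_j$ follows immediately; clearing denominators and invoking injectivity of $\chi_q$ (Proposition \ref{inject}) then upgrades the $q$-character identity to an identity in $\wt{K}_0^{s_j}(\OO)$, giving Conjecture \ref{exttq}. The main obstacle is the bookkeeping in the $i=j$ step: one must carefully track how each factor in Theorem \ref{fex} shifts under $a\mapsto aq_j^{\pm 1}$, confirm the exact cancellation of the $(1-[-\alpha_j])$ prefactors, and verify the telescoping identity $\Psib_{j,aq_j^{-3}}^{-1}\Psib_{j,aq_j^{-1}}\cdot(\text{bar products}) = Y_{j,a}A_{j,aq_j^{-1}}^{-1}[\omega_j]^{-1}$ coming from the definition of $\wt{\Psib}$. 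All other ingredients are either direct appeals to the preceding machinery or routine definition chases.
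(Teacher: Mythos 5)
Your proposal is correct and follows essentially the same route as the paper: the paper derives this theorem by combining Theorem \ref{wbax1} (invariance of $\chi_q(V)$ under the $\Theta_w$-substitution, coming from the Weyl group action of Theorem \ref{mfh}) with the explicit $q$-character formula of Theorem \ref{fex}, which identifies $E_e(\Theta_{s_j}(Y_{i,a}))$ with the substitution of Conjecture \ref{exttq1} exactly as in your $i\neq j$ and $i=j$ case analysis (the latter being formula \eqref{sioi}). The bookkeeping you flag as the main obstacle is precisely the content the paper leaves implicit, and your verification of it is sound.
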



In light of this discussion, it is natural to conjecture the following
generalization of Theorem \ref{fex} and formula \eqref{sioi} for
$\Theta_i(Y_{i,a})$.

\begin{conj}    \label{wconj}
  For any $w \in W$, we have the following identity in $\wt\Yim^e$:
  \begin{equation}    \label{qcharw}
    E_e(\Theta_w(Y_{i,a})) = [w(\omega_i)]
\frac{\chi_q(L(\Psib_{w(\omega_i),aq_i^{-1}}))}{\chi_q(L(\Psib_{w(\omega_i),aq_i}))}
  \end{equation}
\end{conj}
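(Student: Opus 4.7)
I would prove Conjecture \ref{wconj} by induction on $l(w)$, leveraging the braid-group action of the $\Theta_i$'s (Theorem \ref{mfh}) and the explicit $q$-character formulas known in the two initial cases, $w=e$ (Theorem \ref{formuachar}) and $w=s_i$ (Theorem \ref{fex}).

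For the base cases, when $w=e$ both sides of \eqref{qcharw} collapse to $Y_{i,a}$: the ordinary character $\chi(L_{i,a}^+)$ is $a$-independent, so the ratio on the right reduces to $[\omega_i]\Psib_{i,aq_i^{-1}}\Psib_{i,aq_i}^{-1}$, which is $Y_{i,a}$ by Definition \ref{defY}. When $w=s_j$ with $j\neq i$, both sides again equal $Y_{i,a}$, since $s_j(\omega_i)=\omega_i$ and $\Theta_j$ fixes $Y_{i,a}$. For $w=s_i$, I would cancel the common factors $\chi(L(\wt{\Psib}_{i,aq_i^{-2}}))$ and $(1-[-\alpha_i])$ in the ratio of $q$-characters supplied by Theorem \ref{fex}, and verify that the result matches formula \eqref{sioi} for $\Theta_i(Y_{i,a})$.

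For the inductive step, choose a reduced decomposition $w = w's_j$ with $l(w')=l(w)-1$. Tracking the source completion of the Weyl group action gives the factorization $\Theta_w^w(Y_{i,a}) = \Theta_{w'}^{w'}\bigl(\Theta_j^w(Y_{i,a})\bigr)$, where the intermediate element $\Theta_j^w(Y_{i,a}) \in \wt{\mathcal{Y}}^{w'}$ is, by \eqref{defti} and \eqref{twosol}, a rational function in the $Y_{k,b}$'s times a ratio of $\Sigma$-series of prescribed completion-type. The induction hypothesis controls $\Theta_{w'}^{w'}$ only on the diagonal embedding of $\Yim$, so I would enlarge it to track a compatible family of candidate $q$-characters for all $L(\Psib_{v(\omega_k),a})$ with $l(v)<l(w)$, using the defining difference equation \eqref{siae} and the substitution rules of Definition \ref{subs} to propagate the identity from $w'$ to $w$.

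The main obstacle, and the reason the paper proves the result only in rank at most $2$, is the final identification: one must show that the resulting formal expression in $\wt{\mathcal{Y}}^e$ is genuinely the $q$-character of a simple module $L(\Psib_{w(\omega_i),a})$ in category $\mathcal{O}$, not merely a plausible ratio of formal series. Following the approach used for $w=s_i$ in \cite{FH2,FHR}, I would try to realize $L(\Psib_{w(\omega_i),a})$ as an asymptotic limit of finite-dimensional Kirillov--Reshetikhin-type modules whose $q$-characters are controlled by the iterated $\Theta$-action, and then confirm that the extremal $\ell$-weight $\Psib_{w(\omega_i),a}$ appears with multiplicity one. This amounts essentially to proving the full conjectural $q$-character formula mentioned in the introduction; the rank-$2$ verification in the paper provides a template, but a uniform treatment in higher rank appears to require a significantly more involved construction of the relevant simple modules in category $\mathcal{O}$.
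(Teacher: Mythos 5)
First, note that the statement you were asked to prove is labeled a \emph{conjecture} in the paper: it is established there only for $w=e$, for $w=s_j$ ($j\in I$), and for arbitrary $w$ when $\g$ has rank $2$. Your proposal is consistent with this: your base cases reproduce the paper's arguments (for $w=e$ the cancellation of the $a$-independent character $\chi(L_{i,a}^+)$ reduces the right-hand side to $[\omega_i]\Psib_{i,aq_i^{-1}}\Psib_{i,aq_i}^{-1}=Y_{i,a}$; for $w=s_i$ the cancellation of $\chi(L(\wt{\Psib}_{i,aq_i^{-2}}))$ and $(1-[-\alpha_i])$ in Theorem \ref{fex} recovers formula \eqref{sioi}), and you correctly locate the genuine obstruction: identifying the formal expression produced by the iterated $\Theta$-action with the actual $q$-character of the simple module $L(\Psib_{w(\omega_i),a})$. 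No one should mistake your outline for a complete proof, and you do not present it as one.

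The one structural difference worth flagging is that your inductive step blends two things that the paper keeps separate. The paper first proves, \emph{unconditionally} and by induction on $l(w)$, that $\Theta_w(Y_{i,a})$ admits a unique factorization
$\Theta_w(Y_{i,a}) = [w(\omega_i)]\,
\Psib_{w(\omega_i),aq_i^{-1}}Q_{w(\omega_i),aq_i^{-1}}
\bigl(\Psib_{w(\omega_i),aq_i}Q_{w(\omega_i),aq_i}\bigr)^{-1}$
with $\Lambda(Q_{w(\omega_i),a})=\varpi(Q_{w(\omega_i),a})=1$ (Theorem \ref{exwo}); this is purely a statement about the Weyl group action on $\Pi$ and requires no representation theory. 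Conjecture \ref{wconj} is then equivalent to the single $q$-character formula of Conjecture \ref{bass2}, namely
$\chi_q(L(\Psib_{w(\omega_i),a})) = \Psib_{w(\omega_i),a}\, E_e(Q_{w(\omega_i),a})\, \chi(L(\Psib_{w(\omega_i),a}))$,
and only this last identification is open in higher rank. Your plan to ``track a compatible family of candidate $q$-characters'' through the induction conflates the formal recursion (which goes through for all $w$ and all $\g$) with the hard identification step (which does not). If you adopt the paper's separation, your induction becomes the proof of Theorem \ref{exwo}, and the remaining work is concentrated in a single, cleanly stated $q$-character conjecture — which the paper verifies in rank $2$ by comparing the explicit series $E_e(Q_{w(\omega_i),a})$ from \cite{FH3} with $q$-characters of negative prefundamental representations and limits of Kirillov--Reshetikhin modules, exactly along the lines you suggest at the end.
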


Theorem \ref{mfh} and Conjecture \ref{wconj} imply Conjectures
\ref{exttq} and \ref{exttq1} for an arbitrary $w \in W$.
  
At the moment, we can only prove Conjecture \ref{wconj} for $w=e$ and
$w=s_j, j \in I$ (as explained above) and for an arbitrary $w \in W$
if $\g$ has rank $2$ (see Section \ref{r2s}). However, in the next
subsection we will show that this conjecture pins down the
$q$-character of $L(\Psib_{w(\omega_i),aq_i^{-1}})$ up to
multiplication by a non-zero factor from a completion of ${\mc E}
\subset {\mc E}_\ell$ (i.e. a possibly infinite linear combination of
constant $\ell$-weights, see Section \ref{catO}). This in turn implies
Conjectures \ref{exttq} and \ref{exttq1} (see Corollary \ref{twoconj}
below).


\subsection{Computation of $\Theta_w(Y_{i,a})$}    \label{comptheta}

We will say that a family of $\ell$-weights $\Psib_a$ parametrized by
$a\in\mathbb{C}^\times$ is $\mathbb{C}^\times$-equivariant if $\Psib_a
= \tau_a(\Psib_1)$ for any $a\in\mathbb{C}^\times$. We have an
analogous definition of $\mathbb{C}^\times$-equivariant families in
$\Pi$. We also have a notion of $\mathbb{C}^\times$-invariant element,
that is fixed by $\tau_a$, $a\in\mathbb{C}^\times$.

Note that by Theorem \ref{mfh}, $\Theta_w(Y_{i,a})$ only
  depends on $a \in \C^\times$ and $w(\omega_i) \in {\mc
    P}_{\omega_i}$. Recall from Lemma \ref{wn},(3) that the subsets ${\mc
    P}_{\omega_i}, i \in I$, of $P$ are mutually disjoint.

\begin{thm}\label{exwo} Let $i\in I$ and $w\in W$.

(1) There is a family of 
$\ell$-weights $\Psib_{w(\omega_i),a}$ and a family of invertible elements $Q_{w(\omega_i),a}\in \Pi$, both $\mathbb{C}^\times$-equivariant and unique, such that
\begin{equation}\label{yq}\Theta_w(Y_{i,a}) =  [w(\omega_i)] \frac{\Psib_{w(\omega_i),aq_i^{-1}}Q_{w(\omega_i),aq_i^{-1}}}{\Psib_{w(\omega_i),aq_i}Q_{w(\omega_i),aq_i}},\end{equation}
with $\Lambda(Q_{w(\omega_i),a}) = 1 = \varpi(Q_{w(\omega_i),a})$ and $\Psib_{w(\omega_i),a}(0) = 1$.

(2) The $\ell$-weight $\Psib_{w(\omega_i),a}$ defined in part (1)
coincides with the $\ell$-weight defined $\Psib_{w(\omega_i),a}$
explicitly in Definition \ref{subs}.
\end{thm}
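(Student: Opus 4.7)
The strategy is to lift the computation of $\Theta_w(Y_{i,a})$ from $\Pi$ to the extension $\Pi'$ of Section \ref{extact}, where the $\Psib_{j,b}$ variables live alongside the $Y_{k,c}$. Using the identity $Y_{i,a} = [\omega_i]\,\Psib_{i,aq_i^{-1}}\Psib_{i,aq_i}^{-1}$ in $\Pi'$ and the extended braid operator $\Theta'_w$ of Proposition \ref{extact1} (which restricts to $\Theta_w$ on $\Pi$), one obtains
$$
\Theta_w(Y_{i,a}) \;=\; [w(\omega_i)]\,\frac{\Theta'_w(\Psib_{i,aq_i^{-1}})}{\Theta'_w(\Psib_{i,aq_i})}.
$$
The problem then reduces to computing $\Theta'_w(\Psib_{i,a})\in\Pi'$ and exhibiting a factorization $\Psib_{w(\omega_i),a}\cdot Q_{w(\omega_i),a}$, with the $\Psib$-part being a monomial in the $\Psib_{j,b}^{\pm 1}$ (the $\ell$-weight) and the $Q$-part living in $\Pi$.

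For existence I would proceed by induction on $l(w)$ using a reduced decomposition $w = s_{i_1}\cdots s_{i_k}$. The base case $w=e$ is immediate, giving $\Psib_{\omega_i,a}=\Psib_{i,a}$ and $Q_{\omega_i,a}=1$. For the inductive step, apply $\Theta'_{i_1}$ to the product $\Psib_{w'(\omega_i),a}\cdot Q_{w'(\omega_i),a}$ with $w' = s_{i_2}\cdots s_{i_k}$: by Proposition \ref{extact1}, $\Theta'_{i_1}$ fixes $\Psib_{j,b}$ for $j\neq i_1$ and sends $\Psib_{i_1,b}$ to $\wt{\Psib}_{i_1,bq_{i_1}^{-2}}\Sigma_{i_1,bq_{i_1}^{-2}}$ in the appropriate component. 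Distributing across the product, the $\wt{\Psib}$ substitutions accumulate into the new monomial $\Psib_{w(\omega_i),a}$, while the $\Sigma$-factors multiply into $Q_{w(\omega_i),a}$. The normalizations $\Lambda(Q)=\varpi(Q)=1$ are preserved because $\Lambda_e(\Sigma^e_{i,b}) = 1$ and $\varpi(\Sigma^e_{i,b})=1$, once the correct component is accounted for.

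For uniqueness, the factorization formula determines the product $\Psib_{w(\omega_i),a}\cdot Q_{w(\omega_i),a}$ only up to a factor invariant under $a\mapsto aq_i^2$; combined with $\C^\times$-equivariance this reduces the ambiguity to an overall constant, which the normalizations $\Psib_{w(\omega_i),a}(0)=1$, $\Lambda(Q_{w(\omega_i),a})=1$, and $\varpi(Q_{w(\omega_i),a})=1$ eliminate by separating the $\Psib$-part from the $Q$-part (the former being a monomial in $\Psib_{j,b}^{\pm 1}$ while the latter is a combination of $Y_{k,c}^{\pm 1}$ with no such monomial factors other than $1$). For part (2), the factor $\Psib_{w(\omega_i),a}$ produced inductively is the result of iterating the substitution $\Psib_{i_m,b}\mapsto \wt{\Psib}_{i_m,bq_{i_m}^{-2}}$ and expanding via \eqref{psfh2}. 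Matching this recursion with the Chari procedure $\sigma\circ T'_w\circ \sigma$ applied to $\Psib_{i,a}$ yields exactly the substitution dictionary of Definition \ref{subs}, proving part (2) and simultaneously verifying Proposition \ref{proptprime}.

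The main obstacle is tracking the component-dependence of $\Sigma^{vs_{i_m}}_{i_m,b}\in\{\Sigma^e_{i_m,b},\Sigma^{s_{i_m}}_{i_m,b}\}$ during the induction, since the choice depends on whether $l(vs_{i_m})$ exceeds or falls below $l(v)$, and the component $v$ shifts after each application of $\Theta'_{i_m}$. A careful analysis along each reduced decomposition, using the fact that reduced decompositions remain reduced after truncation, is required to verify that the leading-term normalization $\Lambda(Q)=1$ is preserved uniformly in the $w=e$ component of $\Pi$ throughout the induction.
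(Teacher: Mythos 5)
Your overall skeleton is reasonable, and your uniqueness argument is essentially the paper's (the $q_i^2$-difference constraint forces the $\ell$-weight to be constant, hence $1$ by $\Psib(0)=1$, and then $\varpi$ kills the remaining constant in $Q$). But the existence argument has two concrete problems. First, the claim that $\varpi(\Sigma^e_{i,b})=1$ is false: since $\varpi(A_{i,c}^{-1})=[-\alpha_i]$, one has $\varpi(\Sigma^e_{i,b})=\sum_{k\geq 0}[-k\alpha_i]={\mb e}_e\bigl((1-[-\alpha_i])^{-1}\bigr)$. Consequently the element $\Theta_w'(\Psib_{i,a})$ you propose to factor is not of the form (monomial in the $\Psib_{j,b}$) times (element of $\Pi$ with $\varpi=1$); the constant that accumulates is exactly $\chi_{w(\omega_i)}$ of Lemma \ref{family} (compare \eqref{Qw}: $\Theta_w'(\Psib_{i,a})$ corresponds to $\mathcal{Q}_{w(\omega_i),a}=\chi_{w(\omega_i)}\Psib_{w(\omega_i),a}Q_{w(\omega_i),a}$, not to $\Psib_{w(\omega_i),a}Q_{w(\omega_i),a}$). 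This is repairable by dividing by $\varpi$ of the non-monomial part, but as written your normalization claim is wrong. The paper sidesteps this entirely by inducting on $l(ws_i)=l(w)+1$ and writing $\Theta_{ws_i}(Y_{i,a})=\Theta_w(Y_{i,a}A_{i,aq_i^{-1}}^{-1})\cdot\Theta_w(\Sigma_{i,aq_i^{-3}})/\Theta_w(\Sigma_{i,aq_i^{-1}})$: in that \emph{ratio} the spectral-parameter-independent $\varpi$-images cancel automatically.

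The second and more serious gap is the one you flag in your last paragraph but do not close: the preservation of $\Lambda(Q_{w(\omega_i),a})=1$, equivalently the identification of the leading monomial of $E_e(\Theta'_w(\Psib_{i,a}))$. In your left-multiplication recursion $w=s_{i_1}w'$ you must apply $\Theta_{i_1}$ to the infinite sum $Q_{w'(\omega_i),a}\in\Pi$; since $\Theta_{i_1}$ maps $\wt{\mathcal{Y}}^{v}$ to $\wt{\mathcal{Y}}^{vs_{i_1}}$, the $E_e$-component of the result is governed by the $E_{s_{i_1}}$-component of $Q_{w'(\omega_i),a}$, about which your induction hypothesis ($\Lambda=\Lambda_e$ of $Q$ equals $1$) says nothing. "Distributing the $\wt\Psib$ substitutions" does not settle this. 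The paper's proof needs a genuinely separate input here, namely Proposition \ref{tctw}: $\Lambda\circ\Theta_w=\sigma\circ T_w\circ\sigma$, which is proved by exploiting the $\Theta_w$-invariance of $\chi_q(L(m))$ for finite-dimensional $L(m)$ together with an extremal-monomial/weight argument. That idea is absent from your proposal, and without it (or an equivalent control of $\Lambda_{v}(Q_{w'(\omega_i),a})$ for all $v$, not just $v=e$) both the inductive step for existence and your derivation of part (2) (which needs to match the monomial part with the Chari monomial $T_w(Y_{i,1})$, i.e.\ with Definition \ref{subs}) remain unproved.
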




\begin{example}  For example, we have 
\begin{equation}\label{thisa}Q_{\omega_i,a} = 1, \qquad
  Q_{s_i(\omega_i),a} = \Sigma_{i,aq_i^{-2}} (1 -
  [-\alpha_i]).\end{equation}
\end{example}

\medskip

\noindent {\em Proof of Theorem \ref{exwo}.}
To prove uniqueness, consider $\Psib_{w(\omega_i),a}$ and $Q_{w(\omega_i),a}$ as in the proposition. Then 
\begin{equation}\Lambda(\Theta_w(Y_{i,a})) =  [w(\omega_i)] \frac{\Psib_{w(\omega_i),aq_i^{-1}}}{\Psib_{w(\omega_i),aq_i}}.\end{equation}
If $\Psib_{w(\omega_i),a}'$ and $Q_{w(\omega_i),a}'$ give rise
to another solution, then we setting $\Psib :=
\Psib_{w(\omega_i),1}'\Psib_{w(\omega_i),1}^{-1}$, we obtain that
$\Psib(z) = \Psib(zq_i^2)$. This implies that $\Psib$ is a constant
$\ell$-weight, and so by the conditions of the theorem, $\Psib = \Psib(0) = 1$.
Hence $\Psib_{w(\omega_i),a}$ is unique. 
Now $Q_{w(\omega_i),a}/Q_{w(\omega_i),a}'$ is a constant $H$ that does not depend on $a$. 
Hence we have $H = \varpi(H) = 1$. Thus, uniqueness is proved.

Now let $i\in I$, $a\in\mathbb{C}^\times$ and $w\in W$. Recall that $\Theta_w(Y_{i,a})$ depends only on $a$ and $w(\omega_i)$.
The existence of $\Psib_{w(\omega_i),a}$ and $Q_{w(\omega_i),a}$ is proved by induction on the length $l(w)$ of $w$. It is clear if $w = e$. 
Then consider $w s_j$ so that $l(w s_j) = l(w) + 1$. For $j \neq i$, the result is clear for $w s_j$ as $\Theta_{ws_j}(Y_{i,a}) = \Theta_w(Y_{i,a})$. 
For $w s_i$, we have 
$$\Theta_{ws_i}(Y_{i,a}) = \Theta_w (Y_{i,a}A_{i,aq_i^{-1}}^{-1})\times \frac{\Theta_w(\Sigma_{i,aq_i^{-3}})}{\Theta_w(\Sigma_{i,aq_i^{-1}})}.$$
For the second factor, we have
$$\frac{\Theta_w(\Sigma_{i,aq_i^{-3}})}{\Theta_w(\Sigma_{i,aq_i^{-1}})} = \frac{\Lambda(\Theta_w(\Sigma_{i,aq_i^{-3}}))}{\Lambda(\Theta_w(\Sigma_{i,aq_i^{-1}}))}\frac{\Theta_w(\Sigma_{i,aq_i^{-3}})\Lambda(\Theta_w(\Sigma_{i,aq_i^{-3}}^{-1}))}{\Theta_w(\Sigma_{i,aq_i^{-1}})\Lambda(\Theta_w(\Sigma_{i,aq_i^{-1}}^{-1}))},$$
which is of the correct form as $\Lambda(\Theta_w(\Sigma_{i,aq_i^{-3}}))$ is in $\mathcal{M}$ (and so is an $\ell$-weight). 

For the first factor, 
the induction hypothesis on the length of $w$ gives that for each $j\in I$, $b\in\mathbb{C}^\times$, $\Theta_w(Y_{j,b})$ is of the form (\ref{yq}). Note that the spectral parameter shift in the expression for $j$ is $q_j$, which is not necessarily equal to $q_i$. However we explain it leads to the desired expression for $\Theta_w (Y_{i,a}A_{i,aq_i^{-1}}^{-1})$. Indeed, the factor $\Theta_w(Y_{i,aq_i^{-2}}^{-1})$ is already in the correct form. Let $j$ so that $C_{j,i}\neq 0$. 

If $C_{j,i} = -1$, the factor 
$\Theta_w(Y_{j,b})$ occurs with $b = aq_i^{-1}$. If $d_j = d_i$, it is in the correct form and if $d_j = 2 > d_i = 1$, we have
$$\Theta_w(Y_{j,b}) = [w(\omega_j)] \frac{\Psib_{w(\omega_j),bq^{-2}}Q_{w(\omega_i),bq^{-2}}}{\Psib_{w(\omega_j),bq^2}Q_{w(\omega_j),bq^2}} 
=
[w(\omega_j)] 
\frac{\Psib_{w(\omega_j),bq^{-2}}Q_{w(\omega_i),bq^{-2}}\Psib_{w(\omega_j),b}Q_{w(\omega_j),b}}{\Psib_{w(\omega_j),b}Q_{w(\omega_j),b}\Psib_{w(\omega_j),ab^2}Q_{w(\omega_j),bq^2}}.
$$
(it is analogous if $d_j = 3 > d_i = 1$). 

If $C_{j,i} = -2$, we have $d_j = 1$, $d_i = 2$ and  the factor 
$\Theta_w(Y_{j,aq^{-1}}Y_{j,aq})$ occurs. It is of the correct form
$$[w(2\omega_j)] \frac{\Psib_{w(\omega_j),aq^{-2}}Q_{w(\omega_i),aq^{-2}}\Psib_{w(\omega_j),a}Q_{w(\omega_i),a}}{\Psib_{w(\omega_j),a}Q_{w(\omega_i),a}\Psib_{w(\omega_j),aq^2}Q_{w(\omega_j),aq^2}}
= [w(2\omega_j)] \frac{\Psib_{w(\omega_j),aq_i^{-1}}Q_{w(\omega_i),aq_i^{-1}}}{\Psib_{w(\omega_j),aq_i}Q_{w(\omega_j),aq_i^{-1}}}.$$
It is analogous if $C_{j,i} = -3$. This completes the proof of part
(1) of the theorem.

To prove part (2) of the theorem, we will use the
  following generalization of (\ref{til}).


\begin{prop}\label{tctw} (1) For any $w\in W$,

(i) we have
  \begin{equation}\label{ctw}\Lambda \circ \Theta_w = \sigma \circ T_w
  \circ \sigma ;
\end{equation}

(ii) for any $i\in I$ and $a\in\mathbb{C}^\times$, the $\ell$-weight $\Psib_{w(\omega_i),a}$ in the statement of Theorem \ref{exwo} satisfies
  \begin{equation}\label{aideform}
    [w(\omega_i)]\frac{\Psib_{w(\omega_i),aq_i^{-1}}}{\Psib_{w(\omega_i),aq_i}}
= \sigma (Y_{w(\omega_i),a^{-1}}).\end{equation}
\end{prop}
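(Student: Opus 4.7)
The plan is to prove (i) by induction on $l(w)$, and then derive (ii) as a direct consequence by applying (i) to the generators $Y_{i,a}$. The base cases $w=e$ (trivial: both sides are the identity) and $w=s_i$ (equation \eqref{til} rewritten via $\sigma^2=\on{id}$) are already at hand. For the inductive step, fix $w$ with $l(w)\ge 2$ and write $w = w's_j$ with $l(w') = l(w)-1$. By Theorem \ref{mfh}, the operator $\Theta_w$ on $\Pi$ depends only on $w$ and equals $\Theta_{w'}\circ\Theta_{s_j}$; likewise $T_w = T_{w'}\circ T_{s_j}$ on $\mathcal{M}$ by Chari's braid relations.

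The crux is to establish, for every $M\in\mathcal{M}$, the leading-term compatibility
$$\Lambda\bigl(\Theta_{w'}(\Theta_{s_j}(M))\bigr) = \Lambda\bigl(\Theta_{w'}(\Lambda(\Theta_{s_j}(M)))\bigr).$$
Granted this identity, the inductive hypothesis for $w'$ combined with the base case for $s_j$ gives $\Lambda\circ\Theta_w = (\sigma T_{w'}\sigma)\circ(\sigma T_{s_j}\sigma) = \sigma T_w\sigma$, proving (i) for $w$. Part (ii) then follows by applying (i) to $Y_{i,a}$: on one hand, using \eqref{Ywia} and $\sigma(Y_{i,a})=Y_{i,a^{-1}}$, the right-hand side $\sigma\circ T_w\circ\sigma(Y_{i,a})$ equals $\sigma(Y_{w(\omega_i),a^{-1}})$; on the other hand, Theorem \ref{exwo}(1) together with the normalization $\Lambda(Q_{w(\omega_i),a})=1$ implies that $\Lambda(\Theta_w(Y_{i,a})) = [w(\omega_i)]\Psib_{w(\omega_i),aq_i^{-1}}/\Psib_{w(\omega_i),aq_i}$. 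Equating these two expressions yields \eqref{aideform}.

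The principal obstacle is the leading-term compatibility displayed above. Concretely, one writes $\Theta_{s_j}(M) = \Lambda(\Theta_{s_j}(M))\cdot R$, where $R$ is a series in $\Pi$ whose non-constant monomials are strictly $\prec_e$-lower than $1$, as can be read off from \eqref{defti} using the expansions \eqref{Sigmai} and \eqref{Sigmasi} of the two solutions $\Sigma^e$ and $\Sigma^{s_j}$ of the $q^{2d_j}$-difference equation \eqref{siae}. The task is then to show that $\Theta_{w'}$ preserves this filtration, i.e.\ sends strictly-$\prec_e$-lower monomials to strictly-$\prec_e$-lower ones in the target completion. This should follow by a secondary induction on $l(w')$, using that each simple-reflection operator $\Theta_{s_i}$ preserves the partial orderings (as is implicit in the proof of \eqref{til} in \cite{FH3}) together with the Weyl-equivariance of the system of completions $\wt{\mathcal{Y}}^v$.
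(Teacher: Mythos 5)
Your overall architecture (induction on $l(w)$, with (ii) deduced from (i) applied to $Y_{i,a}$) is coherent, and your deduction of (ii) from (i) is exactly the paper's. But the inductive step rests on a supporting claim that is false as stated, and the correct version needs an input you never invoke. You reduce everything to showing that $\Theta_{w'}$ ``sends strictly-$\prec_e$-lower monomials to strictly-$\prec_e$-lower ones.'' It does not: by the very leading-term behavior under discussion, a monomial $m'$ with $\varphi(m')=\mu$ is sent by $\Theta_{w'}$ to an element whose top weight is $w'(\mu)$, so for $\mu\in -Q^+\setminus\{0\}$ with $w'(\mu)\notin -Q^+$ (e.g.\ $\mu=-\alpha_1$, $w'=s_1$) the image is strictly \emph{higher} than $1$. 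What saves your argument in the situation at hand is that the non-constant monomials of $R=\Theta_{s_j}(M)\,\Lambda(\Theta_{s_j}(M))^{-1}$ are supported on weights in $-\Z_{>0}\alpha_j$ (they come from ratios of $\Sigma_{j,\cdot}$'s, hence are products of $A_{j,b}^{-1}$'s), and that $w'(\alpha_j)\in\Delta_+$ precisely because $w=w's_j$ is reduced ($l(w's_j)=l(w')+1$, Lemma \ref{wn},(1)). Only then does $w'(-k\alpha_j)<0$ force $\Lambda(\Theta_{w'}(R))=1$. Your sketch never uses reducedness at this point, and the appeal to something ``implicit'' in \cite{FH3} does not supply it; there is also unaddressed bookkeeping about which component of $\Pi$ each $\Lambda$ and each $\Sigma^{v}$ refers to (for instance, that the $w'$-component of $\Theta_{s_j}(M)$ involves $\Sigma^e_{j,\cdot}$ rather than $\Sigma^{s_j}_{j,\cdot}$ is again a consequence of $l(w's_j)>l(w')$). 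So as written the key compatibility lemma is not established, and the proposed route to it would fail.

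For comparison, the paper avoids this local analysis entirely. It first checks $T_{w^{-1}}^{-1}=\sigma\circ T_w\circ\sigma$ within Chari's action on $\mathcal{M}$, and then proves $\Lambda\circ\Theta_w\circ T_{w^{-1}}=\on{Id}$ globally: for a dominant monomial $m$, $\chi_q(L(m))$ is fixed by $\Theta_w$ (Theorem \ref{mfh}), the extremal monomial $m_{w^{-1}}=T_{w^{-1}}(m)$ is the unique monomial of weight $w^{-1}(\varphi(m))$ (Theorem \ref{occurlm}), and every other monomial $m'$ satisfies $w(\varphi(m'))<\varphi(m)$, so only $\Theta_w(m_{w^{-1}})$ can contribute to the top weight of $E_e(\Theta_w(\chi_q(L(m))))=\chi_q(L(m))$; hence $\Lambda(\Theta_w(m_{w^{-1}}))=m$, and the identity, being multiplicative, extends from dominant monomials to all of $\mathcal{M}$. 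If you want to salvage your induction, you must (a) record that the lower terms of $R$ lie in weights $-\Z_{>0}\alpha_j$ and (b) use $l(w's_j)=l(w')+1\Rightarrow w'(\alpha_j)\in\Delta_+$; with those two ingredients your compatibility identity does hold, but without them it is not a proof.
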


\begin{proof}
Since $T_w=\Lambda_w\circ \Theta_w$ is an automorphism of $\mathcal{M}$, we can consider its inverse $T_w^{-1}$.
For example, we have for $i,j\in I$ and $a\in\mathbb{C}^\times$,
$$T_{s_i}^{-1}(Y_{j,a}) = Y_{j,a} A_{i,aq_i^{-1}}^{-\delta_{i,j}} = (\sigma T_{s_i}  \sigma)(Y_{j,a}).$$ 
This implies that for a general $w\in W$ we have
$$T_{w^{-1}}^{-1} = \sigma \circ T_w \circ \sigma.$$
Now consider 
$$T_{w^{-1}}(m) = m_{w^{-1}}.$$ 
As $\chi_q(L(m))$ is fixed by $\Theta_w$, $m$ is the monomial of highest weight in 
$$\chi_q(L(m)) = E_e(\Theta_w(\chi_q(L(m)))).$$ 
It follows from the first part (existence) in Theorem \ref{exwo} that for a monomial $m'$ of weight $\omega$, the monomial in $E_e(\Theta_w(m'))$ of highest weight  
has weight $w(\omega)$. Let $\omega$ be the weight of $m$. Then $m_{w^{-1}}$ is the only monomial of weight $w^{-1}(\omega)$ and 
for any other monomial $m'$ in $\chi_q(L(m))$ of weight $\omega'\neq w^{-1}(\omega_m)$,  the weight $w(\omega')$ is strictly lower than $\omega$. We obtain that 
only $\Theta_w(m_{w^{-1}})$ can contribute in $E_e(\Theta_w(\chi_q(L(m))))$ to the terms of weight $\omega$. We have established
$$\Lambda (\Theta_w (m_{w^{-1}})) = m.$$
Since this is true for any dominant monomial $m$, we have
$$\Lambda\circ \Theta_w\circ T_{w^{-1}} = \text{Id},$$
that is
$$\Lambda\circ \Theta_w = T_{w^{-1}}^{-1} = \sigma \circ T_w \circ
\sigma.$$
 This proves part (i). To prove part (ii), observe that
  $\Lambda\circ
\Theta_w(Y_{i,a})$ is equal the LHS of equation \eqref{aideform}, by
the definition of $\Psib_{w(\omega_i),a}$ given in the statement of
Theorem \ref{exwo}, and $\sigma \circ T_w \circ
\sigma(Y_{i,a})$ is equal to the RHS.
\end{proof}

Now we can conclude the proof of part (2) of Theorem \ref{exwo}.
Using equation \eqref{aideform}, we obtain an expression of
$\Psib_{w(\omega_i),1}$ in terms of $\Psib_{j,b}^{\pm 1}$ using the
factorization of $Y_{w(\omega_i),1}$ in terms of $Y_{j,c}^{\pm 1}$. This
expression coincides with the one given in Definition \ref{subs}
because we have

$[\omega_k] \frac{\Psib_{k,aq_i^{-1}}}{\Psib_{k,aq_i}} = Y_{k,a}$ if $d_i = d_k$, 

$[\omega_k] \frac{\Psib_{k,aq^{-2}}\Psib_{k,a}}{\Psib_{k,a}\Psib_{k,aq^2}} = Y_{k,a}$ if $C_{i,k} = -2$, 

$[\omega_k] \frac{\Psib_{k,a q^{-3}}\Psib_{k,aq^{-1}}\Psib_{k,aq}}{\Psib_{k,aq^{-1}}\Psib_{k,a q}\Psib_{k,a q^3}} = Y_{k,a}$ if $C_{i,k} = -3$,

$[2\omega_k] \frac{\Psib_{k,aq^{-2}}}{\Psib_{k,q^2}} = Y_{k,aq^{-1}}Y_{k,aq}$ if $C_{k,i} = -2$,

$[3\omega_k] \frac{\Psib_{k, a q^{-3}}}{\Psib_{k, aq^3}} = Y_{k,aq^{-2}}Y_{k,a}Y_{k,aq^2}$ if $C_{k,i} = -3$.
\qed

\medskip

 \noindent {\em Proof of Proposition \ref{proptprime}.} From
  formula \ref{aideform}, we obtain that
$$\Psib_{w(\omega_i),aq_i^{-1}}\Psib_{w(\omega_i),aq_i}^{-1}
= \sigma (T'_w(\Psib_{i,a^{-1}q_i^{-1}}\Psib_{i,a^{-1}q_i}^{-1}))$$
and so
$$\Psib_{w(\omega_i),aq_i^{-1}} \sigma (T'_w(\Psib_{i,a^{-1}q_i}))  =
\Psib_{w(\omega_i),aq_i} \sigma (T'_w(\Psib_{i,a^{-1}q_i^{-1}}))$$
which implies that
$$\Psib_{w(\omega_i),aq_i^{-1}} (\sigma (T'_w(\sigma(\Psib_{i,aq_i^{-1}}))))^{-1}  
= \Psib_{w(\omega_i),aq_i} (\sigma (T'_w(\sigma(\Psib_{i,aq_i}))))^{-1}.$$
Hence $\Psib_{w(\omega_i),aq_i^{2m}} (\sigma \circ
T'_w \circ \sigma(\Psib_{i,aq_i^{2m}}))^{-1}, m \in \Z$, does not depend on
  $m$, and since it is a Laurent monomial in the 
$\Psib_{j,aq^r}^{\pm 1}, j \in I, r \in \Z$, it must be equal to
$1$. Hence the result.\qed

\subsection{Conjectural $q$-character formula for
  $L(\Psib_{w(\omega_i),a})$}

Theorem \ref{exwo} implies that Conjecture \ref{wconj} is
equivalent to the following conjecture (which holds for $w=e$ by
formula \eqref{pref}; for $w=s_j, j \in I$, by Theorem
\ref{fex}; and in the rank $2$ case, as we show in Section
\ref{r2s}).

\begin{conj}    \label{upto}\label{bass2} 
  We have the $q$-character formula
  \begin{equation}    \label{qupto}
    \chi_q(L(\Psib_{w(\omega_i),a})) = \Psib_{w(\omega_i),a} \;
    E_e(Q_{w(\omega_i),a}) \; \chi(L(\Psib_{w(\omega_i),a})).
  \end{equation}
\end{conj}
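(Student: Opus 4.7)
The plan is to prove Conjecture \ref{bass2} by induction on the length $\ell(w)$ of $w \in W$. The base cases are already in hand: for $\ell(w)=0$, formula \eqref{pref} together with $Q_{\omega_i,a}=1$ gives the claim, and for $\ell(w)=1$, Theorem \ref{fex} yields it with $Q_{s_i(\omega_i),a} = \Sigma_{i,aq_i^{-2}}(1-[-\alpha_i])$. So the issue is the inductive step.

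For the inductive step, fix a reduced decomposition $w = s_{j_1} s_{j_2} \cdots s_{j_\ell}$ and set $w' = s_{j_1} \cdots s_{j_{\ell-1}}$. The combinatorial side of the argument proceeds by computing $\Theta'_w(\Psib_{i,a})$ in $\ol\Pi'$, iterating the formula $\Theta'_i(\Psib_{i,a}) = \widetilde{\Psib}_{i,aq_i^{-2}}\Sigma_{i,aq_i^{-2}}$ from Proposition \ref{extact1} along the chosen decomposition. Using the defining factorization \eqref{yq} from Theorem \ref{exwo} together with Proposition \ref{tctw}(i) and the uniqueness statement in Theorem \ref{exwo}(1), one isolates the $\ell$-weight part $\Psib_{w(\omega_i),a}$ and the $\Pi$-part $Q_{w(\omega_i),a}$ in this transform, reducing Conjecture \ref{bass2} to showing that the element $\Psib_{w(\omega_i),a}\cdot E_e(Q_{w(\omega_i),a})\cdot \chi(L(\Psib_{w(\omega_i),a}))$ of $\ol{\mathcal Y}'{}^e$ coincides with $\chi_q(L(\Psib_{w(\omega_i),a}))$.

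The representation-theoretic heart of the argument is this last identification. The natural strategy I would pursue is to realize $L(\Psib_{w(\omega_i),a})$ as a limit in the topology of $K_0(\OO)$ of a sequence of fusion products of $L(\Psib_{w'(\omega_i),a})$ with a suitable auxiliary simple module whose $q$-character is controlled by the inductive hypothesis, Theorem \ref{fex}, and formula \eqref{pref}. By the ring homomorphism property of $\chi_q$ (Proposition \ref{inject}) and a passage to the limit, the $q$-character factors into the desired form. An alternative, more conceptual approach is to derive a $q_i^2$-difference equation satisfied by $\chi_q(L(\Psib_{w(\omega_i),a}))$ analogous to the one underlying \eqref{TWY}, exploit the $W$-equivariance of $q$-characters of finite-dimensional modules established in Theorem \ref{mfh}, and then invoke the uniqueness clause of Theorem \ref{exwo}(1) together with the known highest $\ell$-weight to pin down the solution.

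The main obstacle is constructing $L(\Psib_{w(\omega_i),a})$ explicitly for general $w$ in higher rank. For $w=s_i$, the required limit was carried out in \cite{FH2, FHR} via Kirillov-Reshetikhin modules; for $\g$ of rank two, the direct computations of Section \ref{r2s} suffice. For $w$ of arbitrary length in rank three or higher, no such limit construction is presently available, and this is precisely the gap left open by our conjecture. Two promising inputs for overcoming this obstruction are the Miura $(G,q)$-oper framework of \cite{FKSZ}, in which the generating functions $Q_{w(\omega_i),a}$ arise naturally from $w$-twisted monodromy data, and the shifted quantum affine algebra perspective developed in Section \ref{shifted}, which may provide a geometric realization of the missing modules. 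Once any such construction is available, uniqueness in Theorem \ref{exwo}(1) ensures that the abstract factorization we have extracted from the braid group action matches the representation-theoretic $q$-character automatically.
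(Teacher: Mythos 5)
The statement you are addressing is stated in the paper as a \emph{conjecture}, and the paper does not prove it in general: it is established only for $w=e$ (via formula \eqref{pref}), for $w=s_j$ (via Theorem \ref{fex}), and for $\g$ of rank $2$ (Theorem \ref{r2}, proved by explicit case-by-case computation in Section \ref{r2s}). Your proposal correctly identifies exactly these known cases, and correctly locates the open content: Theorem \ref{exwo} already pins down $\Psib_{w(\omega_i),a}$ and $Q_{w(\omega_i),a}$ unconditionally, so what remains is purely the representation-theoretic identification of $\Psib_{w(\omega_i),a}\,E_e(Q_{w(\omega_i),a})\,\chi(L(\Psib_{w(\omega_i),a}))$ with the actual $q$-character of the simple module, and no construction of $L(\Psib_{w(\omega_i),a})$ adequate for this purpose is currently available for general $w$ when $\g$ has rank $\geq 3$. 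So your attempt is not a proof, but you say so explicitly, and your diagnosis of where the gap lies agrees with the paper's.

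Two remarks on the comparison. First, in the cases the paper can handle it does not run an induction on the length of $w$: in rank $2$ the authors enumerate the finitely many weights $w(\omega_i)$, read off explicit formulas for $E_e(Q_{w(\omega_i),a})$ from \cite[Sections 4.6--4.8]{FH3}, and match them against $q$-characters obtained either as limits of Kirillov--Reshetikhin modules (for the negative prefundamental weights $-\omega_i$), or from explicit constructions of simple modules over shifted quantum affine algebras as in \cite[Example 5.2]{Hsh} (for the intermediate weights), or by computations as in \cite{FH}. Second, your ``more conceptual alternative'' --- deriving a $q_i^2$-difference equation for $\chi_q(L(\Psib_{w(\omega_i),a}))$ and invoking uniqueness --- is essentially a restatement of \eqref{TWY}, i.e., of the conjecture itself, so it cannot serve as an independent strategy without new representation-theoretic input; and your fusion/limit strategy would additionally require justifying that the relevant limits exist in $K_0(\OO)$ and are compatible with the factorization, which Proposition \ref{inject} alone does not provide. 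Neither objection is fatal to your write-up since you flag the inductive step as open, but you should not present either mechanism as reducing the conjecture to a routine verification.
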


\begin{rem} Instead of $E_e(Q_{w(\omega_i),a})$, let us consider
$E_y(Q_{w(\omega_i),a}) \in \wt\Yim^y$ for some $y \in W$. We expect
that this is also equal to the $q$-character of an irreducible
$U_q(\wh\bb)$-module (up to a non-zero renormalization factor from
${\mc E} \subset {\mc E}_\ell$). But this module belongs to a
``twist'' of the category $\mathcal{O}$ by $y \in W$, which is
analogous to the twist of the category $\OO$ of representations of
$\g$ by $v \in W$ -- it consists of modules that are locally finite
with the respect to the twisted Borel subalgebra $y \bb y^{-1}$. These
categories have been defined and studied by Keyu Wang \cite{W2}.
\end{rem}

We now establish the extended $TQ$-relations for the right hand sides of formula \eqref{qupto}.

\begin{thm}\label{wbax}
Let $V$ be a finite dimensional representation of $U_q(\wh{\Glie})$
and $w\in W$.

(1) The $q$-character $\chi_q(V)$, viewed as an
element of the completion $\wt\Yim^e$ of $\Yim$, is invariant under
the substitution
$$
Y_{i,a} \mapsto [w(\omega_i)]
\frac{[\Psib_{w(\omega_i),aq_i^{-1}}]E_e(Q_{w(\omega_i),aq_i^{-1}})}{[\Psib_{w(\omega_i),aq_i}]E_e(Q_{w(\omega_i),aq_i})},
\qquad \forall i \in I, a \in
\C^\times.
$$

(2) The $q$-character $\chi_q(V)$, viewed as an element of $\Pi$ under
the diagonal embedding $\Yim \hookrightarrow \Pi$, is invariant under
the substitution
\begin{equation}    \label{subw}
Y_{i,a} \mapsto [w(\omega_i)] \frac{[\Psib_{w(\omega_i),aq_i^{-1}}]Q_{w(\omega_i),aq_i^{-1}}}{[\Psib_{w(\omega_i),aq_i}]Q_{w(\omega_i),aq_i}}, \qquad \forall i \in I, a \in
\C^\times.
\end{equation}
\end{thm}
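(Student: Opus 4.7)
The plan is to derive both parts of Theorem \ref{wbax} as direct consequences of two earlier results: the $W$-invariance of the $q$-character established in Theorem \ref{wbax1} (which rests on Theorem \ref{mfh}), and the explicit factorization of $\Theta_w(Y_{i,a})$ provided by Theorem \ref{exwo}.

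For part (2), I would argue as follows. Theorem \ref{wbax1}(2) states that $\chi_q(V)$, viewed in $\Pi$ via the diagonal embedding $\Yim \hookrightarrow \Pi$, is fixed by the substitution $Y_{i,a} \mapsto \Theta_w(Y_{i,a})$. Theorem \ref{exwo}(1) provides the factorization
$$\Theta_w(Y_{i,a}) = [w(\omega_i)]\,\frac{\Psib_{w(\omega_i),aq_i^{-1}}\,Q_{w(\omega_i),aq_i^{-1}}}{\Psib_{w(\omega_i),aq_i}\,Q_{w(\omega_i),aq_i}},$$
with the $\Psib$'s being the $\ell$-weights characterized uniquely in that theorem and matching the explicit expression of Definition \ref{subs} by Theorem \ref{exwo}(2). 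This right-hand side is precisely the one appearing in the substitution \eqref{subw}. Substituting this factorization term by term into the invariance of $\chi_q(V)$ under $\Theta_w$ immediately yields part (2), working in $\Pi'$ so that the factors $\Psib_{w(\omega_i),aq_i^{\pm 1}}$ are interpreted as elements of $\mathcal{Y}' \subset \Pi'$ via their product expansion from Definition \ref{subs}.

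For part (1), I would then apply the canonical projection $E_e : \Pi' \to \ol{\mathcal{Y}}'{}^e$ (which restricts to $E_e : \Pi \to \wt{\mathcal{Y}}^e$ on the subring $\Pi \subset \Pi'$) to the identity established in part (2). Since $E_e$ is a ring homomorphism and the factors $\Psib_{w(\omega_i),aq_i^{\pm 1}}$ and $[w(\omega_i)]$ already lie in $\mathcal{Y}'$ and pass through $E_e$ untouched, the projection commutes with these factors and acts non-trivially only on the $Q_{w(\omega_i),aq_i^{\pm 1}}$. This produces exactly the substitution appearing in the statement of part (1).

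Since the proof is essentially an assembly of already-established results, the main challenge is not conceptual but consists of carefully tracking which completion each term inhabits---$\mathcal{Y}$, $\mathcal{Y}'$, $\Pi$, $\Pi'$, $\wt{\mathcal{Y}}^e$, or $\ol{\mathcal{Y}}'{}^e$---and verifying that the identification between the abstract $\ell$-weight $\Psib_{w(\omega_i),a}$ of Theorem \ref{exwo} and its explicit monomial expression in the generators $\Psib_{j,b}^{\pm 1}$ of $\mathcal{Y}'$ (Definition \ref{subs}) is compatible with the various natural maps involved. Once this bookkeeping is in place, the substitutions in both parts are well-defined and the invariance transports immediately from Theorem \ref{wbax1}.
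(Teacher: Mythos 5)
Your proof is correct and follows essentially the same route as the paper: the paper likewise deduces part (2) from the $\Theta_w$-invariance of $\chi_q(V)$ (Theorem \ref{wbax1}, resting on Theorem \ref{mfh}) together with the identification of the right-hand side of \eqref{subw} with $\Theta_w(Y_{i,a})$ from Theorem \ref{exwo}, and then obtains part (1) by applying the projection $E_e$. Your additional remarks on tracking the completions are careful bookkeeping but do not alter the argument.
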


\begin{proof} We follow the same argument as in the proof of Theorem
  \ref{wbax1}: Theorem \ref{mfh} implies that the $q$-character
  $\chi_q(V)$, viewed as an element of $\Pi$ under the diagonal
  embedding $\Yim \hookrightarrow \Pi$, is invariant under all
  operators $\Theta_w, w \in W$. But the right hand side of formula
  \eqref{subw} is equal to $\Theta_w(Y_{i,a})$. This proves part
  (2). Applying the projection $E_e: \Pi \to \wt\Yim^e$, we obtain
  part (1).
\end{proof}

This theorem has the following immediate corollary.

\begin{cor}    \label{twoconj}
  Conjecture \ref{upto} implies Conjectures \ref{exttq} and
  \ref{exttq1}.
\end{cor}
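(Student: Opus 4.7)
The plan is to combine Theorem \ref{wbax}(1), which is unconditional, with Conjecture \ref{upto} to convert the substitution appearing there into the substitution defining ${\mc B}'_w$, and then to deduce Conjecture \ref{exttq} from Conjecture \ref{exttq1} via injectivity of the $q$-character.

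First, I would rewrite the right-hand side of the substitution in Theorem \ref{wbax}(1). By Conjecture \ref{upto}, for each $i\in I$ and $a\in\CC^\times$,
$$
[\Psib_{w(\omega_i),a}]\, E_e(Q_{w(\omega_i),a}) \;=\; \frac{\chi_q(L(\Psib_{w(\omega_i),a}))}{\chi(L(\Psib_{w(\omega_i),a}))}.
$$
Because the family $\Psib_{w(\omega_i),a}$ is $\CC^\times$-equivariant, the simple modules $L(\Psib_{w(\omega_i),aq_i^{-1}})$ and $L(\Psib_{w(\omega_i),aq_i})$ are pullbacks of one another by some $\tau_b$, and hence their ordinary characters coincide. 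Thus, when one forms the ratio appearing in the substitution of Theorem \ref{wbax}(1), the two factors $\chi(L(\Psib_{w(\omega_i),aq_i^{\pm 1}}))$ cancel, and the substitution reduces to
$$
Y_{i,a} \;\longmapsto\; [w(\omega_i)]\,\frac{\chi_q(L(\Psib_{w(\omega_i),aq_i^{-1}}))}{\chi_q(L(\Psib_{w(\omega_i),aq_i}))},
$$
which is precisely the homomorphism ${\mc B}'_w$ of \eqref{locwc}--\eqref{locwc1}.

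Next I would check that Theorem \ref{wbax}(1), which gives the invariance of $\chi_q(V)$ in the completion $\wt\Yim^e$, yields an honest equality ${\mc B}'_w(\chi_q(V)) = \chi_q(V)$ in $\wt{\mc E}^w_\ell$. Here one observes that $\chi_q(V)\in\Yim$ is a finite Laurent polynomial in the $Y_{i,a}$'s, so both sides of the intended identity are well-defined elements of $\wt{\mc E}^w_\ell$, and the equality in $\wt\Yim^e$ (together with the natural inclusion of $\Yim$ into $\ol{\mc Y}'{}^e$ and hence into ${\mc E}_\ell$) transports to an equality in the localization. This establishes Conjecture \ref{exttq1}.

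Finally, I would deduce Conjecture \ref{exttq}. The $q$-character map extends to an injective homomorphism $\wt\chi_q\colon \wt K^w_0(\OO)\to \wt{\mc E}^w_\ell$, because it sends the localizing elements $[L(\Psib_{w(\omega_i),a})]$ precisely to the localizing elements $\chi_q(L(\Psib_{w(\omega_i),a}))$; injectivity follows from Proposition \ref{inject}. By construction, ${\mc B}'_w = \wt\chi_q\circ{\mc B}_w$ as maps $\Yim\to\wt{\mc E}^w_\ell$, and $\chi_q(V)=\wt\chi_q([V])$. Hence the identity ${\mc B}'_w(\chi_q(V)) = \chi_q(V)$ becomes $\wt\chi_q({\mc B}_w(\chi_q(V))) = \wt\chi_q([V])$, and injectivity gives ${\mc B}_w(\chi_q(V))=[V]$, which is Conjecture \ref{exttq}. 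The only subtle step is the passage between completions and localizations in the second paragraph; once it is verified that no spurious infinite sums are produced when substituting into the finite Laurent polynomial $\chi_q(V)$, the rest of the argument is formal.
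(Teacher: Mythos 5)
Your proof is correct and follows the same route the paper intends: Corollary \ref{twoconj} is stated as an immediate consequence of Theorem \ref{wbax}, and your argument simply makes explicit the two points the paper leaves implicit, namely that the ordinary characters $\chi(L(\Psib_{w(\omega_i),aq_i^{\pm 1}}))$ cancel in the ratio (by $\CC^\times$-equivariance of the family $\Psib_{w(\omega_i),a}$ and the $\tau_b$-pullback argument), and that Conjecture \ref{exttq} follows from Conjecture \ref{exttq1} by injectivity of the $q$-character homomorphism (an equivalence the paper already records just before stating Conjecture \ref{exttq1}). No changes needed.
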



\section{Extended $QQ$-systems}    \label{QQ}

  We start by recalling in Theorem \ref{orqq} the $QQ$-system in the
  Grothendieck ring $K_0(\OO)$ of the category $\mathcal{O}$
  established in \cite{FH2}. We then formulate the extended
  $QQ$-system in $K_0(\OO)$ (Conjecture \ref{extqq}).

  Note that the analogous $QQ$-systems for Yangians were studied in
  \cite{Tsuboi,Bazhanov} for type A, in \cite{ffk,EV} for type D, and
  in \cite{MV,esv} for types ADE.

  \subsection{The $QQ$-system}    \label{QQsys}

  The following theorem was proved in \cite{FH2}.
  
  \begin{thm}\label{orqq}
    For all $i\in I$ and $a\in\mathbb{C}^\times$, we have the
    following system of relations in $K_0(\OO)$ called the $QQ$-system:
\begin{equation}    \label{origQQ}
\wt{{\mb Q}}_{i,aq_i} {\mb Q}_{i,aq_i^{-1}} - [-\alpha_i]  \wt{{\mb Q}}_{i,aq_i^{-1}} {\mb Q}_{i,aq_i}$$
$$=  \prod_{j,C_{i,j} = -1} {\mb Q}_{j,a}\prod_{j,C_{i,j} = -2} {\mb Q}_{j,aq} {\mb Q}_{j,aq^{-1}}
\prod_{j,C_{i,j} = -3} {\mb Q}_{j,aq^2} {\mb Q}_{j,a} {\mb Q}_{j,aq^{-2}},
\end{equation}
where 
$${\mb Q}_{i,a} = [L(\Psib_{i,a})]/\chi(L(\Psib_{i,a})), \quad
\wt{{\mb Q}}_{i,a} =
[L(\wt{\Psib}_{i,aq_i^{-2}})]\chi_i/(\chi(L(\wt{\Psib}_{i,aq_i^{-2}}))),
\quad \chi_i = \sum_{n \geq 0} [-n\alpha_i]$$
and $\wt{\Psib}_{i,aq_i^{-2}}$ is defined by formula
\eqref{psfh2}.
\end{thm}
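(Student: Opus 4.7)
My plan is to reduce the identity to an equality in the completed ring $\mathcal{E}_\ell$ at the level of $q$-characters and then invoke the injectivity of $\chi_q$ (Proposition \ref{inject}).

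First I would compute the $q$-characters of the two building blocks. Theorem \ref{formuachar} directly gives $\chi_q(\mb{Q}_{i,a}) = [\Psib_{i,a}]$. For $\wt{\mb{Q}}_{i,a}$, Theorem \ref{fex} together with the telescoping identity $(1-[-\alpha_i])\chi_i = 1$, valid in the completion of $\mathcal{E}$, yields the compact form $\chi_q(\wt{\mb{Q}}_{i,a}) = [\wt\Psib_{i,aq_i^{-2}}]\,\Sigma^e_{i,aq_i^{-2}}$. Applying $\chi_q$ to the LHS of \eqref{origQQ} therefore produces
\[
[\wt\Psib_{i,aq_i^{-1}}\Psib_{i,aq_i^{-1}}]\,\Sigma^e_{i,aq_i^{-1}} \;-\; [-\alpha_i]\,[\wt\Psib_{i,aq_i^{-3}}\Psib_{i,aq_i}]\,\Sigma^e_{i,aq_i^{-3}}.
\]

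Next, a direct computation from formula \eqref{psfh2} shows that the product $\wt\Psib_{i,aq_i^{-1}}\Psib_{i,aq_i^{-1}}$ telescopes exactly to the $\ell$-weight
\[
P_i(a) := \prod_{j,C_{i,j}=-1}\Psib_{j,a}\prod_{j,C_{i,j}=-2}\Psib_{j,aq}\Psib_{j,aq^{-1}}\prod_{j,C_{i,j}=-3}\Psib_{j,aq^2}\Psib_{j,a}\Psib_{j,aq^{-2}},
\]
which is precisely the image under $\chi_q$ of the RHS of \eqref{origQQ}. Using the recursion $\Sigma^e_{i,aq_i^{-1}} = 1 + A^{-1}_{i,aq_i^{-1}}\,\Sigma^e_{i,aq_i^{-3}}$ from \eqref{siae} to split the first term, the desired equality reduces to verifying the single monomial identity
\[
P_i(a)\cdot A^{-1}_{i,aq_i^{-1}} \;=\; [-\alpha_i]\,\wt\Psib_{i,aq_i^{-3}}\,\Psib_{i,aq_i}
\]
as elements of $\mathfrak{r}$. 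Once this holds, the two terms containing $\Sigma^e_{i,aq_i^{-3}}$ cancel and the LHS collapses to $P_i(a)$, matching the image of the RHS.

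The main obstacle is this last $\ell$-weight identity, because the formulas for $A_{i,a}$ (equation \eqref{Ai}) and for $\wt\Psib_{i,a}$ (equation \eqref{psfh2}) both branch on the lacing number $d$ and on the individual values of $C_{i,j}$ and $C_{j,i}$ at the neighbors $j$ of $i$. The verification therefore requires a careful case analysis to check that the spectral-parameter shifts (powers of $q$ versus powers of $q_i$) and the one-dimensional weight factors $[\lambda]$ line up on both sides. In the simply-laced case the calculation is transparent; in the $B$/$C$ and $G_2$ cases one must track the asymmetry between $q_i$-shifts on the $i$-slot and $q$-shifts on the neighbor slots. Once the identity is established uniformly, injectivity of $\chi_q$ promotes the resulting $q$-character equality to the required relation in the completed Grothendieck ring $K_0(\mathcal{O})$.
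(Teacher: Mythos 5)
Your argument is correct, and it is worth noting that the paper itself does not prove Theorem \ref{orqq} — it recalls it from \cite{FH2} — so your proposal supplies a self-contained derivation from ingredients stated in the paper. Structurally it is the $w=e$ specialization of the paper's proof of the extended $QQ$-system (Theorem \ref{QQrel}): there too the engine is the difference relation \eqref{siae} for $\Sigma^e_{i,a}$, followed by an identification of the monomial ($\ell$-weight) prefactors, except that the paper works in $\ol\Pi'$ with the operators $\Theta_w$, $\Lambda$, and the auxiliary elements $\Sigma_{i,w,a}$, while you work directly in $\wt{\mc E}_\ell$ using the explicit $q$-character formulas of Theorems \ref{formuachar} and \ref{fex} and then invoke injectivity of $\chi_q$. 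The two computations you leave as "careful case analysis" do go through: the telescoping $\wt\Psib_{i,aq_i^{-1}}\Psib_{i,aq_i^{-1}} = P_i(a)$ is immediate from \eqref{psfh2} upon substituting $b = aq_i^{-1}$ (using that $C_{i,j}=-2$ forces $q_i=q$, etc.), and the key identity $P_i(a)\,A_{i,aq_i^{-1}}^{-1} = [-\alpha_i]\,\wt\Psib_{i,aq_i^{-3}}\Psib_{i,aq_i}$ follows by expanding $A_{i,aq_i^{-1}}$ via $Y_{k,b} = [\omega_k]\Psib_{k,bq_k^{-1}}\Psib_{k,bq_k}^{-1}$; the internal $\Psib_{i,\cdot}$ factors cancel in pairs and the neighbor factors recombine into $\wt\Psib_{i,aq_i^{-3}}$ in each of the cases $C_{j,i}\in\{-1,-2,-3\}$. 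To make the proof complete you should write out at least the representative non-simply-laced cases (one $i$ short with a long neighbor, one $i$ long with a short neighbor), since that is precisely where the $q$-versus-$q_i$ bookkeeping could silently fail — but it does not.
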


\begin{rem} Here we use the normalization we used in \cite{FHR}, which
  is slightly different from the normalization we used in
  \cite{FH2}. The relation between the two is explained in
  \cite[Section 5.8]{FHR}. The system \eqref{origQQ} was originally
  introduced in \cite{MRV1,MRV2} in the context of affine opers.
\end{rem}

\subsection{Generalization}

 We are going to formulate a generalization of the system
  \eqref{origQQ}, called the extended $QQ$-system, which is a system
  of relations on the variables ${\mb Q}_{w(\omega_i),a}, w \in W, i
  \in I, a \in \C^\times$. It includes equations \eqref{origQQ} in
  which we set ${\mb Q}_{i,a} = {\mb Q}_{\omega_i,a}$ and $\wt{\mb
    Q}_{i,a} = {\mb Q}_{s_i(\omega_i),a}$.

We will need the following result.

\begin{lemma}    \label{family}
There is a unique family
$$\chi_{w(\omega_i)} \in \ZZ[(1 - [-\alpha])^{-1}]_{\alpha \in \Delta}\text{ for }w\in W, i\in I$$ 
such that $\chi_{\omega_i} = 1$ and 
\begin{equation}\label{indchii}\chi_{ws_i(\omega_i)}\chi_{w(\omega_i)}
  = (1 - [- w(\alpha_i)])^{-1} \prod_{j\neq
    i}\chi_{w(\omega_j)}^{-C_{i,j}}\text{ if $l(w s_i) = l(w) +
    1$.}\end{equation}
\end{lemma}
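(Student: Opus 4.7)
My plan is to establish uniqueness and existence separately, in both cases by induction on the length $\ell(\mu) := \min\{l(w) : w(\omega_i) = \mu\}$, where $i$ is the unique index with $\mu \in {\mc P}_{\omega_i}$ (Lemma~\ref{wn}(3)). The key preliminary observation is that any minimum-length coset representative $w_\mu$ of $wW_i$ has every reduced decomposition ending in $s_i$: otherwise dropping the last simple factor $s_j$ with $j \neq i$ would give a strictly shorter representative of $\mu$, since $s_j(\omega_i) = \omega_i$ by Lemma~\ref{wn}(2).

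For uniqueness, the base case $\ell(\mu) = 0$ forces $\mu = \omega_i$ with $\chi_\mu = 1$ prescribed. For $\ell(\mu) \geq 1$, write $w_\mu = w' s_i$ with $l(w') = \ell(\mu)-1$; relation \eqref{indchii} then pins down $\chi_\mu$ as a rational expression in the $\chi_{w'(\omega_j)}$, each of which has minimum-length coset representative of length at most $l(w') < \ell(\mu)$, hence is determined by the inductive hypothesis. For existence, I would define auxiliary values $\chi^{(w)}_i$ for every pair $(w,i) \in W \times I$ by induction on $l(w)$ using a chosen reduced decomposition $w = s_{j_1}\cdots s_{j_L}$: setting $w' := s_{j_1}\cdots s_{j_{L-1}}$, declare $\chi^{(w)}_i := \chi^{(w')}_i$ if $j_L \neq i$, and otherwise solve for $\chi^{(w)}_i$ using \eqref{indchii}. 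One verifies inductively that the denominator $\chi^{(w')}_i$ is invertible in $\ZZ[(1-[-\alpha])^{-1}]_{\alpha \in \Delta}$. The decisive point is to prove that $\chi^{(w)}_i$ is independent of the reduced decomposition and depends only on $w(\omega_i)$; granted this, the recursion \eqref{indchii} for a general $w$ with $l(ws_i) = l(w)+1$ follows tautologically by choosing a reduced expression of $ws_i$ that ends in $s_i$.

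The main obstacle is the well-definedness check for the auxiliary family. By Matsumoto's theorem, independence of the reduced decomposition reduces to verifying invariance of the recursion \eqref{indchii} under each rank-two braid move $\underbrace{s_j s_k s_j \cdots}_{m_{jk}} = \underbrace{s_k s_j s_k \cdots}_{m_{jk}}$ inside the parabolic $\langle s_j, s_k \rangle$. For $A_1 \times A_1$ and $A_2$ the identity is short, but for $B_2$ and $G_2$ the length-4 and length-6 braid relations with mixed root lengths yield more elaborate identities that I would check case by case; Lemma~\ref{sign}, which controls the signs of the intermediate weights $(s_{i_{M+1}}\cdots s_{i_N})(\omega_i)$ appearing in the factors $(1-[-w(\alpha_i)])^{-1}$, is useful for organizing this bookkeeping. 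Once independence is settled, dependence only on $w(\omega_i)$ follows from Lemma~\ref{wn}(2) combined with the standard Coxeter-group fact $l(w_\mu v) = l(w_\mu) + l(v)$ for $v \in W_i$: one chooses a reduced expression for $w = w_\mu v$ in which the letters of $v$ appear last, and since those letters are all $s_j$ with $j \neq i$, they act as no-ops in our recursion, so $\chi^{(w)}_i = \chi^{(w_\mu)}_i = \chi_\mu$ unambiguously.
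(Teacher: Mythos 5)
Your overall architecture (uniqueness by induction; existence by a recursive construction whose well-definedness is reduced, via Matsumoto's theorem, to rank-two braid moves) is reasonable, and your uniqueness argument and the reduction of the general relation to words ending in $s_i$ are fine. But the proposal has a genuine gap exactly at the point you yourself identify as decisive: the verification that the recursion \eqref{indchii} is invariant under the length-$m_{jk}$ braid moves, in particular the length-4 and length-6 relations of types $B_2$ and $G_2$. You state that you ``would check case by case,'' but these checks are the entire mathematical content of the existence half of the lemma --- they amount to nontrivial identities among products of factors $(1-[-\beta])^{-1}$ for the various intermediate roots $\beta$ (compare the explicit $G_2$ formulas in Section \ref{r2s}) --- and without them nothing has been proved. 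A secondary issue you leave untouched is why the element obtained by dividing by $\chi^{(w')}_i$ at each step actually lies in $\ZZ[(1 - [-\alpha])^{-1}]_{\alpha \in \Delta}$ rather than merely in the fraction field.

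The paper's proof sidesteps both problems by producing an explicit solution rather than building one letter by letter: it applies $\varpi \circ \Theta'_w$ to the difference equation \eqref{siae} and checks that $\chi_{w(\omega_i)} := \varpi(\Theta'_w(\Psib_{i,a}))$ satisfies \eqref{indchii}, where $\Theta'_w$ is the braid group action on $\Pi'$ from Proposition \ref{extact1}; the braid relations (i.e.\ your rank-two checks) are thus inherited from results already established there and in \cite{FH3}. The descent from a function of $w$ to a function of $w(\omega_i)$ is then obtained from Proposition \ref{actG} (the induced Weyl group action on $G/(\pm\tb^\times)$) together with Lemma \ref{wn}, with the residual $\pm\tb^\times$ ambiguity removed by the normalization of the leading term, which is itself pinned down by the inductive uniqueness step. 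To repair your argument you should either carry out the $A_2$, $B_2$, $G_2$ computations explicitly or, more efficiently, replace your auxiliary construction by an appeal to Propositions \ref{extact1} and \ref{actG}.
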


Recall the topological completion
  $(\mathbb{Z}[y_i^{\pm 1}]_{i\in I})^v, v \in W$, of $\mathbb{Z}[y_i^{\pm
    1}]_{i\in I}$ defined in Section \ref{add}. We have a natural
  embedding
\begin{equation}    \label{emby}
{\mb e}_v: \ZZ[(1 - [-\alpha])^{-1}]_{\alpha \in \Delta}
\hookrightarrow (\mathbb{Z}[y_i^{\pm 1}]_{i\in I})^v
\end{equation}
obtained by expanding each $(1 - [-\alpha])^{-1}, \alpha \in \Delta$,
in positive powers of $[-\alpha]$ if $\alpha \in v(\Delta_+)$ and in
positive powers of $[\alpha]$ if $\alpha \in v(\Delta_+)$.

In what follows, we will consider $\chi_{w(\omega_i)}$ as an element
the ring
\begin{equation}    \label{piagain}
  \ol\Pi' = \bigoplus_{v \in W} \ol{\mathcal{Y}}'{}^v
\end{equation}
  introduced in Section \ref{add}, whose component
in $\ol{\mathcal{Y}}'{}^v$ is ${\mb e}_v(\chi_{w(\omega_i)})$ (note
that according to the definitions given in Section \ref{add}, we have
natural inclusion
$(\mathbb{Z}[y_i^{\pm 1}]_{i\in I})^v \subset
\ol{\mathcal{Y}}'{}^v$).

\begin{rem} With this interpretation of $\chi_{w(\omega_i)}$, the
  relations (\ref{indchii}) coincide with
   the image of the extended $QQ$-system \eqref{origQQ} under the map
   $\varpi$ (i.e. we forget the spectral parameters in
   (\ref{indchii})).
\end{rem}

\noindent {\em Proof of Lemma \ref{family}.}
By induction on the length of $w$, we prove that there
  exists a unique solution of this system that {\em a priori} depends
  on both $i \in I$ and $w \in W$. Moreover, it follows by induction
  that the highest weight term of
  $$
{\mb e}_e(\chi_{w(\omega_i)}) \in (\mathbb{Z}[y_i^{\pm 1}]_{i\in
  I})^e = \ZZ[[y_i^{-1}]]_{i\in I}
$$
is equal to $1$. It remains to show that this solution only depends on
  $w(\omega_i)$.

To do that, recall the operators $\Theta_w'$ defined in Proposition
\ref{extact1}.  Since $\Sigma_{i,a} =
\wt{\Psib}_{i,a}^{-1}\Theta_i'(\Psib_{i,aq_i^2})$ and $A_{i,a} =
      [\alpha_i]\wt{\Psib}_{i,a}\Psib_{i,a}\wt{\Psib}_{i,aq_i^{-2}}^{-1}
      \Psib_{i,aq_i^2}^{-1}$,
      applying $\varpi \circ \Theta_w'$ to equation (\ref{siae}), we
      obtain that $\chi_{w(\omega_j)} =
      \varpi(\Theta_w'(\Psib_{i,a})), w \in W$, is a solution of the system
      \eqref{indchii}.

Now, by Proposition \ref{actG} and Lemma \ref{wn},
$\varpi(\Theta_w'(\Psib_{i,a}))$ depends on $w(\omega_i)$ up to a
factor in $\pm \tb^\times$. But the leading term of 
${\mb e}_e(\varpi(\Theta_w'(\Psib_{i,a}))) =
  \varpi(E_e(\Theta_w'(\Psib_{i,a})))$
must be equal to $1$ because it is a solution of our system, as we
have shown above. Therefore,
$\varpi(\Theta_w'(\Psib_{i,a}))$ depends only on $w(\omega_i)$.\qed

\medskip

\begin{example} For $i \in I$ and $w=s_i$, we have
    $$\chi_{s_i(\omega_i)} = (1 - [-\alpha_i])^{-1}.$$
    Therefore
    $$
    {\mb e}_e(\chi_{s_i(\omega_i)}) = \sum_{n\geq 0} [-n\alpha_i]
    $$
    which is the series $\chi_i$ from Theorem \ref{orqq}.
\end{example}

For $\alpha\in \Delta$, we set $[\alpha]_+ = 1$ if $\alpha\in\Delta_-$ and $[\alpha]_+ = - [\alpha]$ if $\alpha\in \Delta_-$.

Recall the ring $\overline{\Pi}'$ defined in Section \ref{extact}. We introduce
the following elements of $\overline{\Pi}'$ for $w\in W$, $i\in I$, and
$a\in\mathbb{C}^\times$:
\begin{equation}    \label{Qw}
\mathcal{Q}_{w(\omega_i),a}' = \Psib_{w(\omega_i),a}Q_{w(\omega_i),a},
\qquad \mathcal{Q}_{w(\omega_i),a} =
\chi_{w(\omega_i)}\mathcal{Q}_{w(\omega_i),a}'.
\end{equation}

According to Conjecture \ref{bass2}, we have
  \begin{equation}    \label{Qw1}
    E_e(\mathcal{Q}_{w(\omega_i),a}) = \chi_q(L(\Psib_{w(\omega_i),a})) \cdot
    {\mb e}_e(\chi_{w(\omega_i)}/\chi(L(\Psib_{w(\omega_i),a})).
  \end{equation}

\begin{thm}\label{QQrel} For all $i\in I$, $w\in W$, $a\in\mathbb{C}^\times$, we have the extended $QQ$-system in $\overline{\Pi}'$
$$\mathcal{Q}_{ws_i(\omega_i),aq_i} \mathcal{Q}_{w(\omega_i),aq_i^{-1}} - 
[-w(\alpha_i)]  \mathcal{Q}_{ws_i(\omega_i),aq_i^{-1}}\mathcal{Q}_{w(\omega_i),aq_i}$$
$$= [-w(\alpha_i)]_+
   \prod_{j,C_{i,j} = -1}\mathcal{Q}_{w(\omega_j),a}\prod_{j,C_{i,j} = -2}\mathcal{Q}_{w(\omega_j),aq}\mathcal{Q}_{w(\omega_j),aq^{-1}}
\prod_{j,C_{i,j} = -3}\mathcal{Q}_{w(\omega_j),aq^2}\mathcal{Q}_{w(\omega_j),a}\mathcal{Q}_{w(\omega_j),aq^{-2}},$$
where $[-w(\alpha_i)]_+ = 1$ if $w(\alpha_i)\in\Delta_+$ and $[-w(\alpha_i)]_+ = - [-w(\alpha_i)]$ if $w(\alpha_i)\in \Delta_-$.
\end{thm}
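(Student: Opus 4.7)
The plan is to derive Theorem~\ref{QQrel} from the original $QQ$-system (Theorem~\ref{orqq}) by applying the braid group automorphism $\Theta_w'$ of $\overline{\Pi}'$ from Proposition~\ref{extact1}. The first step is to interpret Theorem~\ref{orqq} as the $w=e$ case of Theorem~\ref{QQrel} in $\overline{\Pi}'$. Using $\chi_{\omega_i}=1$, $Q_{\omega_i,a}=1$, the formula $Q_{s_i(\omega_i),a}=\Sigma_{i,aq_i^{-2}}(1-[-\alpha_i])$ from \eqref{thisa}, and $\chi_{s_i(\omega_i)}=(1-[-\alpha_i])^{-1}$ from Lemma~\ref{family}, a short calculation gives
\[
\mathcal{Q}_{\omega_i,a}=\Psib_{i,a},\qquad \mathcal{Q}_{s_i(\omega_i),a}=\wt{\Psib}_{i,aq_i^{-2}}\,\Sigma_{i,aq_i^{-2}},
\]
which matches ${\mb Q}_{i,a}$ and $\wt{\mb Q}_{i,a}$ of Theorem~\ref{orqq} once one divides by $\chi(L(\Psib_{i,a}))$, $\chi(L(\wt\Psib_{i,aq_i^{-2}}))$ and uses that $\chi_i=(1-[-\alpha_i])^{-1}$ inside $\overline{\mathcal{Y}}'{}^e$. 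Since $[-\alpha_i]_+=1$ when $\alpha_i\in\Delta_+$, the $w=e$ case of Theorem~\ref{QQrel} is exactly Theorem~\ref{orqq}.

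The crux of the argument is then the identification
\[
\Theta_u'(\mathcal{Q}_{v(\omega_i),a})=\mathcal{Q}_{uv(\omega_i),a}
\]
for all $u,v\in W$ and $i\in I$, understood modulo the $\pm\tb^\times$ ambiguity of Proposition~\ref{actG}. This would be proved by induction on $l(u)$ from the explicit formulas of Proposition~\ref{extact1} together with the defining recursion~\eqref{indchii} of the characters $\chi_{w(\omega_i)}$; the scalar identity $\chi_{w(\omega_i)}=\varpi(\Theta_w'(\Psib_{i,a}))$ extracted in the proof of Lemma~\ref{family} is already the $\varpi$-shadow of this statement. With such a lemma in hand, applying $\Theta_w'$ to the $w=e$ identity transforms each generator $\mathcal{Q}_{v(\omega_j),b}$ appearing on either side into $\mathcal{Q}_{wv(\omega_j),b}$ with the spectral shifts intact, and sends the scalar $[-\alpha_i]$ to $[-w(\alpha_i)]$ via $\Theta_i'([\omega])=[s_i(\omega)]$. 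This yields the left-hand side of the extended $QQ$-system for $w$ and almost the right-hand side.

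The main obstacle will be to account for the sign factor $[-w(\alpha_i)]_+$ appearing in front of the product on the right-hand side, which is the genuinely new feature for $w\neq e$. Its source is the interplay between the two solutions $\Sigma^e_{i,a}$ and $\Sigma^{s_i}_{i,a}=-A_{i,aq_i^2}(1+\cdots)$ of the $q^{2d_i}$-difference equation~\eqref{siae}: depending on whether $w(\alpha_i)$ is a positive or a negative root, applying $\Theta_w'$ to $\Sigma_{i,aq_i^{-2}}$ lands in the ``$\Sigma^e$-branch'' or in the ``$\Sigma^{s_i}$-branch'' (compare the case distinction~\eqref{twosol} used to define the Weyl group action $\Theta_w$ on $\Pi$), and the leading minus sign in $\Sigma^{s_i}$ produces precisely the prefactor $-[-w(\alpha_i)]$ whenever $w(\alpha_i)\in\Delta_-$. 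Making this dichotomy rigorous and checking that the resulting sign matches the definition of $[-w(\alpha_i)]_+$ (equivalently, that it combines compatibly with the $\pm\tb^\times$ ambiguity of $\Theta_w'$) is the principal technical step of the proof.
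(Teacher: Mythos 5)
Your strategy --- transporting the $w=e$ relation by the braid group automorphisms $\Theta'_w$ of $\ol\Pi'$ --- is genuinely different from the paper's, which instead applies $\Theta_w$ directly to the $q$-difference equation \eqref{siae}, introduces $\Sigma_{i,w,a}=\Theta_w(\Sigma_{i,a})Q_{ws_i(\omega_i),aq_i^2}^{-1}$, and extracts the relation by tracking leading terms via $\Lambda$ (Lemma \ref{lsw} together with formulas \eqref{ltwa1} and \eqref{ltwa}). But as written your argument has two gaps. First, the base case is not quite what you claim: Theorem \ref{orqq} is an identity in $K_0(\OO)\cong\ol{\mathcal{Y}}'{}^e$, i.e.\ it only gives the $E_e$-component of the $w=e$ case of Theorem \ref{QQrel}, whereas the theorem is an identity in all of $\ol\Pi'=\bigoplus_{v\in W}\ol{\mathcal{Y}}'{}^v$; in the components with $l(vs_i)>l(v)$ the element $\Sigma_{i,a}$ expands as $\Sigma^{s_i}_{i,a}$ rather than $\Sigma^e_{i,a}$, so already for $w=e$ one needs an argument beyond Theorem \ref{orqq} (essentially the paper's computation specialized to $w=e$).

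Second, and more seriously, your key lemma $\Theta'_u(\mathcal{Q}_{v(\omega_i),a})=\mathcal{Q}_{uv(\omega_i),a}$ is only asserted ``modulo the $\pm\tb^\times$ ambiguity of Proposition \ref{actG}'', with the sign and normalization analysis deferred to the end. But the $QQ$-system is a Wronskian-type identity whose entire content lies in the exact coefficients $[-w(\alpha_i)]$ and $[-w(\alpha_i)]_+$ and in the exact normalizations $\chi_{w(\omega_i)}$ and $\Lambda(Q_{w(\omega_i),a})=1$: modulo $\pm\tb^\times$ the statement is nearly empty, since one could not distinguish it from a relation with the wrong signs or wrong one-dimensional twists. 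Pinning these factors down requires precisely the leading-term bookkeeping with the two solutions $\Sigma^e_{i,a}$ and $\Sigma^{s_i}_{i,a}$ that you label ``the principal technical step'' --- and that bookkeeping is, in substance, the whole of the paper's proof. So the proposal correctly identifies the relevant ingredients (the difference equation, the two branches of $\Sigma$, the recursion \eqref{indchii} for the $\chi_{w(\omega_i)}$) but does not actually close the argument.
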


\begin{rem} For each $w'\in W$, by applying $E_{w'}$ to the extended
  $QQ$-system in Theorem \ref{QQrel},
we obtain a relation between the expansions
$E_{w'}(\mathcal{Q}_{w(\omega_j),a})$ in the ring
$\overline{\mathcal{Y}}'{}^{w'}$ introduced in Section \ref{extact}.
Our proof below is given in the 
ring $\overline{\Pi}'$ which is the direct sum of
$\overline{\mathcal{Y}}'{}^{w'}$ over all $w' \in W$. Hence we also
obtain the proof of the corresponding relations in
$\overline{\mathcal{Y}}'{}^{w'}$for each $w'$.
\end{rem}

\noindent {\em Proof of Theorem \ref{QQrel}.}
First let us consider
\begin{equation}\label{defsi}\Sigma_{i,w,a} = \Theta_w(\Sigma_{i,a}) Q_{ws_i(\omega) ,aq_i^2}^{-1}.\end{equation}
We can express $\Sigma_{i,w,a}$ in terms of the $Q_{w(\omega_j),b}$ in the following way. 

\begin{lem}\label{lsw} For any $i\in I$, $w\in W$, $a\in\mathbb{C}^\times$, we have
$$\frac{\Lambda(\Sigma_{i,w,a}^{-1})\Sigma_{i,w,a}}{\varpi(\Lambda(\Sigma_{i,w,a}^{-1})\Sigma_{i,w,a})} =$$
$$Q_{w(\omega_i),a}\prod_{j,C_{i,j} = -1}Q_{w(\omega_j),aq_i}^{-1}\prod_{j,C_{i,j} = -2}Q_{w(\omega_j),aq^{2}}^{-1}Q_{w(\omega_j),a}^{-1}
\prod_{j,C_{i,j} = -3}Q_{w(\omega_j),aq^{3}}^{-1}Q_{w(\omega_j),aq}^{-1}Q_{w(\omega_j),aq^{-1}}^{-1}.$$
\end{lem}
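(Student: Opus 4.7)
The strategy is a direct computation: apply $\Theta_w$ to the defining $q^{2d_i}$-difference equation
\[
\Sigma_{i,a} = 1 + A_{i,a}^{-1}\Sigma_{i,aq_i^{-2}}
\]
and use the explicit formula for $\Theta_w(Y_{j,b})$ from Theorem \ref{exwo} to identify the $Q$-content of $\Theta_w(\Sigma_{i,a})$. Since $\Theta_w$ is a ring automorphism of $\Pi$, the difference equation becomes
\[
\Theta_w(\Sigma_{i,a}) = 1 + \Theta_w(A_{i,a})^{-1}\,\Theta_w(\Sigma_{i,aq_i^{-2}}),
\]
which, under the normalization $\Sigma_{i,w,a} = \Theta_w(\Sigma_{i,a})Q_{ws_i(\omega_i),aq_i^2}^{-1}$, translates into a recursion relating $\Sigma_{i,w,a}$ to $\Sigma_{i,w,aq_i^{-2}}$ modulated by $\Theta_w(A_{i,a})^{-1}$ and a ratio of $Q_{ws_i(\omega_i),\cdot}$'s.

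Next, using Theorem \ref{exwo}, every $Y_{j,b}$ in the defining product \eqref{Ai} for $A_{i,a}$ is replaced by
\[
[w(\omega_j)]\,\Psib_{w(\omega_j),bq_j^{-1}}Q_{w(\omega_j),bq_j^{-1}}\bigl(\Psib_{w(\omega_j),bq_j}Q_{w(\omega_j),bq_j}\bigr)^{-1}.
\]
Collecting the contributions from $Y_{i,aq_i^{\pm 1}}$ and from the neighbors of $i$ separated into the cases $C_{j,i}\in\{-1,-2,-3\}$, the result factors as $\Theta_w(A_{i,a}) = [w(\alpha_i)]\cdot\mathcal{P}_{w,i,a}\cdot\mathcal{R}_{w,i,a}$, where $\mathcal{P}_{w,i,a}$ is a monomial in the $\Psib_{w(\omega_j),\cdot}$'s and $\mathcal{R}_{w,i,a}$ is a monomial in the $Q_{w(\omega_j),\cdot}$'s. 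Combined with the $Q_{ws_i(\omega_i),\cdot}$ factors in the recursion, the $Q$-contributions telescope and assemble, after tracking spectral shifts, into exactly the product on the right-hand side of the lemma.

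To extract the claimed identity, recall from Theorem \ref{exwo} that $\Lambda(Q_{w(\omega),b}) = 1$ and $\varpi(Q_{w(\omega),b}) = 1$. Hence $\Lambda(\Sigma_{i,w,a})$ is a pure $\Psib$-monomial, obtained from the leading contribution of $\mathcal{P}_{w,i,a}$ and the normalization (one identifies it using Proposition \ref{tctw}, since $\Lambda\circ\Theta_w = \sigma\circ T_w\circ\sigma$ reduces its computation to a calculation with the Chari operators on $\mathcal{M}$), and $\varpi(\Lambda(\Sigma_{i,w,a}^{-1})\Sigma_{i,w,a})$ equals the residual weight factor $[\cdot]$. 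Multiplying $\Sigma_{i,w,a}$ by $\Lambda(\Sigma_{i,w,a}^{-1})$ strips the $\Psib$-monomial and dividing by $\varpi(\cdots)$ strips the weight, leaving precisely the $Q$-ratio computed above.

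The main obstacle is the bookkeeping of Step 2: one must verify that the spectral-parameter shifts in $\Theta_w(A_{i,a})^{-1}$ and in the $Q_{ws_i(\omega_i),\cdot}$ normalization factors combine to give exactly the products $\prod_{j,\,C_{i,j}=-k}\prod_{\ell}Q_{w(\omega_j),aq^{r_\ell}}^{-1}$ appearing on the right-hand side, for each $k\in\{1,2,3\}$ and for every configuration of $d_i,d_j$. This is the same type of case analysis carried out in the proof of Theorem \ref{exwo}(1); the identities for spectral shifts established there will also suffice here.
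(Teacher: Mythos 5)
Your second step --- substituting formula \eqref{yq} for each $Y_{j,b}$ occurring in the definition \eqref{Ai} of $A_{i,a}$, so as to split $\Theta_w(A_{i,a})$ into a $\Psib$-monomial times a ratio of $Q$'s --- is indeed one half of the paper's argument. But the other half is missing, and the bridge you propose in its place does not hold. Applying $\Theta_w$ to the difference equation \eqref{siae} yields an \emph{additive} recursion $\Theta_w(\Sigma_{i,a}) = 1 + \Theta_w(A_{i,a})^{-1}\Theta_w(\Sigma_{i,aq_i^{-2}})$; iterating it produces an infinite \emph{sum} of terms, each carrying its own $Q$-content, and there is no telescoping mechanism that converts such a sum into the single product of $Q$'s asserted by the lemma. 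What the lemma actually requires is a first-order \emph{multiplicative} difference equation for the normalized $\Sigma_{i,w,a}$, i.e.\ an identity expressing $\Sigma_{i,w,a}/\Sigma_{i,w,aq_i^{-2}}$ in terms of $\Theta_w(A_{i,a}^{-1})$ and the $Q_{w(\omega_i),\cdot}$. Your proposal never produces such an identity.

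In the paper this identity comes from the composition $\Theta_{ws_i} = \Theta_w\circ\Theta_{s_i}$ together with the definition \eqref{defti}: one writes $\Theta_{ws_i}(Y_{i,a}) = \Theta_w(Y_{i,a}A_{i,aq_i^{-1}}^{-1})\,\Theta_w(\Sigma_{i,aq_i^{-3}})/\Theta_w(\Sigma_{i,aq_i^{-1}})$ and compares this with the characterization of \emph{both} $\Theta_{ws_i}(Y_{i,a})$ and $\Theta_w(Y_{i,a})$ furnished by Theorem \ref{exwo}. This yields equation \eqref{twa}, namely $\Theta_w(A_{i,a}^{-1}) = \Lambda(\Theta_{ws_i}(Y_{i,a})\Theta_w(Y_{i,a}^{-1}))\,\Sigma_{i,w,a}Q_{w(\omega_i),aq_i^2}\,/\,(\Sigma_{i,w,aq_i^{-2}}Q_{w(\omega_i),a})$, which is precisely the missing multiplicative relation; combined with your computation of $\Theta_w(A_{i,a})\Lambda(\Theta_w(A_{i,a}^{-1}))$ it gives a first-order equation for $\Lambda(\Sigma_{i,w,a}^{-1})\Sigma_{i,w,a}/\varpi(\Lambda(\Sigma_{i,w,a}^{-1})\Sigma_{i,w,a})$ whose normalized solution is the claimed product. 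The additive recursion you start from is used in the paper only \emph{after} the lemma, to derive the extended $QQ$-system in Theorem \ref{QQrel} from the lemma; it cannot substitute for \eqref{twa}. To repair your proof, insert the use of $\Theta_{ws_i} = \Theta_w\circ\Theta_{s_i}$ and of Theorem \ref{exwo} applied to $ws_i$ as well as to $w$.
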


\begin{proof} We have
$$\Theta_{ws_i}(Y_{i,a}) =  \Theta_{w}(Y_{i,a}A_{i,aq_i^{-1}}^{-1} )\frac{\Theta_{w}(\Sigma_{i,aq_i^{-3}})}{\Theta_{w}(\Sigma_{i,aq_i^{-1}})} = \Lambda(\Theta_{ws_i}(Y_{i,a})) \frac{Q_{ws_i(\omega_i),aq_i^{-1}}}{Q_{ws_i(\omega_i),aq_i}},$$
$$\Theta_{w}(Y_{i,a}A_{i,aq_i^{-1}}^{-1} ) = \Lambda(\Theta_{ws_i}(Y_{i,a})) \frac{\Sigma_{i,w,aq_i^{-1}}}{\Sigma_{i,w,aq_i^{-3}}},$$
\begin{equation}\label{twa}\Theta_w(A_{i,a}^{-1}) = \Lambda(\Theta_{ws_i}(Y_{i,a})\Theta_w(Y_{i,a}^{-1}))  \frac{\Sigma_{i,w,a}Q_{w(\omega_i),a q_i^2}}{\Sigma_{i,w,aq_i^{-2}}Q_{w(\omega_i),a}}.\end{equation}
But from (\ref{yq}) and the definition of $A_{i,a}$, we obtain 
\begin{multline*}
\Theta_w(A_{i,a})\Lambda(\Theta_w(A_{i,a}^{-1})) =
\\ \frac{Q_{w(\omega_i),aq_i^{-2}}}{Q_{w(\omega_i),aq_i^2}}\prod_{j,C_{j,i}
  = -1}\frac{Q_{w(\omega_j),aq_j}}{Q_{w(\omega_j),aq_j^{-1}}}
\prod_{j,C_{j,i} = -2}\frac{Q_{w(\omega_j),aq^2}}{Q_{w(\omega_j),aq^{-2}}}
\prod_{j,C_{j,i} =
  -3}\frac{Q_{w(\omega_j),aq^3}}{Q_{w(\omega_j),aq^{-3}}},
\end{multline*}
which gives the formula in the lemma.
\end{proof}

Now we can finish the proof of Theorem \ref{QQrel}. By (\ref{siae}), we have
$$\Sigma_{i,a} = 1 + A_{i,a}^{-1}\Sigma_{i,aq_i^{-2}}\text{ and so }
\Theta_w(\Sigma_{i,a}) = 1 + \Theta_w(A_{i,a}^{-1}) \Theta_{w}(\Sigma_{i,aq_i^{-2}}),$$
$$Q_{ws_i(\omega_i),aq_i^2}\Sigma_{i,w,a} = 1 +  \Lambda(\Theta_{ws_i}(Y_{i,a})\Theta_w(Y_{i,a}^{-1}))  \frac{\Sigma_{i,w,a}Q_{w(\omega_i),a q_i^2}}{\Sigma_{i,w,aq_i^{-2}}Q_{w(\omega_i),a}} Q_{ws_i(\omega_i),a}\Sigma_{i,w,aq_i^{-2}}.$$
Then we obtain
$$Q_{ws_i(\omega_i),aq_i^2} Q_{w(\omega_i),a} = \frac{Q_{w(\omega_i),a}}{\Sigma_{i,w,a}} + \Lambda(\Theta_{ws_i}(Y_{i,a})\Theta_w(Y_{i,a}^{-1}))  Q_{ws_i(\omega_i),a}Q_{i,w,aq_i^2}.$$
After a shift of spectral parameters by $q_i^{-1}$, using Lemma \ref{lsw}, we obtain that
$$\Lambda(\Sigma_{i,w,aq_i^{-1}}) Q_{ws_i(\omega_i),aq_i} Q_{w(\omega_i),aq_i^{-1}} -  \Lambda(\Sigma_{i,w,aq_i^{-3}})\Lambda(\Theta_w(A_{i,aq_i^{-1}}^{-1})) Q_{ws_i(\omega_i),aq_i^{-1}}Q_{w(\omega_i),aq_i}$$
$$= \varpi(\Lambda(\Sigma_{i,w,a})\Sigma_{i,w,a}^{-1}) \prod_{j,C_{i,j} = -1}Q_{w(\omega_j),a}\prod_{j,C_{i,j} =
  -2}Q_{w(\omega_j),aq}Q_{w(\omega_j),aq^{-1}}  \times$$
$$\times \prod_{j,C_{i,j} =
  -3}Q_{w(\omega_j),aq^2}Q_{w(\omega_j),a}Q_{w(\omega_j),aq^{-2}}.$$

We have $\Lambda(\Sigma_{i,w,a}) = \Lambda(\Theta_w(\Sigma_{i,a}))$ as $\Lambda(Q_{ws_i(\omega_i),a}) = 1$. 
Formula (\ref{twa}) implies that
\begin{equation}\label{ltwa1}\Lambda(\Theta_w(A_{i,aq_i^{-1}}^{-1})) = [-w(\alpha_i)]\frac{\Psib_{ws_i(\omega_i),aq_i^{-1}}\Psib_{w(\omega_i),aq_i}\Lambda(\Sigma_{i,w,aq_i^{-1}})}
{\Psib_{ws_i(\omega_i),aq_i}\Psib_{w(\omega_i),aq_i^{-1}}\Lambda(\Sigma_{i,w,aq_i^{-3}})}.\end{equation}
Besides, from the explicit expression of $A_{i,a}$, the same computation as above gives 
\begin{equation}\label{ltwa}\Lambda(\Theta_w(A_{i,a})) = [w(\alpha_i)]\frac{\wt{\Psib}_{w(\omega_i),aq_i^{-2}}^{-1}\Psib_{w(\omega_i),a}}{\wt{\Psib}_{w(\omega_i),a}^{-1}\Psib_{w(\omega_i),aq_i^{2}}}\end{equation}
where
$$\wt{\Psib}_{w(\omega_i),aq_i^{-1}} = \Psib_{w(\omega_i),aq_i^{-1}}^{-1}$$
$$\times \prod_{j,C_{i,j} = -1}\Psib_{w(\omega_j),a}\prod_{j,C_{i,j} = -2}\Psib_{w(\omega_j),aq}\Psib_{w(\omega_j),aq^{-1}}
\prod_{j,C_{i,j} = -3}\Psib_{w(\omega_j),aq^2}\Psib_{w(\omega_j),a}\Psib_{w(\omega_j),aq^{-2}}.$$
 Note that $\wt{\Psib}_{\omega_i,a}$ is equal to $\wt{\Psib}_{i,a}$
from Example \ref{exPsi},(ii). Therefore from (\ref{ltwa1}) and
(\ref{ltwa}) we obtain that
$$\Lambda(\Sigma_{i,w,a}) = \varpi(\Lambda(\Sigma_{i,w,a}))\wt{\Psib}_{w(\omega_i),a}^{-1}\Psib_{ws_i(\omega_i),aq_i^{2}}.$$
This gives
$$\Psib_{ws_i(\omega_i),aq_i}\Psib_{w(\omega_i),aq_i^{-1}} Q_{ws_i(\omega_i),aq_i} Q_{w(\omega_i),aq_i^{-1}} 
- $$ $$
[-w(\alpha_i)]\Psib_{ws_i(\omega_i),aq_i^{-1}}\Psib_{w(\omega_i),aq_i}
Q_{ws_i(\omega_i),aq_i^{-1}}Q_{w(\omega_i),aq_i}$$
$$= \Psib_{ws_i(\omega_i),aq_i}\Psib_{w(\omega_i),aq_i^{-1}} \wt{\Psib}_{w(\omega_i),aq_i^{-1}}\Psib_{(ws_i)(\omega_i),aq_i}^{-1}
 \varpi(\Sigma_{i,w,a}^{-1}) \times$$
$$\times \prod_{j,C_{i,j} = -1}Q_{w(\omega_j),a}\prod_{j,C_{i,j} = -2}Q_{w(\omega_j),aq}Q_{w(\omega_j),aq^{-1}}
\prod_{j,C_{i,j} = -3}Q_{w(\omega_j),aq^2}Q_{w(\omega_j),a}Q_{w(\omega_j),aq^{-2}}.$$
Note that 
$$\varpi(\Sigma_{i,w,a}^{-1}) = \varpi(\Theta_w(\Sigma_{i,a}^{-1})) = w(\varpi(\Sigma_{i,a}^{-1})) = (1 - [-w(\alpha_i)]).$$
Therefore, we have
$$\mathcal{Q}_{ws_i(\omega_i),aq_i}' \mathcal{Q}_{w(\omega_)i,aq_i^{-1}}' - 
[-w(\alpha_i)]  \mathcal{Q}_{ws_i(\omega_i),aq_i^{-1}}'\mathcal{Q}_{w(\omega_i),aq_i}'$$
$$= 
 (1 - [-w(\alpha_i)])  \prod_{j,C_{i,j} =
  -1}\mathcal{Q}_{w(\omega_j),a}'\prod_{j,C_{i,j} =
  -2}\mathcal{Q}_{w(\omega_j),aq}'\mathcal{Q}_{w(\omega_j),aq^{-1}}'
\times$$ $$\times
\prod_{j,C_{i,j} = -3}\mathcal{Q}_{w(\omega_j),aq^2}'\mathcal{Q}_{w(\omega_j),a}'\mathcal{Q}_{w(\omega_j),aq^{-2}}',$$
which is the extended $QQ$-system. Theorem \ref{QQrel} is proved.\qed

\subsection{Extended $QQ$-system Conjecture}

Conjecture \ref{bass2}, injectivity of the $q$-character
homomorphism, and Theorem \ref{QQrel} (in which we apply the
projection $E_e$ to the $Q_{w(\omega_i),a}$), imply the following
conjecture, which we call the Extended $QQ$-system Conjecture. It
describes a system of relations in $K_0(\OO)$ generalizing the
$QQ$-system of Theorem \ref{QQrel}. In Theorem \ref{r2}, we will
establish it together with Conjecture \ref{bass2} for all $\g$ of rank
$2$.

In what follows we will use the notation
  \begin{equation}    \label{ee}
  \bchi_{w(\omega_i)} := {\mb e}_e(\chi_{w(\omega_i)}).
  \end{equation}

\begin{conj}\label{extqq}
  Let $w\in W$. For any $i\in I$ and $a\in\mathbb{C}^\times$, we have in $K_0(\OO)$ the relations
\begin{equation}    \label{extqq1}
{\mb Q}_{(ws_i)(\omega_i),aq_i} {\mb Q}_{w(\omega_i),aq_i^{-1}} - 
[-w(\alpha_i)]
{\mb Q}_{(ws_i)(\omega_i),aq_i^{-1}}{\mb Q}_{w(\omega_i),aq_i} = [-w(\alpha_i)]_+ \cdot
\end{equation}
$$
 \cdot \prod_{j,C_{i,j} = -1}{\mb Q}_{w(\omega_j),a}\prod_{j,C_{i,j} = -2}{\mb Q}_{w(\omega_j),aq}{\mb Q}_{w(\omega_j),aq^{-1}}
\prod_{j,C_{i,j} = -3}{\mb Q}_{w(\omega_j),aq^2}{\mb Q}_{w(\omega_j),a}{\mb Q}_{w(\omega_j),aq^{-2}},$$
where
\begin{equation}    \label{Qw2}
{\mb Q}_{w(\omega_i),a} = [L(\Psib_{w(\omega_i),a})] \cdot
\bchi_{w(\omega_i)}/\chi(L(\Psib_{w(\omega_i),a})).
\end{equation}
\end{conj}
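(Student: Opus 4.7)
The plan is to deduce Conjecture \ref{extqq} as a consequence of three inputs: Theorem \ref{QQrel} (already established), Conjecture \ref{bass2}, and the injectivity of the $q$-character homomorphism (Proposition \ref{inject}). The point is that Theorem \ref{QQrel} already provides the extended $QQ$-system relations at the level of the generating series $\mathcal{Q}_{w(\omega_i),a}$ living in $\ol\Pi'$, so the task is to reinterpret these relations as identities in $K_0(\OO)$.

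First I would apply the projection $E_e: \Pi \to \wt\Yim^e$, extended to $\ol\Pi' \to \ol{\mathcal{Y}}'{}^e$, to the relation in Theorem \ref{QQrel}. By the definition \eqref{Qw} and the notation \eqref{ee},
$$
E_e(\mathcal{Q}_{w(\omega_i),a}) = \bchi_{w(\omega_i)} \cdot \Psib_{w(\omega_i),a} \cdot E_e(Q_{w(\omega_i),a}).
$$
Assuming Conjecture \ref{bass2}, I would substitute
$$
\Psib_{w(\omega_i),a} \cdot E_e(Q_{w(\omega_i),a}) = \chi_q(L(\Psib_{w(\omega_i),a}))/\chi(L(\Psib_{w(\omega_i),a})),
$$
which yields the key identification
$$
E_e(\mathcal{Q}_{w(\omega_i),a}) = \chi_q(L(\Psib_{w(\omega_i),a})) \cdot \bchi_{w(\omega_i)}/\chi(L(\Psib_{w(\omega_i),a})) = \chi_q\bigl({\mb Q}_{w(\omega_i),a}\bigr),
$$
where ${\mb Q}_{w(\omega_i),a}$ is defined by formula \eqref{Qw2}. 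Feeding this into the image under $E_e$ of the relation in Theorem \ref{QQrel} produces the desired $QQ$-system at the level of $q$-characters inside $\ol{\mathcal{Y}}'{}^e \hookrightarrow {\mc E}_\ell$.

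Finally, I would invoke Proposition \ref{inject}, the injectivity of the ring homomorphism $\chi_q: K_0(\OO) \to {\mc E}_\ell$, to lift the identity on $q$-characters to the identity \eqref{extqq1} in $K_0(\OO)$. Here one should verify that both sides of \eqref{extqq1} genuinely lie in (the completion of) $K_0(\OO)$: this amounts to checking that the support of $\bchi_{w(\omega_i)}$, combined with the weight structure of $L(\Psib_{w(\omega_i),a})$, produces elements compatible with the completion in Definition \ref{K0}, which is automatic from the construction of $\bchi_{w(\omega_i)}$ via ${\mb e}_e$.

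The main obstacle is not any step of the reduction sketched above, which is essentially bookkeeping once the dictionary between $\mathcal{Q}_{w(\omega_i),a}$ and ${\mb Q}_{w(\omega_i),a}$ is in place. The genuine difficulty lies upstream, in Conjecture \ref{bass2} itself. That $q$-character formula is known only for $w=e$ (via Theorem \ref{formuachar}), for $w=s_i$ (via Theorem \ref{fex}), and in rank $2$ (to be established in Section \ref{r2s}). For arbitrary $w$ in higher rank, the proof proposed here is conditional, and establishing Conjecture \ref{bass2}, rather than performing the deduction above, is where the substantive work lies.
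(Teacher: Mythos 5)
Your reduction is exactly the paper's: the authors derive Conjecture \ref{extqq} from Theorem \ref{QQrel} by applying $E_e$, identifying $E_e(\mathcal{Q}_{w(\omega_i),a})$ with $\chi_q({\mb Q}_{w(\omega_i),a})$ via Conjecture \ref{bass2} (formula \eqref{Qw1}), and then invoking injectivity of $\chi_q$ — and, like you, they note the result is unconditional only where Conjecture \ref{bass2} is proved (namely $w=e$, $w=s_i$, and rank $2$ via Section \ref{r2s}). Your proposal matches the paper's argument, including its correct identification of where the substantive difficulty lies.
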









\section{Shifted quantum affine algebras}    \label{shifted}

 A drawback of the conjectural formula \eqref{qupto} for
  the $q$-character $\chi_q(L(\Psib_{w(\omega_i),a}))$ is that for a
  general Lie algebra $\g$ we
  don't have an explicit expression for the third factor on the right
  hand side of \eqref{qupto}; namely, the ordinary character
  $\chi(L(\Psib_{w(\omega_i),a}))$ (not even for $w=e$ or $s_j, \in
  I$). This is not important for the extended $TQ$-relations because
  $\chi_q(L(\Psib_{w(\omega_i),a}))$ enters in them through the ratios
  $$
  \dfrac{\chi_q(L(\Psib_{w(\omega_i),aq_i^{-1}}))}{\chi_q(L(\Psib_{w(\omega_i),aq_i}))}
  $$
  in which the factors $\chi(L(\Psib_{w(\omega_i),a}))$ cancel
  out. But the variables ${\mb Q}_{w(\omega_i),a}$ appearing in
  the $QQ$-system of Conjecture \ref{extqq} correspond to a
  particular normalization of $\chi_q(L(\Psib_{w(\omega_i),a}))$ (or
  equivalently, $[L(\Psib_{w(\omega_i),a})]$) given by formulas
  \eqref{Qw1} and \eqref{Qw2}. This suggests that perhaps the factor
  $\bchi_{w(\omega_i)}/\chi(L(\Psib_{w(\omega_i),a}))$ appearing in
  these formulas is equal to $1$. However, this is not the case, even
  for $w=e$, when $\chi_{\omega_i}=1$ but
  $\chi(L(\Psib_{\omega_i,a}))$ is known to be an infinite series.

  Nonetheless, it turns out that there are certain modifications of
  $U_q(\ghat)$ called {\em shifted quantum affine algebras} for which
  there are analogous representations $L^{\on{sh}}(\Psib_{w(\omega_i),a}))$
  satisfying (conjecturally, in general) the extended $TQ$- and
  $QQ$-systems and for which we expect that
  \begin{equation}    \label{Lsh}
  \chi(L^{\on{sh}}(\Psib_{w(\omega_i),a})) = \bchi_{w(\omega_i)}, \qquad
  \forall w \in W.
  \end{equation}
  
In this section we recall the definition of the shifted quantum affine
algebras and their categories of representations analogous to the
category $\OO$, and formulate the analogues of the extended $TQ$- and
$QQ$-systems for representations from these categories. We also
conjecture $q$-character formulas for these representations in
Conjecture \ref{cosh}. In the next section, we will establish these
conjectures for all Lie algebras of rank $2$.

\subsection{Representations of shifted quantum affine algebras}

We collect some information on the shifted quantum affine algebras of
\cite{FT} and of their representations (see \cite{Hsh} for more
details and references).

Fix an integral coweight $\mu$ of $\mathfrak{g}$. The
  following definition is due to \cite{FT}.

\begin{defi} The {\em shifted
quantum affine algebra} $U_q^\mu(\wh{\mathfrak{g}})$
  is the algebra with the same Drinfeld generators (\ref{dgen}) as the
  algebra $U_q(\wh{\mathfrak{g}})$ and the same relations except that
  the definition of the series $\phi_i^-(z)$ given in (\ref{phrel}) is
  modified as follows:
\begin{equation}\label{phrelshm}\phi_i^-(z) =
  \sum_{m\in\mathbb{Z}}\phi_{i, m}^- z^{ m} =
  \phi_{i,\alpha_i(\mu)}^-z^{\alpha_i(\mu)} \exp\left(- (q_i -
  q_i^{-1})\sum_{r > 0} h_{i,-r } z^{- r} \right)\end{equation}
(the definition of the series $\phi_i^+(z)$ stays the same). In
addition, we impose the condition that $\phi_{i,0}^+$ and
$\phi_{i,\alpha_i(\mu)}^-$ are invertible (but they are not
necessarily inverse to each other).
\end{defi}
		
Note that in particular, for $i\in I$, we have set $\phi_{i,m}^+ = 0$
for $m < 0$ and $\phi_{i,m}^- = 0$ for $m  > \alpha_i(\mu)$.

We define the category $\mathcal{O}_\mu$ of
$U_q^\mu(\hat{\mathfrak{g}})$ modules similarly to the
category $\mathcal{O}$ of $U_q(\wh\bb)$-modules in Definition
\ref{defo}. The notions of Cartan-diagonalizable 
representation and weight spaces $V_{\omega}$ are the same as before.

\begin{defi}\cite{Hsh} A $U_q^\mu(\hat{\mathfrak{g}})$-module $V$ 
is said to be in category $\mathcal{O}_\mu$ if

i) $V$ is Cartan-diagonalizable;

ii) for all $\omega\in \tb^\times$ we have 
$\dim (V_{\omega})<\infty$;

iii) there exist a finite number of elements 
$\lambda_1,\cdots,\lambda_s\in \tb^\times$ 
such that the weights of $V$ are in 
$\underset{j=1,\cdots, s}{\bigcup}D(\lambda_j)$.
\end{defi}

\begin{thm}\cite{Hsh} The simple representations $L^{\on{sh}}(\Psib)$ in $\mathcal{O}_\mu$ are parametrized by rational $\ell$-weights 
$\Psib = (\Psi_i(z))_{i\in I}$  in $\mathfrak{r}$ such that 
$$\mu = \sum_{i\in I} \on{deg}(\Psi_i(z)) \omega_i^\vee\in P^\vee.$$ 
\end{thm}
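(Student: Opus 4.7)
The plan is to adapt to the shifted setting the standard classification of simple modules in the category $\OO$ for $U_q(\wh\bb)$ from \cite{HJ}. First, I would show that every simple $V$ in $\OO_\mu$ is of highest $\ell$-weight. By condition (iii) in the definition of $\OO_\mu$, the set of weights of $V$ has maximal elements with respect to the partial ordering on $\tb^\times$; fix such a maximal $\omega$ and a nonzero $v\in V_\omega$. Then $x_{i,r}^+v=0$ for every $i\in I$ and $r\in\Z$ (otherwise we would produce a strictly higher weight), while the Drinfeld relations force the $\phi_{i,m}^\pm$ to preserve the finite-dimensional space $V_\omega$. Passing to a joint generalized eigenvector of this commuting family, and then to an honest joint eigenvector inside it, yields a highest $\ell$-weight vector; simplicity then gives $V = U_q^\mu(\wh\g)v$ and determines the $\ell$-weight $\Psib$ uniquely from $V$.

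Next, I would verify that the associated $\Psib$ lies in $\mfr$ and that $\mu = \sum_i \on{deg}(\Psi_i(z))\omega_i^\vee$. The generating series $\Psi_i^+(z) = \sum_{m\ge 0}\Psi_{i,m}^+z^m$ read off from $\phi_i^+(z)$ is, by the standard argument in \cite{CP, HJ}, the expansion at $z = 0$ of a rational function $\Psi_i(z)$: the input is the finiteness of weight spaces together with the quadratic Drinfeld relation between $x_i^+(z)$ and $x_i^-(w)$, which produces a recursion on the coefficients $\Psi_{i,m}^+$ forcing rationality. The new phenomenon is the degree: by the modified formula \eqref{phrelshm}, $\phi_i^-(z)$ acts on $v$ as a Laurent series of the form $\Psi_{i,\alpha_i(\mu)}^-z^{\alpha_i(\mu)}(1 + O(z^{-1}))$, whose leading coefficient is nonzero thanks to the imposed invertibility of $\phi_{i,\alpha_i(\mu)}^-$. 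The same Drinfeld relation identifies this series with the expansion of $\Psi_i(z)$ at $z = \infty$, and therefore $\on{deg}(\Psi_i(z)) = \alpha_i(\mu)$ for every $i\in I$. Summing over $i$ gives $\mu = \sum_i \alpha_i(\mu)\omega_i^\vee = \sum_i \on{deg}(\Psi_i(z))\omega_i^\vee$.

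For the existence direction, for every rational $\Psib$ satisfying the degree condition I would construct a Verma-type module $M^{\on{sh}}(\Psib)$ by inducing a one-dimensional representation of the Borel-type subalgebra of $U_q^\mu(\wh\g)$ generated by the $e_i$, $k_i^{\pm 1}$ and the $\phi_{i,m}^\pm$, on which these elements act by the coefficients of the expansions of $\Psi_i(z)$ at $0$ and $\infty$. The degree condition on $\Psib$ is precisely what makes this assignment compatible with the modification \eqref{phrelshm}. A PBW-type decomposition for $U_q^\mu(\wh\g)$ (established in \cite{FT, Hsh}) ensures that $M^{\on{sh}}(\Psib)$ has finite-dimensional weight spaces bounded above by $\jmath(\Psib)$, and hence lies in $\OO_\mu$. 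Its unique simple quotient is $L^{\on{sh}}(\Psib)$, and distinct $\Psib$ give non-isomorphic simple quotients because $\Psib$ is recoverable from $V$ by the first step.

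The main obstacle is establishing rationality of $\Psi_i(z)$ in the shifted setting. This requires checking that the Drinfeld-type argument of \cite{HJ} goes through after the modification \eqref{phrelshm} — the subtlety is that the change in $\phi_i^-(z)$ could in principle disrupt the recursions that yield rationality. In fact \eqref{phrelshm} is arranged so that $\phi_i^+(z)$ and $\phi_i^-(z)$ acting on a highest $\ell$-weight vector remain the expansions at $0$ and $\infty$ of a single rational function, merely shifted in top degree by $\alpha_i(\mu)$; verifying this compatibility explicitly from the quadratic Drinfeld relations is the technical heart of the argument. A secondary issue is justifying the triangular/PBW decomposition of $U_q^\mu(\wh\g)$ used in the Verma construction, which is not entirely obvious because of the nontrivial degree shift in the $\phi^-$ generators.
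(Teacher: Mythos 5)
First, note that the paper you are working from does not prove this statement: it is quoted from \cite{Hsh}, so the comparison below is with the argument given there (and with the unshifted prototype in \cite{HJ}).

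Your first two steps (every simple object of $\mathcal{O}_\mu$ is of highest $\ell$-weight; the highest $\ell$-weight is rational with $\on{deg}(\Psi_i(z))=\alpha_i(\mu)$, read off from the invertible leading coefficient $\phi_{i,\alpha_i(\mu)}^-$ in \eqref{phrelshm} and the identification of $\phi_i^{\pm}(z)$ with the expansions of one rational function at $0$ and $\infty$) are essentially the argument used in \cite{Hsh}, and the bookkeeping $\mu=\sum_i\alpha_i(\mu)\omega_i^\vee$ is correct. The genuine gap is in the existence direction. The induced module $M^{\on{sh}}(\Psib)$ does \emph{not} have finite-dimensional weight spaces: the negative part of $U_q^\mu(\wh{\g})$ contains all the generators $x_{i,r}^-$, $r\in\Z$, which have the same $Q$-weight $-\alpha_i$, so already the weight space of $M^{\on{sh}}(\Psib)$ of weight $\jmath(\Psib)\,\overline{\alpha}_i^{-1}$ is infinite-dimensional. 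No PBW theorem can repair this; the same phenomenon occurs for $U_q(\wh{\bb})$, which is why the Verma module of \cite{HJ} is not an object of $\OO$. A second symptom of the problem is that your construction never uses rationality of $\Psib$: if it worked it would place $L^{\on{sh}}(\Psib)$ in $\OO_\mu$ for \emph{every} $\ell$-weight with the right degrees, contradicting the analogue of Theorem \ref{class} (rationality is a genuine constraint, not an automatic consequence of the degree condition).

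What is actually needed, and what \cite{Hsh} does, is an explicit construction of $L^{\on{sh}}(\Psib)$ for rational $\Psib$: one factors $\Psib$ (up to a constant $\ell$-weight) into a product of $\ell$-weights of the form $\Psib_{i,a}^{\pm 1}$, takes the fusion product (via the deformed Drinfeld coproduct, as in Theorem \ref{shmult}) of the corresponding representations --- the positive prefundamental-type factors being explicit and the negative ones obtained as limits of Kirillov--Reshetikhin modules, as in \cite{HJ} --- and realizes $L^{\on{sh}}(\Psib)$ as a subquotient of this fusion product, which visibly lies in $\OO_\mu$ and therefore has finite-dimensional weight spaces. This asymptotic/fusion construction is the technical heart of the existence statement and cannot be replaced by a Verma-module induction. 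Your closing remark correctly flags the PBW issue as delicate, but the difficulty is not that the triangular decomposition might fail; it is that even granting it, the induced module leaves the category.
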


Each representation $V$ in the category $\mathcal{O}_\mu$ has a
$q$-character
$$\chi_q(V) = \sum_{\Psibs \in
  \mathfrak{r}_\mu}\text{dim}(V_{\Psibs})[\Psib]$$ where $V_{\Psibs}$ is
the $\ell$-weight space of $\ell$-weight $\Psib$ as above and
$\mathfrak{r}_\mu$ is the group of rational
  $\ell$-weights $\Psib = (\Psi_i(z))_{i\in I}$ such that
  $\on{deg}(\Psi_i(z)) = \alpha_i(\mu)$ for each $i\in I$.

\begin{thm}\cite{Hsh} The $\ell$-weights of a simple representations
  in $\mathcal{O}_\mu$ belong to $\mathfrak{r}_\mu$.
\end{thm}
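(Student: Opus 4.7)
The plan is to verify, for an arbitrary $\ell$-weight $\Psib' = (\Psi'_j(z))_{j\in I}$ appearing in $L^{\on{sh}}(\Psib)$, both that each $\Psi'_j(z)$ is a rational function and that $\on{deg} \Psi'_j(z) = \alpha_j(\mu)$. Of these, the degree condition is essentially structural, while rationality requires the Frenkel-Reshetikhin / Hernandez-Jimbo analysis adapted to the shifted setting.

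For the degree condition I would argue directly from the definition of $U_q^\mu(\wh{\Glie})$. The element $\phi_{j,0}^+ = k_j$ is invertible, so it acts with nonzero eigenvalue on every weight vector; equivalently, $\Psi'_j(z)$ expanded at $z=0$ has nonzero constant term. In the shifted algebra, the element $\phi_{j,\alpha_j(\mu)}^-$ is declared invertible, and by (\ref{phrelshm}) we have $\phi_{j,m}^- = 0$ for $m > \alpha_j(\mu)$; hence the top coefficient of $\phi_j^-(z)$ acts with nonzero eigenvalue on every generalized $\ell$-weight vector. Granted rationality, and identifying $\Psi'_j(z)$ with the two expansions of $\phi_j^\pm(z)$, this forces the expansion of $\Psi'_j(z)$ near $\infty$ to begin at $z^{\alpha_j(\mu)}$ with nonzero coefficient while its expansion at $z=0$ has nonzero constant term, so $\on{deg} \Psi'_j(z) = \alpha_j(\mu)$.

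For rationality I would follow the strategy of \cite{HJ} adapted to the shifted setting. Let $v$ be the highest $\ell$-weight vector; every weight subspace of $L^{\on{sh}}(\Psib)$ is spanned by vectors of the form $x_{i_1,r_1}^- \cdots x_{i_k,r_k}^- v$. I would show by induction on $k$ that on any such vector, the generalized eigenvalues of $\phi_j^\pm(z)$ are expansions of rational functions. The inductive step uses the commutation relations of $\phi_j^\pm(z)$ with $x_{i,s}^-$, which are identical in $U_q^\mu(\wh{\Glie})$ and in $U_q(\wh{\Glie})$; these relations allow one to rewrite $\phi_j^\pm(z)\,(x_{i,s}^- u)$ as $x_{i,s}^- \phi_j^\pm(z) u$ plus correction terms depending rationally on $z$, and together with the triangularity of the $\ell$-weight filtration this propagates rationality. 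The main obstacle is to check that the shifted, semi-infinite form of $\phi_j^-(z)$ (truncated above at $z^{\alpha_j(\mu)}$) does not disrupt the standard argument; since only the range of nonzero $\phi_{j,m}^-$ changes, and not the Drinfeld relations themselves, the classical argument transfers essentially verbatim, as carried out in \cite{Hsh}.
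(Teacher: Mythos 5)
Note first that the paper does not prove this statement: it is quoted from \cite{Hsh}, so the comparison below is with the argument given there (itself modeled on \cite{HJ}). Your overall architecture is the right one and matches that source: the degree condition is extracted from the invertibility of $\phi_{j,0}^+$ and $\phi_{j,\alpha_j(\mu)}^-$ together with the vanishing of $\phi_{j,m}^-$ for $m>\alpha_j(\mu)$, and the real substance lies in the rationality statement.

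However, your inductive step for rationality has a genuine gap. The Drinfeld relation between $\phi_j^\pm(z)$ and $x_i^-(w)$ has the form $\phi_j^\pm(z)\,x_i^-(w)=g(z,w)^{\pm 1}\,x_i^-(w)\,\phi_j^\pm(z)$ for a rational function $g$ of two variables; extracting the coefficient of $w^s$ does \emph{not} give ``$x_{i,s}^-\phi_j^\pm(z)u$ plus correction terms depending rationally on $z$'', but rather an infinite sum $\sum_{l\geq 0} g_l z^l\, x_{i,s+l}^-\,\phi_j^\pm(z)u$ over infinitely many distinct vectors $x_{i,s+l}^- u$ lying in a single weight space. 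Applied vector-by-vector, the induction therefore does not close: nothing in your argument identifies the generalized eigenvalues of $\phi_j^\pm(z)$ on the span of these vectors as expansions of rational functions. The missing engine is condition (ii) in the definition of $\mathcal{O}_\mu$ (finite-dimensionality of weight spaces): one must work with an entire weight space $V_\omega$ at once, observe that the operators $x_{i,r}^-\colon V_\omega\to V_{\omega-\alpha_i}$ lie in a finite-dimensional space and hence satisfy a linear recurrence in $r$, and combine this with the relation $[x_{i,m}^+,x_{i,r}^-]=(\phi_{i,m+r}^+-\phi_{i,m+r}^-)/(q_i-q_i^{-1})$ to conclude that $\phi_i^{\pm}(z)|_{V_\omega}$ is an $\on{End}(V_\omega)$-valued rational function. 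That same relation is also what identifies the eigenvalues of $\phi_j^+(z)$ and of $\phi_j^-(z)$ as the expansions at $0$ and at $\infty$ of one and the same rational function $\Psi'_j(z)$ --- a fact you invoke (``identifying $\Psi'_j(z)$ with the two expansions'') in the degree argument but never establish. Without these two inputs the proposal does not yield the theorem.
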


As in the case of $U_q(\wh{\bb})$, the $q$-character map extends to an injective group homomorphism
\begin{equation}    \label{chiqmu}
  \chi_q : K_0(\mathcal{O}_\mu)\rightarrow \mathcal{E}_\ell.
\end{equation}

Using the fusion product of simple representations constructed from
the deformed Drinfeld coproduct, the following result was established
in \cite{Hsh}.  Note that this fusion procedure is known to work for
simple representations but not for arbitrary representations in
$\mathcal{O}_\mu$.

  \begin{thm}    \label{shmult}
    The direct sum ${\mb K} := \bigoplus_{\mu \in P^\vee}
    K_0(\mathcal{O}_\mu)$ has a ring structure such that the product
    of elements from $K_0(\mathcal{O}_\mu)$ and $K_0(\mathcal{O}_\nu)$
    belongs to $K_0(\mathcal{O}_{\mu+\nu})$ and the maps
    $\chi_q$ given by \eqref{chiqmu} combine into an injective ring
    homomorphism from ${\mb K}$ to $\mathcal{E}_\ell$.
    Moreover, the structure constants on simple representations
    are positive integers.
  \end{thm}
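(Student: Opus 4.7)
The strategy is to construct a fusion product directly on simple objects of the categories $\mathcal{O}_\mu$ by means of a deformed Drinfeld coproduct, and then extend bilinearly to obtain the ring structure on ${\mb K}$. The starting observation is that in the shifted algebra $U_q^\mu(\wh{\mathfrak{g}})$, formula \eqref{phrelshm} makes the series $\phi_i^-(z)$ a Laurent polynomial of degree $\alpha_i(\mu)$ rather than an infinite series, which is what allows the Drinfeld coproduct (otherwise only formal) to be tamed. Concretely, for each pair of integral coweights $(\mu_1,\mu_2)$, one defines a homomorphism
$$
\Delta_{\mu_1,\mu_2} : U_q^{\mu_1+\mu_2}(\wh{\mathfrak{g}}) \longrightarrow \bigl(U_q^{\mu_1}(\wh{\mathfrak{g}}) \otimes U_q^{\mu_2}(\wh{\mathfrak{g}})\bigr)^{\wedge}
$$
into a suitable completion, given by the standard Drinfeld coproduct formulas on the generators $x_{i,r}^{\pm}$, $\phi_i^\pm(z)$, and checks that the defining relations are preserved (the additivity of the shifts under $\Delta_{\mu_1,\mu_2}$ is immediate from the degree assignment).

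Next, for simple modules $V_1 = L^{\on{sh}}(\Psib_1) \in \mathcal{O}_{\mu_1}$ and $V_2 = L^{\on{sh}}(\Psib_2) \in \mathcal{O}_{\mu_2}$, one verifies that $\Delta_{\mu_1,\mu_2}$ gives a well-defined action on $V_1 \otimes V_2$: on any fixed weight vector, the series appearing in the image are truncated to finite sums by the combination of the weight grading and the category $\mathcal{O}_{\mu_j}$ finiteness conditions. One then defines
$$
V_1 * V_2 := U_q^{\mu_1+\mu_2}(\wh{\mathfrak{g}}) \cdot (v_1 \otimes v_2) \subset V_1 \otimes V_2,
$$
where $v_j$ is a highest $\ell$-weight vector of $V_j$. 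A direct computation shows that $v_1 \otimes v_2$ is a highest $\ell$-weight vector of $\ell$-weight $\Psib_1 \Psib_2$, which lies in $\mathfrak{r}_{\mu_1+\mu_2}$ since degrees of the $\Psi_i(z)$ add. One verifies the three axioms of the category $\mathcal{O}_{\mu_1+\mu_2}$: Cartan-diagonalizability is clear from the construction; finite-dimensional weight spaces and the support condition follow by bounding weights of $V_1 * V_2$ by the pointwise sum of the supports of $V_1$ and $V_2$.

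One then sets $[V_1] \cdot [V_2] := [V_1 * V_2]$ and extends bilinearly to ${\mb K}$. Associativity follows from (formal) coassociativity of the Drinfeld coproduct applied to the triple tensor product $V_1 \otimes V_2 \otimes V_3$. Since $V_1 * V_2$ is an honest representation, its Jordan–H\"older multiplicities are non-negative integers, giving the positivity of the structure constants on simples. Multiplicativity of $\chi_q$ reduces to the fact that the $\ell$-weights of $V_1 \otimes V_2$ are products of $\ell$-weights of the factors, which matches multiplication in $\mathcal{E}_\ell$; injectivity of the assembled map ${\mb K} \to \mathcal{E}_\ell$ follows from the known injectivity on each summand together with the fact that the degrees of the $\Psi_i(z)$ distinguish $\mu$.

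The main obstacle is handling the formal nature of the Drinfeld coproduct: it does not land in $U_q^{\mu_1}(\wh{\mathfrak{g}}) \otimes U_q^{\mu_2}(\wh{\mathfrak{g}})$ itself, so one must set up a completion in which the coproduct is genuinely a homomorphism, and then show that when evaluated on any vector of $V_1 \otimes V_2$ only finitely many terms contribute. Equivalently, the technical heart is showing that the cyclic module $V_1 * V_2$ is indeed an object of $\mathcal{O}_{\mu_1+\mu_2}$ with finite-dimensional weight spaces, so that the fusion product is well-defined on Grothendieck groups rather than just on a formal completion.
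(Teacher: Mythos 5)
Note first that the paper does not actually prove Theorem \ref{shmult}: it is recalled from \cite{Hsh}, with only the one-line indication that it relies on the fusion product of simple representations constructed from the deformed Drinfeld coproduct. Your proposal reconstructs that strategy in outline, and the overall architecture (a Drinfeld-type coproduct adapted to the shifts, fusion of simples, bilinear extension, multiplicativity and injectivity of $\chi_q$) is the right one.

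There is, however, a genuine internal gap. You define the product by $[V_1]\cdot[V_2]:=[V_1 * V_2]$, where $V_1 * V_2$ is the cyclic submodule of $V_1\otimes V_2$ generated by $v_1\otimes v_2$, and then justify multiplicativity of $\chi_q$ by the fact that the $\ell$-weights of $V_1\otimes V_2$ are products of $\ell$-weights of the factors. These two assertions concern different modules: if $V_1 * V_2$ is a proper submodule of $V_1\otimes V_2$, then $\chi_q(V_1 * V_2)\neq \chi_q(V_1)\chi_q(V_2)$ and your product would not be intertwined with multiplication in $\mathcal{E}_\ell$. To close this you must either define the product as the class of the full (deformed) tensor product and verify that it lies in $\mathcal{O}_{\mu_1+\mu_2}$, or prove that the deformed tensor product of two simple $\ell$-highest weight modules is cyclic on $v_1\otimes v_2$ --- a nontrivial theorem, and precisely the place where the restriction to \emph{simple} representations (emphasized in the remark following the theorem) enters. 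Relatedly, the ``deformed'' coproduct carries a deformation (spectral) parameter, and the fusion product is obtained by a specialization/renormalization at the critical value of that parameter; the existence of this specialization is the technical heart of \cite{Hsh} and is not supplied by your appeal to weight-space truncation alone. Finally, once multiplicativity and injectivity of $\chi_q$ are established, associativity of the product on ${\mb K}$ is automatic because $\mathcal{E}_\ell$ is an associative ring; invoking coassociativity on a triple tensor product is both unnecessary and, since the fusion is only defined for simple objects, not directly meaningful.
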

	
  \begin{rem}
    The ring ${\mb K}$ will be studied from the point of view of cluster
    algebras in \cite{GHL}.
    \end{rem}

\subsection{Langlands dual Lie algebra}
We have already seen the occurrence of the Langlands dual Lie algebra
${}^L\mathfrak{g}$ in the context of the $\ell$-weights
$\Psib_{w(\omega_i),a}$ (see Remark \ref{orbit}). In the context of representations of shifted quantum
  affine algebras recalled in the preceding subsection, we also assign
  the coweight $\omega_k^\vee$ to $\Psib_{k,a}$ that suggests a
  connection to ${}^L\mathfrak{g}$.

Now for $i\in I$, define $\phi_i : P_\mathbb{Q}\rightarrow
P^\vee_{\mathbb{Q}}$ as a homomorphism of $W$-modules given by the formula
$$\phi_i(\omega_k) = \frac{d_k}{d_i}\omega_k^\vee, \qquad k \in I.$$
Then we have 
$$\phi_i(\alpha_k) = \sum_{j\in I} C_{j,k} \frac{d_j}{d_i} \omega_j^\vee = \frac{d_k}{d_i}\sum_{j\in I} C_{k,j}\omega_j^\vee = \frac{d_k}{d_i} \alpha_k^\vee.$$
Indeed, 
$$\phi_i(s_j(\omega_k)) = \phi_i(\omega_k -\delta_{j,k}\alpha_k) = \frac{d_k}{d_i} (\omega_k^\vee - \delta_{j,k}\alpha_k^\vee) 
= \frac{d_k}{d_i}(s_j(\omega_k^\vee)) = s_j(\phi_i(\omega_k)).$$
In particular for $w\in W$, we have
$$\phi_i(w(\omega_i)) = w(\omega_i^\vee)$$
and so 
$$\phi_i( W \cdot \omega_i) = W \cdot \omega_i^\vee \subset P^\vee.$$
We obtain the following. 

\begin{prop}\label{lbp} Let $i\in I$, $w\in W$ and $a\in\mathbb{C}^\times$. 

(1) The coweight associated to $\Psib_{w(\omega_i),a}$ is 
$$w(\omega_i^\vee) = \phi_i(w(\omega_i)),$$ 
that is the image of the weight of $Y_{w(\omega_i)}(z)$ by $\phi_i$. 

(2) The coweight $\mu$ associated to $L^{\on{sh}}(\Psib_{w(\omega_i),a})$ is 
$$\mu = w(\omega_i^\vee).$$
Thus, $L^{\on{sh}}(\Psib_{w(\omega_i),a})$ is a simple
  object of the category ${\mathcal O}^{w(\omega_i^\vee)}$.
\end{prop}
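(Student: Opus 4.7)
The plan is to establish part (1) by a structural argument, and to deduce part (2) from it via the classification of simple modules in $\mathcal{O}_\mu$ from \cite{Hsh}: the latter theorem states that $L^{\on{sh}}(\Psib)$ lies in $\mathcal{O}_\mu$ with $\mu = \sum_k \deg(\Psi_k(z))\omega_k^\vee$. Note that the equality $w(\omega_i^\vee) = \phi_i(w(\omega_i))$ in the statement of (1) follows at once from the $W$-equivariance of $\phi_i$ verified in the text preceding the proposition, so the remaining task for (1) is to show that the coweight of the $\ell$-weight $\Psib_{w(\omega_i),a}$ equals $w(\omega_i^\vee)$.

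The approach is to exploit Proposition \ref{proptprime}, which expresses $\Psib_{w(\omega_i),a} = \sigma \circ T'_w \circ \sigma(\Psib_{i,a})$. I introduce a group homomorphism $\on{cw}$ from the group of Laurent monomials in the generators $\Psib_{j,b}^{\pm 1}, y_k^{\pm 1}$ of $\mathcal{Y}'$ (equivalently, in $\Psib_{j,b}^{\pm 1}, Y_{k,b}^{\pm 1}, y_k^{\pm 1}$) to $P^\vee$, defined on generators by $\on{cw}(\Psib_{j,b}) = \omega_j^\vee$ and $\on{cw}(Y_{k,b}) = \on{cw}(y_k) = 0$; consistency with the embedding $Y_{k,a} = y_k\Psib_{k,aq_k^{-1}}\Psib_{k,aq_k}^{-1}$ is immediate. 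This $\on{cw}$ agrees with $\sum_k \deg(\Psi_k(z))\omega_k^\vee$ on any rational $\ell$-weight written as a Laurent monomial in the $\Psib_{j,b}$, because $\Psib_{j,b}^{\pm 1}(z) = (1-bz)^{\pm 1}$ contributes degree $\pm 1$ to the $j$-th slot. Since the formulas \eqref{TiPj} and $\sigma$ each send every generator to a single Laurent monomial, $T'_w \sigma(\Psib_{i,a})$ and hence $\sigma T'_w \sigma(\Psib_{i,a})$ are themselves Laurent monomials, so $\on{cw}$ applies.

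The key step is to verify the $W$-equivariance $\on{cw}(T'_i(X)) = s_i(\on{cw}(X))$; by multiplicativity this reduces to generators. The identity is immediate for $y_k^{\pm 1}$ and for $\Psib_{j,b}^{\pm 1}$ with $j \neq i$. The one substantive case is $X = \Psib_{i,a}$; reading off the exponents of $\Psib_{k,\cdot}$ from the explicit formula \eqref{TiPj} for $T'_i(\Psib_{i,a})$ gives
$$\on{cw}(T'_i(\Psib_{i,a})) = -\omega_i^\vee + \sum_{C_{i,k}=-1}\omega_k^\vee + 2\sum_{C_{i,k}=-2}\omega_k^\vee + 3\sum_{C_{i,k}=-3}\omega_k^\vee,$$
which equals $\omega_i^\vee - \alpha_i^\vee = s_i(\omega_i^\vee)$ via the expansion $\alpha_i^\vee = \sum_k C_{i,k}\omega_k^\vee$ in ${}^L\mathfrak{g}$. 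Combined with the evident identity $\on{cw}\circ\sigma = -\on{cw}$, the composition yields $\on{cw}(\Psib_{w(\omega_i),a}) = -w(\on{cw}(\sigma(\Psib_{i,a}))) = -w(-\omega_i^\vee) = w(\omega_i^\vee)$, which is part (1). No serious obstacle is anticipated; the only ingredient needing care is the explicit verification on the generator $\Psib_{i,a}$, after which the structural setup handles everything mechanically. A more hands-on alternative directly from Definition \ref{subs} is also feasible: writing $w(\omega_i) = \sum_k c_k\omega_k$ and $\varphi(Y_{w(\omega_i),1}) = w(\omega_i)$, one checks case by case that each $Y_{k,b}$ (or $W_{k,b}$) in the factorization contributes exactly $d_k/d_i$ to $\deg\Psi_k$, so that the coweight of $\Psib_{w(\omega_i),a}$ sums to $\sum_k c_k(d_k/d_i)\omega_k^\vee = \phi_i(w(\omega_i))$.
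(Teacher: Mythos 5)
Your argument is correct, but your primary route differs from the paper's. The paper's (largely implicit) proof of part (1) is exactly your ``hands-on alternative'': the text preceding the proposition establishes the $W$-equivariance of $\phi_i$, hence $\phi_i(w(\omega_i)) = w(\omega_i^\vee)$, and the identification of the coweight of $\Psib_{w(\omega_i),a}$ with $\phi_i$ of the weight of $Y_{w(\omega_i),a}$ is read off from Definition \ref{subs}: each $Y_{k,b}$ (of weight $\omega_k$) is replaced by $d_k/d_i$ factors $\Psib_{k,\cdot}$, and each $W_{k,b}$ (of weight $(d_i/d_k)\omega_k$) by a single $\Psib_{k,\cdot}$, matching $\phi_i(\omega_k)=(d_k/d_i)\omega_k^\vee$ case by case. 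Your main argument instead routes through Proposition \ref{proptprime} and the braid-group operators $T_i'$, introducing the degree homomorphism $\on{cw}$ and checking $\on{cw}\circ T_i' = s_i\circ \on{cw}$ on generators; the one substantive computation, $\on{cw}(T_i'(\Psib_{i,a})) = -\omega_i^\vee + \sum_{j\neq i}(-C_{i,j})\,\omega_j^\vee = s_i(\omega_i^\vee)$, is right, using the transposed expansion $\alpha_i^\vee=\sum_j C_{i,j}\omega_j^\vee$ (another manifestation of the Langlands duality noted in Remark \ref{orbit}). Your route buys a uniform, case-free induction on reduced words that bypasses the lacing-number case analysis, at the cost of invoking Proposition \ref{proptprime}; the paper's route is more elementary and self-contained given Definition \ref{subs}. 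Part (2) is deduced identically in both, via the classification of simples in $\mathcal{O}_\mu$ from \cite{Hsh}.
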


Part (2) follows from part (1) and the definition of coweights of
simple representations of shifted quantum affine algebras.

\subsection{Conjectural $q$-character formula} We now formulate a more
explicit version of Conjecture \ref{bass2} for shifted quantum affine
algebras.


\begin{conj}\label{cosh} The $q$-character of the simple
  representation $L^{\on{sh}}(\Psib_{w(\omega_i),a})$ is
$$\chi_q(L^{\on{sh}}(\Psib_{w(\omega_i),a})) = [\Psib_{w(\omega_i),a}]
E_e(Q_{w(\omega_i),a}) \bchi_{w(\omega_i)}.$$
  In particular, its character is equal to $\bchi_{w(\omega_i)}$:
$$
  \chi(L^{\on{sh}}(\Psib_{w(\omega_i),a}))
  = \bchi_{w(\omega_i)}.
  $$
\end{conj}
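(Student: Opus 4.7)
I would prove Conjecture~\ref{cosh} by induction on the length $\ell(w)$ of $w \in W$, transporting the extended $QQ$-system of Theorem~\ref{QQrel} from the formal ring $\overline{\Pi}'$ into the shifted Grothendieck ring $\mb{K} = \bigoplus_{\mu \in P^\vee} K_0(\mathcal{O}_\mu)$ of Theorem~\ref{shmult}, and using the injectivity of the shifted $q$-character homomorphism established there.

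For the base case $w=e$, the formula collapses to $\chi_q(L^{\on{sh}}(\Psib_{i,a})) = [\Psib_{i,a}]$, since $Q_{\omega_i,a} = 1$ and $\bchi_{\omega_i} = 1$. By Proposition~\ref{lbp}, the coweight of this representation is $\omega_i^\vee$. I would verify directly from the defining relation~\eqref{phrelshm} that on a highest $\ell$-weight vector of weight $\Psib_{i,a}$ the negative Drinfeld currents act so restrictively (the coweight $\omega_i^\vee$ forces the $\phi_j^-(z)$ to match the scalars prescribed by $\Psib_{i,a}$) that the only cyclic module compatible with them is one-dimensional. Hence $L^{\on{sh}}(\Psib_{i,a})$ is one-dimensional with $q$-character $[\Psib_{i,a}]$.

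For the inductive step, write $w = w' s_i$ with $\ell(w') < \ell(w)$ and assume the formula for all shorter Weyl group elements. The plan is to lift Theorem~\ref{QQrel} from the formal ring $\overline{\Pi}'$ to a relation in $\mb{K}$ among the classes of simple shifted modules $[L^{\on{sh}}(\Psib_{v(\omega_j),b})]$, normalized by the factors $\bchi_{v(\omega_j)}$. Concretely, both sides of the $QQ$-system in $\overline{\Pi}'$ correspond, by the inductive hypothesis and the base case, to $q$-characters of specific classes in $\mb{K}$ for the appropriate coweight; Theorem~\ref{shmult} then allows us to recover the corresponding identity of classes from the identity of $q$-characters. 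Solving this recursion for the unknown class $[L^{\on{sh}}(\Psib_{w(\omega_i),a})]$ in terms of previously identified classes gives the desired formula. The ``in particular'' statement $\chi(L^{\on{sh}}(\Psib_{w(\omega_i),a})) = \bchi_{w(\omega_i)}$ then follows by applying $\varpi$ and using $\varpi(Q_{w(\omega_i),a}) = 1$ from Theorem~\ref{exwo}.

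The main obstacle is the lifting step: to conclude that the recursion among formal $q$-character expressions actually determines the class of a \emph{single} simple shifted module, one must show that the fusion products of shifted prefundamental representations appearing on the right-hand side of the $QQ$-system are irreducible (so the product of classes is the class of a simple), and that no other simple modules share the leading $\ell$-weight of the candidate $L^{\on{sh}}(\Psib_{w(\omega_i),a})$. For $\g$ of rank $2$, this can be settled by explicit case-by-case analysis, as in Section~\ref{r2s}. For higher rank, the genuine difficulty is the absence (to our knowledge) of a fully developed $R$-matrix and cyclicity theory for shifted quantum affine algebras that would ensure irreducibility of the relevant fusion products; developing or borrowing such a tool is, I expect, the principal hurdle to a general proof. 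A secondary technical point is to track correctly the sign factor $[-w(\al_i)]_+$ in Theorem~\ref{QQrel} against the positivity of structure constants asserted in Theorem~\ref{shmult}, which I expect to fall into place once the inductive hypothesis is formulated carefully with the correct normalizations by $\bchi_{v(\omega_j)}$.
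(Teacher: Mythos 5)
Your strategy is genuinely different from the paper's, and it has gaps that go beyond the one you acknowledge. The paper's proof of Conjecture \ref{cosh} (Theorem \ref{r2}, Section \ref{r2s}) is a direct case-by-case verification for rank $2$: for each orbit element $w(\omega_i)$ it writes down the explicit lattice-sum formula for $E_e(Q_{w(\omega_i),a})$ from \cite{FH3}, and then identifies it with $\chi_q(L^{\on{sh}}(\Psib_{w(\omega_i),a}))$ either via the known $q$-characters of negative prefundamental representations (obtained as limits of Kirillov--Reshetikhin modules, using \cite{HJ} and \cite[Corollary 4.11]{Hsh}) or via an explicit construction of the shifted simple module as in \cite[Example 5.2]{Hsh}. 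The extended $QQ$-system plays no role in that proof; on the contrary, the logical direction in the paper is that Conjecture \ref{cosh} together with Theorem \ref{QQrel} \emph{implies} the shifted $QQ$-system (Conjecture \ref{shQQ}), not the reverse.

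Your induction therefore cannot close as described. To transport Theorem \ref{QQrel} into ${\mb K}$ you must first know that the classes $[L^{\on{sh}}(\Psib_{v(\omega_j),b})]$ themselves satisfy the $QQ$-relation --- i.e.\ that the relevant fusion products decompose with exactly the constituents prescribed by the formal identity --- and this is precisely the representation-theoretic input you flag as missing; without it the induction has no engine. Moreover, even granting that both the formal expressions and the module classes satisfy the relation, the relation for $w = w's_i$ is a first-order $q$-difference equation in the unknown $\mathcal{Q}_{w(\omega_i),\bullet}$ (it appears at the two spectral parameters $aq_i^{\pm 1}$), so it determines that unknown only up to a solution of the associated homogeneous equation; you would need an additional leading-term or normalization argument (in the spirit of the uniqueness part of Theorem \ref{exwo}) to pin down the answer, and this is not supplied. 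Finally, your fallback for rank $2$ --- ``settle by explicit case-by-case analysis, as in Section \ref{r2s}'' --- simply defers to the paper's own proof, so no independent argument is given even in the one case where the conjecture is actually a theorem. The base case $w=e$ is fine (the paper cites \cite[Example 4.13]{Hsh} for the one-dimensionality of $L^{\on{sh}}(\Psib_{i,a})$).
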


\begin{example} (i) The conjecture is true for $w = e$ because it is
  known that  $L^{\on{sh}}(\Psib_{i,a})$ is a one-dimensional
  representation by \cite[Example 4.13]{Hsh}.

(ii) The conjecture is true for a simple reflexion $w = s_i$ because
  by \cite[Example 5.2]{Hsh}, we have
$$ \chi_q(L(\wt{\Psib}_{i,a})) = \wt{\Psib}_{i,a}
    E_e(\Sigma_{i,a}).$$ 
\end{example}

\begin{rem}  Note that Conjecture \ref{cosh} implies positivity of $\bchi_{w(\omega_i)}$ for any $w\in W$, $i\in I$, which is
a purely combinatorial statement.
\end{rem}

 Theorem \ref{QQrel} and Conjecture \ref{cosh} imply the
  following analogue of Conjecture \ref{extqq} for shifted quantum
  affine algebras (here we use the multiplicative
    structure of Theorem \ref{shmult}).
  
  \begin{conj}    \label{shQQ}
The extended $QQ$-system \eqref{extqq1} is satisfied if we set
    \begin{equation}    \label{Qw3}
{\mb Q}_{w(\omega_i),a} = [L^{\on{sh}}(\Psib_{w(\omega_i),a})].
\end{equation}
    \end{conj}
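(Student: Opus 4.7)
The plan is to derive Conjecture \ref{shQQ} as a direct consequence of Theorem \ref{QQrel}, Conjecture \ref{cosh}, and the injectivity of the $q$-character map on $\mathbf{K}$ from Theorem \ref{shmult}. First, I would apply the projection $E_e : \overline{\Pi}' \to \overline{\mathcal{Y}}'{}^e$ to the extended $QQ$-system identity proved in Theorem \ref{QQrel}. Since $E_e$ is a ring homomorphism, this yields the corresponding identity in $\overline{\mathcal{Y}}'{}^e \subset \mathcal{E}_\ell$ among the elements $E_e(\mathcal{Q}_{w(\omega_i),a})$, the coefficients $[-w(\alpha_i)]$ and $[-w(\alpha_i)]_+$ being unaffected since they belong to $\mathcal{E}$.

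Next, using formula \eqref{Qw} together with the definition of $\bchi_{w(\omega_i)}$ in \eqref{ee}, we compute
$$E_e(\mathcal{Q}_{w(\omega_i),a}) = \bchi_{w(\omega_i)} \, [\Psib_{w(\omega_i),a}] \, E_e(Q_{w(\omega_i),a}),$$
which, assuming Conjecture \ref{cosh}, is exactly $\chi_q(L^{\on{sh}}(\Psib_{w(\omega_i),a}))$. Under this identification, the image of the extended $QQ$-system from Theorem \ref{QQrel} under $E_e$ becomes a polynomial identity in $\mathcal{E}_\ell$ among the $q$-characters of the simple shifted representations $L^{\on{sh}}(\Psib_{w(\omega_j),b})$ and the classes of one-dimensional representations $[-w(\alpha_i)]$ and $[-w(\alpha_i)]_+$ (which lie in $K_0(\mathcal{O}_0)$). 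To transfer this to a relation in $\mathbf{K}$, I would check that the two sides have the same coweight: by Proposition \ref{lbp} the coweight of $[L^{\on{sh}}(\Psib_{w(\omega_i),a})]$ is $w(\omega_i^\vee)$, and using $\alpha_i^\vee = 2\omega_i^\vee + \sum_{j\neq i} C_{i,j}\omega_j^\vee$ one verifies that both sides lie in $K_0(\mathcal{O}_\mu)$ for $\mu = 2w(\omega_i^\vee) - w(\alpha_i^\vee)$. Injectivity of $\chi_q$ on $\mathbf{K} = \bigoplus_\mu K_0(\mathcal{O}_\mu)$ from Theorem \ref{shmult} then lifts the $q$-character identity to the desired relation in $K_0(\mathcal{O}_\mu)$, which is precisely the extended $QQ$-system \eqref{extqq1} with $\mathbf{Q}_{w(\omega_i),a} = [L^{\on{sh}}(\Psib_{w(\omega_i),a})]$.

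The main obstacle, and the reason this is stated as a conjecture rather than a theorem in general, is that the argument is entirely conditional on Conjecture \ref{cosh}, which pins down the $q$-character of $L^{\on{sh}}(\Psib_{w(\omega_i),a})$. In the excerpt, this conjecture is established for $w=e$, for $w=s_i$, and (to be proved in Section \ref{r2s}) for all $w \in W$ when $\g$ has rank $2$; correspondingly, Conjecture \ref{shQQ} becomes unconditional in these cases. A secondary technical point worth being careful about is that the product appearing on the right hand side of \eqref{extqq1} couples classes from distinct categories $\mathcal{O}_{\mu_j}$, so it must be interpreted via the fusion product on $\mathbf{K}$ provided by Theorem \ref{shmult} rather than any naive tensor product; this is precisely what makes the ring-theoretic statement in $\mathbf{K}$, and thereby the injectivity argument, work.
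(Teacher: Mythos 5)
Your proposal is correct and follows essentially the same route the paper takes: the paper derives Conjecture \ref{shQQ} precisely by combining Theorem \ref{QQrel} (projected via $E_e$) with Conjecture \ref{cosh}, using the ring structure and injectivity of $\chi_q$ on ${\mb K}$ from Theorem \ref{shmult}. Your explicit coweight bookkeeping and the remark about interpreting the products via the fusion structure are correct elaborations of details the paper leaves implicit.
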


The proof of Conjecture \ref{cosh} (which implies Conjecture
\ref{shQQ}) for rank $2$ Lie algebras will be given in the next
section.

\section{Rank $2$ Lie algebras}\label{r2s}

 We prove Conjecture \ref{bass2} (hence also \ref{exttq},
  \ref{exttq1}, and \ref{extqq}) and Conjecture \ref{cosh} (hence also
  \ref{shQQ}) for all rank $2$ Lie algebras.

\begin{thm}\label{r2} Conjectures \ref{bass2} and \ref{cosh} hold for
  rank $2$ Lie algebras.
\end{thm}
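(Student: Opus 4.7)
The plan is a case-by-case analysis for $\g$ of types $A_1\times A_1$, $A_2$, $B_2$, and $G_2$. In the $A_1\times A_1$ case every coset representative of $W/W_i$ has length at most one, so both conjectures follow immediately from formula \eqref{pref}, Theorem \ref{fex}, and \cite[Examples 4.13 and 5.2]{Hsh}. In the remaining three types, each orbit ${\mc P}_{\omega_i} \simeq W/W_i$ has at most six elements, so there are only finitely many nontrivial new cases beyond $w=e$ and $w=s_i$.

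For each coset representative $w \in W/W_i$ in $A_2$, $B_2$, and $G_2$, I first write down explicit formulas for $\Psib_{w(\omega_i),a}$ and $Q_{w(\omega_i),a}$. The $\ell$-weight $\Psib_{w(\omega_i),a}$ is determined by Definition \ref{subs} together with Proposition \ref{proptprime}, which reduces the task to applying the Chari operator $T_w$ to $Y_{i,a}$ in the monomial ring $\Yim$. The series $Q_{w(\omega_i),a} \in \wt{\mathcal{Y}}^e$ is then computed recursively from the defining formula \eqref{defti} together with the rank $2$ formulas for $\Theta_w(\Sigma_{i,a})$ worked out in Sections 4.7 and 4.8 of \cite{FH3}. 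This yields explicit candidate expressions for the right-hand side of \eqref{qupto} and of the formula in Conjecture \ref{cosh}.

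To establish Conjecture \ref{cosh}, I realize $L^{\on{sh}}(\Psib_{w(\omega_i),a})$ as the top simple constituent of a fusion product in the ring ${\mb K}$ of Theorem \ref{shmult}, built from the one-dimensional modules $L^{\on{sh}}(\Psib_{j,b})$ and the twisted prefundamental modules $L^{\on{sh}}(\wt\Psib_{j,b})$ whose $q$-characters are known by \cite[Example 5.2]{Hsh}. The factorization of $\Psib_{w(\omega_i),a}$ obtained in the previous step dictates which product to take. Positivity of structure constants in ${\mb K}$, together with the extended $QQ$-system of Theorem \ref{QQrel} applied to the generating functions $\mathcal{Q}'_{w(\omega_i),a}$, pins down the $q$-character of the relevant simple factor and produces precisely $[\Psib_{w(\omega_i),a}]E_e(Q_{w(\omega_i),a})\bchi_{w(\omega_i)}$. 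Conjecture \ref{bass2} follows by the parallel argument inside $K_0(\OO)$: the extended $QQ$-system also holds on the same $\mathcal{Q}'_{w(\omega_i),a}$, the base cases $w=e$ and $w=s_i$ are already settled by \eqref{pref} and Theorem \ref{fex}, and the injectivity of $\chi_q$ (Proposition \ref{inject}) forces the match.

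The main obstacle is the computational burden in type $G_2$, where the longest Weyl group element has length $6$ and the $\ell$-weight $\Psib_{w(\omega_i),a}$ involves up to six factors of the $\Psib_{j,b}^{\pm 1}$, while $Q_{w(\omega_i),a}$ is built from iterated applications of \eqref{defti} with several cancellations to track. Checking that the proposed fusion product in ${\mb K}$ decomposes with the predicted multiplicity of $L^{\on{sh}}(\Psib_{w(\omega_i),a})$ and no spurious simple factors of the same highest $\ell$-weight requires careful bookkeeping; here the Weyl group invariance of $q$-characters established in Theorem \ref{mfh} and the explicit rank $2$ braid computations supporting Proposition \ref{extact1} provide the rigidifying constraints that reduce the verification to a finite combinatorial check.
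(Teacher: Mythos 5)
Your computational setup --- extracting $\Psib_{w(\omega_i),a}$ from the Chari operators and $Q_{w(\omega_i),a}$ from the recursion \eqref{defti} together with the explicit rank~$2$ formulas of \cite[Sections 4.6--4.8]{FH3}, and reducing to the finitely many cosets in $W/W_i$ beyond $w=e$ and $w=s_i$ --- matches the paper. The gap is in the step where you identify the $q$-character of the actual simple module with the predicted expression. Realizing $L^{\on{sh}}(\Psib_{w(\omega_i),a})$ as the top constituent of a fusion product in ${\mb K}$ and invoking positivity only bounds $\chi_q(L^{\on{sh}}(\Psib_{w(\omega_i),a}))$ from above by the product of the factors' $q$-characters; it supplies no lower bound. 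For the intermediate weights (e.g.\ $s_2s_1(\omega_1)=\omega_1-2\omega_2$ in type $B_2$, where $\Psib_{\omega_1-2\omega_2,a}=\Psib_{2,aq^{-4}}^{-1}\Psib_{1,aq^{-2}}$), any factorization of the highest $\ell$-weight into highest $\ell$-weights of the modules you list (one-dimensionals, the $L^{\on{sh}}(\wt{\Psib}_{j,b})$, negative prefundamentals) forces in a negative prefundamental factor whose $q$-character strictly dominates the target, so the desired module is a proper constituent and the surplus terms must actually be accounted for. Moreover, invoking Theorem \ref{QQrel} to ``pin down'' the simple factor is circular: that theorem is an unconditional statement about the formal generating functions $\mathcal{Q}_{w(\omega_i),a}$ built from the Weyl group action on $\Pi$, and transferring it to classes of simple modules is precisely the content of Conjectures \ref{extqq} and \ref{shQQ}, i.e.\ of what is being proven. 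The same objection applies to your appeal to injectivity of $\chi_q$ for Conjecture \ref{bass2}: injectivity identifies two module classes with equal $q$-characters, but does not identify a module's $q$-character with a formal candidate.

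The paper closes this gap by independent means. For the weights $w_0(\omega_i)$ the modules are negative prefundamental representations, whose $q$-characters are known from \cite{HJ} as limits of Kirillov--Reshetikhin $q$-characters (and agree with the shifted ones by \cite[Corollary 4.11]{Hsh}); these are compared term by term with the explicit sums $E_e(Q_{w_0(\omega_i),a})$. For the intermediate weights, the paper gives explicit constructions of the shifted modules $L^{\on{sh}}(\Psib_{w(\omega_i),a})$ in the spirit of \cite[Example 5.2]{Hsh}, and computations ``as in \cite{FH}'' on the Borel side. Some such direct construction or limit argument is needed to supply the lower bound your proposal is missing; once that is added, the rest of your outline is consistent with the paper's.
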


This is clear for type $A_1\times A_1$ from the case of $\sw_2$
discussed above.

For $i\in I$, $a\in\mathbb{C}^\times$ and $\alpha\geq 0$, define the
element of $\mathcal{M}$, 
$$  
  V_{i,a}^{(\alpha)} := (A_{i,a}A_{i,aq_i^{-2}}\cdots
  A_{i,aq_i^{2-2\alpha}})^{-1}, \quad \al>0, \qquad V_{i,a}^{(0)} := 1.
  $$
We will also use the notation $a_i = [\alpha_i]$. 

\medskip

{\bf Type $A_2$} 

\medskip

 It suffices to consider the case $i = 1$ from the symmetry of the Dynkin quiver. We have :  
$$\chi_{\omega_1} = 1\text{ , }\chi_{\omega_2 - \omega_1} = \frac{1}{1 - a_1^{-1}} \text{ , }
 \chi_{-\omega_2}  = \frac{1}{(1- a_1^{-1}a_2^{-1})(1-a_2^{-1})},$$
$$\Psib_{\omega_1,a} = \Psib_{1,a}\text{ , }
\Psib_{\omega_2 - \omega_1,a} = \Psib_{1,aq^{-2}}^{-1}\Psib_{2,a}\text{ , }
\Psib_{-\omega_2,a} = \Psib_{2,aq^{-3}}^{-1}.$$
Now the cases $w = e, s_2, s_1, s_1s_2$ follow from the discussion 
above. It suffices to treat the case $w = s_2s_1$. We have 
by \cite[Section 4.6]{FH3}
$$E_e(Q_{w(\omega_1),a}) = \sum_{0\leq \beta \leq \alpha} V_{2,aq^{-3}}^{(\alpha)}V_{1,aq^{-2}}^{(\beta)},$$
$$\bchi_{w(\omega_1)} = \varpi(E_e(Q_{w(\omega_1),a})) = {\mb e}_e\left(\frac{1}{(1 - a_2^{-1})(1 - a_1^{-1}a_2^{-1})}\right).$$
This can be compared with \cite[Example 6.1]{FH3}. Now, this corresponds to a negative prefundamental representations, and by \cite[Section 4.1]{HJ} we have
$$\chi_q(L(\Psib_{2,aq^{-3}}^{-1})) = \Psib_{2,aq^{-3}}^{-1} E_e(Q_{w(\omega_1),a}) = \Psib_{2,aq^{-3}}^{-1} \chi(L(\Psib_{2,aq^{-3}}^{-1})) E_e(Q_{w(\omega_1),a})(\bchi_{w(\omega_1)})^{-1}.$$
But by \cite[Corollary 4.11]{Hsh} this is also $\chi_q(L^{\on{sh}}(\Psib_{2,aq^{-3}}^{-1}))$.

\medskip

{\bf Type $B_2$} 

\medskip

We denote $d_1 = 2$, $d_2 = 1$. We have
$$\chi_{\omega_1}  = 1\text{ , } \chi_{2\omega_2 - \omega_1} = \frac{1}{1-a_1^{-1}}\text{ , }
 \chi_{\omega_1 - 2\omega_2}  = \frac{1}{(1- a_1^{-1}a_2^{-2})(1-a_2^{-1})},
$$
$$ \chi_{-\omega_2} = \chi_{1,s_1s_2s_1s_2} = \frac{1}{(1-a_1^{-1}a_2^{-2})(1-a_1^{-1})(1-a_1^{-1}a_2^{-1})},$$

$$\chi_{\omega_2}  = 1\text{ , } \chi_{\omega_1 - \omega_2}  = \frac{1}{1-a_2^{-1}}\text{ , }
 \chi_{\omega_2 - \omega_1}  = \frac{1}{(1 - a_1^{-1}a_2^{-1})(1 - a_1^{-1})^2},$$
$$
 \chi_{-\omega_2} =  \frac{1}{(1- a_1^{-1}a_2^{-1})(1 - a_2^{-1})(1 - a_1^{-1}a_2^{-2})^2}.
$$
The orbit of $\ell$-weights are : 
$$\Psib_{\omega_1,a} = \Psib_{1,a}\text{ , }\Psib_{2\omega_2-\omega_1} = \Psib_{1,aq^{-4}}^{-1}\Psib_{2,aq^{-2}}
\text{ , }\Psib_{\omega_1 - 2\omega_2} = \Psib_{2,aq^{-4}}^{-1}\Psib_{1,aq^{-2}}\text{ , }\Psib_{-\omega_1,a} = \Psib_{1,aq^{-6}}^{-1},$$
$$\Psib_{\omega_2,a} = \Psib_{2,a}\text{ , }\Psib_{\omega_1-\omega_2} = \Psib_{2,aq^{-2}}^{-1}\Psib_{1,aq^{-2}}\Psib_{1,a}
\text{ , }\Psib_{\omega_2 - \omega_1} = \Psib_{2,aq^{-4}}\Psib_{1,aq^{-6}}^{-1}\Psib_{1,aq^{-4}}^{-1}\text{ , }\Psib_{-\omega_2,a} = \Psib_{2,aq^{-6}}^{-1}.$$
The cases $w = e, s_2, s_1$ follow from the discussion above. It suffices to treat the cases :
$$\text{(A) }s_2s_1(\omega_1) = \omega_1 - 2 \omega_2\text{, (B) }(s_1s_2s_1)(\omega_1) = - \omega_1\text{, (C) }
(s_1s_2)(\omega_2) = \omega_2 - \omega_1\text{ , (D) }(s_2s_1s_2)(\omega_2) = - \omega_2.$$

The cases (B) and (D) correspond to negative prefundamental
representations and can be treated as in type $A_2$ above. More precisely, 
let $i = 1$ or $2$, we have $\Psib_{-\omega_i,a} = \Psib_{i,aq^{-6}}^{-1}$. By \cite[Section 4.7]{FH3} (formulas (4.34) and (4.47)) : 
$$E_e(Q_{-\omega_1,a}) = \sum_{0\leq \gamma\leq \beta/2 \leq \alpha} V_{1,aq^{-6}}^{(\alpha)}V_{2,aq^{-4}}^{(\beta)}V_{1,aq^{-4}}^{(\gamma)}.$$
$$E_e(Q_{-\omega_2,a}) = 
  \sum_{0\leq \beta\leq \alpha/2, 0\leq \beta'\leq (\alpha+1)/2, 0\leq
    \gamma\leq \text{Min}(1 + 2\beta, 2\beta'-1)}
  V_{2,aq^{-6}}^{(\alpha)}V_{1,aq^{-6}}^{(\beta)}V_{1,aq^{-4}}^{(\beta')}V_{2,aq^{-2}}^{(\gamma)}.$$
We have $\bchi_{-\omega_i} = \varpi(E_e(Q_{-\omega_i,a}))$. Now by \cite{HJ}, $\chi_q(L(\Psib_{i,aq^{-6}}^{-1}))$ can be obtained as a limit of 
$q$-characters of Kirillov-Reshetikhin modules whose $q$-character can be obtained algorithmically. Comparing these results, we obtain 
$$\chi_q(L(\Psib_{i,aq^{-6}}^{-1})) = \Psib_{i,aq^{-6}}^{-1} E_e(Q_{-\omega_i,a}) = \Psib_{i,aq^{-6}}^{-1} \chi(L(\Psib_{i,aq^{-6}}^{-1})) E_e(Q_{-\omega_i,a})(\bchi_{-\omega_i})^{-1}.$$
But by \cite[Corollary 4.11]{Hsh}, this is also
$\chi_q(L^{\on{sh}}(\Psib_{i,aq^{-6}}^{-1}))$.

Case (A): by formula (4.45) of \cite[Section 4.7]{FH3}, we have 
$$E_e(Q_{\omega_1 - 2\omega_2,a}) =  \sum_{0\leq 2\beta\leq \alpha} V_{2,aq^{-4}}^{(\alpha)}V_{1,aq^{-4}}^{(\beta)}.$$
An explicit construction of the representation $L^{\on{sh}}(\Psib_{1,aq^{-2}} \Psib_{2,aq^{-4}}^{-1})$ (similar to \cite[Example 5.2]{Hsh}) implies that its $q$-character is equal to 
$\Psib_{1,aq^{-2}} \Psib_{2,aq^{-4}}^{-1}E_e(Q_{\omega_1 - 2\omega_2,a})$.
We have again $\bchi_{s_2s_1(\omega_1)} = \varpi(E_e(Q_{\omega_1 - 2\omega_2,a}))$. 
Also, a computation as in \cite{FH} shows that
$$\chi_q(L(\Psib_{\omega_1 - 2 \omega_2,a})) = \Psib_{\omega_1 - 2 \omega_2,a} \chi(L(\Psib_{\omega_1 - 2 \omega_2,a})) E_e(Q_{\omega_1 - 2\omega_2,a})(\bchi_{s_2s_1(\omega_1)})^{-1}.$$

Case (C) : by \cite[Section 4.7]{FH3} Formula (4.41), we have
$$E_e(Q_{\omega_2 - \omega_1,a}) =  
 \sum_{0\leq \beta\leq
    \text{Min}(2\alpha,2\alpha'+1)}
  V_{1,aq^{-4}}^{(\alpha)}V_{1,aq^{-6}}^{(\alpha')}V_{2,aq^{-2}}^{(\beta)}.$$
An explicit construction of the representation $L^{\on{sh}}(\Psib_{2,aq^{-4}}\Psib_{1,aq^{-4}}^{-1}\Psib_{1,aq^{-6}}^{-1})$ 
(similar to \cite[Example 5.2]{Hsh}) gives that its $q$-character is equal to 
$\Psib_{2,aq^{-4}}\Psib_{1,aq^{-4}}^{-1}\Psib_{1,aq^{-6}}^{-1}E_e(Q_{\omega_2 - \omega_1,a})$.
Also, a computation as in \cite{FH} shows that
$$\chi_q(L(\Psib_{\omega_2 -\omega_1,a}) = \Psib_{\omega_2 - \omega_1,a} \chi(L(\Psib_{\omega_2 - \omega_1,a})) E_e(Q_{\omega_2 - \omega_1,a})(\bchi_{s_1s_2(\omega_2)})^{-1}.$$

\medskip

{\bf Type $G_2$} 

\medskip

We denote $d_1 = 3$, $d_2 = 1$. We have

$$\chi_{\omega_1}  = 1\text{ , } 
\chi_{3\omega_2 - \omega_1}  = \frac{1}{1-a_1^{-1}}\text{ , }
\chi_{2\omega_1 - 3\omega_2}  = \frac{1}{(1- a_1^{-1}a_2^{-3})(1-a_2^{-1})},
$$
$$ \chi_{-2\omega_1 + 3 \omega_2}  = \frac{1}{(1-a_1^{-1}a_2^{-3})(1-a_1^{-1})^2(1-a_1^{-1}a_2^{-1})},$$
$$ \chi_{\omega_1 - 3 \omega_2} =  \frac{1}{(1-a_1^{-2}a_2^{-3})(1-a_1^{-1}a_2^{-2})(1-a_1^{-1}a_2^{-3})^2(1-a_2)},$$
$$ \chi_{-\omega_1}  = \frac{1}{(1-a_1^{-2}a_2^{-3})^2(1-a_1^{-1}a_2^{-1})(1-a_1)(1-a_1^{-1}a_2^{-2})(1-a_1^{-1}a_2^{-3})},$$
$$\chi_{\omega_2}  = 1\text{ , } \chi_{\omega_1 - \omega_2}  = \frac{1}{1-a_2^{-1}}
\text{ , }
 \chi_{2\omega_2 - \omega_1}  = \frac{1}{(1 - a_1^{-1}a_2^{-1})(1 - a_1^{-1})^3},$$
$$
 \chi_{\omega_1 - 2\omega_2}  = \frac{1}{(1- a_1^{-1}a_2^{-1})(1 - a_2^{-1})^2(1 - a_1^{-1}a_2^{-2})},
$$
$$
 \chi_{\omega_2 - \omega_1} =  \frac{1}{(1- a_1^{-1}a_2^{-1})^2(1 - a_1^{-1})^3(1 - a_1^{-1}a_2^{-2})(1-a_1^{-2}a_2^{-3})^3},
$$
$$
 \chi_{-\omega_2}  = \frac{1}{(1- a_1^{-1}a_2^{-1})(1 - a_2^{-1})(1 - a_1^{-1}a_2^{-2})^3(1-a_1^{-2}a_2^{-3})^2(1-a_1^{-1}a_2^{-3})^3}.
$$
The orbit of $\ell$-weights are : 
$$\Psib_{\omega_1,a} = \Psib_{1,a}
\text{ , }\Psib_{3\omega_2-\omega_1} = \Psib_{1,aq^{-6}}^{-1}\Psib_{2,aq^{-3}}
\text{ , }\Psib_{2\omega_1 - 3\omega_2} = \Psib_{2,aq^{-5}}^{-1}\Psib_{1,aq^{-4}}\Psib_{1,aq^{-2}},$$
$$\Psib_{-2\omega_1 + 3\omega_2} = \Psib_{2,aq^{-7}}\Psib_{1,aq^{-10}}^{-1}\Psib_{1,aq^{-8}}^{-1}
\text{ , }\Psib_{\omega_1 - 3\omega_2} = \Psib_{2,aq^{-9}}^{-1}\Psib_{1,aq^{-6}}
\text{ , }\Psib_{-\omega_1,a} = \Psib_{1,aq^{-12}}^{-1},$$
$$\Psib_{\omega_2,a} = \Psib_{2,a}
\text{ , }\Psib_{\omega_1-\omega_2} = \Psib_{2,aq^{-2}}^{-1}\Psib_{1,aq}\Psib_{1,aq^{-1}}\Psib_{1,aq^{-3}},$$
$$\Psib_{2\omega_2-\omega_1} = \Psib_{2,aq^{-4}}\Psib_{2,aq^{-6}}\Psib_{1,aq^{-5}}^{-1}\Psib_{1,aq^{-7}}^{-1}\Psib_{1,aq^{-9}}^{-1},
\Psib_{\omega_1-2\omega_2} = \Psib_{2,aq^{-6}}^{-1}\Psib_{2,aq^{-8}}^{-1}\Psib_{1,aq^{-3}}\Psib_{1,aq^{-5}}\Psib_{1,aq^{-7}},$$
$$\Psib_{\omega_2-\omega_1} =  \Psib_{2,aq^{-10}}\Psib_{1,aq^{-9}}^{-1}\Psib_{1,aq^{-11}}^{-1}\Psib_{1,aq^{-13}}^{-1}
\text{ , }\Psib_{-\omega_2,a} = \Psib_{2,aq^{-12}}^{-1}.$$

The results for the weight $\omega_i$, $s_i(\omega_i)$, $i = 1,2$, have been treated above. The weights $-\omega_i$, $i = 1,2$, 
correspond to negative prefundamental representations and can be treated as in type $A_2$ above. The explicit formula for 
$Q_{i,a}$ can be found in \cite[Section 4.8]{FH3}.

The other intermediate weights are treated as for type $B_2$ above. For example, we have
$$E_e(Q_{2\omega_1-3\omega_2,a}) = \sum_{0\leq 3 \beta\leq\alpha} V_{2,aq^{-5}}^{(\alpha)}V_{1,aq^{-6}}^{(\beta)},$$
and
$$\chi_q(L(\Psib_{2\omega_1 - 3 \omega_2,a})) = \Psib_{2\omega_1 - 3 \omega_2,a} \chi(L(\Psib_{2\omega_1 - 3 \omega_2,a})) E_e(Q_{2\omega_1 - 3\omega_2,a})
(\bchi_{s_2s_1(\omega_1)})^{-1}.$$


\begin{thebibliography}{BFLMS}

\bibitem[BFLMS]{Bazhanov} {V. V. Bazhanov, R. Frassek, T. Lukowski,
    C. Meneghelli, and M. Staudacher},
{\em Baxter Q-Operators and Representations of Yangians},
Nucl. Phys. {\bf B 850} (2011) 148--174.

\bi[BLZ1]{BLZ1} V. Bazhanov, S. Lukyanov, and A. Zamolodchikov, {\em
Integrable structure of conformal field theory, quantum KdV theory and
thermodynamic Bethe Ansatz}, Comm. Math. Phys. {\bf 177} (1996)
381--398.

\bibitem[BLZ2]{BLZ2} {V.V. Bazhanov, S.L. Lukyanov, and
  A.B. Zamolodchikov},
\newblock {\em Integrable structure of conformal field
theory. II. Q-operator and DDV equation},
\newblock Comm. Math. Phys. {\bf 190} (1997) 247--278.

\bibitem[BLZ3]{BLZ3} {V.V. Bazhanov, S.L. Lukyanov, and
  A.B. Zamolodchikov},
\newblock {\em Higher-level eigenvalues of
Q-operators and Schrodinger equation},
\newblock Adv. Theor. Math. Phys. {\bf 7} (2003) 711-725.

\bibitem[Be]{bec} {J. Beck},
\newblock {\em Braid group action and quantum affine algebras}, 
\newblock Comm. Math. Phys. {\bf 165} 
(1994) 555--568.

\bibitem[BP]{BP} P. Bouwknegt and K. Pilch,
{\em On deformed W-algebras and quantum affine algebras}, 
Adv. Theor. Math. Phys. {\bf 2} (1998) 357--397.

\bibitem[BrK]{brkl} {J. Brundan and A. Kleshchev}, {\it
  Representations of shifted Yangians and finite W-algebras},
{Mem. Amer. Math. Soc. {\bf 196} (2008).}

\bibitem[C]{C} V. Chari, 
{\em Braid group actions and tensor products}, 
Int. Math. Res. Not. {\bf 2002} (2002) 357--382.

\bibitem[CH]{CH} V. Chari and D. Hernandez, 
{\em Beyond Kirillov-Reshetikhin modules, 
In Quantum affine algebras, extended affine Lie algebras, 
and their applications},  
Contemp. Math. {\bf 506}, pp. 49--81, AMS Providence, 2010.

\bibitem[CM]{CM} V. Chari and A. Moura,
{\em  Characters and blocks for finite-dimensional representations of
  quantum affine algebras},
Int. Math. Res. Not. {\bf 2005} (2005) 257--298.

\bibitem[CP]{CP} V. Chari and A. Pressley,
{\em A guide to quantum groups},
Cambridge University Press 1994.

\bibitem[Dr]{Dri2} {V. Drinfel'd}, 
\newblock {\em A new realization of Yangians and of 
quantum affine algebras}, 
\newblock Soviet Math. Dokl. {\bf 36}
(1988) 212--216.  

\bi[ESV]{esv} S. Ekhammar, H. Shu, and D. Volin, {\em Extended systems
  of Baxter Q-functions and fused flags I: simply-laced case},
Preprint arXiv:2008.10597.

\bibitem[EV]{EV} S. Ekhammar and D. Volin, {\em Bethe Algebra
  using Pure Spinors}, Preprint arXiv:2104.04539.

\bi[FF]{FF:sol} B. Feigin and E. Frenkel, {\em Quantization of soliton
  systems and Langlands duality} in {\em Exploration of New Structures
  and Natural Constructions in Mathematical Physics}, pp. 185-274,
Adv. Stud. Pure Math. {\bf 61}, Math. Soc. Japan, Tokyo, 2011
(arXiv:0705.2486).

\bi[FFK]{ffk} G. Ferrando, R. Frassek, and V. Kazakov, {\em QQ-system
  and Weyl-type transfer matrices in integrable SO(2r) spin chains},
J. High Energy Phys. {\bf 2021} (2021) 193.

\bibitem[FH1]{FH} {E. Frenkel and D. Hernandez},
\newblock {\em Baxter's Relations and Spectra of Quantum Integrable
  Models},
\newblock Duke Math. J. {\bf 164} (2015) 2407--2460.

\bibitem[FH2]{FH2} {E. Frenkel and D. Hernandez},
\newblock {\em Spectra of quantum KdV Hamiltonians, Langlands duality,
  and affine opers},
\newblock Comm. Math. Phys. {\bf 362} (2018) 361--414. 

\bibitem[FH3]{FH3} {E. Frenkel and D. Hernandez},
\newblock {\em Weyl group symmetry of q-characters}, 
\newblock Preprint arXiv:2211.09779.

\bibitem[FH4]{next} E. Frenkel and D. Hernandez, to appear.

\bibitem[FHR]{FHR} {E. Frenkel, D. Hernandez and N. Reshetikhin},
\newblock {\em Folded quantum integrable models and deformed W-algebras}
\newblock Lett. Math. Phys. {\bf 112} (2022) 80.

\bibitem[FKSZ]{FKSZ} E. Frenkel, P. Koroteev, D.S. Sage, and
  A.M. Zeitlin, {\em $q$-Opers, $QQ$-Systems, and Bethe Ansatz},
  Journal of European Math. Society (2023) (arXiv:2002.07344).

\bibitem[FJMM]{FJMM} {B. Feigin, M. Jimbo, T. Miwa, E. Mukhin}, 
\newblock {\em Finite type modules and Bethe Ansatz equations}, 
\newblock Ann. Henri Poincar\'e {\bf 18} (2017) 2543--2579.

\bibitem[FM]{Fre2} {E. Frenkel and E. Mukhin}, 
\newblock {\em Combinatorics of $q$-characters of finite-dimensional 
representations of quantum affine algebras}, 
\newblock Comm. Math. Phys. {\bf 216}
(2001) 23--57.

\bibitem[FR]{Fre} {E. Frenkel and N. Reshetikhin}, 
\newblock {\em The $q$-characters of representations of quantum 
affine algebras and deformations of $W$-Algebras}, 
\newblock in Recent Developments in Quantum Affine Algebras and 
related topics, 
\newblock 
 Contemp. Math. {\bf 248} (1999) 163--205 (arXiv:math/9810055).

\bibitem[FT]{FT} {M. Finkelberg and A. Tsymbaliuk}, {\it
  Multiplicative slices, relativistic Toda and shifted quantum affine
  algebras}, \newblock in Representations and Nilpotent Orbits of Lie
  Algebraic Systems, In Honour of the 75th Birthday of Tony Joseph,
  M. Gorelik, e.a. (eds.), Progress in Math. {\bf 330}, pp. 133--304,
  Birkh\"auser, 2019.

\bibitem[GHL]{GHL} C. Geiss, D. Hernandez and B. Leclerc,
{\em Representations of shifted quantum affine algebras
and cluster algebras}, Preprint arXiv:2401.04616.

\bibitem[He]{Hsh} {D. Hernandez}, 
\newblock {\em Representations of shifted quantum affine algebras},  
{Int. Math. Res. Not. {\bf 2023} (2023) 11035--11126}.

\bibitem[Hu]{hu} {J. Humphreys}, {\em Reflection groups and Coxeter
  groups}, Cambridge Studies in Advanced Mathematics {\bf 29},
  Cambridge University Press, 1990.

\bibitem[HJ]{HJ} {D. Hernandez and M. Jimbo},
\newblock {\em Asymptotic representations and Drinfeld rational
  fractions},
\newblock  Compos. Math. {\bf 148} (2012) 1593--1623.


\bibitem[HL]{HL} {D. Hernandez and B. Leclerc},
\newblock {\em Cluster algebras and category $\mathcal{O}$ for
representations of Borel subalgebras of quantum affine algebras},
\newblock Algebra and Number Theory {\bf 10} (2016) 2015--2052.


\bibitem[Kac]{ka}{V. Kac},
\newblock {\em Infinite Dimensional Lie Algebras}, 3rd ed.,
\newblock Cambridge University Press, Cambridge, 1990.

\bibitem[Kas]{kas}{M. Kashiwara}, 
\newblock {\em The crystal base and Littelmann’s refine Demazure
  character formula}, \newblock Duke Math. J. {\bf 71}, 839--858 (1993).

\bibitem[KWWY]{kwwy1} {J. Kamnitzer, B. Webster, A. Weekes, and O. Yacobi}, 
\newblock Yangians and quantization of slices in the affine Grassmannian, 
\newblock Algebra Number Theory {\bf 8} (2014) 857--893.

\bibitem[KZ]{KZ} P. Koroteev and A. Zeitlin, {\em $q$-opers,
  $QQ$-systems, and Bethe Ansatz II: Generalized minors}, Journal
  f\"ur die Reine und Angewandte Mathematik (Crelles Journal) {\bf
    2023} (2023) 271--296.

\bibitem[L]{L} B. Leclerc, {\em Quantum loop algebras, quiver
  varieties, and cluster algebras}, in Representations of Algebras and
  Related Topics, A. Skowro\'nski and K. Yamagata (eds.),
  pp. 117--152, European Math. Soc. Series of Congress Reports, 2011.

\bibitem[MRV1]{MRV1} {D. Masoero, A. Raimondo and D. Valeri},
\newblock {\em Bethe Ansatz and the Spectral Theory of affine Lie
algebra-valued connections. The simply-laced case},
\newblock Commun. Math. Phys. {\bf 344} (2016) 719--750.

\bibitem[MRV2]{MRV2} {D. Masoero, A. Raimondo and D. Valeri},
\newblock {\em Bethe Ansatz and the Spectral Theory of affine Lie
algebra-valued connections. The non simply-laced case},
\newblock Commun. Math. Phys. {\bf 349} (2017) 1063--1105.

\bibitem[MV]{MV} E. Mukhin and A. Varchenko, {\em Discrete
  Miura Opers and Solutions of the Bethe Ansatz Equations},
  Commun. Math. Phys. {\bf 256} (2005) 565--588.

\bibitem[NW]{nw} {H. Nakajima and A. Weekes}, {\it Coulomb branches of quiver gauge theories with symmetrizers}, 
{Journal of European Math. Soc. {\bf 25} (2023) 203--230.}

\bibitem[T]{Tsuboi} Z. Tsuboi, {\em Solutions of the T-system and
  Baxter equations for supersymmetric spin chains}, Nucl. Phys. {\bf
  B826} (2010) 399--455.

\bibitem[W1]{W} K. Wang, \emph{QQ-systems for twisted quantum affine
algebras}, Commun. Math. Phys. {\bf 400} (2023) 1137--1179.

\bibitem[W2]{W2} K. Wang, {\em Weyl group twists and representations of
  quantum affine Borel algebras}, Preprint arXiv:2404.11749.

\end{thebibliography}
\end{document}